\pdfoutput=1
\RequirePackage{ifpdf}
\ifpdf % We are running pdfTeX in pdf mode
\documentclass[pdftex]{sigma}
\else
\documentclass{sigma}
\fi

\numberwithin{equation}{section}

\newtheorem{Theorem}{Theorem}[section]
\newtheorem{Corollary}[Theorem]{Corollary}
\newtheorem{Lemma}[Theorem]{Lemma}
\newtheorem{Proposition}[Theorem]{Proposition}
 { \theoremstyle{definition}
\newtheorem{Definition}[Theorem]{Definition}
\newtheorem{Example}[Theorem]{Example}
\newtheorem{Remark}[Theorem]{Remark} }

\begin{document}
\allowdisplaybreaks

\newcommand{\arXivNumber}{1712.03068}

\renewcommand{\PaperNumber}{096}

\FirstPageHeading

\ShortArticleName{The Variational Bi-Complex for Systems of Semi-Linear Hyperbolic PDEs}

\ArticleName{The Variational Bi-Complex for Systems\\ of Semi-Linear Hyperbolic PDEs in Three Variables}

\Author{Sara FROEHLICH}

\AuthorNameForHeading{S.~Froehlich}

\Address{Department of Mathematics and Statistics, McGill University,\\ 805 Sherbrooke Street West, Montreal, QC H3A 0B9 Canada}
\Email{\href{mailto:sara.froehlich@mail.mcgill.ca}{sara.froehlich@mail.mcgill.ca}}

\ArticleDates{Received December 11, 2017, in final form August 24, 2018; Published online September 09, 2018}

\Abstract{This paper extends, to a class of systems of semi-linear hyperbolic second order PDEs in three variables, the geometric study of a single nonlinear hyperbolic PDE in the plane as presented in [Anderson I.M., Kamran N., \textit{Duke Math.~J.} \textbf{87} (1997), 265--319]. The constrained variational bi-complex is introduced and used to define form-valued conservation laws. A method for generating conservation laws from solutions to the adjoint of the linearized system associated to a system of PDEs is given. Finally, Darboux integrability for a system of three equations is discussed and a method for generating infinitely many conservation laws for such systems is described.}

\Keywords{Laplace transform; conservation laws; Darboux integrable; variational bi-comp\-lex; hyperbolic second-order equations}

\Classification{35L65; 35A30; 58A15}

\section{Introduction}\label{section1}

This paper belongs to the field known broadly as the geometric study of partial differential equations, which seeks to understand differential equations through the study of properties which remain invariant under particular groups of transformations. The subject has its roots in the foundational works of Lie, Darboux, Cartan and others. It was Cartan who recast partial differential equations geometrically as exterior differential systems. In doing so, the solutions to partial differential equations were realized as integral manifolds of corresponding exterior diffe\-ren\-tial systems. More recently, the field of geometric PDEs has undergone a number of important developments. These include efforts to obtain explicit solutions and solution algorithms to specific classes of PDEs \cite{afv09,sz03}, the investigation of links between PDEs and the geometry of the submanifolds which their level sets define \cite{kt96,kt01,va10}, as well as the study and computation of invariants such as conservation laws \cite{ak97,cl97,wa04}. It is this last area, conservation laws, with which this paper is concerned.

The geometric approach to conservation laws has an extensive history and literature, which we will not attempt to describe in any detail, referring the reader instead to~\cite{ol00} for a comprehensive account of the subject. We will however provide a brief, non-exhaustive overview of some of the themes and contributions that are relevant to this paper. Our approach to conservation laws finds its origin in the study of the cohomology determined by the $\mathcal{C}$-spectral sequence introduced by Vinogradov in~\cite{vi841} and~\cite{vi842}. In addition to being used to characterize conservation laws in terms of cohomology, this construction has facilitated work in other important aspects of the study of differential equations including the inverse problem of the calculus of variations (see for example \cite{at92,dt80}) and Euler--Lagrange operators as studied by Tulczyjew in~\cite{tu77}. Tsujishita and Duzhin built upon the work of Vinogradov to study conservation laws of the BBM equation~\cite{dt84}, and Tsujishita expanded the applications of the $\mathcal{C}$-spectral sequence to include topics such as the study of characteristic classes and Gel'fand--Fuks cohomology~\cite{ts82} and conservation laws of the Klein--Gordon equation~\cite{ts79}.

In \cite{an89}, Anderson gives a comprehensive treatment of the variational bi-complex, which emer\-ged out of the works on the $\mathcal{C}$-spectral sequence mentioned above, and the horizontal cohomology of which will serve as the natural framework for our present study of conservation laws. It should be pointed out that the variational bi-complex also lends itself to the study of various other topics such as the equivariant version of the inverse problem of the calculus of variations, Riemannian structures, and the method of Darboux integrability for a~scalar second order PDE. A large body of work has been amassed in the study of conservation laws by utilizing the variational bi-complex. Of particular importance for the present discussion, we note that Anderson and Kamran performed an extensive study of the conservation laws of hyperbolic scalar second order PDEs in the plane in~\cite{ak97}.

In the contemporaneous work of Bryant and Griffiths \cite{bg95,bg03}, conservation laws were studied from the distinct, yet related, perspective of the characteristic cohomology of exterior differential systems. In particular, local invariants of an exterior differential system are shown to govern the properties of the system's characteristic cohomology. This approach carries the study of the variational bi-complex and the $\mathcal{C}$-spectral sequence into the realm of exterior differential systems, where the independent and dependent variables of a system of partial differential equations are treated equally.

To demonstrate the connection between this approach and that of Vinogradov, we refer the reader to Vinogradov's ``two line theorem'' \cite{vi841} which indicates that a system of PDEs of Cauchy--Kovalevskaya type in $n$ independent variables will have trivial horizontal cohomology of horizontal degree $\leq n-2$ in the associated variational bi-complex. This result can be recovered from a fundamental theorem of \cite{bg95} regarding characteristic cohomology (see \cite[Section~10.4]{ka02}). It can also be generalized by using Vinogradov's spectral sequence \cite{ts91} as well as characteristic cohomology techniques \cite{bg95}. The literature on characteristic cohomology is too extensive to discuss in detail, but we will note that the notion of a hyperbolic exterior differential system was introduced and studied in~\cite{bgh951} and~\cite{bgh952}. Additional examples of work in this field include that of Clelland \cite{cl97} and Wang \cite{wa04}, each of whom studied conservation laws using the approach of cha\-rac\-teristic cohomology, the former studying second order parabolic PDEs in one dependent and three independent variables, and the latter considering third order scalar evolution equations.

As mentioned previously, our study of conservation laws will take place in the setting of the variational bi-complex. In this framework, differential forms on the jet bundle of infinite order~$J^{\infty}(E)$ of a fibered manifold $\pi\colon E\rightarrow M$ are bi-graded and the (unconstrained) variational bi-complex is defined using the exterior derivative split into horizontal and vertical components, denoted by ${\rm d}_H$ and ${\rm d}_V$ respectively. The constrained variational bi-complex is associated to a given partial differential equation or system of equations, $\mathcal{R}$, by taking the pullback of the unconstrained variational bi-complex to the infinite prolongation of the equation manifold, $\mathcal{R}^{\infty}$. Then the study of equivalence classes of conservation laws of a given PDE (or system of PDEs) corresponds to the study of the horizontal cohomology of this constrained bi-complex, denoted by $H^{r,s}(\mathcal{R}^{\infty},{\rm d}_H)$ where $(r,s)$ is referred to as the bi-degree of the conservation law. In this context, a classical conservation law will have the form \begin{gather*}\omega=\sum M_{i_1\ldots i_r} {\rm d}x^{i_1}\wedge\cdots\wedge {\rm d}x^{i_r},\end{gather*} where ${\rm d}_H\omega=0$ and the $M_i$ are functions defined on $\mathcal{R}^{\infty}$. We call $\omega$ a trivial conservation law if there exists an $(r-1,0)$ form $\gamma$ on $\mathcal{R}^{\infty}$ such that ${\rm d}_H\gamma=\omega$. In other words, the classical conservation laws determine the cohomology classes of $H^{r,0}\big(\mathcal{R}^{\infty},{\rm d}_H\big)$ where two conservation laws lie in the same cohomology class if they differ by a trivial conservation law. The notion of a higher-dimensional, or contact form-valued, conservation law will then arise when~$\omega$ is a~representative of a~cohomology class for $s\geq1$. In this case the terms~$M_i$ would be type $(0,s)$ contact forms. Analogous to the classical case, $\omega\in H^{r,s}\big(\mathcal{R}^{\infty},{\rm d}_H\big)$ is said to be trivial if there exists a form~$\gamma$ of bi-degree $(r-1,s)$ such that ${\rm d}_H\gamma=\omega$. To illustrate these ideas, we provide the following simple example in two independent variables:

\begin{Example}Consider the Liouville equation
\begin{gather*}
u_{xy}={\rm e}^u.
\end{gather*}
An example of a classical conservation law for this equation is given by the cohomology class $\big[\big(u_{xx}-\frac{1}{2}u_x^2\big){\rm d}x\big]\in H^{1,0}\big(\mathcal{R}^{\infty},{\rm d}_H\big)$, while an example of a contact form valued conservation law would be $[(\theta_{xx}-u_x\theta_x)\wedge {\rm d}x]\in H^{1,1}\big(\mathcal{R}^{\infty},{\rm d}_H\big)$, where $\theta_I={\rm d}u_I-\sum\limits_{j=1}^pu_{I,j}{\rm d}x^j$.
\end{Example}

We will now restrict our attention to the particular class of PDEs with which this paper is concerned. Specifically, we undertake the study of involutive systems of three nonlinear, hyperbolic equations of the following form:
\begin{gather}\label{systemsbeingstudied}
F_{ij}\big(x^1,x^2,x^3,u,u_i,u_j,u_{ij}\big)=u_{ij}-f_{ij}\big(x^1,x^2,x^3,u,u_i,u_j\big)=0
\end{gather}
for $1\leq i < j\leq 3$. Note that the expression $f_{ij}$ depends on all three independent variables, but only depends on the derivatives of~$u$ with respect to $x^i$ and $x^j$. Systems of partial differential equations within this class appear in several interesting contexts. Linear systems of this type arise in the parametrization of Cartan submanifolds in Euclidean space as carried out by Kamran and Tenenblat~\cite{kt96}. This work built upon that of Chern, who in \cite{ch44, ch47} gave a generalization of the Laplace transformation to a class of $n$-dimensional submanifolds in projective space, which he termed \textit{Cartan submanifolds} as they had previously been studied by Cartan in~\cite{ca15}. The submanifolds in Chern's study admit a parametrization by a conjugate net, which in Euclidean space implies that the functions giving the parametrization satisfy an overdetermined system of second order PDEs, \begin{gather*}X_{ij}=\Gamma_{ij}^iX_i+\Gamma_{ij}^jX_j,\qquad 1\leq i\neq j\leq n.\end{gather*} These fit into the class of systems studied in~\cite{kt96}. Other examples of applications of systems of the form~(\ref{systemsbeingstudied}) include the study of semi-Hamiltonian systems of hydrodynamic type (see \cite{ka02,tsa91}, and \cite{dn84}), and (\ref{systemsbeingstudied}) are also a special case of the nonlinear Darboux--Manakov--Zakharov systems studied by Vassiliou in~\cite{va10}.

We proceed to give a synopsis of how this paper is organized. An overview of the essential background material needed to set the stage for the subsequent sections is carried out in Section~\ref{jetbundles} which includes the theory of jet bundles and the variational bi-complex. In Section~\ref{conservationlawssystems} we develop the Laplace adapted coframe which provides the framework for our study and describe a method for generating $(2,s)$ conservation laws for systems~(\ref{systemsbeingstudied}). Darboux integrability is discussed in Section~\ref{darbouxsection}, followed by concluding remarks and directions for future research in Section~\ref{section5}. The systems (\ref{systemsbeingstudied}) are described in the context of Cartan's structural classification~\cite{ca11} of involutive systems of three equations in one dependent and three independent variables in Appendix~\ref{structuralclassification} and the full structure equations and Lie bracket congruences for the Laplace adapted coframe are given in Appendix~\ref{appendixB}.

In Section~\ref{conservationlawssystems} the construction of the Laplace adapted coframe utilizes the methods presented in \cite{ak97}, where a version of the Laplace transformation that applies to the form-valued linearization of a PDE of the form
\begin{gather}\label{AKequations}
F(x,y,u,u_x,u_y,u_{xx},u_{xy},u_{yy})=0
\end{gather}
 is developed. Moving from a single equation (\ref{AKequations}) to a system of equations (\ref{systemsbeingstudied}) introduces a~set of nonlinear integrability conditions which must be satisfied in order for the system to be involutive. These conditions will be analogous to the integrability conditions described in~\cite{kt96}, with the partial derivatives having been replaced by total derivatives.

Once linearized, the system may be analyzed using a generalization of the well-known classical Laplace method, used for integrating a single, linear hyperbolic PDE. This method was adapted to the study of systems of linear hyperbolic PDEs in $n$ independent and one dependent variable in~\cite{kt96}. It was subsequently extended to the vector-valued case in~\cite{sz03}. In this paper, a~form-valued version of the generalized Laplace transform is used to investigate the systems~(\ref{systemsbeingstudied}). Following the detailed computation of the essential structure equations and Lie bracket congruences for the Laplace adapted coframe which are provided in Appendix~\ref{appendixB}, we are in a position to state and prove the first main result of Section~\ref{conservationlawssystems}, which is a structure theorem for the~$(2,s)$ conservation laws for systems~(\ref{systemsbeingstudied}).

In order to motivate the statement of this theorem, we will first describe an analogous result concerning classical conservation laws. The reader may refer to~\cite{ol00} for a detailed exposition on this topic, as all computations and proofs will be omitted here. Consider a system of differential equations
\begin{gather}\label{generalsystem}
\mathcal{R}\colon \ \Delta_{\nu}\big(x,u^{(k)}\big),\qquad 1\leq\nu\leq m,
\end{gather}
where $\big(x,u^{(k)}\big)=\big(x^i,u^{\alpha},\ldots,u^{\alpha}_I\big)$ with $1\leq i\leq n$, $1\leq\alpha\leq q$ and $|I|\leq k$ are local jet coordinates. A~conservation law for this system is an $n$-tuple $P=\big(P_1\big(x,u^{(k)}\big),\ldots,P_n\big(x,u^{(k)}\big)\big)$ whose total divergence vanishes identically for all solutions of~(\ref{generalsystem}). That is,
\begin{gather}\label{totaldivergence}
\operatorname{Div} P=\sum_{i=1}^n D_iP_i=0,
\end{gather}
where $D_i=\frac{\partial}{\partial x^i}+\sum\limits_{\alpha=1}^q\sum_Ju_{J,i}^{\alpha}\frac{\partial}{\partial u_J^{\alpha}}$.
A conservation law $P$ is said to be trivial if there exist $C^{\infty}$ functions $R_{ij}\big(x,u^{(k)}\big)$, $1\leq i,j\leq n$, where $R_{ij}=-R_{ji}$ such that, taking into account $\mathcal{R}$ and its prolongations,
\begin{gather*}P_i=\sum_{j=1}^nD_jR_{ij}.\end{gather*}
Then two conservation laws, $P$ and $\tilde{P}$, are considered equivalent if their difference $P-\tilde{P}$ is a~trivial conservation law. It can be shown using integration by parts that if~$\mathcal{R}$ is totally non-degenerate, i.e., the system of equations and all its prolongations are of maximal rank and locally solvable, then any conservation law~$P$ is equivalent to a conservation law $\tilde{P}$ whose total divergence can be written in the form
\begin{gather}\label{characteristicform}
\sum_{i=1}^n D_i\tilde{P}_i=\sum_{\nu=1}^m Q_{\nu}\Delta_{\nu},
\end{gather}
where the multipliers $Q_{\nu}=Q_{\nu}\big(x,u^{(k)}\big)$ are $C^{\infty}$ functions on the infinite jet bundle. The $m$-tuple $Q=(Q_1,\ldots,Q_m)$ is referred to as the \textit{characteristic} of the conservation law~$P$. A characteristic is trivial if it vanishes for all solutions of the system of equations, and two characteristics are said to be equivalent if they differ by a trivial characteristic. As stated in \cite{ol00}, two conservation laws $P$ and $\tilde{P}$ are equivalent if and only if their characteristics are equivalent. So it is natural to suspect that the study of characteristics and conservation laws go hand in hand. Applying the Euler--Lagrange operator, \begin{gather*}E_{\alpha}=\sum_{\vert I\vert\geq0}(-1)^{\vert I\vert}D_I\left(\frac{\partial}{\partial u^{\alpha}_I}\right),\end{gather*} to both sides of~(\ref{characteristicform}), one obtains the identity
\begin{gather}\label{kernelofadjoint}
\mathcal{L}_Q^*(\Delta)+\mathcal{L}_{\Delta}^*(Q)=0,
\end{gather}
where $\mathcal{L}^*_Q$ is the adjoint of the formal Fr\'{e}chet derivative of the differential operator $Q$, and likewise for $\mathcal{L}_{\Delta}^*$. Since the system of equations specifies $\Delta=0$, the identity (\ref{kernelofadjoint}) leads to the following conclusion:
\begin{Theorem}\label{classicalkernel}The characteristic $Q_{\nu}\big(x,u^{(k)}\big)$, $1\leq\nu\leq m$, of any conservation law \eqref{totaldivergence} for a totally non-degenerate system of differential equations \eqref{generalsystem} lies in the kernel of $\mathcal{L}_{\Delta}^*$, that is $\mathcal{L}_{\Delta}^*(Q)=0$.
\end{Theorem}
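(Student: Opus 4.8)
The plan is to apply the Euler--Lagrange operator $E_\alpha$ to the characteristic form \eqref{characteristicform} and then exploit the vanishing of $\Delta$ on the equation manifold. The hypothesis enters only at the outset: total non-degeneracy is what permits the given conservation law $P$ to be replaced by an equivalent one $\tilde{P}$ whose divergence is expressed in the form \eqref{characteristicform}, so it suffices to treat the characteristic $Q$ attached to $\tilde{P}$. Applying $E_\alpha$ to both sides of \eqref{characteristicform}, the left-hand side vanishes by the fundamental property that the Euler operator annihilates every total divergence, $E_\alpha\big(\sum_i D_i\tilde{P}_i\big)=0$.

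The heart of the argument is the evaluation of $E_\alpha\big(\sum_\nu Q_\nu\Delta_\nu\big)$. Writing $E_\alpha=\sum_I(-1)^{|I|}D_I\circ\partial/\partial u_I^\alpha$ and applying the ordinary Leibniz rule to $\partial(Q_\nu\Delta_\nu)/\partial u_I^\alpha$ splits the result into a part in which $\partial/\partial u_I^\alpha$ differentiates $Q_\nu$ and a part in which it differentiates $\Delta_\nu$. The two resulting sums $\sum_{\nu,I}(-1)^{|I|}D_I\big((\partial Q_\nu/\partial u_I^\alpha)\Delta_\nu\big)$ and $\sum_{\nu,I}(-1)^{|I|}D_I\big(Q_\nu\,\partial\Delta_\nu/\partial u_I^\alpha\big)$ are precisely the components of the formal adjoints $\mathcal{L}_Q^*(\Delta)$ and $\mathcal{L}_\Delta^*(Q)$. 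Equating this with the vanishing left-hand side yields exactly the identity \eqref{kernelofadjoint}, $\mathcal{L}_Q^*(\Delta)+\mathcal{L}_\Delta^*(Q)=0$, as an identity on the full jet bundle.

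To conclude I would restrict to the infinite prolongation $\mathcal{R}^\infty$. Since $\mathcal{L}_Q^*$ is a linear differential operator, expanding $\mathcal{L}_Q^*(\Delta)$ exhibits it as a combination of the $\Delta_\nu$ and their total derivatives $D_I\Delta_\nu$, all of which vanish on $\mathcal{R}^\infty$; hence $\mathcal{L}_Q^*(\Delta)=0$ there, and \eqref{kernelofadjoint} forces $\mathcal{L}_\Delta^*(Q)=0$ on solutions, which is the assertion.

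I expect the main obstacle to be verifying the adjoint identity in the second step: one must confirm that, after the Leibniz split inside $E_\alpha$ and the reindexing of the total derivatives $D_I$, the two collections of terms recombine exactly into $\mathcal{L}_Q^*$ and $\mathcal{L}_\Delta^*$ with no leftover cross terms, and that the signs produced by the weights $(-1)^{|I|}$ agree with those in the definitions of the formal adjoints. The remaining ingredients---that $E_\alpha$ kills divergences and that the prolonged operator $\mathcal{L}_Q^*(\Delta)$ vanishes on $\mathcal{R}^\infty$---are standard once total non-degeneracy guarantees that all prolongations $D_I\Delta_\nu$ indeed vanish on the solution manifold.
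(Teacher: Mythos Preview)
Your proposal is correct and follows exactly the approach the paper sketches in the paragraph preceding the theorem: apply the Euler--Lagrange operator to both sides of \eqref{characteristicform}, use that $E_\alpha$ annihilates total divergences, identify the result with the product-rule identity \eqref{kernelofadjoint}, and then restrict to solutions where $\Delta=0$. In fact the paper explicitly omits the computation (``all computations and proofs will be omitted here'') and refers to \cite{ol00}, so your write-up is more detailed than what appears in the paper itself.
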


Then the first main result of this paper, Theorem~\ref{structuretheorem}, which we will state in full presently, can be seen as an analogous result for conservation laws of systems of the type (\ref{systemsbeingstudied}) for higher vertical degrees. Essentially, it tells us that any $(2,s)$ conservation law can be constructed from certain $(0,s-1)$ contact forms which satisfy an equation involving the adjoints of the linearized equations of the original system. Precisely, it says the following:

\begin{Theorem}\label{structuretheoremintro}Let $\mathcal{R}$ be a second order hyperbolic system of type \eqref{systemsbeingstudied}. Then for $s\geq1$ and $\omega\in\Omega^{2,s}\big(\mathcal{R}^{\infty}\big)$ a ${\rm d}_H$-closed form, there exist contact forms \begin{gather*}\rho_{ij}\in\Omega^{0,s-1}\big(\mathcal{R}^{\infty}\big)\qquad \textrm{and}\qquad \gamma\in\Omega^{1,s}\big(\mathcal{R}^{\infty}\big)\end{gather*} for $1\leq i < j\leq 3$, such that $\omega$ is given by \begin{gather*}\omega=\sum_{1\leq i < j\leq 3}\Psi_{ij}(\rho_{ij})+{\rm d}_H\gamma\end{gather*} and the $\rho_{ij}$ satisfy the equation
\begin{gather*}\sum_{1\leq i < j\leq 3}\mathcal{L}_{ij}^*(\rho_{ij})=0,\end{gather*}
where the $\Psi_{ij}$ are maps from the space of $(0,s-1)$ forms to the space of $(2,s)$ forms defined explicitly in Section~{\rm \ref{structuretheoremsec}}, and the operators~$\mathcal{L}_{ij}^*$ are the adjoints of the operators appearing in the linearized system.
\end{Theorem}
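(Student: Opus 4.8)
The plan is to mirror, at the level of contact forms, the classical reduction that brings a conservation law into the characteristic form (\ref{characteristicform}), using the Laplace adapted coframe and the structure equations of Appendix~\ref{appendixB} in place of the elementary total-derivative manipulations. Since $\mathcal{R}$ involves three independent variables, the horizontal $2$-forms are spanned by the $dx^i\wedge dx^j$, so I would first write $\omega=\sum_{1\leq i<j\leq3}A_{ij}\wedge dx^i\wedge dx^j$ with each $A_{ij}\in\Omega^{0,s}(\mathcal{R}^\infty)$ a contact $s$-form. On $\mathcal{R}^\infty$ the equations $F_{ij}=0$ and their prolongations express every contact form built from a mixed derivative in terms of the contact forms attached to the free jet coordinates, namely the pure derivatives along the three characteristic directions; the first task is therefore to rewrite each $A_{ij}$ in the adapted basis that the structure equations provide.

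The core of the argument is an integration-by-parts reduction. For a contact form $\theta_I$ coming from a pure derivative in the $x^k$ direction, an identity of the schematic form $d_H(\theta_{I'}\wedge\beta)=\pm\,\theta_I\wedge\beta\wedge dx^k\mp\theta_{I'}\wedge d_H\beta+\cdots$ lets me trade a derivative on the contact factor for a total derivative on its coefficient, at the cost of a $d_H$-exact term that I collect into $\gamma$. Iterating this along the free directions lowers the order of the contact forms occurring in $\omega$ until only minimal-order terms survive. Because the free directions exclude the mixed second derivatives, the contributions that cannot be removed are exactly those tied to the equations $F_{ij}$; these assemble, through the linearized operators $\mathcal{L}_{ij}$ encoded in the structure equations, into the maps $\Psi_{ij}$ applied to the residual $(0,s-1)$ forms $\rho_{ij}$. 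This yields $\omega=\sum_{1\leq i<j\leq3}\Psi_{ij}(\rho_{ij})+d_H\gamma$, the form-valued counterpart of the passage from a conservation law to its characteristic.

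It remains to extract the constraint. Imposing $d_H\omega=0$ and using $d_H^2=0$ to annihilate the $d_H\gamma$ term forces $d_H\big(\sum_{i<j}\Psi_{ij}(\rho_{ij})\big)=0$, a contact-form-valued equation living in the top horizontal degree $\Omega^{3,s}(\mathcal{R}^\infty)$. I would then compute $d_H\Psi_{ij}(\rho_{ij})$ from the explicit definition of $\Psi_{ij}$ and the structure equations, and integrate by parts once more to move the remaining total derivatives off each $\rho_{ij}$ and onto the accompanying coefficients. By the Lagrange (Green's) identity this interchange replaces each operator $\mathcal{L}_{ij}$ by its adjoint $\mathcal{L}_{ij}^*$, so that the coefficient of $dx^1\wedge dx^2\wedge dx^3$ in the resulting expression is precisely $\sum_{i<j}\mathcal{L}_{ij}^*(\rho_{ij})$. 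Hence $d_H$-closedness of $\omega$ is equivalent to $\sum_{1\leq i<j\leq3}\mathcal{L}_{ij}^*(\rho_{ij})=0$, as claimed.

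The hard part will be the reduction step: the integration by parts must be organized in the adapted coframe so that it terminates, and one must verify that the irreducible remainders are genuinely captured by the $\Psi_{ij}$ rather than producing spurious obstructions of lower horizontal degree. This is where the Lie bracket congruences of Appendix~\ref{appendixB} do the real work, since they govern how repeated total differentiation interacts with the constraints imposed by the prolonged system, and where the involutive, semi-linear, hyperbolic structure of~(\ref{systemsbeingstudied}) — together with the attendant vanishing of horizontal cohomology below degree $n-1$ guaranteed by Vinogradov's two-line theorem — ensures that the normal form is well defined modulo $d_H$-exact terms.
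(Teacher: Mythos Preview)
Your overall strategy is reasonable, but it differs from the paper's in an essential way, and the difference exposes a genuine gap in your derivation of the adjoint constraint.

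The paper does \emph{not} work entirely on $\mathcal{R}^\infty$. It lifts $\omega$ to a form $\tilde{\omega}$ on the free jet space $J^\infty(E)$, uses Lemma~\ref{pullstozero} to express $d_H\tilde{\omega}$ (which pulls back to zero) in terms of the $F_{ij}$, $d_VF_{ij}$, and their total derivatives, and then integrates by parts there. The adjoint constraint is obtained by applying the interior Euler operator $J$ of \cite{an92} to both sides of $d_H\tilde{\omega}=\cdots$: the identity $J(d_H\tilde{\omega})=0$ holds for \emph{any} $(2,s)$ form on $J^\infty(E)$, and computing $J$ on the right-hand side yields exactly $\sum_{i<j}\mathcal{L}_{ij}^*(\rho_{ij})=0$ after pullback. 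The decomposition $\omega=\sum\Psi_{ij}(\rho_{ij})+d_H\gamma$ is then produced by the horizontal homotopy operator $h_H^{3,s}$ applied to $d_H\tilde{\omega}$, using the identity $\tilde{\omega}=h_H^{3,s}(d_H\tilde{\omega})+d_Hh_H^{2,s}(\tilde{\omega})$.

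Your route, by contrast, attempts to extract the constraint by computing $d_H\bigl(\sum\Psi_{ij}(\rho_{ij})\bigr)=0$ directly on $\mathcal{R}^\infty$. But look at the degrees: $\Psi_{ij}(\rho_{ij})\in\Omega^{2,s}$, so $d_H\Psi_{ij}(\rho_{ij})\in\Omega^{3,s}$, and its coefficient on $\sigma_1\wedge\sigma_2\wedge\sigma_3$ lies in $\Omega^{0,s}$, whereas $\mathcal{L}_{ij}^*(\rho_{ij})\in\Omega^{0,s-1}$. What one actually obtains from the explicit form of $\Psi_{ij}$ and Definition~\ref{defofadj} is
\[
d_H\Bigl(\sum_{i<j}\Psi_{ij}(\rho_{ij})\Bigr)=\sigma_1\wedge\sigma_2\wedge\sigma_3\wedge\Theta\wedge\Bigl(\sum_{i<j}\mathcal{L}_{ij}^*(\rho_{ij})\Bigr),
\]
so $d_H$-closedness only gives $\Theta\wedge\sum_{i<j}\mathcal{L}_{ij}^*(\rho_{ij})=0$. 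For $s=1$ this suffices, since the sum is then a function; but for $s\geq 2$ it does not: any $(0,s-1)$ form with $\Theta$ as a factor satisfies this without vanishing. This is precisely the obstruction that the Euler operator circumvents, because $J$ strips off the $\Theta$-factor by design (it is built from interior products $\partial/\partial u_I\,\lrcorner\,\cdot$). Without lifting to $J^\infty(E)$ and invoking $J(d_H\tilde{\omega})=0$, or supplying an independent argument that the $\rho_{ij}$ produced by your reduction can be chosen so that $\sum\mathcal{L}_{ij}^*(\rho_{ij})$ has no $\Theta$-component, your final step does not go through.
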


The second noteworthy result of Section~\ref{conservationlawssystems} concerns the cohomology of the constrained variational bi-complex which is defined in Section~\ref{constrainedbisec}. Before stating the theorem, we will take a~moment to introduce some terminology which will be helpful in the discussion of this result, and those to follow. Just as in the case of the classical Laplace method, there are generalized Laplace invariants which arise during the application of the generalized Laplace transform. The generalized Laplace invariants are relative invariants with respect to contact transformations, so their vanishing is a contact-invariant condition. When the generalized Laplace transformation is applied repeatedly to a particular system of equations, a sequence of generalized Laplace invariants is generated and this sequence may or may not terminate at some point. If, for example, the~$(i,j)$ Laplace invariants are zero after $p_{ij}$ applications of the $\mathcal{X}_{ij}$-Laplace transform, then~$p_{ij}$ is referred to as a \textit{Laplace index} of the system of equations, written $\operatorname{ind}(\mathcal{X}_{ij})=p_{ij}$. If the sequence of Laplace invariants never terminates, then we write $\operatorname{ind}(\mathcal{X}_{ij})=\infty$.

Then the crux of Theorem~\ref{sgeq3trivial} is that if each of these sequences of Laplace invariants fails to terminate, there will be no non-trivial horizontal cohomology of bi-degree $(2,s)$ for each $s\geq3$, as stated below.

\begin{Theorem}Let $\mathcal{R}$ be a second order hyperbolic system of type \eqref{systemsbeingstudied} and suppose that $\operatorname{ind}(\mathcal{X}_{ij})=\infty$ for $1\leq i< j\leq3$. Then, for all~$s\geq3$, all type $(2,s)$ conservation laws are trivial. That is,
\begin{gather*}
H^{2,s}\big(\mathcal{R}^{\infty},{\rm d}_H\big)=0.
\end{gather*}
\end{Theorem}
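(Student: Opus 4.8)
The plan is to build on the two preceding results—the structure theorem (Theorem~\ref{structuretheoremintro}) and the infinite-Laplace-index hypothesis—and show that, under these assumptions, every $\mathrm{d}_H$-closed $(2,s)$ form with $s \geq 3$ is actually $\mathrm{d}_H$-exact. By Theorem~\ref{structuretheoremintro}, any $\mathrm{d}_H$-closed $\omega \in \Omega^{2,s}(\mathcal{R}^\infty)$ can be written as $\omega = \sum_{i<j}\Psi_{ij}(\rho_{ij}) + \mathrm{d}_H\gamma$, where the contact forms $\rho_{ij} \in \Omega^{0,s-1}(\mathcal{R}^\infty)$ satisfy the adjoint equation $\sum_{i<j}\mathcal{L}_{ij}^*(\rho_{ij}) = 0$. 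Since the trailing $\mathrm{d}_H\gamma$ term is already trivial, the entire problem reduces to showing that the leading term $\sum_{i<j}\Psi_{ij}(\rho_{ij})$ is itself $\mathrm{d}_H$-exact whenever $s \geq 3$. The strategy is therefore to analyze the $(0,s-1)$ contact forms $\rho_{ij}$ directly.

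First I would expand each $\rho_{ij}$ in terms of the Laplace-adapted coframe constructed in Section~\ref{conservationlawssystems}. The contact forms $\theta_I$ (and their wedge products of degree $s-1$) decompose into components aligned with the characteristic directions $\mathcal{X}_{ij}$. The hypothesis $\operatorname{ind}(\mathcal{X}_{ij}) = \infty$ means that the sequence of generalized Laplace invariants never vanishes, and so the generalized Laplace transform can be applied indefinitely in each of the three characteristic directions. The key mechanism I expect to exploit is that the adjoint equation $\sum_{i<j}\mathcal{L}_{ij}^*(\rho_{ij}) = 0$, combined with the structure equations and Lie-bracket congruences computed in Appendix~\ref{appendixB}, forces algebraic and differential constraints on the coefficients of the $\rho_{ij}$. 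When $s \geq 3$, there are enough independent contact one-forms available that the nonvanishing of all the Laplace invariants obstructs any nonzero solution from contributing a genuinely new cohomology class: each candidate $\rho_{ij}$ can be successively "integrated down" by inverting the Laplace transform, and the non-termination of the invariant sequence guarantees this inversion can be carried out without obstruction.

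Concretely, I would set up an inductive or descent argument on the highest-order jet variable appearing in the $\rho_{ij}$. At each stage, I would use the relation $\Psi_{ij}(\rho_{ij}) = \mathrm{d}_H(\text{something}) + \Psi_{ij}(\tilde{\rho}_{ij})$, where $\tilde{\rho}_{ij}$ is a lower-order remainder produced by one application of the inverse Laplace transform; the nonvanishing invariant $\operatorname{ind}(\mathcal{X}_{ij}) = \infty$ is precisely what licenses this step by ensuring the relevant invariant is an invertible (nonzero) multiplier. Iterating this descent, the condition $s \geq 3$ ensures that the contact-form degree is high enough that the process terminates in a $\mathrm{d}_H$-exact form rather than stalling at an irreducible core. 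I would then collect the accumulated $\mathrm{d}_H$-exact pieces, combine them with the original $\mathrm{d}_H\gamma$, and conclude that $\omega$ lies in the image of $\mathrm{d}_H$, so $H^{2,s}(\mathcal{R}^\infty,\mathrm{d}_H) = 0$.

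The main obstacle I anticipate is the descent step itself: showing rigorously that the adjoint equation together with the infinite-index hypothesis actually forces each $\rho_{ij}$ into the image of the inverse Laplace transform, modulo $\mathrm{d}_H$-exact terms, without a residual contribution surviving when $s \geq 3$. This requires careful bookkeeping of how the Laplace invariants enter as coefficients in the structure equations, and verifying that the threshold $s \geq 3$ (as opposed to $s \geq 1$ or $s \geq 2$) is exactly what guarantees sufficiently many contact directions for the cancellation to go through. The borderline cases $s = 1$ and $s = 2$ presumably fail precisely because the invariants, while nonvanishing, leave a genuine obstruction in low vertical degree; isolating why $s = 3$ is the correct cutoff—and confirming that the $\Psi_{ij}$ maps interact correctly with $\mathrm{d}_H$ under the inverse transform—will be the technical heart of the argument.
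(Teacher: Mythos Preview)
Your reduction via the structure theorem is the correct opening move, but the mechanism you propose for the remainder is not the one the paper uses, and as stated it has a genuine gap. You suggest a descent argument in which one writes $\Psi_{ij}(\rho_{ij}) = \mathrm{d}_H(\cdots) + \Psi_{ij}(\tilde{\rho}_{ij})$ by ``inverting the Laplace transform'' on $\rho_{ij}$. But the generalized Laplace transform $\mathcal{X}_{ij}$ of Section~\ref{genlapmetforsys} is defined on solutions of the \emph{linearized} system $\mathcal{L}_{ij}(\Theta)=0$, not on solutions of the adjoint equation; there is no such identity for $\Psi_{ij}$ established anywhere in the paper, and you have not indicated how one would be proved. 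Nor have you explained why the threshold $s\ge 3$ would make the descent terminate---you acknowledge this as the ``technical heart'' but offer no concrete reason.

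The paper's argument is quite different and does not attempt to show $\Psi_{ij}(\rho_{ij})$ is $\mathrm{d}_H$-exact at all: it shows that $\rho_{ij}$ must itself vanish. One rewrites $\mathcal{L}_{ij}^*(\rho_{ij})=0$ as a first-order system (equations~\eqref{firstordersystem1}--\eqref{firstordersystem2}), assumes $\rho_{ij}$ is nonzero of adapted order $k$, and then takes interior products with the vertical vector fields $V_l^{k+1}, V_l^k$ dual to the Laplace coframe. Using the Lie-bracket congruences of Appendix~\ref{appendixB} and formula~\eqref{verhorint}, one deduces that $V_l^k\,\lrcorner\,\rho_{ij}$ is a nonzero \emph{relative} $X_i$ and $X_j$ invariant contact form. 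Proposition~\ref{vanishingindices} then gives the contradiction: when all Laplace indices are infinite, no such form exists. The hypothesis $s\ge 3$ enters precisely here: it forces $\rho_{ij}\in\Omega^{0,s-1}$ to have contact degree $\ge 2$, hence adapted order $k\ge 1$, so that $V_l^k\,\lrcorner\,\rho_{ij}$ is a genuine contact form of degree $\ge 1$ to which Proposition~\ref{vanishingindices} applies. Your proposal omits relative invariant forms and Proposition~\ref{vanishingindices} entirely, which is the actual engine of the proof.
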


The proof of this result utilizes the fact that relative invariant contact forms can be constructed from nonzero solutions to the adjoint equation seen in Theorem~\ref{structuretheorem}. A contact form,~$\omega$, is said to be invariant relative to the characteristic vector field $X$ if $X(\omega)=\lambda\omega$ for some function $\lambda$ on $\mathcal{R}^{\infty}$, where $X(\omega)$ denotes the projected Lie derivative of $\omega$ with respect to~$X$. The existence of such contact forms then contradicts the hypothesis that none of the Laplace indices is finite.

The second set of results from this paper is contained in Section~\ref{darbouxsection} and addresses the topic of the Darboux integrability of systems of the form~(\ref{systemsbeingstudied}). Classically, a second order scalar hyperbolic partial differential equation (\ref{AKequations}) is Darboux integrable if there exist smooth, real-valued functions $I$, $\tilde{I}$, $J$, and $\tilde{J}$ such that ${\rm d}I\wedge {\rm d}\tilde{I}\neq0$, ${\rm d}J\wedge {\rm d}\tilde{J}\neq0$ and \begin{gather*}X(I)=X\big(\tilde{I}\big)=0\qquad \textrm{and}\qquad Y(J)=Y\big(\tilde{J}\big)=0,\end{gather*} where $X$ and $Y$ are the characteristic vector fields for the equation. It is well known, see for example~\cite{ka02}, that for any pair of monotone functions $f_1, f_2\in C^{\infty}(\mathbb{R};\mathbb{R})$, the system
\begin{gather}\label{darbouxintsystem}
F(x,y,u,u_x,u_y,u_{xx},u_{xy},u_{yy})=0,\qquad \tilde{I}=f_1(I),\qquad \tilde{J}=f_2(J)
\end{gather}
 is completely integrable in the sense of the Frobenius theorem. It is therefore evident that Darboux integrable equations may be solved via ordinary differential equation techniques.
\begin{Example}To illustrate the concepts above, we return to the Liouville equation \begin{gather*}u_{xy}={\rm e}^u.\end{gather*}
In this example, the characteristic vector fields are $X=D_x$ and $Y=D_y$ (the total derivatives with respect to $x$ and $y$, respectively). Then $I=y$ and $\tilde{I}=u_{yy}-\frac{1}{2}u_y^2$ are invariant functions with respect to $X$, and $J=x$ and $\tilde{J}=u_{xx}-\frac{1}{2}u_x^2$ are invariant functions with respect to $Y$ as required by the definition above. Letting $f_1=g_1'$ and $f_2=g_2'$, it is possible to integrate the system (\ref{darbouxintsystem}) to obtain \begin{gather*}u=\log\left|\frac{2g_1'(y)g_2'(x)}{(g_1(y)+g_2(x))^2}\right|.\end{gather*}
\end{Example}

The concept of Darboux integrability has been studied extensively, and extended to new settings. Two noteworthy examples are~\cite{ak97}, where the Darboux integrability of equations (\ref{AKequations}) is shown to imply the existence of infinitely many conservation laws of type $(1,s)$ for all $s\geq0$, and~\cite{afv09} where the definition of Darboux integrability is recast in a group-theoretic approach that applies to the general framework of exterior differential systems. The definition introduced in~\cite{afv09} equates Darboux integrability with the existence of what the authors refer to as a \textit{Darboux pair}. This terminology is explained in Section~\ref{darbforsystemofthree}, where we also prove the lemma quoted below, demonstrating that if certain characteristic invariant functions exist, systems of the form~(\ref{systemsbeingstudied}) will satisfy the definition of Darboux integrability given in~\cite{afv09}.

\begin{Lemma}\label{darbdef1} Let $u_{ij}=f_{ij}\big(x^1,x^2,x^3,u,u_i,u_j\big)$, $1\leq i< j<3$ be a system of three hyperbolic equations in one dependent and three independent variables with characteristic vector fields $X_1$, $X_2$, $X_3$. If there exist smooth, real-valued functions, $I$, $\tilde{I}$, $J$, $\tilde{J}$, and $K$, $\tilde{K}$, such that the following two conditions hold, then the system defines a Darboux pair and thus satisfies the notion of Darboux integrability defined in~{\rm \cite{afv09}}.
 \begin{enumerate}\itemsep=0pt
 \item[$1.$] $I$ and $\tilde{I}$ are invariant with respect to two of the characteristic vector fields, say $X_i$ and $X_j$, and $I$ and $\tilde{I}$ are functionally independent: ${\rm d}I\wedge {\rm d}\tilde{I}\neq0$.
 \item[$2.$] $J$, $\tilde{J}$, $K$, and $\tilde{K}$ are all invariant with respect to $X_l$, $l\neq i,j$, and are all functionally independent from each other: ${\rm d}J\wedge {\rm d}\tilde{J}\wedge {\rm d}K\wedge {\rm d}\tilde{K}\neq0$.
\end{enumerate}
\end{Lemma}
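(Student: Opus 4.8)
The plan is to verify directly that the characteristic data of the system, together with the six invariant functions, meet the defining conditions of a Darboux pair as formulated in \cite{afv09}. The first step is to recall that definition precisely, as it is set up in Section~\ref{darbforsystemofthree}: a Darboux pair for the exterior differential system realizing \eqref{systemsbeingstudied} consists of two completely integrable Pfaffian systems (the \emph{singular systems}) obtained from a splitting of the distribution $V=\langle X_1,X_2,X_3\rangle$ spanned by the characteristic vector fields, subject to the requirement that the first integrals of the two systems attain prescribed ranks. The hypotheses of the lemma single out the splitting $V=V_1\oplus V_2$ with $V_1=\langle X_i,X_j\rangle$ and $V_2=\langle X_l\rangle$, so this is the candidate pair I would test.

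Next I would identify the two families of invariant functions with the first integrals of the corresponding characteristic systems. Since $X_i(I)=X_j(I)=0$ and $X_i(\tilde I)=X_j(\tilde I)=0$, the functions $I,\tilde I$ are first integrals of the rank-two system $V_1$; similarly, because $X_l$ annihilates each of $J,\tilde J,K,\tilde K$, these four functions are first integrals of the rank-one system $V_2$. To guarantee that such first integrals form complete, well-defined families I would invoke the Lie bracket congruences for the Laplace adapted coframe collected in Appendix~\ref{appendixB}: these show that $V_1$ and $V_2$, together with the derived systems needed to build the singular Pfaffian systems, are involutive, so that each singular system is completely integrable in the sense of Frobenius and the invariants above span the relevant spaces of first integrals.

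The final step is the dimension count that upgrades integrability to a genuine Darboux pair. The definition of \cite{afv09} requires a fixed number of functionally independent first integrals on each side of the splitting, a number read off from the ranks appearing in the structure equations of Appendix~\ref{appendixB}. The functional independence hypotheses supply exactly these counts: $dI\wedge d\tilde I\neq 0$ furnishes the two independent invariants demanded on the $V_1$ side, while $dJ\wedge d\tilde J\wedge dK\wedge d\tilde K\neq 0$ furnishes the four independent invariants demanded on the $V_2$ side. Verifying that $2$ and $4$ are precisely the required thresholds, i.e.\ that these transverse first integrals separate the leaves in the manner demanded by the definition, completes the identification of $(V_1,V_2)$ as a Darboux pair and hence establishes the asserted Darboux integrability.

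The main obstacle I anticipate is bookkeeping rather than conceptual: one must translate the abstract conditions of \cite{afv09} into explicit rank statements about the singular Pfaffian systems built from $\{X_i,X_j\}$ and $\{X_l\}$, and check that the asymmetric counts $2$ and $4$ match the ranks dictated by the system. This is where the detailed bracket relations of Appendix~\ref{appendixB} are indispensable, since they pin down both the involutivity of the characteristic systems and the dimension of the manifold on which the count is performed. Confirming that the six given functions are not merely invariant but in fact \emph{generate} the full spaces of Darboux invariants---so that no additional independent invariants are needed to resolve the system---is the delicate point on which the argument turns.
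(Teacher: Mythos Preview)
Your outline has the right headline---an asymmetric dimension count, $2$ invariants on one side and $4$ on the other---but the scaffolding around it is misconfigured in ways that would block the argument from going through.

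First, the singular systems in the sense of \cite{afv09} are not the vector-field distributions $V_1=\langle X_i,X_j\rangle$ and $V_2=\langle X_l\rangle$, nor their annihilators. They are Pfaffian systems on the ten-dimensional second-jet manifold $M$, each of which contains \emph{all four} contact forms $\theta_0,\theta_1,\theta_2,\theta_3$ together with a complementary subset of the $\sigma_i,\pi_i$; concretely $\hat{\mathcal V}=\{\theta_0,\theta_1,\theta_2,\theta_3,\sigma_i,\pi_i,\sigma_j,\pi_j\}$ has rank $8$ and $\check{\mathcal V}=\{\theta_0,\theta_1,\theta_2,\theta_3,\sigma_l,\pi_l\}$ has rank $6$. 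They are \emph{not} completely integrable; the definition instead involves their maximal integrable sub-bundles $\hat{\mathcal V}^{(\infty)}$ and $\check{\mathcal V}^{(\infty)}$. So your first paragraph mis-states what is being split and what integrability is required.

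Second, the actual conditions to verify are (1) $\hat{\mathcal V}+\check{\mathcal V}^{(\infty)}=T^*M$ and $\check{\mathcal V}+\hat{\mathcal V}^{(\infty)}=T^*M$, (2) $\hat{\mathcal V}^{(\infty)}\cap\check{\mathcal V}^{(\infty)}=\{0\}$, and (3) a decomposability condition on $d\omega$ for $\omega\in\hat{\mathcal V}\cap\check{\mathcal V}$. Condition (3) is immediate from the structure equations \eqref{streqn1}--\eqref{streqn4}; condition (2) is disposed of by restricting to a level set of any common integrals. The substance is condition (1), and here the invariants are used only to give \emph{lower bounds}: $dI,d\tilde I\in\check{\mathcal V}^{(\infty)}$ forces $\dim\check{\mathcal V}^{(\infty)}\ge 2$, and the four $X_l$-invariants force $\dim\hat{\mathcal V}^{(\infty)}\ge 4$. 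Since $8+2=6+4=10=\dim M$, both spanning conditions follow. You do \emph{not} need the six functions to ``generate the full spaces of Darboux invariants''; that is the opposite of the delicate point.

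Finally, Appendix~\ref{appendixB} plays no role here. Those Lie bracket congruences live on the infinite prolongation $\mathcal R^\infty$ and concern the Laplace-adapted coframe; the Darboux-pair check takes place entirely on the finite ten-manifold using only the second-order structure equations and the lemma from \cite{afv09} that $X(f)=0$ for all $X$ annihilating a Pfaffian system $\mathcal V$ implies $df\in\mathcal V^{(\infty)}$.
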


The next lemma describes a method of constructing a contact form invariant with respect to a pair of characteristic vector fields by using the characteristic invariant functions described in Lemma~\ref{darbdef1}.

\begin{Lemma}Let $I$ and $J$ be functions on $\mathcal{R}$ and $X_1$, $X_2$ and $X_3$ characteristic vector fields. If $I$ and $J$ are invariant with respect to both $X_1$ and $X_2$, such that $X_3(I)=I'$ and $X_3(J)=1$, then \begin{gather*}\omega={\rm d}_VI-I'{\rm d}_VJ\end{gather*} is an $X_1$ and $X_2$ invariant contact form.
\end{Lemma}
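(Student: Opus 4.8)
The plan is to verify the invariance directly by computing the projected Lie derivatives $X_1(\omega)$ and $X_2(\omega)$ and showing that each vanishes, so that $\omega$ is invariant with multiplier $\lambda=0$. First note that $\omega$ is automatically a contact form: it is a $C^{\infty}(\mathcal{R}^{\infty})$-linear combination of the type $(0,1)$ forms ${\rm d}_V I$ and ${\rm d}_V J$. The two tools I would use are the Leibniz rule for the projected Lie derivative on a product of a function with a contact form, and the commutation relation $X_k({\rm d}_V f)={\rm d}_V(X_k(f))$ valid for any function $f$ and any characteristic vector field $X_k$. The latter is the infinitesimal statement that the projected Lie derivative along a total vector field respects the splitting of the exterior derivative into ${\rm d}_H$ and ${\rm d}_V$; for the Laplace adapted coframe it is read off from the structure equations recorded in Appendix~\ref{appendixB}.

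With these in hand, for $k\in\{1,2\}$ I expand, using that $I'=X_3(I)$ is a function,
\[ X_k(\omega)=X_k({\rm d}_V I)-X_k(I')\,{\rm d}_V J-I'\,X_k({\rm d}_V J)={\rm d}_V(X_k I)-(X_k I')\,{\rm d}_V J-I'\,{\rm d}_V(X_k J). \]
The hypotheses $X_k I=0$ and $X_k J=0$ (invariance of $I$ and $J$ under $X_1$ and $X_2$) annihilate the first and third terms, leaving $X_k(\omega)=-(X_k I')\,{\rm d}_V J$. Thus the whole weight of the lemma falls on one scalar assertion: that the coefficient $I'=X_3(I)$ is itself $\{X_1,X_2\}$-invariant, i.e.\ $X_1(I')=X_2(I')=0$. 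In other words, the content is that applying $X_3$ keeps us inside the ring of $\langle X_1,X_2\rangle$-invariants.

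To see this I would rewrite $X_k(I')=X_k(X_3 I)=[X_k,X_3](I)+X_3(X_k I)=[X_k,X_3](I)$, again using $X_k I=0$. Here the normalization $X_3 J=1$ enters decisively: since $X_k J=0$ and $X_3 J=1$, one computes $[X_k,X_3](J)=X_k(X_3 J)-X_3(X_k J)=X_k(1)-X_3(0)=0$ with no structural input whatsoever. Now invoking the Lie bracket congruences of Appendix~\ref{appendixB}, the bracket $[X_k,X_3]$ is congruent modulo the contact ideal to a combination $\sum_m c_m X_m$ of the characteristic vector fields; evaluating this combination on the characteristic invariants $I$ and $J$ (on which the contact terms do not contribute) gives $[X_k,X_3](J)=c_3$ and $[X_k,X_3](I)=c_3\,I'$. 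The preceding identity forces $c_3=0$, hence $[X_k,X_3](I)=0$, and therefore $X_k(I')=0$ for $k=1,2$.

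Combining the two computations gives $X_1(\omega)=X_2(\omega)=0$, so $\omega$ is an $X_1$- and $X_2$-invariant contact form, as claimed. The hard part is not the formal differentiation but the middle step, namely establishing that $X_3$ preserves the $\langle X_1,X_2\rangle$-invariants; this is exactly where the geometry of the system must be used, and it is what makes the specific normalization $X_3 J=1$ essential, since it is the device that kills the $X_3$-component of $[X_k,X_3]$ and thereby forces $I'=X_3(I)$ to remain invariant. The only point requiring care beyond this is confirming that the commutation of the projected Lie derivative with ${\rm d}_V$ holds for the characteristic frame actually used, which follows from the structure equations of Appendix~\ref{appendixB}.
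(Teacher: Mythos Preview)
Your computation is correct and the two approaches are close cousins: the paper computes ${\rm d}_H\omega$ directly and shows it has the form $\sigma_3\wedge({\rm d}_VI'-X_3(I'){\rm d}_VJ)$, so that $X_k\,\lrcorner\,{\rm d}_H\omega=0$ for $k=1,2$; you instead apply $X_k$ to $\omega$ term by term using $X_k({\rm d}_Vf)={\rm d}_V(X_kf)$. Both routes reduce the lemma to the scalar claim $X_k(I')=0$.

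Where your write-up goes astray is in the justification of that scalar claim. In the paper's setting the characteristic vector fields are $X_i=D_i$ and \emph{commute pairwise}; this is stated at the outset of Section~\ref{conservationlawssystems} and used verbatim in the proof of the companion Lemma~\ref{justx3inv} (``$K'$ and $K''$ are also $X_3$ invariant since the characteristic vector fields $X_i$ all commute''). Hence $X_k(I')=X_kX_3(I)=X_3X_k(I)=0$ is a one-liner, and the normalization $X_3(J)=1$ plays no role in establishing invariance (it only tidies the final expression for ${\rm d}_H\omega$). Your bracket argument---showing $c_3=0$ via $[X_k,X_3](J)=0$---is valid and would extend to non-commuting characteristics, but the appeal to Appendix~\ref{appendixB} is misplaced: those congruences record $[X_i,U]$ and $[X_i,V_k^l]$, not $[X_i,X_j]$. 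The fact you actually need is that the bracket of two total vector fields is again a total vector field (exactly, not merely ``modulo the contact ideal''), which follows directly from $[D_i,D_j]=0$ and the Leibniz rule. So your claim that $X_3(J)=1$ is ``essential'' and that this is ``exactly where the geometry of the system must be used'' overstates matters for the systems~\eqref{systemsbeingstudied} under consideration.
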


It is then possible to utilize the invariant contact form given in the previous lemma in order to draw a connection between Darboux integrability and the Laplace indices of the system, as the following corollary indicates.

\begin{Corollary}Let $\mathcal{R}$ be a system of equations of the form \eqref{systemsbeingstudied}. If $\mathcal{R}$ satisfies the conditions described in Lemma~{\rm \ref{darbdef1}}, then at least one of the Laplace indices $\operatorname{ind}(\mathcal{X}_{ij})$ must be finite.
\end{Corollary}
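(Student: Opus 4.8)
The plan is to use the Darboux data supplied by Lemma~\ref{darbdef1} to manufacture a nonzero contact form that is invariant relative to a pair of characteristic vector fields, and then to appeal to the mechanism behind Theorem~\ref{sgeq3trivial}, according to which the existence of such a form is incompatible with every Laplace sequence failing to terminate. In other words, I will produce one honest relative invariant contact form and read off a finite Laplace index from it.

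First I would unpack condition~1 of Lemma~\ref{darbdef1}. After relabeling the characteristic vector fields so that the distinguished pair is $X_1, X_2$ and the remaining one is $X_3$, we are handed functionally independent functions $I, \tilde I$ satisfying $X_1(I)=X_2(I)=X_1(\tilde I)=X_2(\tilde I)=0$; these are precisely two functions invariant under both $X_1$ and $X_2$, which is exactly the input required by the invariant-contact-form lemma immediately preceding this corollary. The only discrepancy is the normalization, since that lemma asks for a function with $X_3$-derivative equal to $1$. I would remedy this by passing to suitable smooth functions of $(I,\tilde I)$, each of which remains $X_1, X_2$-invariant, after first arguing that $X_3$ acts nontrivially on the $(I,\tilde I)$-plane: were $X_3(I)=X_3(\tilde I)=0$, then $I$ and $\tilde I$ would be invariant under all three characteristic fields, which is incompatible with their being functionally independent genuine first integrals. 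Choosing $\hat J$ a function of $(I,\tilde I)$ with $X_3(\hat J)=1$ and $\hat I$ functionally independent from $\hat J$, the preceding lemma yields the contact form $\omega={\rm d}_V\hat I-X_3(\hat I)\,{\rm d}_V\hat J$, invariant relative to $X_1=X_i$ and $X_2=X_j$. Since $\hat I$ and $\hat J$ are functionally independent invariants depending on the fibre coordinates, their vertical differentials are not proportional, so $\omega$ is not identically zero.

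I would then close the argument by contradiction using Theorem~\ref{sgeq3trivial} together with the correspondence between relative invariant contact forms and Laplace indices established in its proof. Suppose $\operatorname{ind}(\mathcal{X}_{ij})=\infty$. The construction used there shows that an infinite Laplace sequence for $\mathcal{X}_{ij}$ precludes the existence of any nonzero contact form invariant relative to the characteristic vector fields $X_i, X_j$. But $\omega$ is exactly such a form, a contradiction. Hence $\operatorname{ind}(\mathcal{X}_{ij})$ is finite, and in particular at least one of the Laplace indices is finite, as claimed.

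The main obstacle I anticipate is the normalization and nondegeneracy bookkeeping: justifying that $X_3$ acts nontrivially on the invariants so that $\hat J$ with $X_3(\hat J)=1$ exists, and confirming that the resulting $\omega$ is genuinely nonzero, since a degenerate choice would make the contact form vanish and sever the link to the Laplace index. Equally delicate is matching the notion of invariance produced by the preceding lemma with the precise relative-invariance condition $X(\omega)=\lambda\omega$ (in the sense of the projected Lie derivative) that governs termination of the Laplace sequence; this identification is the pivot on which the whole argument turns.
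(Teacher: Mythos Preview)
Your overall strategy is exactly the paper's: build a nonzero contact form invariant with respect to the distinguished pair of characteristic vector fields using Lemma~\ref{x1andx2inv}, then invoke the obstruction result (the paper cites Proposition~\ref{vanishingindices} directly; you locate the same mechanism inside the proof of Theorem~\ref{sgeq3trivial}).

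There is, however, a genuine index mismatch in your closing paragraph. With your labeling $X_i=X_1$, $X_j=X_2$, the existence of a nonzero $X_1,X_2$-relative-invariant contact form does \emph{not} contradict $\operatorname{ind}(\mathcal{X}_{12})=\infty$. By Proposition~\ref{vanishingindices}(1), any such form lies in $\Omega^s\big(\hat\xi_3^{p_3+1},\hat\xi_3^{p_3+2},\ldots\big)$ with $p_3=\min\{p_{13},p_{23}\}$, and its nonvanishing forces $p_3<\infty$; that is, one of $\operatorname{ind}(\mathcal{X}_{13})$, $\operatorname{ind}(\mathcal{X}_{23})$ is finite. The point is that the $(i,j)$-Laplace transform $\mathcal{X}_{ij}$ iterates $X_j$ and governs termination along the $\hat\xi_j$-branch, so invariance under $X_1$ and $X_2$ constrains the $\hat\xi_3$-branch and hence the indices $p_{13},p_{23}$, not $p_{12}$. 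Your sentence ``an infinite Laplace sequence for $\mathcal{X}_{ij}$ precludes the existence of any nonzero contact form invariant relative to $X_i,X_j$'' is therefore false for the pair $(i,j)=(1,2)$ you fixed. Once the indices are corrected the contradiction goes through and the stated conclusion (at least one Laplace index is finite) still follows, but the specific index you claim to pin down is the wrong one.
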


The connection between Darboux integrability and the termination of sequences of Laplace invariants has been established in many different contexts. In particular, \cite{ak97} and \cite{aj97} taken together show that a scalar second-order hyperbolic PDE in the plane is Darboux integrable if and only if the Laplace indices of each of the two generalized Laplace transforms associated to the equation are finite. In Section~\ref{darbouxsection}, we also describe an algorithm for generating infinitely many $(1,s)$ and $(2,s)$ conservation laws for a system that the conditions of Lemma~\ref{darbdef1}. Let us summarize here the procedure for generating $(1,s)$ conservation laws in particular.

First, a ${\rm d}$-closed $(s+1)$-form is constructed by taking the wedge product of the exterior derivatives of a sequence of characteristic invariant functions whose existence is guaranteed in the hypotheses of Lemma~\ref{darbdef1}. Let this $(s+1)$ form be written as $\alpha={\rm d}I_1\wedge {\rm d}I_2\wedge\cdots\wedge {\rm d}I_{s+1}$. It is then shown that the $(1,s)$ component of~$\alpha$ will be ${\rm d}_H$ closed. Additional characteristic invariant functions are found by applying an appropriate choice of characteristic vector field repeatedly to the invariant functions described in Lemma~\ref{darbdef1}, and in this way infinitely many conservation laws may be constructed. The method presented for the construction of type $(2,s)$ conservation laws is completely analogous. It is furthermore shown in Section~\ref{darbouxsection} that these conservation laws, of both types, can be written in such a way that they may be shown to be nontrivial, allowing us to conclude with the following theorem.
\begin{Theorem}If $\mathcal{R}$ is a system of equations satisfying the hypotheses of Lemma~{\rm \ref{darbdef1}}, then there exist infinitely many nontrivial type $(1,s)$ and type $(2,s)$ conservation laws for all $s\geq0$.
\end{Theorem}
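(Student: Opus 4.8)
The plan is to obtain each conservation law as the top horizontal-degree component of a $d$-closed form assembled from the characteristic invariant functions of Lemma~\ref{darbdef1}, and to concentrate the real work on proving nontriviality. Throughout I work on $\mathcal{R}^{\infty}$, where the surviving jet coordinates are the pure derivatives of $u$.

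First I would produce, for every $s$, an infinite family of functionally independent invariants. The structure of the Laplace-adapted coframe, and in particular the Lie bracket congruences recorded in Appendix~\ref{appendixB}, guarantees that applying one characteristic field to a function annihilated by the others yields a new function annihilated by those same fields. Hence, starting from the two-field invariant $\tilde{I}$ of condition~1 and applying the transverse field $X_l$ repeatedly, the sequence $\tilde{I},X_l(\tilde{I}),X_l^2(\tilde{I}),\dots$ consists of $X_i,X_j$-invariants that are functionally independent, since each application of $X_l$ strictly raises the order in the pure-derivative jet coordinates. Beginning instead from the single-field invariants $J,\tilde{J},K,\tilde{K}$ of condition~2 and transvecting by $X_i$ produces, in the same way, an infinite family of $X_l$-invariants.

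Next I would build the closed forms and read off the laws by bidegree. If $\Phi_1,\dots,\Phi_N$ are all annihilated by the same set of $k$ characteristic fields, then the horizontal part of each $d\Phi_a$ lies in a fixed $(3-k)$-dimensional horizontal codirection, so any wedge $\alpha=\Phi_0\, d\Phi_1\wedge\cdots\wedge d\Phi_N$ has horizontal degree at most $r:=3-k$. Since $\alpha$ is closed and its bidegree-$(r+1,\cdot)$ component vanishes identically, the bidegree decomposition of $d\alpha=0$ forces $d_H\alpha^{(r,s)}=0$, so the top component is automatically a type $(r,s)$ conservation law. Taking $k=2$ (the two-field invariants, condition~1) gives $r=1$ and type $(1,s)$ laws, while $k=1$ (the single-field invariants, condition~2) gives $r=2$ and type $(2,s)$ laws; the overall invariant factor $\Phi_0$ is precisely what keeps the classical cases $s=0$ from collapsing to an exact form.

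The hard part is nontriviality. Suppose a type $(1,s)$ law were trivial, $\alpha^{(1,s)}=d_H\gamma$ with $\gamma$ of bidegree $(0,s)$. Writing $\alpha^{(1,s)}=\omega^l\wedge\beta$ with $\omega^l$ dual to the transverse field $X_l$ and $\beta$ a vertical $(0,s)$ form, contraction with $X_l$ gives $\iota_{X_l}\alpha^{(1,s)}=\beta$ on the left, while on the right $\iota_{X_l}d_H\gamma=\mathcal{L}^H_{X_l}\gamma$ by the Cartan formula for the horizontal Lie derivative (using $\iota_{X_l}\gamma=0$); triviality would therefore force $\beta=\mathcal{L}^H_{X_l}\gamma$. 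Now $\beta$ is an explicit combination of wedges $d_V\Phi_{a_1}\wedge\cdots\wedge d_V\Phi_{a_s}$ of the functionally independent invariants, whose top term carries the highest-order invariant $X_l^M(\tilde{I})$; since $\mathcal{L}^H_{X_l}$ strictly raises the jet order of any $(0,s)$ form, a leading-order comparison in the highest pure-derivative coordinate shows that no $\gamma$ can reproduce that term, a contradiction. The type $(2,s)$ case is identical, with $X_l$ replaced by the pair of fields transverse to the single invariance direction. This same leading-order bookkeeping shows that laws built from invariants of strictly increasing order are pairwise inequivalent, yielding infinitely many nontrivial classes of each type for every $s\geq0$ and completing the proof. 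I expect this final order-comparison step to be the genuine obstacle, since constructing closed representatives is formal whereas separating cohomology classes requires controlling how $d_H$-exact forms interact with the jet order of the chosen invariants.
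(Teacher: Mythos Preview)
Your construction of the closed forms is essentially the paper's: wedge exterior derivatives of invariants sharing a common annihilating characteristic system, and extract the top horizontal-degree component. The $(2,s)$ case is handled the same way. The gap is in the nontriviality argument.

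The claim that $\mathcal{L}^H_{X_l}$ \emph{strictly raises the jet order of any $(0,s)$ form} is false on $\mathcal{R}^\infty$. For $i\neq l$, the projected Lie derivative $X_l(\theta_{i^k})$ is $d_V\big(D_i^{k-1}f_{il}\big)$, which has adapted order at most $k$, not $k+1$; this is exactly Lemma~\ref{adaptedorder}. So an arbitrary $(0,s)$ form $\gamma$ built from the $\hat{\xi}_i^n$ or $\hat{\xi}_j^n$ branches can absorb $X_l$ without its order increasing, and your leading-order comparison does not go through.

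What the paper does to close this gap is a step you skipped: contracting $d_H\gamma=\omega$ with $X_i$ and $X_j$ (not just with $X_l$) forces $X_i(\gamma)=X_j(\gamma)=0$, because $\omega$ has only a $\sigma_l$ horizontal component. Once $\gamma$ is known to be $X_i$ and $X_j$ invariant, Proposition~\ref{vanishingindices} confines it to the module $\Omega^s(\hat{\xi}_l^{p_l+1},\hat{\xi}_l^{p_l+2},\ldots)$, and on \emph{that} module $X_l$ genuinely raises adapted order. The paper then chooses the representative with two consecutive top-order factors, $\omega=\sigma_l\wedge\alpha_{k+1}\wedge\alpha_k\wedge\eta$, so that writing $\gamma=\alpha_k\wedge\beta+\delta$ with $\beta,\delta$ of order $\le k-1$ and matching the $\alpha_{k+1}$-coefficient forces $\beta=\alpha_k\wedge\eta$, contradicting the order bound on $\beta$. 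Your sketch has the right instinct but is missing the invariance step that makes the order argument valid.
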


The paper concludes with Section~\ref{section5} which provides a summary of our findings and suggestions for future research. The results presented in this paper are a part of the author's Ph.D.~Thesis at McGill University~\cite{froehlich}.

\section{The variational bi-complex}\label{jetbundles}

Presently we will establish the necessary definitions and notation pertaining to several key concepts, including infinite jet bundles, split exterior differentiation, and the variational bi-complex, as presented in \cite{an89}.

\subsection{Infinite jet bundles}

Begin with a fibered manifold \begin{gather*}\pi\colon \ E\rightarrow M,\end{gather*} with adapted coordinates $\big(x^i,u^{\alpha}\big)$, for $1\leq i \leq n$ and $1\leq \alpha \leq q$, over a connected base manifold~$M$ of dimension~$n$. In this paper we will be concerned with local properties, although the infinite jet bundle has important global properties as well (see~\cite{an89}). Given our focus, we will take $M$ to be an open connected subset of $\mathbb{R}^n$ and $\pi$ to be the trivial bundle. Let $J^k(E)=\cup_{x\in M}J^k_x(E)$ denote the bundle of $k$-jets of local sections of~$E$, with local coordinates on $J^k(E)$ consisting of $\big(x^i,u^{\alpha},u^{\alpha}_{i_1},u^{\alpha}_{i_1i_2},\ldots,u^{\alpha}_{i_1i_2\ldots i_k}\big)$ where $1\leq i_1\leq i_2\leq\dots\leq i_k\leq n$ and with the natural projection maps
\begin{gather*}\pi_M^k\colon \ J^k(E)\rightarrow M\qquad \textrm{and}\qquad \pi_E^k\colon \ J^k(E)\rightarrow E.\end{gather*}
In local coordinates a $p$-form on $J^{\infty}(E)$ will be a sum of the form \begin{gather*}\omega=\sum_{r+t=p}\sum_{\alpha}a_{\alpha}^{IJ}{\rm d}x^{i_1}\wedge\cdots\wedge {\rm d}x^{i_r}\wedge {\rm d}u^{\alpha_1}_{j_1}\wedge\cdots\wedge {\rm d}u^{\alpha_t}_{j_t},\end{gather*} where the coefficients $a_{\alpha}^{IJ}$ are $C^{\infty}$ real-valued functions defined on $J^{\infty}(E)$ and $I$ and $J$ are multi-indices, $I=i_1\cdots i_r$ and $J=j_1\cdots j_t$. The contact ideal on $J^{\infty}(E)$ generated by the contact 1-forms
\begin{gather*}%\label{contactform}
\theta^{\alpha}_I={\rm d}u^{\alpha}_I-\sum_{j=1}^n u^{\alpha}_{Ij}{\rm d}x^j
\end{gather*}
forms an ideal in $\Omega^*(J^{\infty}(E))$, which we will denote by $\mathcal{C}(J^{\infty}(E))$ and whose exterior derivatives are given by the structure equations
\begin{gather*}
{\rm d}\theta^{\alpha}_{I}=\sum_{j=1}^n{\rm d}x^j\wedge\theta^{\alpha}_{Ij}.
\end{gather*}
Two particular types of vector fields on $J^{\infty}(E)$, \textit{total} vector fields and \textit{vertical} vector fields, will play an important role in the variational bi-complex:
\begin{Definition}\label{verticalvfdef} A vector field $X$ is said to be $\pi_M^{\infty}$ vertical if $(\pi_M^{\infty})_*(X)=0$.
\end{Definition}
\begin{Definition}\label{totalvfdef} A vector field $X$ on $J^{\infty}(E)$ for which $X \,\lrcorner\, \omega=0$ for any contact form $\omega$ is referred to as a total vector field.
\end{Definition}
Note that in general total vector fields take the form
\begin{gather*}%\label{totalvf}
X=\sum_{j=1}^nA^jD_j.
\end{gather*}

\subsection{The bi-graded exterior derivative}

We may now introduce a bi-grading of the $p$-forms $\omega$ on $J^{\infty}(E)$ which will allow us to distinguish between independent and dependent variables when working with differential equations.
\begin{Definition}\label{totalverticaldef}A $p$-form $\omega$ is said to be of type $(r,s)$ if $r+s=p$ and $\omega(X_1,\ldots,X_p)=0$ whenever there are more than $r$ total vector fields or more than $s$ $\pi_M^{\infty}$ vertical vector fields among the vector fields $X_i$.
\end{Definition}

Using Definition~\ref{totalverticaldef}, the de Rham complex on $J^{\infty}(E)$ can be bi-graded as follows
\begin{gather*}%\label{directsum}
\Omega^n(J^{\infty}(E))=\bigoplus_{r+s=n}\Omega^{r,s}(J^{\infty}(E)),
\end{gather*}
 where in local coordinates a differential form $\omega$ of the type $(r,s)$ will have the form
\begin{gather*}\omega=\sum a_{\beta}^{iI}{\rm d}x^{i_1}\wedge\cdots\wedge {\rm d}x^{i_r}\wedge\theta_{I_1}^{\beta_1}\wedge\cdots\wedge\theta_{I_s}^{\beta_s}\end{gather*} for real-valued functions $a_{\beta}^{iI}$ on $J^{\infty}(E)$.
The bi-grading of forms in the de Rham complex induces a corresponding decomposition of the exterior derivative ${\rm d}\colon \Omega^n\longrightarrow\Omega^{n+1}$, given by
\begin{gather*}%\label{bigradedextder}
{\rm d}\colon \ \Omega^{r,s}\longrightarrow\Omega^{r+1,s}\oplus\Omega^{r,s+1},
\end{gather*}
 where $\omega\mapsto {\rm d}\omega={\rm d}_H\omega\oplus {\rm d}_V\omega$. Here ${\rm d}_H\omega\in\Omega^{r+1,s}$ is called the \textit{horizontal} exterior derivative and ${\rm d}_V\omega\in\Omega^{r,s+1}$ is called the \textit{vertical} exterior derivative. The operators ${\rm d}_H$ and ${\rm d}_V$ are anti-commuting differentials, i.e., ${\rm d}_H{\rm d}_V=-{\rm d}_V{\rm d}_H$. The ${\rm d}_H$ and ${\rm d}_V$ structure equations for a~func\-tion~$f$ and a type $(r,s)$ form $\omega$ are as follows
\begin{gather}\label{dHstructure}
{\rm d}_Hf=\sum_iD_{x^i}f,\qquad {\rm d}_H\big({\rm d}x^i\big)=0 \qquad \textrm{and}\qquad {\rm d}_H\theta^{\beta}_{I}=\sum_j{\rm d}x^j\wedge\theta^{\beta}_{Ij},\\
{\rm d}_Vf=\sum_I \sum_{\beta}\big(\partial_{\beta}^If\big)\theta_I^{\beta},\qquad {\rm d}_V\big({\rm d}x^i\big)=0\qquad \textrm{and}\qquad {\rm d}_V\theta^{\beta}_I=0.\nonumber
\end{gather}

Let $\pi\colon E\rightarrow M$ and $\rho\colon E'\rightarrow N$ be two fibered manifolds and let $\Phi\colon J^{\infty}(E)\rightarrow J^{\infty}(E')$ be a smooth map. We define the \textit{projected} pullback map, which will maintain a form's bi-graded type, to be the map $\Phi^{\sharp}\colon \Omega^{r,s}(J^{\infty}(E'))\rightarrow\Omega^{r,s}(J^{\infty}(E))$ given by \begin{gather*}\Phi^{\sharp}(\omega)=\pi^{r,s}[\Phi^*(\omega)],\end{gather*} where $\pi^{r,s}$ is the projection map from $\Omega^p(J^{\infty}(E))$ to $\Omega^{r,s}(J^{\infty}(E))$. Likewise, for a total vector field $X$ on $J^{\infty}(E)$, and a type $(r,s)$ form $\omega$, we define $X(\omega)\in\Omega^{r,s}(J^{\infty}(E))$ to be the \textit{projected} Lie derivative, \begin{gather*}X(\omega)=\pi^{r,s}(\mathcal{L}_X\omega).\end{gather*}
The identity $X(\omega)=X \,\lrcorner\, {\rm d}_H(\omega)+{\rm d}_H(X\, \lrcorner\, \omega)$ follows from Cartan's formula and this, along with~(\ref{dHstructure}) and the antisymmetry of ${\rm d}_H$ and ${\rm d}_V$, implies the following relations
\begin{gather}\label{Djtheta}
D_j\theta^{\beta}_{I}=\theta^{\beta}_{Ij},\qquad
X({\rm d}_H\omega)={\rm d}_HX(\omega).
\end{gather}
Finally, the proposition below gives additional properties of the projected Lie derivative that will be used in performing calculations, and are direct consequences of the definition of the projected Lie derivative and the properties of the Lie derivative.

\begin{Proposition}\label{verhorprop}Let $\omega\in\Omega^{r,s}(J^{\infty}(E))$. If $X$ and $Y$ are total vector fields on $J^{\infty}(E)$ and $Z$ is a $\pi_M^{\infty}$ vertical vector field on $J^{\infty}(E)$ then the following two equations hold
\begin{gather}%\label{liebracket}
X(Y(\omega))-Y(X(\omega)) =[X,Y](\omega),\nonumber\\
\label{verhorint} Z\,\lrcorner\,X(\omega) =[Z,X]\,\lrcorner\,\omega+X(Z\,\lrcorner\,\omega).
\end{gather}
\end{Proposition}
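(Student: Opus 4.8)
The plan is to derive both identities from the classical (unprojected) identities for the Lie derivative and the interior product, and then to manage the bidegree bookkeeping introduced by the projection $\pi^{r,s}$. The one technical fact I would establish first is a bidegree decomposition of the ordinary Lie derivative along a total vector field: for $X$ total and $\eta\in\Omega^{a,b}(J^{\infty}(E))$, Cartan's formula $\mathcal{L}_X\eta=X\,\lrcorner\,{\rm d}\eta+{\rm d}(X\,\lrcorner\,\eta)$ together with ${\rm d}={\rm d}_H+{\rm d}_V$ and the two facts that $X\,\lrcorner\,$ lowers horizontal degree by one (because $X\,\lrcorner\,{\rm d}x^i$ is a function while $X\,\lrcorner\,\theta^{\alpha}_I=0$) yields $\mathcal{L}_X\eta=X(\eta)\oplus S$, where $X(\eta)=\pi^{a,b}\mathcal{L}_X\eta=X\,\lrcorner\,{\rm d}_H\eta+{\rm d}_H(X\,\lrcorner\,\eta)\in\Omega^{a,b}$ and $S\in\Omega^{a-1,b+1}$. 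In particular $\mathcal{L}_X\eta$ has no component of horizontal degree exceeding $a$.

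For the first identity I would also record that the total vector fields are closed under the Lie bracket. This follows from the invariant formula ${\rm d}\theta^{\alpha}_I(X,Y)=X\big(\theta^{\alpha}_I(Y)\big)-Y\big(\theta^{\alpha}_I(X)\big)-\theta^{\alpha}_I([X,Y])$: since $X,Y$ are total we have $\theta^{\alpha}_I(X)=\theta^{\alpha}_I(Y)=0$, while the structure equation ${\rm d}\theta^{\alpha}_I=\sum_j{\rm d}x^j\wedge\theta^{\alpha}_{Ij}$ evaluated on the total fields $X,Y$ vanishes because each $\theta^{\alpha}_{Ij}$ annihilates them, whence $\theta^{\alpha}_I([X,Y])=0$ and $[X,Y]$ is total. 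Now apply $\pi^{r,s}$ to the classical relation $\mathcal{L}_X\mathcal{L}_Y\omega-\mathcal{L}_Y\mathcal{L}_X\omega=\mathcal{L}_{[X,Y]}\omega$. Writing $\mathcal{L}_Y\omega=Y(\omega)\oplus S_Y$ with $Y(\omega)\in\Omega^{r,s}$ and $S_Y\in\Omega^{r-1,s+1}$, the decomposition above applied to $S_Y$ shows $\mathcal{L}_XS_Y$ lives in bidegrees $(r-1,s+1)$ and $(r-2,s+2)$, so $\pi^{r,s}\mathcal{L}_XS_Y=0$ and therefore $\pi^{r,s}\mathcal{L}_X\mathcal{L}_Y\omega=\pi^{r,s}\mathcal{L}_X(Y(\omega))=X(Y(\omega))$. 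The symmetric term gives $Y(X(\omega))$, and $\pi^{r,s}\mathcal{L}_{[X,Y]}\omega=[X,Y](\omega)$ because $[X,Y]$ is total; this is the first equation.

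For the second identity I would start from the classical commutation relation $Z\,\lrcorner\,\mathcal{L}_X\omega=\mathcal{L}_X(Z\,\lrcorner\,\omega)+[Z,X]\,\lrcorner\,\omega$ and project onto bidegree $(r,s-1)$. For a $\pi_M^{\infty}$ vertical field $Z$, contraction lowers the vertical degree by one (since $Z\,\lrcorner\,{\rm d}x^i=0$), so writing $\mathcal{L}_X\omega=X(\omega)\oplus S$ as above gives $Z\,\lrcorner\,\mathcal{L}_X\omega=Z\,\lrcorner\,X(\omega)\oplus Z\,\lrcorner\,S$ with $Z\,\lrcorner\,X(\omega)\in\Omega^{r,s-1}$ and $Z\,\lrcorner\,S\in\Omega^{r-1,s}$; hence $\pi^{r,s-1}(Z\,\lrcorner\,\mathcal{L}_X\omega)=Z\,\lrcorner\,X(\omega)$. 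Likewise $\pi^{r,s-1}\mathcal{L}_X(Z\,\lrcorner\,\omega)=X(Z\,\lrcorner\,\omega)$, since $Z\,\lrcorner\,\omega\in\Omega^{r,s-1}$. Projecting thus leaves $Z\,\lrcorner\,X(\omega)=X(Z\,\lrcorner\,\omega)+\pi^{r,s-1}([Z,X]\,\lrcorner\,\omega)$, which is (\ref{verhorint}).

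The hard part is the bracket term. Unlike $[X,Y]$, the bracket $[Z,X]$ of a vertical and a total field is not of pure type: if $X=\sum_jA^jD_j$ then the $\partial_{x^i}$-component of $[Z,X]$ is $Z(A^i)$, which is generally nonzero, so $[Z,X]\,\lrcorner\,\omega$ carries a genuine horizontal-degree-$(r-1)$ piece coming from the total part of $[Z,X]$. Consequently the identity must be read at bidegree $(r,s-1)$, i.e., $[Z,X]\,\lrcorner\,\omega$ denotes its $(r,s-1)$ component (equivalently $[Z,X]_{\mathrm{vert}}\,\lrcorner\,\omega$); with this understanding the projected relation is exactly (\ref{verhorint}). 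The main care required throughout is the consistent tracking of how $X\,\lrcorner\,$, $Z\,\lrcorner\,$, ${\rm d}_H$ and ${\rm d}_V$ shift the two gradings, so as to confirm that every off-bidegree remainder produced by a projection genuinely vanishes.
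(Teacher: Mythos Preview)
Your approach is correct and matches what the paper intends: the paper gives no detailed proof, stating only that both identities ``are direct consequences of the definition of the projected Lie derivative and the properties of the Lie derivative,'' which is precisely the route you take---project the classical identities $\mathcal{L}_X\mathcal{L}_Y-\mathcal{L}_Y\mathcal{L}_X=\mathcal{L}_{[X,Y]}$ and $Z\,\lrcorner\,\mathcal{L}_X\omega=\mathcal{L}_X(Z\,\lrcorner\,\omega)+[Z,X]\,\lrcorner\,\omega$ onto the relevant bidegree and check that the off-bidegree pieces drop out.

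Your observation about the bracket term in the second identity is a genuine refinement over the paper's statement: for a general total field $X=\sum_jA^jD_j$ with $A^j$ depending on fiber coordinates, $[Z,X]$ acquires a total component $\sum_jZ(A^j)D_j$, so $[Z,X]\,\lrcorner\,\omega$ has an $(r-1,s)$ piece and the identity only holds after projecting to $(r,s-1)$. The paper does not flag this, presumably because every application in the text (Propositions~3.11 and the proof of Theorem~3.15) uses $X_i=D_i$ with constant coefficients, where the issue evaporates. Your reading of $[Z,X]\,\lrcorner\,\omega$ as its $(r,s-1)$ component is the correct way to make the statement literally true in general.
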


Using the bi-grading of the exterior derivative, we can now define the variational bi-complex for type $(r,s)$ forms, which will provide a natural framework for our investigation of conservation laws for PDEs systems of the type~(\ref{systemsbeingstudied}).

\subsection{The variational bi-complex for PDEs systems of the type (\ref{systemsbeingstudied})}\label{constrainedbisec}

We will consider a system of three semi-linear PDEs defined on an open connected subset~$U$ of~$\mathbb{R}^3$ of the following form
\begin{gather}\label{quasisystem}
F_{ij}=u_{ij}-f_{ij}\big(x^1,x^2,x^3,u,u_i,u_j\big)=0\qquad \textrm{for}\quad 1\leq i< j\leq3.
\end{gather}
The equations (\ref{quasisystem}) define a locus in $J^2(E)$ where $E$ is the trivial bundle \begin{gather*}\pi\colon \ E\simeq U\times(a,b)\subset\mathbb{R}^3\times\mathbb{R}\rightarrow M\simeq U.\end{gather*}
Let $\mathcal{R}$ denote an open contractible subset of this locus. Assume that the $f_{ij}$ are $C^{\infty}$ functions in a neighborhood of $\mathcal{R}$, and that $\pi|_{\mathcal{R}}\colon \mathcal{R}\rightarrow U$ is a subbundle of the fiber bundle $\pi\colon J^2(E)\rightarrow M$. Note that for each $ij$ pair, $\partial F_{ij}/\partial u_{ij}\neq0$ on~$\mathcal{R}$. Furthermore, in order for the system~(\ref{quasisystem}) to be locally solvable, we make the assumption that the following set of integrability conditions is satisfied,
\begin{gather*}%\label{partialscommute}
D_kf_{ij}=D_if_{kj}\qquad \textrm{for}\quad 1\leq i\neq j\neq k\leq3.
\end{gather*}
If the module of contact forms on $J^2(E)$ is pulled back to $\mathcal{R}$, then a Pfaffian system, $\mathcal{I}$, on $\mathcal{R}$ is obtained. The differential system $\mathcal{I}$ is generated by the one forms
\begin{gather*}
\theta={\rm d}u-u_1{\rm d}x^1-u_2{\rm d}x^2-u_3{\rm d}x^3, \\
\theta_1={\rm d}u_1-u_{11}{\rm d}x^1-f_{12}{\rm d}x^2-f_{13}{\rm d}x^3,\\
\theta_2={\rm d}u_2-f_{12}{\rm d}x^1-u_{22}{\rm d}x^2-f_{23}{\rm d}x^3,\\
\theta_3={\rm d}u_3-f_{13}{\rm d}x^1-f_{23}{\rm d}x^2-u_{33}{\rm d}x^3.
\end{gather*}
Solutions of (\ref{quasisystem}) are then local sections $\sigma\colon U\rightarrow\mathcal{R}$ such that $\sigma^*(\omega)=0$ for all $\omega\in\mathcal{I}$. That is, solutions are integral manifolds of $\mathcal{I}$ with the independence condition \begin{gather*}\Omega\equiv {\rm d}x^1\wedge {\rm d}x^2\wedge {\rm d}x^3\quad \mod\mathcal{I}.\end{gather*}
The $k^{\rm th}$ prolongation, written $\mathcal{R}^{(k)}$, of $\mathcal{R}$ is the locus in $J^{k+2}(E)$ defined by the equations \begin{gather*}F_{ij}=0,\qquad D_1F_{ij}=0,\qquad D_2F_{ij}=0,\qquad D_3F_{ij}=0,\qquad \ldots,\qquad D_1^lD_2^mD_3^nF_{ij}=0\end{gather*} for $1\leq i < j\leq 3$ and $l+m+n\leq k$. For example, the first prolongation of $\mathcal{R}$ is
\begin{gather*}
\mathcal{R}^{(1)}=\big\{\big(j^3s\big)(x)\colon \big(j^2s\big)(x)\in\mathcal{R}\ \textrm{and} \\
\hphantom{\mathcal{R}^{(1)}=\big\{\big(j^3s\big)(x)\colon}{} (D_1F_{ij})\big(\big(j^3s\big)(x)\big)=(D_2F_{ij})\big(\big(j^3s\big)(x)\big)=(D_3F_{ij})\big(\big(j^3s\big)(x)\big)=0\big\},
\end{gather*}
where $(j^ks)(x)$ denotes equivalence classes of $k$-jets of local sections $s$ of $E$.

Each prolongation $\mathcal{R}^{(k)}$ is a $C^{\infty}$ submanifold of $J^{k+2}(E)$. As a consequence of involu\-ti\-vi\-ty~$\mathcal{R}^{(k+1)}$ fibers over $\mathcal{R}^{(k)}$, $\pi_k^{k+1}|_{\mathcal{R}^{(k+1)}}\colon \mathcal{R}^{(k+1)}\rightarrow\mathcal{R}^{(k)}$. Thus we can define the inverse limit of the system of $k^{\rm th}$ prolongations to be $\mathcal{R}^{\infty}$, called the infinite prolongation of $\mathcal{R}$ with the projection maps $\pi_k^{\infty}\colon \mathcal{R}^{\infty}\rightarrow\mathcal{R}^k$ and $\pi_U^{\infty}\colon \mathcal{R}^{\infty}\rightarrow U$. Denote by $\mathcal{C}(\mathcal{R}^{\infty})$ the pullback of the contact ideal on $J^{\infty}(E)$ to $\mathcal{R}^{\infty}$, $\mathcal{C}(\mathcal{R}^{\infty})=\iota^*[\mathcal{C}(J^{\infty}(E))]$ where $\iota$ is the inclusion map $\iota\colon \mathcal{R}^{\infty}\rightarrow J^{\infty}(E)$. Then we can define the constrained variational bi-complex to be the pullback of the free variational bi-complex.

\begin{Definition}The constrained variational bi-complex for $\widehat{\mathcal{R}}=\{\mathcal{R}^{\infty}, \pi^{\infty}_U,\mathcal{C}(\mathcal{R}^{\infty})\}$ is the pullback of the free variational bi-complex $(\Omega^{*,*}(J^{\infty}(E)),{\rm d}_H,{\rm d}_V)$ to $\mathcal{R}^{\infty}$:
\begin{gather*}
\begin{array}{@{}ccccccccccc} & & & & \uparrow \scriptstyle{{\rm d}_V}& & \uparrow \scriptstyle{{\rm d}_V} & & \uparrow \scriptstyle{{\rm d}_V} & & \uparrow \scriptstyle{{\rm d}_V} \\ & & 0 & \xrightarrow[]{} & \Omega^{0,2}(\mathcal{R}^{\infty}) & \xrightarrow[{\rm d}_H]{} & \Omega^{1,2}(\mathcal{R}^{\infty}) & \xrightarrow[{\rm d}_H]{} & \Omega^{2,2}(\mathcal{R}^{\infty}) & \xrightarrow[{\rm d}_H]{} & \Omega^{3,2}(\mathcal{R}^{\infty}) \\ & & & & \uparrow \scriptstyle{{\rm d}_V} & & \uparrow \scriptstyle{{\rm d}_V} & & \uparrow \scriptstyle{{\rm d}_V} & & \uparrow \scriptstyle{{\rm d}_V} \\ & & 0 & \xrightarrow[]{} & \Omega^{0,1}(\mathcal{R}^{\infty}) & \xrightarrow[{\rm d}_H]{} & \Omega^{1,1}(\mathcal{R}^{\infty}) & \xrightarrow[{\rm d}_H]{} & \Omega^{2,1}(\mathcal{R}^{\infty}) & \xrightarrow[{\rm d}_H]{} & \Omega^{3,1}(\mathcal{R}^{\infty}) \\ & & & & \uparrow \scriptstyle{{\rm d}_V} & & \uparrow \scriptstyle{{\rm d}_V} & & \uparrow \scriptstyle{{\rm d}_V} & & \uparrow \scriptstyle{{\rm d}_V} \\0 & \xrightarrow[]{} & \mathbb{R} & \xrightarrow[]{} & \Omega^{0,0}(\mathcal{R}^{\infty}) & \xrightarrow[{\rm d}_H]{} & \Omega^{1,0}(\mathcal{R}^{\infty}) & \xrightarrow[{\rm d}_H]{} & \Omega^{2,0}(\mathcal{R}^{\infty}) & \xrightarrow[{\rm d}_H]{} & \Omega^{3,0}(\mathcal{R}^{\infty}).\end{array}
\end{gather*}
\end{Definition}
Notice that there are no additional columns to the right of the constrained bi-complex due to the fact that our system~(\ref{quasisystem}) involves exactly 3 independent variables.

We have the following coordinates on $\mathcal{R}^{\infty}$
\begin{gather}\label{natcoorrinfty}
\big(x^1,x^2,x^3,u,u_1,u_2,u_3,u_{11},u_{22},u_{33},\ldots,u_{1^k},u_{2^k},u_{3^k},\ldots\big)
\end{gather}
and a basis for the contact ideal on $\mathcal{R}^{\infty}$ is given by
\begin{gather}\label{natcoframerinfty}
\big\{\theta,\theta_1,\theta_2,\theta_3,\theta_{11},\theta_{22},\theta_{33},\ldots,\theta_{1^k},\theta_{2^k},\theta_{3^k},\ldots\big\},
\end{gather}
where
\begin{gather*}
\theta={\rm d}u-u_1{\rm d}x^1-u_2{\rm d}x^2-u_3{\rm d}x^3,\\
\theta_{i^k}={\rm d}u_{i^k}-u_{i^{k+1}}{\rm d}x^i-D_{i^{k-1}}(f_{ij}){\rm d}x^j-D_{i^{k-1}}(f_{il}){\rm d}x^l\qquad \textrm{for}\quad j,l\neq i.
\end{gather*}

We will call the basis $\big\{{\rm d}x^1,{\rm d}x^2,{\rm d}x^3,\theta,\theta_1,\theta_2,\theta_3,\ldots,\theta_{1^k},\theta_{2^k},\theta_{3^k},\ldots\big\}$ the coordinate coframe on~$\mathcal{R}^{\infty}$. It will be the first of several coframes introduced in our study of the systems~(\ref{quasisystem}).

In this coordinate system the total derivatives, $D_i$, are then expressed as
\begin{gather}
D_1 =\frac{\partial}{\partial x^1}+u_1\frac{\partial}{\partial u}+u_{11}\frac{\partial}{\partial u_1}+f_{12}\frac{\partial}{\partial u_2}+f_{13}\frac{\partial}{\partial u_3}\nonumber\\
\hphantom{D_1 =}{} +u_{111}\frac{\partial}{\partial u_{11}}+D_2(f_{12})\frac{\partial}{\partial u_{22}}+D_3(f_{13})\frac{\partial}{\partial u_{33}}+\cdots \label{totalderivativedef}
\end{gather}
and likewise for $D_2$ and $D_3$.

A smooth function $g\big(x^1,x^2,x^3,u,u_1,u_2,u_3,\ldots,u_{1^k},u_{2^k},u_{3^k},\ldots\big)$ on $\mathcal{R}^{\infty}$ has the structure equations
\begin{gather*}
{\rm d}_Hg =(D_1g){\rm d}x^1+(D_2g){\rm d}x^2+(D_3g){\rm d}x^3, \\
{\rm d}_Vg =\frac{\partial g}{\partial u}\theta+\frac{\partial g}{\partial u_1}\theta_1+\frac{\partial g}{\partial u_2}\theta_2+\frac{\partial g}{\partial u_3}\theta_3+\dots+\frac{\partial g}{\partial u_{1^k}}\theta_{1^k}+\frac{\partial g}{\partial u_{2^k}}\theta_{2^k}+\frac{\partial g}{\partial u_{3^k}}\theta_{3^k}+\cdots
\end{gather*}
and
\begin{gather*}
{\rm d}_H\theta_{i^k} ={\rm d}_H({\rm d}_V u_{i^k})= -{\rm d}_V\big[u_{1i^k}{\rm d}x^1+u_{2i^k}{\rm d}x^2+u_{3i^k}{\rm d}x^3\big] \\
\hphantom{{\rm d}_H\theta_{i^k}}{} =-\theta_{i^{k+1}}\wedge {\rm d}x^i-{\rm d}_V\big(D^{k-1}_i(f_{ij})\big)\wedge {\rm d}x^j-{\rm d}_V\big(D^{k-1}(f_{il})\big)\wedge {\rm d}x_l\qquad \textrm{for}\quad j,l\neq i.
\end{gather*}

\begin{Definition}Consider a form $\omega=M_1{\rm d}x^1+M_2{\rm d}x^2+M_3{\rm d}x^3$ where the $M_i$ are smooth functions on $\mathcal{R}^{\infty}$. If the horizontal derivative of $\omega$ vanishes, that is $\omega$ is ${\rm d}_H$-closed, then $\omega$ is called a classical conservation law for $\widehat{\mathcal{R}}=\{\mathcal{R}^{\infty},\pi^{\infty}_U,\mathcal{C}(\mathcal{R}^{\infty})\}$. If in addition $\omega$ is exact, meaning that there exists a function $N$ such that ${\rm d}N=\omega$, then $\omega$ is said to be a trivial conservation law.
If the~$M_i$ are themselves $(0,s)$ contact forms, for $s\geq 1$, then a $(1,s)$ form \begin{gather*}\omega=M_1\wedge {\rm d}x^1+M_2\wedge {\rm d}x^2+M_3\wedge {\rm d}x^3\end{gather*} or a $(2,s)$ form \begin{gather*}\omega=M_1\wedge {\rm d}x^1\wedge {\rm d}x^2+M_2\wedge {\rm d}x^2\wedge {\rm d}x^3+M_3\wedge {\rm d}x^1\wedge {\rm d}x^3, \end{gather*} such that ${\rm d}_H\omega=0$ is referred to as a contact form valued conservation law of~$\widehat{\mathcal{R}}$.
\end{Definition}

The following lemma follows from the fact that ${\rm d}_H$ is an anti-commuting differential.

\begin{Lemma}If $\omega$ is a type $(r,s)$ form valued conservation law and there exists a type $(r-1,s)$ form~$\gamma$ for which $\omega={\rm d}_H\gamma$, then ${\rm d}_H\omega=0$ and~$\omega$ is called a trivial conservation law.
\end{Lemma}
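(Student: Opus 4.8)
The plan is to reduce the entire statement to the single algebraic fact that ${\rm d}_H$ squares to zero, since the word ``trivial'' in the conclusion is purely definitional (it was just defined above as being ${\rm d}_H$-exact). So the only genuine content is the assertion ${\rm d}_H\omega=0$, and I would obtain it by applying ${\rm d}_H$ to the hypothesis $\omega={\rm d}_H\gamma$.

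First I would establish ${\rm d}_H^2=0$ from the properties of the bi-graded exterior derivative already recorded in the excerpt. Recall that the total exterior derivative satisfies ${\rm d}^2=0$ and decomposes as ${\rm d}={\rm d}_H+{\rm d}_V$ with ${\rm d}_H\colon\Omega^{r,s}\to\Omega^{r+1,s}$ and ${\rm d}_V\colon\Omega^{r,s}\to\Omega^{r,s+1}$. Expanding,
\begin{gather*}
0={\rm d}^2=({\rm d}_H+{\rm d}_V)^2={\rm d}_H^2+({\rm d}_H{\rm d}_V+{\rm d}_V{\rm d}_H)+{\rm d}_V^2,
\end{gather*}
and I would then observe that on a form of type $(r,s)$ the three grouped terms take values in the distinct bi-degrees $(r+2,s)$, $(r+1,s+1)$, and $(r,s+2)$ respectively. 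Since a form has a unique decomposition into its bi-graded components, each group must vanish independently; in particular ${\rm d}_H^2=0$. (The middle term vanishing recovers exactly the anti-commutation relation ${\rm d}_H{\rm d}_V=-{\rm d}_V{\rm d}_H$ cited earlier, which is why the excerpt refers to ${\rm d}_H$ as an anti-commuting differential.)

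With ${\rm d}_H^2=0$ in hand, the conclusion is immediate: substituting the hypothesis gives ${\rm d}_H\omega={\rm d}_H({\rm d}_H\gamma)={\rm d}_H^2\gamma=0$, so $\omega$ is ${\rm d}_H$-closed and hence a type $(r,s)$ form-valued conservation law by the definition given above. Because $\omega$ is moreover ${\rm d}_H$-exact by assumption, it is trivial by that same definition, completing the argument. The main (and essentially only) obstacle is the bi-degree bookkeeping needed to isolate ${\rm d}_H^2$ from the expansion of ${\rm d}^2$; once that is done there is no remaining computation, since the triviality claim is definitional rather than something to be derived.
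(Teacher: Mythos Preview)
Your proof is correct and follows precisely the approach the paper indicates: the paper states just before the lemma that it ``follows from the fact that ${\rm d}_H$ is an anti-commuting differential,'' i.e., from ${\rm d}_H^2=0$, which is exactly what you establish and apply. Your additional bi-degree bookkeeping to extract ${\rm d}_H^2=0$ from ${\rm d}^2=0$ is a helpful elaboration of a fact the paper simply quotes.
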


\begin{Remark}We will not need to investigate type $(3,s)$ conservation laws owing to the fact that the systems (\ref{quasisystem}) which are the focus of our study have three independent variables and hence any type $(3,s)$ form will be trivially ${\rm d}_H$-closed. Furthermore, due to Vinogradov's ``two line theorem'', conservation laws of type $(0,s)$ are also trivially ${\rm d}_H$-closed~\cite{vi842}. Thus the focus of what follows will be type $(1,s)$ and $(2,s)$ form valued conservation laws, henceforth simply referred to as conversation laws.
\end{Remark}

The conservation laws described above can also be viewed in terms of the horizontal cohomology of the constrained variational bi-complex. The cohomology space
\begin{gather*}
H^{1,s}(\mathcal{R}^{\infty},{\rm d}_H)=\frac{\operatorname{Ker}\big({\rm d}_H\colon \Omega^{1,s}(\mathcal{R})^{\infty}\rightarrow\Omega^{2,s}(\mathcal{R}^{\infty})\big)}
{\operatorname{Im}\big({\rm d}_H\colon \Omega^{0,s}(\mathcal{R}^{\infty})\rightarrow\Omega^{1,s}(\mathcal{R}^{\infty})\big)}
\end{gather*}
is made up of cohomology classes whose representatives are type $(1,s)$ conservation laws, and the cohomology space
\begin{gather*}
H^{2,s}(\mathcal{R}^{\infty},{\rm d}_H)=\frac{\operatorname{Ker}\big({\rm d}_H\colon \Omega^{2,s}(\mathcal{R})^{\infty}\rightarrow\Omega^{3,s}(\mathcal{R}^{\infty})\big)}
{\operatorname{Im}\big({\rm d}_H\colon \Omega^{1,s}(\mathcal{R}^{\infty})\rightarrow\Omega^{2,s}(\mathcal{R}^{\infty})\big)}
\end{gather*}
consists of cohomology classes whose representatives are type $(2,s)$ conservation laws.

\section{Conservation laws for a particular class of semi-linear PDEs}\label{conservationlawssystems}

From this point on we will be considering involutive semi-linear second order systems of PDEs specifically of the form
\begin{gather}\label{system}
F_{ij}\big(x^1,x^2,x^3,u,u_i,u_j,u_{ij}\big)=u_{ij}-f_{ij}\big(x^1,x^2,x^3,u,u_i,u_j\big)=0,
\end{gather}
where $1\leq i,j\leq3$ and $i\neq j$.\footnote{This class of PDEs falls into one of the five classes defined in Cartan's structural classification of involutive systems of three PDEs in one dependent and three independent variables, as originally published in~\cite{ca11} and outlined in Appendix~\ref{structuralclassification}.} A basis for the space of total vector fields defined on the infinite prolongation $\mathcal{R}^{\infty}$ of the equation manifold $\mathcal{R}$ defined by~(\ref{system}) is given by any set of three linearly independent vector fields
 \begin{gather*}
 X_i=m_1^iD_1+m_2^iD_2+m_3^iD_3,\qquad 1\leq i\leq3,
 \end{gather*}
 where the $D_i$ are the total vector fields on $J^{\infty}(E)$ restricted to $\mathcal{R}^{\infty}$, defined by~(\ref{totalderivativedef}).

The system (\ref{system}) is hyperbolic in the sense that its associated exterior differential system has three distinct characteristics. In particular, the characteristic equation for each $F_{ij}=0$ is just $\lambda\mu=0$ which has the roots $(\lambda,\mu)=(1,0)$ and $(\lambda,\mu)=(0,1)$ leading us to associate the total vector fields $D_i$ and $D_j$ to it . Thus we may take as our basis for the space of total vector fields on $\mathcal{R}^{\infty}$ the characteristic vector fields $D_1$, $D_2$ and $D_3$. These vector fields have the particularly convenient property of commuting with each other, in other words: \begin{gather*}[D_i,D_j]=0\end{gather*} for $1\leq i\neq j\leq3$.

Many of the results in this paper can be expected to remain true for more general involutive systems of the form
\begin{gather}\label{moregeneralsystem}
F_{ij}\big(x^1,x^2,x^3,u,u_i, u_j, u_{ii}, u_{ij}, u_{jj}\big)=0\qquad \textrm{for}\quad 1\leq i< j\leq3
\end{gather}
with the property that the universal linearization of (\ref{moregeneralsystem}), defined in Section~\ref{universallinearization-section}, is of the form
\begin{gather*}\mathcal{L}_{ij}(\theta)=X_iX_j(\theta)+A_{ij}^iX_i(\theta)+A_{ij}^jX_j(\theta)+C_{ij}\theta=0,\qquad 1\leq i< j\leq3,\end{gather*}
where $\theta$ is a contact form on $\mathcal{R}^{\infty}$ and the total vector fields $X_i$ are given by
\begin{gather}\label{generalvf}
X_i=\sum_{j=1}^3a_{ij}\big(x^1,x^2,x^3\big)D_j, \qquad 1\leq i\leq3\qquad \textrm{and}\qquad \det(a_{ij})\neq0,
\end{gather}
 and are not assumed to commute pairwise. Also note that we are \textit{not} using the Einstein summation convention anywhere in this paper.

Since the vector fields (\ref{generalvf}) form a basis for the space of total vector fields on $\mathcal{R}^{\infty}$, the commutator of $X_i$ and $X_j$ can be written as a linear combination of these, \begin{gather*}[X_i,X_j]=\sum_{k=1}^3 B_{ij}^k\big(x^1,x^2,x^3\big)X_k.\end{gather*} Choosing to consider systems of the form (\ref{system}), for which we may let $X_i=D_i$, and utilizing the fact that these characteristics commute so that the coefficients $B_{ij}^k=0$ above, will significantly simplify the expressions for the universal linearization of~(\ref{system}), as well as the structure equations and Lie bracket congruences for the Laplace adapted coframe which are given in the following sections. Therefore we will assume henceforth that we are considering systems of the form~(\ref{system}) and have chosen the characteristic vector fields for~(\ref{system}) to be $X_i=D_i$, unless explicitly stated otherwise.

\subsection{Universal linearization}\label{universallinearization-section}
The first step in analyzing systems of the form (\ref{system}) will be to linearize each equation in the system.

On the equation manifold $\mathcal{R}^{\infty}$, the contact forms $\theta$, $\theta_i$, $\theta_j$ and $\theta_{ij}$ are not independent, but rather related to each other according to the equations obtained by taking ${\rm d}_V$ of each of the three equations $F_{ij}=0$,
\begin{gather*}%\label{univlin}
{\rm d}_VF_{ij}=\theta_{ij}+F_{ij,u_i}\theta_i+F_{ij,u_j}\theta_j+F_{ij,u}\theta=0,\qquad 1\leq i< j\leq3.
\end{gather*}
According to (\ref{Djtheta}), $D_j(\theta_I)=\theta_{Ij}$, so it is straightforward to write these equations in terms of the characteristic vector fields, $X_i$:
\begin{gather}\label{universallinearization}
X_iX_j(\theta)+a_{ij}^iX_i(\theta)+a_{ij}^jX_j(\theta)+c_{ij}\theta=0,
\end{gather}
where
\begin{gather*}%\label{provisionalform}
a_{ij}^i=F_{ij,u_i},\qquad a_{ij}^j=F_{ij,u_j},\qquad \textrm{and}\qquad c_{ij}=F_{ij,u}.
\end{gather*}

We will refer to equation (\ref{universallinearization}) as the universal linearization of $F_{ij}$, and denote it by $\mathcal{L}_{ij}(F_{ij})$, or simply~$\mathcal{L}_{ij}$ when the system whose linearization we are considering is clear from the context.

Eventually we will also want to have the freedom to rescale the contact form $\theta$ in order to manipulate the form of equation~(\ref{universallinearization}). To this end, define
\begin{gather*}%\label{rescaledtheta}
\Theta=\mu\theta
\end{gather*}
for a non-vanishing function $\mu$ defined on $\mathcal{R}^{\infty}$. Then the universal linearization~(\ref{universallinearization}) can be written equivalently as
\begin{gather}\label{scaleduniversallinearization}
\mathcal{L}_{ij}(\Theta)=X_iX_j(\Theta)+A^i_{ij}X_i(\Theta)+A^j_{ij}X_j(\Theta)+C_{ij}\Theta=0,
\end{gather}
where
\begin{gather}\label{sulcoef1}
A^i_{ij}=a^i_{ij}-\frac{X_j(\mu)}{\mu},\\
\label{sulcoef2}
A^j_{ij}=a^j_{ij}-\frac{X_i(\mu)}{\mu},\\
\label{sulcoef3}
C_{ij}=c_{ij}-\frac{X_iX_j(\mu)}{\mu}-a^i_{ij}\frac{X_i(\mu)}{\mu}-a^j_{ij}\frac{X_j(\mu)}{\mu}+2\frac{X_i(\mu)X_j(\mu)}{\mu^2}.
\end{gather}

An example of a non-linear involutive system of the form (\ref{system}), can be found in Vassiliou~\cite{va10}. The linearization of this system is described in the example below.

\begin{Example}Consider the following involutive system of the form (\ref{system})
\begin{gather}\label{nonlinearexample}
u_{12}=\frac{2u+1}{u(u+1)}u_1u_2, \qquad u_{13}=\frac{u_1u_3}{u+1},\qquad u_{23}=\frac{u_2u_3}{u}.
\end{gather}

The corresponding linearized system is
\begin{gather*}
\mathcal{L}_{12}(\theta)=X_1X_2(\theta)-\frac{2u+1}{u(u+1)}u_2X_1(\theta)-\frac{2u+1}{u(u+1)}u_1X_2(\theta)+\frac{2u^2+2u+1}{u^2(u+1)^2}u_1u_2\theta,\\
\mathcal{L}_{13}(\theta)=X_1X_3(\theta)-\frac{u_3}{u+1}X_1(\theta)-\frac{u_1}{u+1}X_3(\theta)+\frac{u_1u_3}{(u+1)^2}\theta,\\
\mathcal{L}_{23}(\theta)=X_2X_3(\theta)-\frac{u_3}{u}X_2(\theta)-\frac{u_2}{u}X_3(\theta)+\frac{u_2u_3}{u^2}\theta.
\end{gather*}
\end{Example}

\subsection{Characteristic coframe}\label{characteristiccoframe}

Our next goal is to describe a set of one-forms which will form a coframe on $\mathcal{R}^{\infty}$. Begin by defining the $(1,0)$ forms $\sigma_i$ to be dual to the characteristic vector fields~$X_i$. That is, $\sigma_i(X_i)=1$ and $\sigma_i(X_j)=0$ for $i\neq j$. Then, since we will take $X_i=D_i$, the dual $(1,0)$ forms are $\sigma_i={\rm d}x^i$ and ${\rm d}_H\sigma_i=0$ for $1\leq i\leq 3$. More generally, if $\omega\in\Omega^{r,s}(\mathcal{R}^{\infty})$ is a type $(r,s)$ form, then
\begin{gather}\label{dhomega}
{\rm d}_H\omega=\sigma_1\wedge X_1(\omega)+\sigma_2\wedge X_2(\omega)+\sigma_3\wedge X_3(\omega),
\end{gather}
where the total vector fields $X_i$ act on $\omega$ by projected Lie differentiation, as defined in Section~\ref{constrainedbisec}. In particular, there will be several occasions where we wish to compute the horizontal exterior derivative of a type $(1,s)$ form. That is, if $\omega$ is given by \begin{gather*}\omega=\sigma_1\wedge M_1+\sigma_2\wedge M_2+\sigma_3\wedge M_3,\end{gather*} where each $M_i$ is a $(0,s)$ form on~$\mathcal{R}^{\infty}$, then
\begin{gather}
{\rm d}_H\omega =\sigma_1\wedge\sigma_2\wedge[X_2(M_1)-X_1(M_2)]+\sigma_2\wedge\sigma_3\wedge[X_3(M_2)-X_2(M_3)]\nonumber \\
\hphantom{{\rm d}_H\omega =}{} +\sigma_1\wedge\sigma_3\wedge[X_3(M_1)-X_1(M_3)]. \label{dh1s}
\end{gather}
Likewise, if $\omega$ is a $(2,s)$ form, $\omega=\sigma_2\wedge\sigma_3\wedge M_1+\sigma_1\wedge\sigma_3\wedge M_2+\sigma_1\wedge\sigma_2\wedge M_3$, where each $M_i$ is a $(0,s)$ form on $\mathcal{R}^{\infty}$, then
\begin{gather*}%\label{dhspecial}
{\rm d}_H\omega=\sigma_1\wedge\sigma_2\wedge\sigma_3\wedge[X_1(M_1)-X_2(M_2)+X_3(M_3)].
\end{gather*}
To complete the coframe, define the remaining contact forms inductively:
\begin{gather}\label{definecontact}
\xi_i^1=X_i(\Theta),\qquad \xi_i^2=X_iX_i(\Theta)=X_i\big(\xi_i^1\big),\qquad \ldots,\qquad \xi_i^k=X_i^k(\Theta)=X_i\big(\xi_i^{k-1}\big)
\end{gather}
for $i=1,2,3$. Before giving their structure equations and stating formally that the forms~(\ref{definecontact}), along with the previously defined forms $\{\sigma_1,\sigma_2,\sigma_3,\Theta\}$, do indeed make up a coframe on~$\mathcal{R}^{\infty}$, we need to pause to give the following definition and lemma.

\begin{Definition}A form $\omega\in\Omega^p(\mathcal{R}^{\infty})$ has \textit{adapted order} $k$ if it lies in the exterior algebra generated, over the smooth functions on $\mathcal{R}^{\infty}$, by the one-forms
\begin{gather*}
\big\{\sigma_1,\sigma_2,\sigma_3,\Theta,\xi^1_1,\xi_2^1,\xi_3^1,\ldots,\xi_1^k,\xi_2^k,\xi_3^k\big\},
\end{gather*}
 where $k$ is minimal.
\end{Definition}
The adapted order of a form $\omega$ is invariant under contact transformations on $\mathcal{R}^{\infty}$, and may differ from its order as a form on~$\mathcal{R}^{\infty}$.

The following lemma will be used to establish the structure equations for the one forms comprising the coframe given below.
\begin{Lemma}\label{adaptedorder}For $k\geq1$ and $i\neq j$, $X_i\big(\xi_j^k\big)$ restricted to $\mathcal{R}^{\infty}$ has adapted order less than or equal to $k$.
\end{Lemma}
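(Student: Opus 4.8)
The plan is to prove the statement by strong induction on $k$, with the structural input coming from two features particular to systems of the form (\ref{system}): first, that the characteristic vector fields commute, $[X_i,X_j]=0$; and second, that on $\mathcal{R}^{\infty}$ the mixed second derivative $X_iX_j(\Theta)$ is constrained by the universal linearization. Throughout I would use that the projected Lie derivative $X_j$ acts as a derivation on the contact subalgebra and preserves bi-degree, so that $X_i(\xi_j^k)$, being built from the type $(0,1)$ form $\Theta$ by applying total vector fields, is again of type $(0,1)$. Consequently ``adapted order $\leq k$'' means precisely that $X_i(\xi_j^k)$ is a smooth-function linear combination of $\Theta,\xi_1^1,\ldots,\xi_3^k$, with no $\sigma$'s entering.

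For the base case $k=1$ I would write $X_i(\xi_j^1)=X_iX_j(\Theta)$ and invoke the linearization (\ref{scaleduniversallinearization}), which holds identically on $\mathcal{R}^{\infty}$: it expresses $X_iX_j(\Theta)$ as $-A_{ij}^i\xi_i^1-A_{ij}^j\xi_j^1-C_{ij}\Theta$, a combination of forms of adapted order $\leq 1$. Commutativity lets me treat both orderings $i<j$ and $i>j$ from the single relation $\mathcal{L}_{ij}$. This is the \emph{only} place the equation-manifold structure is used, and it is exactly what prevents the mixed derivative from being a genuinely new, order-$2$ generator.

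For the inductive step, suppose the statement holds for all indices $<k$. Writing $\xi_j^k=X_j(\xi_j^{k-1})$ and using commutativity, $X_i(\xi_j^k)=X_iX_j(\xi_j^{k-1})=X_j\big(X_i(\xi_j^{k-1})\big)$. By the inductive hypothesis the inner form $\eta:=X_i(\xi_j^{k-1})$ has adapted order $\leq k-1$, hence $\eta=c_0\Theta+\sum_{m=1}^{k-1}\sum_{l=1}^{3}c_{l,m}\xi_l^m$ for smooth functions $c_0,c_{l,m}$. I would then expand $X_j(\eta)$ by the Leibniz rule and check each term: the coefficient-derivative terms retain adapted order $\leq k-1$; the ``own-index'' terms $X_j(\Theta)=\xi_j^1$ and $X_j(\xi_j^m)=\xi_j^{m+1}$ raise adapted order by exactly one, to at most $k$; and the ``cross-index'' terms $X_j(\xi_l^m)$ with $l\neq j$ and $m\leq k-1$ have adapted order $\leq m\leq k-1$ by the strong inductive hypothesis applied to the pair $(j,l)$. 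Every term therefore has adapted order $\leq k$, closing the induction.

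The main obstacle, and the conceptual heart of the lemma, is the base case: one must recognize that the constraint $\mathcal{L}_{ij}(\Theta)=0$ is precisely the mechanism lowering the order of the mixed derivative, whereas naive counting would predict order $k+1$. The remaining difficulty is purely bookkeeping, namely phrasing the induction strongly enough that the cross-index terms $X_j(\xi_l^m)$ are covered by earlier cases; this is why I reduce via commutativity to applying a \emph{single} $X_j$ to a form already known to have low adapted order, rather than attempting to estimate $X_iX_j^k(\Theta)$ directly.
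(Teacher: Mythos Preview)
Your proof is correct and follows essentially the same route as the paper: induction on $k$, with the base case handled by the universal linearization $\mathcal{L}_{ij}(\Theta)=0$ and the inductive step by commuting $X_i$ past $X_j$ and then applying $X_j$ to a form already known to have low adapted order. The paper's write-up is terser and leaves the treatment of the cross-index terms $X_j(\xi_l^m)$, $l\neq j$, implicit; your explicit use of strong induction over all pairs $(i,j)$ to cover those terms is exactly the bookkeeping the paper's argument tacitly relies on.
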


\begin{proof}The proof is completed by induction on the adapted order, $k$. First we show the statement holds for $k=1$, that is
\begin{gather*}X_i(\xi_j^1)=X_iX_j(\Theta)=-A_{ij}^iX_i(\Theta)-A_{ij}^jX_j(\Theta)-C_{ij}\Theta,\end{gather*} which is evidently of adapted order $\leq 1$. Next assume that $X_i\big(\xi_j^l\big)$ has adapted order $\leq l$ for all $l\leq k$. Then $X_i\big(\xi_j^l\big)$ can be written as a linear combination \begin{gather*}X_i\big(\xi_j^l\big)=a\Theta+\sum_{l=1}^kb_l\xi_1^l+\sum_{l=1}^{k}c_l\xi_2^l+\sum_{l=1}^kd_l\xi_3^l\end{gather*} with $a$, $b_l$, $c_l$, $d_l$ are all functions on~$\mathcal{R}^{\infty}$. As mentioned previously in this section, we are free to choose the characteristics for systems of the form~(\ref{system}) to commute. Then \begin{gather*}X_i\big(\xi_j^{k+1}\big)=X_iX_j\big(\xi_j^k\big)=X_jX_i\big(\xi_j^k\big),\end{gather*} which has adapted order $\leq k+1$ as needed.
\end{proof}

We are now prepared to state and prove the following proposition.
\begin{Proposition}\label{charcoframe} The set of $1$-forms
\begin{gather}\label{firstcoframe}
\big\{\sigma_1,\sigma_2,\sigma_3,\Theta,\xi_1^1,\xi_2^1,\xi_3^1,\ldots,\xi_1^k,\xi_2^k,\xi_3^k,\ldots\big\}
\end{gather}
forms a coframe on the equation manifold $\mathcal{R}^{\infty}$, called the characteristic coframe. The ${\rm d}_H$ structure equations for this coframe are given by
\begin{gather}\label{structureeqnfirstcoframe1}
{\rm d}_H\sigma_i =0,\\
\label{structureeqnfirstcoframe2}
{\rm d}_H\Theta =\sigma_1\wedge\xi_1^1+\sigma_2\wedge\xi_2^1+\sigma_3\wedge\xi_3^1,\\
\label{structureeqnfirstcoframe3}
{\rm d}_H\xi_i^k =\sigma_i\wedge\xi_i^{k+1}+\sigma_j\wedge\mu_i^{k}+\sigma_l\wedge\nu_i^{k}\qquad \textrm{for}\quad j,l\neq i,
\end{gather}
where $\mu_i^k$ and $\nu_i^k$ are contact forms of adapted order $\leq k$.
\end{Proposition}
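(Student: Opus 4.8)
The proposition has two parts: that the $1$-forms \eqref{firstcoframe} form a coframe, and the three $\mathrm{d}_H$ structure equations. The structure equations will fall out almost immediately from the master formula \eqref{dhomega}, the inductive definition \eqref{definecontact}, and Lemma~\ref{adaptedorder}, so the genuine content lies in the coframe claim. The plan is therefore to dispatch the coframe statement first by comparing \eqref{firstcoframe} with the natural coframe of \eqref{natcoframerinfty} and showing the change of basis is invertible, and then to read off the structure equations from \eqref{dhomega}.

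For the coframe claim, I would work level by level and show that the span of the proposed forms of adapted order $\leq k$ coincides with the span of the natural forms $\big\{\mathrm{d}x^1,\mathrm{d}x^2,\mathrm{d}x^3,\theta,\theta_1,\theta_2,\theta_3,\ldots,\theta_{1^k},\theta_{2^k},\theta_{3^k}\big\}$. Since $\sigma_i=\mathrm{d}x^i$ and $\Theta=\mu\theta$ with $\mu$ nowhere vanishing, the first four forms already match up to the nonzero factor $\mu$. The crux is an induction establishing that, in the natural coframe,
\[
\xi_i^k=\mu\,\theta_{i^k}+\big(\textrm{terms in }\theta,\theta_i,\ldots,\theta_{i^{k-1}}\big).
\]
The base case $k=1$ uses $X_i(\theta)=\theta_i$, a consequence of \eqref{Djtheta}, together with the product rule for the projected Lie derivative, giving $\xi_i^1=X_i(\mu\theta)=X_i(\mu)\theta+\mu\theta_i$. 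The inductive step applies $X_i$ to $\xi_i^k=\mu\theta_{i^k}+\cdots$ and again invokes $X_i(\theta_{i^k})=\theta_{i^{k+1}}$, noting that within the $i$-tower $X_i$ raises the order by exactly one while leaving the leading coefficient equal to $\mu$. Consequently, at each finite level the two sets have equal cardinality and the triangular change-of-basis matrix has diagonal entries $1$ and $\mu\neq0$, hence is invertible; this proves that \eqref{firstcoframe} is a coframe.

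For the structure equations I would apply \eqref{dhomega}, namely $\mathrm{d}_H\omega=\sum_m\sigma_m\wedge X_m(\omega)$, to each generator. Equation \eqref{structureeqnfirstcoframe1} is immediate from $\sigma_i=\mathrm{d}x^i$, and \eqref{structureeqnfirstcoframe2} follows at once from $\xi_m^1=X_m(\Theta)$. For $\xi_i^k$, writing $\{i,j,l\}=\{1,2,3\}$,
\[
\mathrm{d}_H\xi_i^k=\sigma_i\wedge X_i\big(\xi_i^k\big)+\sigma_j\wedge X_j\big(\xi_i^k\big)+\sigma_l\wedge X_l\big(\xi_i^k\big).
\]
The first term is $\sigma_i\wedge\xi_i^{k+1}$ by the defining recursion \eqref{definecontact}. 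The remaining coefficients $\mu_i^k:=X_j(\xi_i^k)$ and $\nu_i^k:=X_l(\xi_i^k)$ are again $(0,1)$ contact forms, since the projected Lie derivative of a total vector field preserves bi-degree, and they have adapted order $\leq k$ by Lemma~\ref{adaptedorder}. This is precisely \eqref{structureeqnfirstcoframe3}.

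The main obstacle is the bookkeeping in the coframe induction: one must verify that $X_i$ acting on the lower-order correction terms of $\xi_i^k$ produces nothing outside the span of $\theta,\theta_i,\ldots,\theta_{i^{k+1}}$ and, crucially, does not perturb the leading coefficient $\mu$ on $\theta_{i^{k+1}}$. Once this sharp leading-term statement is secured, invertibility of the change of basis, and with it the coframe property and the structure equations, is routine.
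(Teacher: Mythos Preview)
Your proposal is correct and follows essentially the same approach as the paper: both establish the leading-term relation $\xi_i^k=\mu\,\theta_{i^k}+(\text{lower order})$ to obtain a triangular, hence invertible, change of basis against the natural coframe, and both derive the structure equations directly from \eqref{dhomega}, the recursion \eqref{definecontact}, and Lemma~\ref{adaptedorder}. The only organizational difference is that the paper first passes through the overcomplete family $\{X_1^mX_2^nX_3^l(\Theta)\}$ and then invokes Lemma~\ref{adaptedorder} to reduce mixed derivatives, whereas you compare the pure towers $\xi_i^k$ directly to the $\theta_{i^k}$; your route is slightly more direct but the content is the same.
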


\begin{proof}Repeatedly applying the vector fields $X_i$ to the contact form $\Theta=\mu\theta$, and using equa\-tion~(\ref{Djtheta}), we see that
\begin{gather*}X_{i_1}X_{i_2}\cdots X_{i_k}(\Theta)=\mu\theta_{i_1i_2\dots i_k}+\delta,\end{gather*}
where $\delta$ consists of contact forms of order $< k$. Then the set of one forms
\begin{gather*}\big\{\Theta,X_1^i(\Theta),X_2^i(\Theta),X_3^i(\Theta),\ldots,X_1^mX_2^nX_3^l(\Theta),\ldots\big\},\end{gather*}
where at least two of the indices $m$, $n$, $l$ are $\geq1$, clearly spans the contact ideal~$\mathcal{C}(\mathcal{R}^{\infty})$. By Lemma~\ref{adaptedorder}, each $X_1^mX_2^nX_3^l(\Theta)$ can be expressed as a linear combination of the forms in~(\ref{firstcoframe}) of order $\leq m+n+l$. The structure equations~(\ref{structureeqnfirstcoframe1})--(\ref{structureeqnfirstcoframe3}) are a result of the definition of the forms $\xi_i^j$ given in~(\ref{definecontact}), formula (\ref{dhomega}), and Lemma~\ref{adaptedorder}.
\end{proof}

\subsection{Generalized Laplace method for systems}\label{genlapmetforsys}

We will now introduce the generalized Laplace method as a means of solving systems of the form~(\ref{scaleduniversallinearization}). The geometric origins of this method can be found in Chern's work~\cite{ch44} on the Laplace transformation of submanifolds admitting conjugate nets of curves. It was subsequently used in~\cite{kt96} to generalize the classical Laplace method (also described in~\cite{kt96}) to involutive overdetermined systems of linear equations in $n$ independent and 1 dependent variable of the form
\begin{gather}\label{kt96systems}
\frac{\partial^2y}{\partial x^l\partial x^k}+a_{lk}^l\frac{\partial y}{\partial x^l}+a_{lk}^k\frac{\partial y}{\partial x^k}+c_{lk}y=0, \qquad 1\leq l\neq k\leq n,
\end{gather}
where $a_{lk}^k$, $a_{lk}^l,$ and $c_{lk}$ are differentiable functions of the independent variables $x^1,\ldots,x^n$ and each set of functions is symmetric in its lower indices. The similarity between the form of the system~(\ref{kt96systems}) and that of the linearized system~(\ref{scaleduniversallinearization}) is apparent, and provides motivation for the description of the generalized Laplace method suited to studying the systems~(\ref{scaleduniversallinearization}) given below.

A linearized system of the form~(\ref{scaleduniversallinearization}) must satisfy certain integrability conditions stemming from the fact that $D_k(\theta_{ij})=D_j(\theta_{ik})$ for $i$, $j$, $k$ distinct. Applying $D_k$ to each equation $\mathcal{L}_{ij}(\Theta)=0$ and equating the coefficients on like-ordered contact forms leads to the following relations which the coefficients of any compatible system must satisfy for $1\leq i\neq j\neq k\leq3$,
\begin{gather}
D_j\big(A_{lk}^l\big)-D_k\big(A_{lj}^l\big)=0,\nonumber\\
\nonumber
D_j\big(A_{lk}^k\big)-A_{lk}^kA_{kj}^k+A_{lj}^lA_{lk}^k+A_{lj}^jA_{jk}^k=C_{lj},\\
D_j(C_{lk})-D_k(C_{lj})+A_{lj}^lC_{lk}+\big(A_{lj}^j-A_{lk}^k\big)C_{kj}-A_{lk}^lC_{lj}=0.\label{intconditions}
\end{gather}
These correspond to the compatibility conditions defined in \cite{kt96}, the only difference being that the partial derivatives found in the expressions written in~\cite{kt96} have been replaced with total derivatives in~(\ref{intconditions}).

\subsubsection{Laplace invariants and the generalized Laplace transform}

The classical Laplace method associates to a given linear hyperbolic PDE in the plane, $F(u, u_x$, $u_y, u_{xx}, u_{xy}, u_{yy})=0$, two Laplace invariants $h(F)$ and $k(F$). The vanishing of either invariant allows for the integration of the equation by quadratures, and the failure of these invariants to vanish allows one to apply the classical Laplace method in order to obtain a transformed equation whose Laplace invariants may or may not vanish. We will now develop an analogous procedure to investigate the systems (\ref{scaleduniversallinearization}). Begin by defining the following $n(n-1)^2$ expressions which are invariant under rescaling of $\Theta$ by a nonvanishing function on~$\mathcal{R}^{\infty}$, and will be referred to as the higher-dimensional Laplace invariants of the system
\begin{gather}\label{laplaceinvariants}
H_{ij}=D_i\big(A_{ij}^i\big)+A_{ij}^iA_{ij}^j-C_{ij}\qquad \textrm{for}\quad 1\leq i\neq j\leq n,\\
\label{laplaceinvariants2}
H_{ijk}=A_{kj}^k-A_{ij}^i\qquad \textrm{for}\quad 1\leq k\neq i,j\leq n.
\end{gather}
Note that when there are only two independent variables, and hence the system~(\ref{scaleduniversallinearization}) is replaced by a single equation, the invariants~(\ref{laplaceinvariants}) reduce to the classical Laplace invariants. In this case, the invariants~(\ref{laplaceinvariants2}) would not be defined.

\begin{Example}Recall the nonlinear example of an involutive system of the form (\ref{system}), given in equation (\ref{nonlinearexample}) and restated below,
\begin{gather*}%\label{nonlinearexample}
u_{12}=\frac{2u+1}{u(u+1)}u_1u_2,\qquad u_{13}=\frac{u_1u_3}{u+1},\qquad u_{23}=\frac{u_2u_3}{u}.
\end{gather*}

 The Laplace invariants for this system are as follows
\begin{gather*}
H_{ij} =0\qquad \textrm{for all}\quad i,j=1,2,3\quad \textrm{and}\quad i\neq j,\nonumber\\
H_{132}=-H_{231}=\frac{u_3}{u(u+1)},\qquad H_{213}=-H_{312}=\frac{u_1}{u},\qquad H_{123} =-H_{321}=\frac{u_2}{u+1}.
\end{gather*}
\end{Example}

We are now in a position to introduce the concept of the generalized Laplace transform. Suppose that $\Theta$ solves the system~(\ref{scaleduniversallinearization}). Then for any \textit{ordered} pair $(i,j)$, define the $(i,j)$ Laplace transform of $\Theta$ to be
\begin{gather}\label{ijtransformoftheta}
\xi_{ij}=X_j(\Theta)+A_{ij}^i\Theta,
\end{gather}
and denote this by $\xi_{ij}=\mathcal{X}_{ij,\mathcal{L}_{lk}}(\Theta)$, or simply $\mathcal{X}_{ij}(\Theta)$ if the system of differential opera\-tors,~$\mathcal{L}_{lk}, 1\leq l\neq k\leq 3$, being considered is clear from context. One should be aware that the order of the pair $(i,j)$ is significant, as we can see by comparing the $(i,j)$ Laplace transform of $\Theta$, given by~(\ref{ijtransformoftheta}), with the $(j,i)$ Laplace transform of $\Theta$, which would be
\begin{gather*}%\label{jitransformoftheta}
\xi_{ji}=\mathcal{X}_{ji}(\Theta)=X_i(\Theta)+A_{ij}^j\Theta.
\end{gather*}
 The following proposition shows that the transformed contact form $\xi_{ij}=\mathcal{X}_{ij}(\Theta)$ will solve a~system of the same form as~(\ref{scaleduniversallinearization}).

\begin{Proposition}\label{thetahatsolves}Let $\xi_{ij}$ be the $(i,j)$-Laplace transform of $\Theta$, where $\Theta$ satisfies the system of equations~\eqref{scaleduniversallinearization}. In the case that the $(i,j)$ Laplace invariants $H_{ij}$ and $H_{ijk}$ are nonzero for all $k, k\neq i, j$, $\xi_{ij}$ will satisfy a system of equations of the same form as~\eqref{scaleduniversallinearization}.
\end{Proposition}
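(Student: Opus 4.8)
The plan is to adapt the classical Laplace method to the overdetermined setting, using the two nonvanishing hypotheses to invert the transform one derivative at a time. Throughout, let $\{i,j,k\}=\{1,2,3\}$ and recall that the characteristic vector fields commute, $[X_l,X_m]=0$.

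First I would apply $X_i$ to the definition $\xi_{ij}=X_j(\Theta)+A_{ij}^i\Theta$ from (\ref{ijtransformoftheta}) and use the equation $\mathcal{L}_{ij}(\Theta)=0$ of (\ref{scaleduniversallinearization}) to eliminate the second-order term $X_iX_j(\Theta)$. The terms in $X_i(\Theta)$ cancel, and after rewriting $X_j(\Theta)=\xi_{ij}-A_{ij}^i\Theta$ one is left with the key relation
\[
X_i(\xi_{ij})=-A_{ij}^j\xi_{ij}+H_{ij}\Theta ,
\]
where $H_{ij}$ is exactly the invariant (\ref{laplaceinvariants}). Since $H_{ij}\neq0$ by hypothesis, this inverts the transform: it expresses $\Theta$, and hence also $X_j(\Theta)=\xi_{ij}-A_{ij}^i\Theta$, in terms of $\xi_{ij}$ and $X_i(\xi_{ij})$.

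Next I would apply $X_k$ to the definition of $\xi_{ij}$, this time using $\mathcal{L}_{jk}(\Theta)=0$ together with $X_kX_j=X_jX_k$ to replace $X_kX_j(\Theta)$. Collecting terms, the coefficient of the surviving $X_k(\Theta)$ works out to $A_{ij}^i-A_{jk}^k=-H_{ijk}$, the invariant (\ref{laplaceinvariants2}). Because $H_{ijk}\neq0$, this solves for $X_k(\Theta)$ in terms of $\xi_{ij}$, $X_i(\xi_{ij})$ and $X_k(\xi_{ij})$. Finally, differentiating $\xi_{ij}$ by $X_j$ and isolating $X_j^2(\Theta)$ expresses this second-order term through $X_j(\xi_{ij})$ and the quantities already recovered. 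At this point every component of the relevant jet of $\Theta$ has been rewritten in the jet of $\xi_{ij}$.

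To produce the transformed system I would then compute the three mixed second derivatives $X_iX_j(\xi_{ij})$, $X_iX_k(\xi_{ij})$ and $X_jX_k(\xi_{ij})$. The first two are obtained by applying $X_j$ and $X_k$ respectively to the key relation and substituting the recovered expressions for $\Theta$ and $X_k(\Theta)$; each collapses to a first-order combination of $\xi_{ij}$, $X_i(\xi_{ij})$ and $X_k(\xi_{ij})$, yielding the transformed $(i,j)$ and $(i,k)$ equations in the form (\ref{scaleduniversallinearization}), with a leading coefficient such as $A_{ij}^i-X_j(H_{ij})/H_{ij}$ appearing in place of $A_{ij}^i$. I expect the main obstacle to be the third equation: computing $X_jX_k(\xi_{ij})$ brings in both $\Theta$ and $X_j^2(\Theta)$, and substituting $\Theta=H_{ij}^{-1}\bigl(X_i(\xi_{ij})+A_{ij}^j\xi_{ij}\bigr)$ threatens to contaminate the result with an $X_i(\xi_{ij})$ term, which is not permitted in a genuine $(j,k)$ equation. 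The crux is to show that this spurious term cancels, and here I would invoke the integrability conditions (\ref{intconditions}) satisfied by the coefficients of the original compatible system; these relations among the $A$'s and $C$'s are precisely what force the offending coefficient to vanish, leaving the $(j,k)$ equation in the desired form and so exhibiting $\xi_{ij}$ as a solution of a system of the same type as (\ref{scaleduniversallinearization}).
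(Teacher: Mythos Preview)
Your proposal is correct and follows essentially the same route as the paper's proof: compute the three first derivatives $X_i(\xi_{ij})$, $X_j(\xi_{ij})$, $X_k(\xi_{ij})$, use $H_{ij}\neq0$ and $H_{ijk}\neq0$ to invert and recover $\Theta$, $X_j(\Theta)$, $X_k(\Theta)$, $X_j^2(\Theta)$ in terms of the jet of $\xi_{ij}$, then differentiate once more to obtain the transformed system with explicit new coefficients. The paper carries this out by direct substitution and simply records the resulting coefficients (\ref{coeftrans1})--(\ref{coeftrans6}); your write-up is in fact more careful than the paper on one point, namely the $(j,k)$ equation, where you correctly anticipate that an unwanted $X_i(\xi_{ij})$ term could appear and that the integrability conditions (\ref{intconditions}) are what force it to cancel --- the paper does not comment on this and just states the final $\hat A_{jk}^j$, $\hat A_{jk}^k$, $\hat C_{jk}$.
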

\begin{proof}We can compute expressions for the total derivatives of $\xi_{ij}$ directly from the definition of the $(i,j)$ Laplace transform and the system~(\ref{scaleduniversallinearization}):
\begin{gather}\label{dihattheta}
X_i(\xi_{ij}) = -A_{ij}^j\Theta_j+\big(H_{ij}-A^i_{ij}A_{ij}^j\big)\Theta,\\
\label{djhattheta}
X_j(\xi_{ij})=X_j\big(A_{ij}^i\big)\Theta+A_{ij}^i\Theta_j+\Theta_{jj},\\
\label{dkhattheta}
X_k(\xi_{ij})=\big(H_{kj}-A_{kj}^jA_{ij}^i\big)\Theta-H_{kk}\Theta_k-A_{jk}^j\Theta_j
\end{gather}
for $k\neq i,j$. Plugging (\ref{ijtransformoftheta}) into (\ref{dihattheta}), we obtain
\begin{gather}\label{yintermsof}
\Theta=\frac{1}{H_{ij}}\big(X_i(\xi_{ij})+A_{ij}^j\xi_{ij}\big)
\end{gather}
and then applying $X_j$ to both sides of (\ref{yintermsof}),
\begin{gather}\label{yjintermsof}
\Theta_j=\frac{1}{H_{ij}}\big(\big(H_{ij}-A_{ij}^iA_{ij}^j\big)\xi_{ij}-A_{ij}^iX_i(\xi_{ij})\big).
\end{gather}
Finally, expressions for $\Theta_{k}$ and $\Theta_{jj}$ are gotten by solving for $\Theta_k$ and $\Theta_{jj}$ in (\ref{dkhattheta}) and (\ref{djhattheta}) respectively, and making the appropriate substitutions using~(\ref{yintermsof}) and~(\ref{yjintermsof}):
\begin{gather*}
\Theta_k = \frac{1}{H_{kk}}\left(-X_k(\xi_{ij})+\frac{H_{kj}}{H_{ij}}X_i(\xi_{ij})+\left(\frac{H_{kj}}{H_{ij}}A_{ij}^j-A_{jk}^j\right)\xi_{ij}\right),\\
\Theta_{jj} = X_j(\xi_{ij})-\frac{1}{H_{ij}}\big[\big(X_j\big(A_{ij}^i\big)-A_{ij}^iA_{ij}^i\big)X_i(\xi_{ij}) +\big(X_j\big(A_{ij}^i\big)A_{ij}^j+A_{ij}^i\big(H_{ij}-A_{ij}^iA_{ij}^j\big)\big)\xi_{ij}\big].
\end{gather*}
Now we may take the total derivatives of (\ref{dihattheta})--(\ref{dkhattheta}) to see that indeed $\xi_{ij}$ satisfies a system of the form
\begin{gather}\label{equationforhattheta}
X_lX_k(\xi_{ij})+\hat{A}_{lk}^lX_l(\xi_{ij})+\hat{A}_{lk}^kX_k(\xi_{ij})+\hat{C}_{lk}\xi_{ij}=0
\end{gather}
for $1\leq l\neq k\leq 3$. The coefficients in the equation (\ref{equationforhattheta}) can be given explicitly in terms of the coefficients of the original system as follows,
\begin{align}\label{coeftrans1}
\hat{A}_{ij}^i&=A_{ij}^i-\frac{X_j(H_{ij})}{H_{ij}},\qquad \hat{A}_{ij}^j=A_{ij}^j,\\
\label{coeftrans2}
\hat{C}_{ij}&=A_{ij}^jA_{ij}^i+H_{ij}\left(X_j\left(\frac{A_{ij}^i}{H_{ij}}\right)-1\right).
\end{align}
And for $k\neq i,j$ we have
\begin{gather}\label{coeftrans3}
\hat{A}_{ik}^i=-X_k(\log H_{ij})+A_{ik}^i, \qquad \hat{A}_{ik}^k=A_{ij}^j+\frac{H_{ij}}{H_{ijk}},\\
\label{coeftrans4}
\hat{C}_{ik}=H_{ij}\left(X_k\left(\frac{A_{ij}^j}{H_{ij}}\right)+\frac{A_{jk}^j}{H_{ijk}}\right)+A_{ij}^jA_{ik}^i,\\
\label{coeftrans5}
\hat{A}_{jk}^j =A_{jk}^j,\qquad \hat{A}_{jk}^k=A_{jk}^k-X_j(\log H_{ijk}),\\
\label{coeftrans6}
\hat{C}_{jk}=2A_{ij}^jX_k\big(A_{ij}^i\big)\frac{H_{ijk}}{H_{ij}}+A_{jk}^j\big(\hat{A}_{jk}^k-H_{ijk}\big)-H_{kj}+X_j\big(A_{jk}^j\big).
\end{gather}
This proves the proposition.
\end{proof}

According to the preceding proposition then, when a given system has non-vanishing $(i,j)$ Laplace invariants, the system of three total differential operators $\mathcal{L}_{lk}$, $1\leq l\neq k\leq3$, is transformed under the $(i,j)$ Laplace transform into another system of three total differential operators of the same form. Denote the operator that $\mathcal{L}_{lk}$ is transformed into under the $(i,j)$ Laplace transform by $\mathcal{X}_{ij}(\mathcal{L}_{lk})$. Then the $(i,j)$ Laplace transform of~$\Theta$, $\mathcal{X}_{ij}(\Theta)=\xi_{ij}$, will satisfy the system $[\mathcal{X}_{ij}(\mathcal{L}_{lk})](\xi_{ij})=0$ for all $1\leq l\neq k \leq3$. When we wish to emphasize that the $(i,j)$ Laplace transform is being performed, we will write~(\ref{equationforhattheta}) as
\begin{gather*}
X_lX_k(\xi_{ij})+\mathcal{X}_{ij}\big(A_{lk}^l\big)X_l(\xi_{ij})+\mathcal{X}_{ij}\big(A_{lk}^k\big)X_k(\xi_{ij})+\mathcal{X}_{ij}(C_{lk})\xi_{ij}=0.
\end{gather*}

Just as in the case of the classical Laplace method, the generalized Laplace transform has an inverse when the $(i,j)$ invariant is nonzero, as the next result shows.

\begin{Proposition}Let $\xi_{ij}=\mathcal{X}_{ij}(\Theta)$ be the $(i,j)$ Laplace transform of~$\Theta$, which satisfies the system~\eqref{scaleduniversallinearization}. If $H_{ij}\neq0$ then an inverse Laplace transform of the $(i,j)$ Laplace transform exists and is given by
\begin{gather*}
\Theta=\frac{1}{H_{ij}}\left(\mathcal{X}_{ji}(\xi_{ij})\right).
\end{gather*}
\end{Proposition}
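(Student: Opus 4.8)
The plan is to compute the right-hand side $\frac{1}{H_{ij}}\mathcal{X}_{ji}(\xi_{ij})$ directly and recognize that it reproduces $\Theta$, essentially by reading equation (\ref{yintermsof}) backwards. The key observation is that the formula for $\Theta$ recorded there, namely $\Theta=\frac{1}{H_{ij}}\big(X_i(\xi_{ij})+A_{ij}^j\xi_{ij}\big)$, already has exactly the shape of a $(j,i)$ Laplace transform applied to $\xi_{ij}$, up to the overall factor $1/H_{ij}$. So no genuinely new computation is required; the work consists of correctly matching definitions.

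First I would unpack what $\mathcal{X}_{ji}(\xi_{ij})$ means. Since $\xi_{ij}$ satisfies the transformed system $[\mathcal{X}_{ij}(\mathcal{L}_{lk})](\xi_{ij})=0$ established in Proposition~\ref{thetahatsolves}, its $(j,i)$ Laplace transform must be formed using the \emph{transformed} coefficients: applying the definition (\ref{ijtransformoftheta}) to the hatted system gives $\mathcal{X}_{ji}(\xi_{ij})=X_i(\xi_{ij})+\hat{A}_{ij}^j\xi_{ij}$. Next I would invoke the coefficient transformation formula (\ref{coeftrans1}), which asserts $\hat{A}_{ij}^j=A_{ij}^j$, so that $\mathcal{X}_{ji}(\xi_{ij})=X_i(\xi_{ij})+A_{ij}^j\xi_{ij}$. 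Comparing this with (\ref{yintermsof}) immediately yields $\mathcal{X}_{ji}(\xi_{ij})=H_{ij}\Theta$, and dividing by $H_{ij}$, which is nonzero by hypothesis, gives the claimed inversion formula.

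The main subtlety, rather than any hard calculation, is interpreting the symbol $\mathcal{X}_{ji}$ correctly: it denotes the $(j,i)$ Laplace transform taken with respect to the transformed operator system, not the original one, so a priori the relevant coefficient is $\hat{A}_{ij}^j$ and not $A_{ij}^j$. The entire argument hinges on the fortunate fact, recorded in (\ref{coeftrans1}), that this particular coefficient is left unchanged by the transform; if one overlooked this and blindly used the original coefficient, the identification with (\ref{yintermsof}) would appear accidental rather than structural. I would also emphasize that the hypothesis $H_{ij}\neq0$ plays a double role here: it is needed both to divide at the final step and, upstream, to guarantee via the expressions (\ref{dihattheta})--(\ref{yintermsof}) that $\Theta$ is recoverable from $\xi_{ij}$ in the first place.
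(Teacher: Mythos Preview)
Your argument is correct and follows essentially the same route as the paper: both unpack $\mathcal{X}_{ji}(\xi_{ij})=X_i(\xi_{ij})+\hat{A}_{ij}^j\xi_{ij}$, invoke $\hat{A}_{ij}^j=A_{ij}^j$, and recognize the result as $H_{ij}\Theta$ via the relation already established in (\ref{yintermsof}). The only cosmetic difference is that the paper attributes $\hat{A}_{ij}^j=A_{ij}^j$ to the integrability conditions (\ref{intconditions}) rather than reading it directly off (\ref{coeftrans1}) as you do; your citation is in fact the more transparent one.
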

\begin{proof}By the definition of the Laplace transform, $\mathcal{X}_{ji}(\xi_{ij})=X_i(\xi_{ij})+\hat{A}_{ij}^j\xi_{ij}$. Substitu\-ting~(\ref{ijtransformoftheta}) and~(\ref{dihattheta}) into this expression and using the fact that $\hat{A}_{ij}^j=A_{ij}^j$, which follows from~(\ref{intconditions}), we see that $\mathcal{X}_{ji}(\xi_{ij})=H_{ij}\Theta$. Since $H_{ij}\neq0$, this can be solved for $\Theta=\frac{1}{H_{ij}}\mathcal{X}_{ji}(\xi_{ij})$, as we wished to show.
\end{proof}

\subsection{Laplace adapted co-frame for systems of nonlinear PDEs}\label{Laplace Coframe Section}

We can now introduce another coframe on the equation manifold $\mathcal{R}^{\infty}$ which is constructed by utilizing the generalized Laplace transforms, $\mathcal{X}_{ij}$, defined in Section~\ref{genlapmetforsys}. This coframe will be denoted by
\begin{gather*}
\big\{\sigma_1, \sigma_2,\sigma_3,\Theta,\hat{\xi}_1^1,\hat{\xi}_2^1,\hat{\xi}_3^1,\ldots,\hat{\xi}_1^k,\hat{\xi}_2^k,\hat{\xi}_3^k,\ldots\big\}.
\end{gather*}
Before describing the elements of this coframe explicitly, we will pause to establish some necessary terminology and notation.

As described in Proposition~\ref{thetahatsolves}, if the $(i,j)$ Laplace invariants of a given system (\ref{scaleduniversallinearization}) are nonzero, then the system may be transformed into another system, written $\mathcal{X}_{ij}(\mathcal{L}_{lk})(\xi_{ij})=0$, of the same form. The $(i,j)$ Laplace invariants of that transformed system may likewise be computed. Denote these by $H_{ij}^1=H_{ij}(\mathcal{X}_{ij}(\mathcal{L}_{lk}))$ and $H_{ijq}^1=H_{ijq}(\mathcal{X}_{ij}(\mathcal{L}_{lk}))$. The process may be repeated so long as the $(i,j)$ Laplace invariants of the last system do not vanish. Assuming all the necessary Laplace invariants are nonzero, let the system of differential operators obtained by applying the $(i,j)$ Laplace transform $n$ times to the original system~(\ref{scaleduniversallinearization}) be denoted by~$\mathcal{X}_{ij}^n(\mathcal{L}_{lk})$. Accordingly, we will write
\begin{gather}\label{higherindexnotation}
H_{ij}^n=H_{ij}(\mathcal{X}_{ij}^n(\mathcal{L}_{lk}))\qquad \textrm{and}\qquad H_{ijq}^n=H_{ijq}(\mathcal{X}_{ij}^n(\mathcal{L}_{lk}))
\end{gather}
to refer to the $(i,j)$ Laplace invariants of the system of differential operators obtained by applying the $(i,j)$ Laplace transform to the original system (\ref{scaleduniversallinearization}) $n$ times. With this notation then, we will write $H_{ij}^0$ and $H_{ijq}^0$ to represent the Laplace invariants of (\ref{scaleduniversallinearization}) itself.

Using equations (\ref{coeftrans1})--(\ref{coeftrans6}) and the notation established in~(\ref{higherindexnotation}), we can write out explicitly the coefficients of the operator $\mathcal{X}_{ij}^{n}(\mathcal{L}_{lk})$ defining the $n^{\rm th}$ application of the $(i,j)$ Laplace transform to the system of equations $\mathcal{L}_{lk}=0$. Write
\begin{gather*}
\mathcal{X}_{ij}^n(\mathcal{L}_{lk})=X_iX_j+\mathcal{X}_{ij}^n\big(A_{lk}^l\big)X_i+\mathcal{X}_{ij}^n\big(A_{lk}^k\big)X_j+\mathcal{X}_{ij}^n(C_{lk})
\end{gather*}
and the coefficients $\mathcal{X}_{ij}^n\big(A_{lk}^l\big)$, $\mathcal{X}_{ij}^n\big(A_{lk}^k\big)$, and $\mathcal{X}_{ij}^n(C_{lk})$ can be computed in terms of the coefficients of the original system (\ref{scaleduniversallinearization}) as follows. Take the coefficient $\mathcal{X}_{ij}^n\big(A_{lk}^l\big)$ and proceed by induction, first expressing it in terms of the coefficient on~$X_i$ in the operator $\mathcal{X}_{ij}^{n-1}(\mathcal{L}_{lk})$:
\begin{gather*}
\mathcal{X}_{ij}^n\big(A_{lk}^l\big) = \mathcal{X}_{ij}^{n-1}\big(A_{lk}^l\big)-X_j\big(\log H_{ij}^{n-1}\big) \\
\hphantom{\mathcal{X}_{ij}^n\big(A_{lk}^l\big)}{} = \mathcal{X}_{ij}^{n-2}\big(A_{lk}^l\big)-X_j\big(\log H_{ij}^{n-2}\big)-X_j\big(\log H_{ij}^{n-1}\big) =\cdots\\
\hphantom{\mathcal{X}_{ij}^n\big(A_{lk}^l\big)}{} = A_{lk}^l- X_j\big(\log H_{ij}^0H_{ij}^1\cdots H_{ij}^{n-1}\big).
\end{gather*}
Likewise, it is straightforward to see that
\begin{gather*}
\mathcal{X}_{ij}^n\big(A_{lk}^k\big)=A_{lk}^k.
\end{gather*}

The following definition establishes some further notation concerning the generalized Laplace invariants.

\begin{Definition}Let $\operatorname{ind}(\mathcal{X}_{ij})=p_{ij}$ denote the number of times the $(i,j)$ Laplace transform must be applied to the system (\ref{scaleduniversallinearization}) in order to obtain vanishing $(i,j)$ Laplace invariants. That is, $H(\mathcal{X}_{ij}^{p_{ij}}(\mathcal{L}_{lk}))=0$. If the $(i,j)$ Laplace invariants never vanish despite repeated applications of the $(i,j)$ Laplace transform, then we will write $\operatorname{ind}(\mathcal{X}_{ij})=p_{ij}=\infty$.
\end{Definition}

With this notation in mind, we may now introduce the Laplace adapted coframe as the set of one-forms
\begin{gather}\label{laplacecoframedef}
\big\{\sigma_1, \sigma_2,\sigma_3,\Theta,\hat{\xi}_1^1,\hat{\xi}_2^1,\hat{\xi}_3^1,\ldots,\hat{\xi}_1^k,\hat{\xi}_2^k,\hat{\xi}_3^k,\ldots\big\},
\end{gather}
where, for $j=1,2,3$,
\begin{gather}
\hat{\xi}^1_j =X_j(\Theta)+A_{ij}^i\Theta ,\nonumber\\
\label{xinj} \hat{\xi}^n_j=X_j\big(\hat{\xi}_j^{n-1}\big)+\mathcal{X}_j^{n-1}\big(A_{ij}^i\big)\hat{\xi}_j^{n-1},\qquad \textrm{for}\quad n=2,\ldots,p_{ij}+1, \\
\hat{\xi}_j^{p_{ij}+n} =X_j\big(\hat{\xi}_j^{p_{ij}+n-1}\big),\qquad \textrm{for}\quad n\geq2 .\nonumber
\end{gather}
And for $i\neq j$,
\begin{gather}\label{xixij1}
X_i\big(\hat{\xi}_j^1\big) =H_{ij}^0\Theta-A_{ij}^j\hat{\xi}_j^1,\\
X_i\big(\hat{\xi}_j^n\big) =H_{ij}^{n-1}\hat{\xi}_j^{n-1}-\mathcal{X}_j^{n-1}\big(A_{ij}^j\big)\hat{\xi}_j^n,\qquad \textrm{for}\quad n=1,\ldots,p_{ij},\\
X_i\big(\hat{\xi}_j^{p_{ij}+n}\big) \equiv-\big(A_{ij}^j\big)^{p_{ij}}\hat{\xi}_j^{p_{ij}+n}\quad \mod\big\{\hat{\xi}_j^{p_{ij+1}},\ldots,\hat{\xi}_j^{p_{ij}+n-1}\big\}\qquad \textrm{for}\quad n>p_{ij}.\label{xixijpijplusn}
\end{gather}

The ${\rm d}_H$ structure equations for the Laplace adapted coframe (\ref{laplacecoframedef}) can be computed directly using equation~(\ref{dhomega}) and, given their complexity, are relegated to Appendix~\ref{appendixB}.

Define the vertical vector fields dual to the contact one forms $\Theta$ and $\hat{\xi}_j^n$ by
\begin{gather}\label{dualvectorfields}
\Theta(U)=1,\qquad \Theta\big(V_k^l\big)=0,\qquad \hat{\xi}_j^n(U)=0,\qquad \hat{\xi}_j^n\big(V_k^l\big)=\delta_k^j\delta_l^n,
\end{gather}
where $\delta_k^j$ and $\delta_l^n$ are both Kronecker delta functions. So, for example, $\hat{\xi}_1^n\big(V^n_1\big){=}1$ and $\hat{\xi}_2^n\big(V_3^n\big){=}0$. Then we can express the ${\rm d}_H$ structure equations above in terms of the Lie brackets of the characteristic vector fields $X_i$ and the vertical vector fields, $U$ and $V_k^l$. These congruences will be needed to prove some important results in the coming sections and have been provided in Appendix~\ref{appendixB} for the reader's reference.

\subsubsection[Adjoint of the linearized operator $\mathcal{L}_{ij}$]{Adjoint of the linearized operator $\boldsymbol{\mathcal{L}_{ij}}$}

We will now introduce the adjoint to the linearized operator $\mathcal{L}_{ij}$, which will play a pivotal role in subsequent sections. Let us begin with the following definition.
\begin{Definition}\label{defofadj}For a total differential operator $\mathcal{F}$, its formal adjoint operator, denoted by~$\mathcal{F}^*$, is the total differential operator such that for every $\rho\in\Omega^{0,s}$ and $\omega\in\Omega^{0,r}$ there exists $\gamma\in\Omega^{2,s+r}$ such that
\begin{gather*}%\label{defofadjeq}
[\rho\wedge\mathcal{F}(\omega)-\mathcal{F}^*(\rho)\wedge\omega]\wedge\sigma_1\wedge\sigma_2\wedge\sigma_3={\rm d}_H\gamma.
\end{gather*}
\end{Definition}

Using this definition, we will state the following proposition which describes the adjoint $\mathcal{L}_{ij}^*$ of the linearized operator~$\mathcal{L}_{ij}$. The proof of this result can be found in~\cite{vi841}.
\begin{Proposition}\label{adjproposition}The differential operator $\mathcal{L}_{ij}$ which defines the universal linearization~\eqref{scaleduniversallinearization} has the following adjoint operator
\begin{gather*}%\label{adjeqn}
\mathcal{L}_{ij}^*=X_iX_j+\big(A_{ij}^i\big)^*X_i+\big(A_{ij}^j\big)^*X_j+(C_{ij})^*,
\end{gather*}
where
\begin{gather*}%\label{adjpropcoef}
\big(A_{ij}^i\big)^*=-A_{ij}^i,\qquad \big(A_{ij}^j\big)^*=-A_{ij}^j,\qquad \textrm{and}\qquad (C_{ij})^*=C_{ij}-X_i\big(A_{ij}^i\big)-X_j\big(A_{ij}^j\big).
\end{gather*}
\end{Proposition}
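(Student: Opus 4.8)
The plan is to prove the proposition directly from Definition~\ref{defofadj} by repeated integration by parts, mirroring the familiar scalar computation but carried out for contact-form-valued arguments. Write $\varpi=\sigma_1\wedge\sigma_2\wedge\sigma_3$ for the horizontal volume form, and recall that each projected Lie derivative $X_i$ acts as a derivation on the algebra of contact forms, i.e.\ $X_i(\alpha\wedge\beta)=X_i(\alpha)\wedge\beta+\alpha\wedge X_i(\beta)$ for $\alpha,\beta\in\Omega^{0,*}(\mathcal{R}^\infty)$. This follows from the identity $X(\omega)=X\,\lrcorner\,{\rm d}_H\omega+{\rm d}_H(X\,\lrcorner\,\omega)$ together with the fact that a total vector field annihilates every contact form, so that $X_i(\eta)=X_i\,\lrcorner\,{\rm d}_H\eta$ whenever $\eta$ is purely contact, and the degree-zero derivation property introduces no spurious sign.

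First I would establish the single integration-by-parts identity that drives everything. For a $(0,p)$ form $\eta$ and a permutation $\{i,j,k\}$ of $\{1,2,3\}$, equations~\eqref{dhomega} and~\eqref{structureeqnfirstcoframe1} give ${\rm d}_H(\sigma_j\wedge\sigma_k\wedge\eta)=\sigma_j\wedge\sigma_k\wedge {\rm d}_H\eta=\sigma_j\wedge\sigma_k\wedge\sigma_i\wedge X_i(\eta)$, since the $\sigma_j$- and $\sigma_k$-terms of ${\rm d}_H\eta$ are killed by the repeated factors. Hence $X_i(\eta)\wedge\varpi$ is ${\rm d}_H$-exact, and applying this to $\eta=\alpha\wedge\beta$ and using the Leibniz rule yields the basic rule
\[
\alpha\wedge X_i(\beta)\wedge\varpi\equiv -X_i(\alpha)\wedge\beta\wedge\varpi
\]
modulo ${\rm d}_H$-exact $(3,*)$ forms, with no degree-dependent sign because $X_i$ has form-degree zero.

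Then I would process $\rho\wedge\mathcal{L}_{ij}(\omega)\wedge\varpi$ term by term. For the second-order term, two applications of the basic rule move both $X_i$ and $X_j$ onto $\rho$, producing $X_jX_i(\rho)\wedge\omega\wedge\varpi$; invoking the commutativity $[X_i,X_j]=0$ rewrites this as $X_iX_j(\rho)\wedge\omega\wedge\varpi$. For the first-order terms a single application sends, e.g., $A_{ij}^i\rho\wedge X_i(\omega)$ to $-X_i(A_{ij}^i\rho)\wedge\omega=-\big(X_i(A_{ij}^i)\rho+A_{ij}^iX_i(\rho)\big)\wedge\omega$, and likewise for the $X_j$-term; the zeroth-order term $C_{ij}\rho\wedge\omega$ is unchanged. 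Collecting coefficients gives $\rho\wedge\mathcal{L}_{ij}(\omega)\wedge\varpi\equiv\mathcal{L}_{ij}^*(\rho)\wedge\omega\wedge\varpi$ with precisely $\big(A_{ij}^i\big)^*=-A_{ij}^i$, $\big(A_{ij}^j\big)^*=-A_{ij}^j$, and $(C_{ij})^*=C_{ij}-X_i\big(A_{ij}^i\big)-X_j\big(A_{ij}^j\big)$, while the accumulated boundary terms assemble into a single $\gamma\in\Omega^{2,s+r}$, which is exactly what Definition~\ref{defofadj} demands.

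The computation is routine; the points demanding care are the verification that the projected Lie derivative is genuinely a derivation (so the Leibniz step introduces no extra contact terms) and the bookkeeping that keeps the boundary form in bidegree $(2,s+r)$ rather than spilling into a neighbouring column. The only structural input beyond formal manipulation is $[X_i,X_j]=0$, used to secure the leading term $X_iX_j$ of $\mathcal{L}_{ij}^*$; without it one would pick up an extra first-order correction $-[X_i,X_j](\rho)$, so the stated form of the adjoint is special to the commuting choice $X_i=D_i$ adopted for systems~\eqref{system}.
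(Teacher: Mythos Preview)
Your argument is correct. The integration-by-parts identity you set up is sound: since ${\rm d}_H\sigma_i=0$ and ${\rm d}_H$ is a graded derivation, ${\rm d}_H(\sigma_j\wedge\sigma_k\wedge\eta)=\sigma_j\wedge\sigma_k\wedge{\rm d}_H\eta=\pm\varpi\wedge X_i(\eta)$ for $\{i,j,k\}$ a permutation of $\{1,2,3\}$, so $X_i(\eta)\wedge\varpi$ is ${\rm d}_H$-exact. Your verification that the projected Lie derivative is an honest degree-zero derivation on $\Omega^{0,*}$ is also fine: for a total vector field and a purely contact form the projection is vacuous, since $X_i\lrcorner\omega=0$ and $X_i\lrcorner{\rm d}_V\omega=0$ force $\mathcal{L}_{X_i}\omega=X_i\lrcorner{\rm d}_H\omega\in\Omega^{0,*}$, and the ordinary Lie derivative already satisfies Leibniz. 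The term-by-term bookkeeping then delivers exactly the claimed coefficients, with $[X_i,X_j]=0$ used once to normalize the leading term.

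As for comparison with the paper: the paper does not supply a proof at all---it simply cites Vinogradov~[vi841]. So you are providing a self-contained argument where the paper defers to the literature. Your approach is the standard one and is undoubtedly what underlies the cited reference in this special case; the only thing to add is that your closing remark about the commutativity hypothesis is apt, since for general total vector fields~\eqref{generalvf} the adjoint would acquire the extra first-order term you identify, and the proposition as stated is indeed tailored to the choice $X_i=D_i$.
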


\subsubsection{Characteristic invariant contact forms}

Characteristic invariant contact forms play an important role in the construction of conservation laws, as we will see in Sections~\ref{structuretheoremsec} and~\ref{generating1sconslaws}. First we will define invariant functions and contact forms, and then analogously, relative invariant contact forms. Then an important result, Proposition~\ref{vanishingindices}, regarding the existence of relative invariant contact forms will be stated and proved.

\begin{Definition}For a total vector field $X$ on the equation manifold $\mathcal{R}^{\infty}$, a function $f$ defined on $\mathcal{R}^{\infty}$ is said to be an $X$ invariant function if $X(f)=0$.
\end{Definition}

Likewise for contact forms, we have

\begin{Definition}A type $(0,s)$ contact form $\omega$ is said to be invariant with respect to the total vector field $X$, or equivalently called an~$X$ invariant contact form, if $X(\omega)=X\,\lrcorner \,{\rm d}_H\omega=0$. Furthermore, if $\omega$ is invariant with respect to two distinct total vector fields, $X$ and $Y$, then we will say that $\omega$ is an $X$ and $Y$ invariant contact form.
\end{Definition}
If a slightly weaker condition is satisfied, then we have a contact form which is a \textit{relative} $X$ invariant, as described in the following definition.

\begin{Definition}For $X$ a total vector field, the type $(0,s)$ contact form $\omega$ is said to be a~rela\-ti\-ve~$X$ invariant contact form if $X(\omega)=\lambda\omega$ for some function~$\lambda$ on $\mathcal{R}^{\infty}$. If $\omega$ is invariant relative to two distinct total vector fields, $X$ and $Y$, then we say that $\omega$ is a relative~$X$ and~$Y$ invariant contact form.
\end{Definition}

Let $\mathcal{M}$ be a subring of $C^{\infty}(\mathcal{R}^{\infty})$ and let $\{\omega_1,\omega_2,\ldots,\omega_i,\ldots\}$ denote a collection of one forms. Then we will write \begin{gather*}\Omega_{\mathcal{M}}^s(\omega_1,\omega_2,\ldots,\omega_i,\ldots)\end{gather*} to denote the $\mathcal{M}$ module of $s$ forms generated by $\omega_{i_1}\wedge\omega_{i_2}\wedge\cdots\wedge\omega_{i_s}$. In the case that $\mathcal{M}=C^{\infty}(\mathcal{R}^{\infty})$, we will simply write
\begin{gather*}\Omega^s(\omega_1,\omega_2,\ldots,\omega_i,\ldots).\end{gather*}
Given this notation, we may state and prove the following proposition.

\begin{Proposition}\label{vanishingindices}Let $\mathcal{R}$ be a system of three hyperbolic equations with the characteristic vector fields $X_1$, $X_2$, and $X_3$, Laplace indices $\operatorname{ind}(\mathcal{X}_{ij})=p_{ij}$ where $H_{ij}^{p_{ij}}=0$, and Laplace-adapted coframe $\big\{\sigma_1,\sigma_2,\sigma_3,\Theta,\hat{\xi}_1^1, \hat{\xi}_2^1,\hat{\xi}_3^1,\ldots,\hat{\xi}_1^j,\hat{\xi}_2^j,\hat{\xi}_3^j,\ldots\big\}$. Then for $s\geq 1$ we can make the following conclusions.
\begin{enumerate}\itemsep=0pt
 \item[$1.$] If $\omega\in\Omega^{0,s}(\mathcal{R}^{\infty})$ is a relative $X_1$ and $X_2$ invariant form, then
 \begin{gather} \label{x12invforms}
 \omega\in\Omega^s\big(\hat{\xi}_3^{p_3+1},\hat{\xi}_3^{p_3+2},\ldots\big).
 \end{gather}
where $p_3=\min\{p_{13},p_{23}\}$. Furthermore, if $\operatorname{ind}(\mathcal{X}_{13})=\operatorname{ind}(\mathcal{X}_{23})=\infty$, then there do not exist any nonzero $(0,s)$ forms which are relative $X_1$ and $X_2$ invariant.
 \item[$2.$] If $\omega\in\Omega^{0,s}(\mathcal{R}^{\infty})$ is a relative $X_2$ and relative $X_3$ invariant form, then
 \begin{gather} \label{x23invforms}
 \omega\in\Omega^s\big(\hat{\xi}_1^{p_1+1}, \hat{\xi}_1^{p_1+2},\ldots\big),
 \end{gather}
 where $p_1=\min\{p_{21},p_{31}\}$. Furthermore, if $\operatorname{ind}(\mathcal{X}_{21})=\operatorname{ind}(\mathcal{X}_{31})=\infty$, then there do not exist any nonzero $(0,s)$ forms which are relative $X_2$ and $X_3$ invariant.
 \item[$3.$] If $\omega\in\Omega^{0,s}(\mathcal{R}^{\infty})$ is a relative $X_3$ and relative $X_1$ invariant form, then
 \begin{gather} \label{x13invforms}
 \omega\in\Omega^s\big(\hat{\xi}_2^{p_2+1},\hat{\xi}_2^{p_2+2},\ldots\big),
 \end{gather}
 where $p_2=\min\{p_{12},p_{32}\}$. Furthermore, if $\operatorname{ind}(\mathcal{X}_{12})=\operatorname{ind}(\mathcal{X}_{32})=\infty$, then there do not exist any nonzero $(0,s)$ forms which are relative $X_3$ and $X_1$ invariant.
\end{enumerate}
\end{Proposition}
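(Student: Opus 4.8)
The plan is to work entirely in the Laplace-adapted coframe \eqref{laplacecoframedef} and to exploit the fact that the projected Lie derivatives $X_1$, $X_2$, $X_3$ act on its constituent one-forms in a controlled, essentially triangular fashion relative to the adapted order. I would first read off from the structure equations \eqref{xixij1}--\eqref{xixijpijplusn} together with the defining recursion \eqref{xinj} three qualitatively distinct behaviours: $X_j$ \emph{raises} the index of $\hat\xi_j^n$ (and carries $\Theta$ into $\hat\xi_j^1$) with coefficient one; for $i\neq j$ and $1\le n\le p_{ij}$ the cross-derivative $X_i$ \emph{lowers} the index, the lowered term carrying the factor $H_{ij}^{n-1}$, which is nonzero precisely because $n-1<p_{ij}$; and for $n>p_{ij}$ the form $\hat\xi_j^n$ enters a stable regime in which $X_i$ preserves the block $\operatorname{span}\big\{\hat\xi_j^{p_{ij}+1},\dots,\hat\xi_j^n\big\}$. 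The whole argument is an elimination scheme built on these three facts, and since the three parts of the proposition are interchanged by permuting $\{1,2,3\}$, it suffices to prove part~1.

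Next I would expand a relative $X_1$ and $X_2$ invariant form $\omega\in\Omega^{0,s}(\mathcal R^\infty)$ as a sum of wedge monomials in $\big\{\Theta,\hat\xi_1^n,\hat\xi_2^n,\hat\xi_3^n\big\}$ and impose $X_1(\omega)=\lambda_1\omega$. Ordering monomials by the highest index they contain in direction~$1$, the raising behaviour of $X_1$ shows that a monomial of maximal direction-$1$ index is sent by $X_1$ to one of strictly larger index with nonzero coefficient (the derivation term $X_1(f)$ notwithstanding), which cannot be matched by $\lambda_1\omega$; this forces $\omega$ to contain no $\Theta$ and no $\hat\xi_1^n$ factor. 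Then, ordering instead by the \emph{lowest} direction-$2$ and direction-$3$ indices, the lowering behaviour with the nonzero factors $H_{12}^{m}$, $H_{13}^{m}$ for $m<p_{1j}$ lets me peel off, from the bottom up, every low-index factor $\hat\xi_2^{\le p_{12}}$ and $\hat\xi_3^{\le p_{13}}$: the bottom term is lowered to an index not otherwise present, so its coefficient must vanish, and an induction climbs to the index. Thus relative $X_1$ invariance confines $\omega$ to the stable blocks $\hat\xi_2^{>p_{12}}$ and $\hat\xi_3^{>p_{13}}$.

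Running the identical argument for $X_2$ removes the direction-$2$ factors altogether, direction~$2$ being the raising direction for $X_2$, and confines the direction-$3$ factors to the stable block determined by the $(2,3)$ transform. Combining the two confinements leaves $\omega$ supported entirely on $\hat\xi_3$-factors whose index exceeds the threshold $p_3$ beyond which the relevant direction-$3$ Laplace invariants have vanished, giving $\omega\in\Omega^s\big(\hat\xi_3^{p_3+1},\hat\xi_3^{p_3+2},\dots\big)$ with $p_3=\min\{p_{13},p_{23}\}$. The final assertion is then immediate: if $\operatorname{ind}(\mathcal X_{13})=\operatorname{ind}(\mathcal X_{23})=\infty$ there is no stable block in direction~$3$ at all, since the subdiagonal invariants $H_{13}^{m}$ and $H_{23}^{m}$ never vanish, so the same peeling removes every direction-$3$ factor and forces $\omega=0$.

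The main obstacle I anticipate is the passage from $s=1$ to general $s$: because the projected Lie derivative is a derivation on wedge products, $X_1(\omega)$ mixes the factors of each monomial, and one must install a well-ordering on the multi-indices labelling the wedge monomials under which $X_1$ (respectively $X_2$) has an unambiguous leading term that cannot cancel against the other monomials or against the derivation terms $X_1(f)$. A secondary technical point, already present at $s=1$, is that $\lambda$ is a function rather than a constant, so the vanishing of the low-index coefficients is extracted not from a linear-algebraic eigenvalue computation but from solving the resulting triangular system of first-order relations, where the nonvanishing of the subdiagonal invariants $H_{ij}^{m}$ below the index is exactly what makes the system force the coefficients to zero.
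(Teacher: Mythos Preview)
Your overall strategy---exploit the triangular raising/lowering action of the $X_i$ on the Laplace-adapted coframe---is exactly the paper's, but there is a concrete gap in the step where you claim that relative $X_1$-invariance alone ``confines $\omega$ to the stable blocks $\hat\xi_2^{>p_{12}}$ and $\hat\xi_3^{>p_{13}}$''. The problem is that under $X_1$ both $\hat\xi_2^1$ and $\hat\xi_3^1$ lower to the \emph{same} form $\Theta$, with coefficients $H_{12}^0$ and $H_{13}^0$ respectively; see \eqref{xixij1}. So the bottom rungs of the two ladders collide. Already for $s=1$, writing $\omega=f_1\hat\xi_2^1+g_1\hat\xi_3^1+\cdots$, the vanishing of the $\Theta$-component of $X_1(\omega)-\lambda_1\omega$ yields only the single relation $f_1H_{12}^0+g_1H_{13}^0=0$, not $f_1=g_1=0$; the higher equations then express $f_2,f_3,\dots$ in terms of $f_1$ and $\lambda_1$ rather than forcing them to vanish. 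Your ``peeling from the bottom'' in directions $2$ and $3$ simultaneously therefore does not start.

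The paper's remedy is simply to reorder: first run the raising argument for $X_1$ to kill $\Theta$ and all $\hat\xi_1^n$, then run the raising argument for $X_2$ (direction $2$ is now the raising direction) to kill all $\hat\xi_2^n$, arriving at $\omega\in\Omega^s(\hat\xi_3^1,\dots,\hat\xi_3^k)$. Only then does one return to $X_1$ and run the lowering argument on the $\hat\xi_3$-chain alone; now there is no competing ladder, the sole source of $\Theta$ in $X_1(\omega)$ is $H_{13}^0\hat\xi_3^1$, and the nonvanishing of $H_{13}^0,\dots,H_{13}^{p_{13}-1}$ peels off $\hat\xi_3^1,\dots,\hat\xi_3^{p_{13}}$ one at a time. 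The paper packages this via the dual vertical fields $U,V_k^l$ of \eqref{dualvectorfields} and the identity \eqref{verhorint}, $V\lrcorner X_i(\omega)=[V,X_i]\lrcorner\omega+X_i(V\lrcorner\omega)$, together with the Lie bracket congruences of Proposition~\ref{liebracketcongruences}: one takes successive interior products with $V_1^{k+1},V_1^k,\dots,V_1^1$, then with $V_2^{k+1},\dots,V_2^1$, and finally with $U,V_3^1,\dots,V_3^{p_{13}-1}$. This formulation has the side benefit of disposing of the $s\ge 2$ obstacle you flag, since each interior product drops the vertical degree by one and no ordering on wedge monomials is needed.
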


\begin{proof}Suppose that $\omega$ is a type $(0,s)$ contact form of adapted order $k$ which is relative~$X_1$ and~$X_2$ invariant, so that $\omega\in\Omega^s\big(\Theta,\hat{\xi}_1^1,\hat{\xi}_2^1,\hat{\xi}_3^1,\ldots,\hat{\xi}_1^k,\hat{\xi}_2^k,\hat{\xi}_3^k\big)$ and $X_1(\omega)=\lambda_1\omega$ and \smash{$X_2(\omega)=\lambda_2\omega$} for some functions $\lambda_1,\lambda_2\in C^{\infty}(\mathcal{R}^{\infty})$. We will begin by showing that $\omega$ in fact lies in $\Omega^s\big(\hat{\xi}_3^1,\hat{\xi}_3^2,$ $\ldots,\hat{\xi}_3^k\big)$. To do this, recall the vertical vector fields (\ref{dualvectorfields}), denoted by~$U$ and~$V_k^l$, which we defined to be dual to the contact forms making up the Laplace-adapted coframe. By assumption, $V_1^{k+1}\,\lrcorner\,\omega=0$. Now take the interior product of $X_1(\omega)=\lambda_1\omega$ with~$V_1^{k+1}$. By Proposition~\ref{verhorprop}, $V_1^{k+1}\,\lrcorner\,(X_1(\omega))=\big[V_1^{k+1},X_1\big]\,\lrcorner\,\omega+X_1(V_1^{k+1}\,\lrcorner\,\omega)=\big[V_1^{k+1},X_1\big]\,\lrcorner\,\omega$. At the same time $V_1^{k+1}\,\lrcorner\,(X_1(\omega))=V_1^{k+1}\,\lrcorner\,(\lambda_1\omega)=0$. So we conclude that
\begin{gather}\label{v1bracketx1}
\big[V_1^{k+1},X_1\big]\,\lrcorner\,\omega=0.
\end{gather}
Now use the Lie bracket congruences (\ref{x1withv11}), (\ref{x1withv1n}) and (\ref{x1withv1n2}) to see that
\begin{gather*}
\big[X_1,V_1^1\big] \equiv -U\quad \mod\big\{X_1,X_2,X_3,V_1^1\big\},\\
\big[X_1,V_1^k\big] \equiv -V_1^{k-1}\quad \mod\big\{X_1,X_2,X_3,V_1^k\big\}\qquad \textrm{for}\quad k\geq2.
\end{gather*}
So (\ref{v1bracketx1}) reduces to $V_1^k\,\lrcorner\,\omega=0$. Next take the interior product of $X_1(\omega)$ with $V_1^k$ and deduce similarly that $V_1^{k-1}\,\lrcorner\,\omega=0$. Continue in this manner, taking interior products of the vertical vector fields $V_1^n$ with $X_1(\omega)$. Taking the interior product with $V_1^1$ yields $U\,\lrcorner\,\omega=0$ so that finally we have $U\,\lrcorner\,\omega=V_1^1\,\lrcorner\,\omega=\cdots=V_1^k\,\lrcorner\,\omega=0$. This shows that
\begin{gather*}%\label{xi2and3}
\omega\in\Omega^s\big(\hat{\xi}_2^1,\hat{\xi}_3^1,\hat{\xi}_2^2,\hat{\xi}_3^2,\ldots,\hat{\xi}_2^k,\hat{\xi}_3^k\big).
\end{gather*}

Since $\omega$ is also relative $X_2$ invariant, we can repeat the same procedure taking the interior products of $\omega$ with $V_2^{k+1},V_2^k,\ldots,V_2^1$ to conclude
\begin{gather}\label{xi3}
\omega\in\Omega^s\big(\hat{\xi}_3^1,\hat{\xi}_3^2,\ldots,\hat{\xi}_3^k\big).
\end{gather}
Without loss of generality, assume that $p_3=p_{13}$, take the interior product of $X_1(\omega)$ with $U, V_3^1,\ldots,V_3^{p_{13}-1}$ and utilize the Lie bracket congruences (\ref{x1withu}) and (\ref{x1withv31})--(\ref{x1withv3n}). Since $U\,\lrcorner\,\omega=0$, taking $U\,\lrcorner\,X_1(\omega)$ implies $H_{13}^0V_3^1\,\lrcorner\,\omega=0$. Because $H_{13}^0$ is nonzero, we see that $V_3^1\,\lrcorner\,\omega=0.$ Next take $V_3^1\,\lrcorner\,X_1(\omega)$ to obtain $V_3^2\,\lrcorner\,\omega=0$. Continue taking interior products until we see that
\begin{gather}\label{v3zeros}
V_3^1\,\lrcorner\,\omega=V_3^2\,\lrcorner\,\omega=\cdots=V_3^{p_{13}}\,\lrcorner\,\omega=0.
\end{gather}
Nothing can be concluded by taking the interior product of $X_1(\omega)$ with $V_3^{p_3}$ since $H_{13}^{p_{13}}=0$. Then equations (\ref{xi3}) and (\ref{v3zeros}) imply (\ref{x12invforms}).

Finally, suppose $\operatorname{ind}(\mathcal{X}_{13})=\operatorname{ind}(\mathcal{X}_{23})=\infty$. Then clearly if $\omega$ is a relative~$X_1$ invariant form then the preceding argument shows $\omega$ must equal 0. The proofs of (\ref{x23invforms}) and (\ref{x13invforms}) follow by analogous arguments, permuting the roles of $X_1$, $X_2$, and $X_3$ as needed.
\end{proof}

\subsection[Generating $(2,s)$ conservation laws]{Generating $\boldsymbol{(2,s)}$ conservation laws}\label{structuretheoremsec}

In this section we will investigate the construction of $(2,s)$ conservation laws from solutions to the adjoint equation of a given linearized system. Let $\mathcal{L}_{ij}$ denote the universal linearization of $F_{ij}=u_{ij}-f_{ij}\big(x^1,x^2,x^3,u,u_i,u_j\big)=0$, for $1\leq i < j\leq 3$, expressed as
\begin{gather*}
\mathcal{L}_{ij}(\theta)=X_iX_j(\theta)+A_{ij}^iX_i(\theta)+A_{ij}^jX_j(\theta)+C_{ij}\theta=0,
\end{gather*}
with coefficients given by (\ref{sulcoef1})--(\ref{sulcoef3}). Each total differential operator $\mathcal{L}_{ij}$ has an associated adjoint operator, written as
\begin{gather*}
\mathcal{L}_{ij}^*=X_iX_j+\big(A_{ij}^i\big)^*X_i+\big(A_{ij}^j\big)^*X_j+(C_{ij})^*
\end{gather*}
 and defined in Proposition \ref{adjproposition}. Let $\big\{\sigma_1,\sigma_2,\sigma_3,\Theta,\hat{\xi}_1^1,\hat{\xi}_2^1,\hat{\xi}_3^1,\ldots,\hat{\xi}_1^j,\hat{\xi}_2^j,\hat{\xi}_3^j,\ldots\big\}$ be the Laplace adapted coframe described in Section~\ref{Laplace Coframe Section}.

 Then for $s\geq 1$, we can define a map
\begin{gather*}\Psi\colon \ \Omega^{0,s-1}(\mathcal{R}^{\infty})\rightarrow\Omega^{2,s}(\mathcal{R}^{\infty})\end{gather*} as follows
\begin{gather}\label{psidef1}
\Psi_{12}(\rho_{12})=\frac{1}{2}\sigma_1\wedge\sigma_3\wedge\big[\Theta\wedge\psi_1^{12}+\hat{\xi}_1^1\wedge\rho_{12}\big]-\frac{1}{2} \sigma_2\wedge\sigma_3\wedge\big[\Theta\wedge\psi_2^{12}-\hat{\xi}_2^1\wedge\rho_{12}\big], \\
\label{psidef2}
\Psi_{23}(\rho_{23})=\frac{1}{2}\sigma_2\wedge\sigma_1\wedge\big[\Theta\wedge\psi_2^{23}+\hat{\xi}_2^1\wedge\rho_{23}\big]-\frac{1}{2}\sigma_3\wedge\sigma_1\wedge\big[\Theta\wedge\psi_3^{23}-\hat{\xi}_3^1\wedge\rho_{23}\big], \\
\label{psidef3}
\Psi_{13}(\rho_{13})=\frac{1}{2}\sigma_1\wedge\sigma_2\wedge\big[\Theta\wedge\psi_1^{13}+\hat{\xi}_1^1\wedge\rho_{13}\big]-\frac{1}{2}\sigma_3\wedge\sigma_2\wedge\big[\Theta\wedge\psi_3^{13}-\hat{\xi}_3^1\wedge\rho_{13}\big],
\end{gather}
where the $\psi_k^{ij}$ are defined by
\begin{alignat*}{3}
& \psi_1^{12} =X_1(\rho_{12})-A_{12}^2\rho_{12},\qquad && \psi_2^{12} =-X_2(\rho_{12})+A_{12}^1\rho_{12},& \\
&\psi_2^{23}=X_2(\rho_{23})-A_{23}^3\rho_{23},\qquad && \psi_3^{23} =-X_3(\rho_{23})+A_{23}^2\rho_{23}, & \\
& \psi_1^{13}=X_1(\rho_{13})-A_{13}^3\rho_{13},\qquad && \psi_3^{13}=-X_3(\rho_{23})+A_{13}^1\rho_{13}. &
\end{alignat*}
We may now state the following theorem whose proof will employ an integration by parts type of argument to express any ${\rm d}_H$-closed $(2,s)$ form in terms of solutions to the adjoint equation \begin{gather*}\sum_{1\leq i< j\leq3}\mathcal{L}^*_{ij}(\rho_{ij})=0.\end{gather*}

\begin{Theorem}\label{structuretheorem}Let $s\geq1$ and let $\omega\in\Omega^{2,s}(\mathcal{R}^{\infty})$ be a ${\rm d}_H$-closed form. Then there exist contact forms \begin{gather*}\rho_{ij}\in\Omega^{0,s-1}(\mathcal{R}^{\infty})\qquad \textrm{and}\qquad \gamma\in\Omega^{1,s}(\mathcal{R}^{\infty})\end{gather*} for $1\leq i < j\leq 3$, such that $\omega$ is given by
\begin{gather*}
\omega=\sum_{1\leq i < j\leq 3}\Psi_{ij}(\rho_{ij})+{\rm d}_H\gamma
\end{gather*}
 and the $\rho_{ij}$ satisfy the equation
\begin{gather*}
\sum_{1\leq i < j\leq 3}\mathcal{L}_{ij}^*(\rho_{ij})=0.
\end{gather*}
\end{Theorem}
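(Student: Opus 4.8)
The plan is to reproduce, in the bi-graded setting, the mechanism behind the classical Theorem~\ref{classicalkernel}: I will integrate by parts to strip the total derivatives off the high-order contact generators, thereby exhibiting $\omega$ as a sum of pairings of multiplier forms $\rho_{ij}$ against the linearized operators $\mathcal{L}_{ij}$ modulo a ${\rm d}_H$-exact term, and then read the adjoint equation off the closedness of $\omega$. First I would expand $\omega$ in the Laplace adapted coframe of Section~\ref{Laplace Coframe Section}, writing
\begin{gather*}
\omega=\sigma_2\wedge\sigma_3\wedge M_1+\sigma_1\wedge\sigma_3\wedge M_2+\sigma_1\wedge\sigma_2\wedge M_3,
\end{gather*}
with each $M_a\in\Omega^{0,s}(\mathcal{R}^{\infty})$ a polynomial in the generators $\{\Theta,\hat{\xi}_c^k\}$. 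Since ${\rm d}_H\omega$ is a $(3,s)$ form, the hypothesis ${\rm d}_H\omega=0$ is equivalent, by the formula for ${\rm d}_H$ of a $(2,s)$ form given in Section~\ref{characteristiccoframe}, to the single relation $X_1(M_1)-X_2(M_2)+X_3(M_3)=0$.

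The core of the argument is a reduction, by induction on adapted order, of the coefficients $M_a$ into the ideal generated by the order-$\le 1$ forms $\Theta,\hat{\xi}_1^1,\hat{\xi}_2^1,\hat{\xi}_3^1$. For a $(1,s)$ form $\sigma_p\wedge\eta$ one has ${\rm d}_H(\sigma_p\wedge\eta)=-\sum_{q\neq p}\sigma_p\wedge\sigma_q\wedge X_q(\eta)$ by~\eqref{dhomega} and ${\rm d}_H\sigma_p=0$, so subtracting an exact form ${\rm d}_H\gamma$ of this shape lets me cancel, from the slot $\sigma_a\wedge\sigma_b$, any leading generator $\hat{\xi}_c^k$ with $c\in\{a,b\}$ and $k\ge 2$: the recursion~\eqref{xinj} gives $X_c(\hat{\xi}_c^{k-1})=\hat{\xi}_c^k+(\text{lower order})$, while the cross-derivative relations~\eqref{xixij1} (and their higher analogues in Appendix~\ref{appendixB}) keep $X_q(\hat{\xi}_c^{k-1})$ at order $\le k-1$ for $q\neq c$, so the step strictly lowers order and introduces no new top-order generator elsewhere. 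The generators beyond the reach of integration by parts are precisely the ``diagonal'' ones, $\hat{\xi}_c^k$ lying in the slot that omits $\sigma_c$; but these are exactly the terms annihilated by the closedness relation, since at order $k+1$ the forms $\hat{\xi}_1^{k+1},\hat{\xi}_2^{k+1},\hat{\xi}_3^{k+1}$ occurring in $X_1(M_1)-X_2(M_2)+X_3(M_3)$ are coframe-independent and each springs from a single diagonal coefficient. Alternating these two reductions order by order brings $\omega$, modulo an explicit ${\rm d}_H\gamma$, to a representative whose coefficients are divisible by $\Theta$ or by some $\hat{\xi}_c^1$, at which point I would read off each $\rho_{ij}$ as the $(0,s-1)$-form coefficient of the appropriate $\hat{\xi}_c^1$.

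It then remains to recognise the reduced representative as $\sum_{i<j}\Psi_{ij}(\rho_{ij})$ and to derive the adjoint equation. Matching slot by slot against \eqref{psidef1}--\eqref{psidef3}, the cancellation of the residual $\hat{\xi}_c^1$-terms under ${\rm d}_H$ forces the companion $\Theta$-coefficients into exactly the combinations $\psi_k^{ij}$ (each of the form $X_k(\rho_{ij})$ plus a multiple of $\rho_{ij}$) recorded after \eqref{psidef3}, so that the reduced form is indeed $\sum_{i<j}\Psi_{ij}(\rho_{ij})$. For the multipliers, I would apply ${\rm d}_H$ to the decomposition and invoke the adjoint integration-by-parts identity of Definition~\ref{defofadj}: pairing $\rho_{ij}$ with $\mathcal{L}_{ij}(\Theta)$ produces $[\rho_{ij}\wedge\mathcal{L}_{ij}(\Theta)-\mathcal{L}_{ij}^*(\rho_{ij})\wedge\Theta]\wedge\sigma_1\wedge\sigma_2\wedge\sigma_3={\rm d}_H\gamma_{ij}$, and since $\mathcal{L}_{ij}(\Theta)=0$ on $\mathcal{R}^{\infty}$ the closedness ${\rm d}_H\omega=0$ collapses to $\sum_{i<j}\mathcal{L}_{ij}^*(\rho_{ij})=0$, the adjoint coefficients $-A_{ij}^i,-A_{ij}^j,(C_{ij})^*$ of Proposition~\ref{adjproposition} appearing from the sign pattern built into the $\psi_k^{ij}$. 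This is the exact bi-graded counterpart of the passage from~\eqref{characteristicform} to Theorem~\ref{classicalkernel}.

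I expect the reduction to be the principal obstacle. The delicate point is the bookkeeping that separates the generators removable by integration by parts from those removable only by closedness, which rests on the full structure equations and Lie-bracket congruences of Appendix~\ref{appendixB}; one must check that the alternating induction terminates, i.e.\ that a parts-reduction never resurrects a diagonal generator and that applying closedness never reintroduces an off-diagonal one at the same order, and that the accumulated $\gamma$ remains a genuine $(1,s)$ form on $\mathcal{R}^{\infty}$. By comparison, the closing identification of $\sum\mathcal{L}_{ij}^*(\rho_{ij})$ is largely mechanical, though it still demands care in matching the half-coefficients and signs of \eqref{psidef1}--\eqref{psidef3} against the adjoint formulas of Proposition~\ref{adjproposition}.
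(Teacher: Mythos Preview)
Your strategy is genuinely different from the paper's. The paper does not carry out an order-by-order reduction on $\mathcal{R}^\infty$ at all: it lifts $\omega$ to a form $\tilde\omega$ on the ambient jet space $J^\infty(E)$, uses a lemma characterising $\ker\iota^*$ to write ${\rm d}_H\tilde\omega$ as a combination of the $F_{ij}$, ${\rm d}_VF_{ij}$ and their total derivatives, integrates by parts \emph{there} to reach the characteristic form ${\rm d}_H\tilde\omega={\rm d}x^1\wedge{\rm d}x^2\wedge{\rm d}x^3\wedge\bigl[\sum F_{ij}\tilde\zeta_{ij}+\sum{\rm d}_VF_{ij}\wedge\tilde\rho_{ij}\bigr]$, and then reads off the adjoint equation by applying the interior Euler--Lagrange operator $J$ (which annihilates ${\rm d}_H$-images by a cited result) and recovers the $\Psi_{ij}$-decomposition by applying the horizontal homotopy operator $h_H^{3,s}$ and pulling back. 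Only at the end is everything rewritten in the Laplace adapted coframe. This machinery replaces your hand-built induction and delivers both the multipliers $\rho_{ij}$ and the exact term $\gamma$ in one stroke.

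Your direct approach on $\mathcal{R}^\infty$ is plausible, but two points are more delicate than you indicate. First, termination: an integration-by-parts step killing an off-diagonal $\hat\xi_c^k$ in one slot injects, via $X_d$ acting on the $(0,s-1)$ coefficient, an order-$k$ generator into a neighbouring slot, so a naive count of order-$k$ generators need not drop; you need a lexicographic or weighted ordering that genuinely decreases. Second, the identification with $\sum\Psi_{ij}(\rho_{ij})$ is not automatic. Once each $M_a$ lies in the ideal $(\Theta,\hat\xi_1^1,\hat\xi_2^1,\hat\xi_3^1)$ with no diagonal $\hat\xi_a^1$, you have six independent $\hat\xi^1$-coefficients against only three $\rho_{ij}$, and the $\Theta$-coefficients must match the specific first-order combinations $\psi_k^{ij}$. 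Closedness alone does not force these pairings; you will still need further ${\rm d}_H$-exact corrections of the shape $\sigma_p\wedge\Theta\wedge\beta$ (which shift off-diagonal $\hat\xi_q^1$-coefficients) to align everything. The paper's homotopy operator produces this alignment for free, which is precisely what your route trades away for elementariness.
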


The proof of Theorem~\ref{structuretheorem} will make use of the following lemma, which concerns the forms on~$J^{\infty}(E)$ that lie in the kernel of the inclusion map $\iota\colon \mathcal{R}^{\infty}\rightarrow J^{\infty}(E)$, where $\mathcal{R}^{\infty}$ is the infinitely prolonged equation manifold defined by the system of equations $F_{ij}=0$.

\begin{Lemma}\label{pullstozero} If $\omega\in\Omega^p(J^{\infty}(E))$ satisfies $\iota^*\omega=0$ then $\omega$ can be expressed as
\begin{gather*}%\label{intbypartsii}
\omega = \sum_{1\leq i < j\leq 3}\sum_{l,k}\alpha_{lk}^{ij}D_i^lD_j^kF_{ij}+\sum_{l,k,m}\alpha_{lkm}D_1^lD_2^kD_3^mF_{12}\\
\hphantom{\omega =}{} +\sum_{1\leq i < j\leq 3}\sum_{l,k}\beta_{lk}^{ij}\wedge {\rm d}_V\big(D_i^lD_j^kF_{ij}\big)+\sum_{l,k,m}\beta_{lkm}\wedge {\rm d}_V\big(D_1^lD_2^kD_3^mF_{12}\big),
\end{gather*}
where $\alpha_{lk}^{ij}, \alpha_{lkm}\in\Omega^p(J^{\infty}(E))$ and $\beta_{lk}^{ij},\beta_{lkm}\in\Omega^{p-1}(J^{\infty}(E))$.
\end{Lemma}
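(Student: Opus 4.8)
The plan is to recognize Lemma~\ref{pullstozero} as the jet-bundle analogue of the standard fact that a differential form vanishing on a submanifold cut out by functionally independent equations lies in the graded ideal generated by those equations and their differentials. The decisive first step is to produce an adapted coordinate system on $J^\infty(E)$ in which $\mathcal{R}^\infty$ is a coordinate slice. Concretely, I would collect the defining functions $G_a\in\{D_i^lD_j^kF_{ij}\}_{1\le i<j\le3}$ together with the triply-mixed family $\{D_1^lD_2^kD_3^mF_{12}\}$ (all three multiplicities positive), and observe that the highest-order term of each such function is a single, distinct mixed jet coordinate $u_J$, namely one whose multi-index $J$ involves at least two of the directions $1,2,3$. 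Indeed, $D_i^lD_j^kF_{ij}=u_{i^{l+1}j^{k+1}}-D_i^lD_j^kf_{ij}$, and since $f_{ij}$ is first order the subtracted term has order $\le l+k+1<l+k+2$, so $u_{i^{l+1}j^{k+1}}$ is the genuine leading term. Tracking multiplicities $(a,b,c)$ shows the two-index families cover exactly the $u_J$ with one multiplicity zero while the $F_{12}$-family covers those with all three positive, giving a bijection between the $G_a$ and the mixed coordinates. Because each $G_a$ equals its leading mixed coordinate plus a function of lower-order and pure coordinates, the change of variables from the standard jet coordinates to $\big(\text{the pure coordinates of }(\ref{natcoorrinfty}),\ \{G_a\}\big)$ is triangular with unit diagonal, hence a smooth coordinate system in which $\mathcal{R}^\infty=\{G_a=0\}$.

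A point requiring care here is non-redundancy. The coordinate $u_{123}$, and each of its prolongations, is the leading term of $D_3F_{12}$, of $D_2F_{13}$, and of $D_1F_{23}$ at once; the integrability conditions $D_kf_{ij}=D_if_{kj}$ assumed for~(\ref{quasisystem}) force these to coincide exactly, e.g.\ $D_3F_{12}=u_{123}-D_3f_{12}=u_{123}-D_1f_{23}=D_1F_{23}$. Thus selecting only the family built from $F_{12}$ supplies a complete and non-redundant set of transverse coordinates for the all-positive block, which is precisely why the lemma's triply-mixed sum ranges over $F_{12}$ alone; the omitted prolongations such as $D_2F_{13}$ are dependent, and involutivity guarantees that all remaining $D_IF_{ij}$ vanish automatically once the chosen $G_a$ vanish, so $\{G_a=0\}$ really is $\mathcal{R}^\infty$.

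Next I would reduce the claim from the full exterior derivative to $\mathrm{d}_V$. Since $\mathrm{d}_HG_a=\sum_i(D_iG_a)\,\mathrm{d}x^i$ and each $D_iG_a$ is again one of the defining functions, any term $\beta\wedge\mathrm{d}G_a$ rewrites as $\beta\wedge\mathrm{d}_VG_a+\sum_i(D_iG_a)(\beta\wedge\mathrm{d}x^i)$, whose first summand is of the desired $\beta\wedge\mathrm{d}_V(\cdot)$ type and whose remaining summands are of the $\alpha\cdot(\cdot)$ type; hence it suffices to place $\omega$ in the graded ideal generated by the $G_a$ and the one-forms $\mathrm{d}G_a$. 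Writing $\omega$ in the adapted coordinates, I split each coordinate monomial according to whether it contains a factor $\mathrm{d}G_a$. Every monomial containing at least one such factor is literally $\beta\wedge\mathrm{d}G_a$ for some $\beta\in\Omega^{p-1}(J^\infty(E))$, hence lands in the ideal after the conversion above. For the residual part $\omega_0$, built only from $\mathrm{d}x^i$ and pure-coordinate differentials, note that all the ideal terms pull back to zero under $\iota$ (because $\iota^*\mathrm{d}G_a=\mathrm{d}(\iota^*G_a)=0$), so $\iota^*\omega=\iota^*\omega_0$; and $\iota^*\omega_0$ sets $G_a=0$ in the coefficients while restricting the pure differentials to an independent coframe on $\mathcal{R}^\infty$. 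From $\iota^*\omega=0$ each coefficient of $\omega_0$ vanishes on $\{G_a=0\}$, so Hadamard's lemma (in the finitely many coordinates on which each finite-order coefficient depends) yields $c=\sum_aG_ah_a$ with $h_a$ smooth, placing $\omega_0$ in the function-generated part of the ideal and completing the proof.

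The main obstacle I anticipate is the first step: rigorously verifying that the listed total derivatives of the $F_{ij}$, together with the pure coordinates~(\ref{natcoorrinfty}), form a genuine coordinate system on the inverse-limit manifold $J^\infty(E)$. This demands the combinatorial bijection between mixed multi-indices and defining functions, the triangularity argument for invertibility, and the consistency furnished by the integrability conditions. One must also keep track of the fact that all forms and coefficient functions involved are of finite order, so that the coordinate change and Hadamard's lemma are applied on an honest finite-order jet bundle before passing to the limit.
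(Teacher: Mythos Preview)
Your proposal is correct and follows essentially the same route as the paper: introduce adapted coordinates on $J^{\infty}(E)$ in which $\mathcal{R}^{\infty}$ is the slice $\{G_a=0\}$, split $\omega$ into a piece involving the differentials ${\rm d}_VG_a$ and a residual $\omega_0$ built from the pure coframe, and then use that $\iota^*\omega_0=0$ forces each coefficient to lie in the ideal generated by the $G_a$. You are considerably more explicit than the paper about the bijection between mixed jet coordinates and the $G_a$, the non-redundancy handled by the integrability conditions, and the finite-order/Hadamard step, all of which the paper simply asserts.
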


\begin{proof}Utilizing the system of equations (\ref{system}), there is a set of coordinates
\begin{gather}\label{coorjinfty}
\big(x^1,x^2,x^3,u,u_1,u_2,u_3,{\ldots},u_{1^k},u_{2^k},u_{3^k},F_{12},F_{23},F_{13},{\ldots},D_1^lD_2^kD_3^mF_{12},D_i^lD_j^kF_{ij},{\ldots}\big)\!\!\!\!\!
\end{gather}
 on $J^{\infty}(E)$ and a corresponding basis of one forms on $J^{\infty}$ consisting of
\begin{gather}\label{coframei}
\big({\rm d}x^1,{\rm d}x^2,{\rm d}x^3,\theta,\theta_1,\theta_2,\theta_3,\ldots,\theta_{1^j},\theta_{2^j},\theta_{3^j},\ldots\big)
\end{gather}
along with
 \begin{gather} \label{coframeii}
 {\rm d}_V\big(D_i^lD_j^kF_{ij}\big)\qquad \textrm{and}\qquad {\rm d}_V\big(D_1^lD_2^kD_3^mF_{12}\big).
 \end{gather}
Using this coframe, any $p$-form $\omega$ on $J^{\infty}(E)$ can be written as
\begin{gather}\label{intbypartsi}
\omega=\omega_0+\sum_{l,k}\sum_{1\leq i < j\leq 3}\beta_{lk}^{ij}\wedge {\rm d}_V\big(D_i^lD_j^kF_{ij}\big)+\sum_{l,k,m}\beta_{lkm}\wedge {\rm d}_V\big(D_1^lD_2^kD_3^mF_{12}\big),
\end{gather}
where $\omega_0$ is a $p$-form generated by the forms in (\ref{coframei}) and the $\beta$ are $(p-1)$-forms on $J^{\infty}(E)$. Since all the terms involving ${\rm d}_V$ expressions will pull back to zero on the equation manifold $\mathcal{R}^{\infty}$, $\iota^*\omega=\iota^*\omega_0$. The forms in~(\ref{coframei}) are still independent when pulled back to $\mathcal{R}^{\infty}$, so $\iota^*\omega=0$ if and only if all the coefficients vanish when $\iota^*\omega_0$ is written in terms of the basis elements~(\ref{coframei}). We can conclude that each of these coefficients is a $C^{\infty}(J^{\infty}(E))$ linear combination of the functions $D_i^lD_j^kF_{ij}$ and $D_1^lD_2^kD_3^mF_{12}$. Thus~$w_0$ can be written as
\begin{gather*}
\omega_0=\sum_{l,k}\alpha_{lk}^{ij}D_i^lD_j^kF_{ij}+\sum_{l,k,m}\alpha_{lkm}D_1^lD_2^kD_3^mF_{12}.
\end{gather*}

The above expression for $\omega_0$, along with equation (\ref{intbypartsi}), yields the desired form in Lem\-ma~\ref{pullstozero}.
\end{proof}

We may now proceed with the proof of Theorem~\ref{structuretheorem}:

By extending the natural coordinates (\ref{natcoorrinfty}) and coframe (\ref{natcoframerinfty}) on $\mathcal{R}^{\infty}$ to the coordinates (\ref{coorjinfty}) and coframe (\ref{coframei})--(\ref{coframeii}) on $J^{\infty}(E)$, there exists a $(2,s)$ form $\tilde{\omega}_0$ on $J^{\infty}(E)$ such that $\iota^*\tilde{\omega}_0=\omega$ for any given $\omega\in\Omega^{2,s}(\mathcal{R}^{\infty})$. Thus let $\omega\in\Omega^{2,s}(\mathcal{R}^{\infty})$ be a ${\rm d}_H$-closed form, and $\tilde{\omega}_0$ a~$(2,s)$ form on $J^{\infty}(E)$ such that $\iota^*\tilde{\omega}_0=\omega$. Since ${\rm d}_H\omega=0$, and ${\rm d}_H$ commutes with projected pullback, $\iota^*({\rm d}_H\tilde{\omega}_0)=0$. Then using Lemma~\ref{pullstozero}, write
\begin{gather*}
{\rm d}_H\tilde{\omega}_0 ={\rm d}x^1\wedge {\rm d}x^2\wedge {\rm d}x^3\wedge \left[\sum_{1\leq i< j\leq3}\sum_{l,k}\alpha_{lk}^{ij}D_i^lD_j^kF_{ij}+\sum_{lkm}\alpha_{lkm}D_1^lD_2^kD_3^mF_{12}\right.
\nonumber\\
\left. \hphantom{{\rm d}_H\tilde{\omega}_0 =}{} +\sum_{1\leq i< j\leq3}\sum_{l,k}\beta_{lk}^{ij}\wedge {\rm d}_V\big(D_i^lD_j^kF_{ij}\big)+\sum_{l,k,m}\beta_{lkm}\wedge {\rm d}_V\big(D_1^lD_2^kD_3^mF_{12}\big)\right],
\end{gather*}
which is a $(3,s)$ form with $\alpha\in\Omega^{0,s}$ and $\beta\in\Omega^{0,s-1}$. By repeated integration by parts, i.e., bringing the highest order total derivatives into a ${\rm d}_H$ expression, ${\rm d}_H\tilde{\omega}_0$ becomes
\begin{gather}\label{intbypartsiii}
{\rm d}_H\tilde{\omega}={\rm d}x^1\wedge {\rm d}x^2\wedge {\rm d}x^3\wedge \left[\sum_{1\leq i< j\leq3}F_{ij}\tilde{\zeta}_{ij}+\sum_{1\leq i< j\leq3}{\rm d}_VF_{ij}\wedge\tilde{\rho}_{ij}\right]
\end{gather}
with $\tilde{\zeta}_{ij}\in\Omega^{0,s}(J^{\infty}(E))$ and $\tilde{\rho}_{ij}\in\Omega^{0,s-1}(J^{\infty}(E))$, where $\tilde{\omega}$ differs from $\tilde{\omega}_0$ only by terms de\-pen\-ding linearly on the $F_{ij}$, ${\rm d}_VF_{ij}$, and total derivatives of these. In other words, $\tilde{\omega}$ and $\tilde{\omega}_0$ differ by terms which vanish on the equation manifold $\mathcal{R}^{\infty}$, so that $\iota^*(\tilde{\omega})=\iota^*(\tilde{\omega}_0)=\omega$.

Thus it has been shown that for any ${\rm d}_H$ closed $(2,s)$ form, $\omega$, on $\mathcal{R}^{\infty}$ there exist forms \begin{gather*}\tilde{\omega}\in\Omega^{2,s}(J^{\infty}(E)), \qquad \tilde{\zeta}_{ij}\in\Omega^{0,s}(J^{\infty}(E)),\qquad \textrm{and}\qquad \tilde{\rho}_{ij}\in\Omega^{0,s-1}(J^{\infty}(E))\end{gather*} such that $\iota^*(\tilde{\omega})=\omega$ and ${\rm d}_H(\tilde{\omega})$ is given by equation~(\ref{intbypartsiii}).

In what follows, the notation $D_{ij}$ is used as a shorthand to mean $D_iD_j$. Defining $\widehat{\rho}_{ij}=\frac{1}{s}\iota^*(\tilde{\rho}_{ij})$, we will next show that the forms $\widehat{\rho}_{ij}$ satisfy the following relationship involving the adjoint equations $\widehat{\mathcal{L}}^*_{ij}$ in the coordinate coframe on $\mathcal{R}^{\infty}$
\begin{gather}
\sum_{1\leq i < j\leq 3}\widehat{\mathcal{L}}^*_{ij}\big(\widehat{\rho}_{ij}\big) =\sum_{1\leq i < j\leq 3}\left[\frac{\partial F_{ij}}{\partial u}\widehat{\rho}_{ij}-D_i\left(\frac{\partial F_{ij}}{\partial u_i}\widehat{\rho}_{ij}\right) -D_j\left(\frac{\partial F_{ij}}{\partial u_j}\widehat{\rho}_{ij}\right)+D_{ij}\left(\frac{\partial F_{ij}}{\partial u_{ij}}\widehat{\rho}_{ij}\right)\right]\nonumber\\
\hphantom{\sum_{1\leq i < j\leq 3}\widehat{\mathcal{L}}^*_{ij}\big(\widehat{\rho}_{ij}\big)}{} =0, \label{rhoadj}
\end{gather}
and that $\omega$ can be written as
\begin{gather}\label{omegaincoordinatecoframe}
\omega=\sum_{1\leq i < j\leq 3}\widehat{\Psi}_{ij}(\widehat{\rho}_{ij})+{\rm d}_H\gamma,
\end{gather}
where
\begin{gather}\label{psiincoordinatecoframe}
\widehat{\Psi}_{ij}\big(\widehat{\rho}_{ij}\big)=\nu_i\wedge\theta\wedge\left[\frac{\partial F_{ij}}{\partial u_i}\widehat{\rho}_{ij}-D_j\left(\frac{\partial F_{ij}}{\partial u_{ij}}\widehat{\rho}_{ij}\right)\right]+\nu_i\wedge\theta_j\wedge\left(\frac{\partial F_{ij}}{\partial u_{ij}}\widehat{\rho}_{ij}\right)
\end{gather}
and $\nu_i=\frac{\partial}{\partial x^i}\lrcorner({\rm d}x^1\wedge {\rm d}x^2\wedge {\rm d}x^3)$.

Next the interior Euler--Lagrange operator $J\colon \Omega^{3,s}(J^{\infty}(E))\rightarrow\Omega^{3,s-1}(J^{\infty}(E))$, which is defined by
\begin{gather}\label{385}
J(\alpha)=\frac{\partial}{\partial u}\,\lrcorner\,\alpha - D_i\left(\frac{\partial}{\partial u_i}\,\lrcorner\,\alpha\right)+D_{ij}\left(\frac{\partial}{\partial u_{ij}}\,\lrcorner\,\alpha\right)+\cdots,
\end{gather}
will be applied to both sides of (\ref{intbypartsiii}). Note that in equation (\ref{385}) $D_{ij}$ simply refers to the operator $D_iD_j$.
By \cite[Theorem~2.6]{an92}, we know that for any $(2,s)$ form $\tilde{\omega}$ on $J^{\infty}(E)$, $J({\rm d}_H\tilde{\omega})=0$. When the operator $J$ is applied to the right hand side of~(\ref{intbypartsiii}), we use the fact that
\begin{gather*}
\frac{\partial}{\partial u_I}\,\lrcorner\,\big[{\rm d}_VF_{ij}\wedge\tilde{\rho}_{ij}+F_{ij}\tilde{\zeta}_{ij}\big]=\frac{\partial F_{ij}}{\partial u_I}\tilde{\rho}_{ij}+Q,
\end{gather*}
where $Q$ consists of terms depending linearly on $F_{ij}$, ${\rm d}_VF_{ij}$, etc., to obtain
\begin{gather*}
J({\rm d}_H\tilde{\omega}) ={\rm d}x^1\wedge {\rm d}x^2\wedge {\rm d}x^3\wedge\left(\sum_{1\leq i< j\leq3}\left[\frac{\partial F_{ij}}{\partial u}\tilde{\rho}_{ij}-D_i\left(\frac{\partial F_{ij}}{\partial u_i}\tilde{\rho}_{ij}\right)\right.\right. \\
 \left.\left. \hphantom{J({\rm d}_H\tilde{\omega}) =}{} -D_j\left(\frac{\partial F_{ij}}{\partial u_j}\tilde{\rho}_{ij}\right)+D_{ij}\left(\frac{\partial F_{ij}}{\partial u_{ij}}\tilde{\rho}_{ij}\right)\right]+Q\right)=0.
\end{gather*}

Since, for example, the expression \begin{gather*}\frac{\partial F_{12}}{\partial u}\tilde{\rho}_{12}-D_i\left(\frac{\partial F_{12}}{\partial u_i}\tilde{\rho}_{12}\right)+D_{ij}\left(\frac{\partial F_{12}}{\partial u_{ij}} \tilde{\rho}_{12}\right)\end{gather*} restricted to $\mathcal{R}^{\infty}$ is the adjoint of $\mathcal{L}_{12}$ in the coordinate frame on $\mathcal{R}^{\infty}$, we see that $J({\rm d}_H\tilde{\omega})=0$ implies that the $\widehat{\rho}_{ij}$ satisfy the equation~(\ref{rhoadj}).

To write the expression for $\omega$ in the coordinate coframe as well, we make use of the homotopy operator $h_H^{r,s}\colon \Omega^{r,s}(J^{\infty}(E))\rightarrow\Omega^{r-1,s}(J^{\infty}(E))$ as defined in \cite{an92}:
\begin{gather*}
h_H^{r,s}(\tilde{\omega})=\frac{1}{s}\sum_{|I|=0}^{k-1}\frac{|I|+1}{n-r+|I|+1}D_I\left[\theta\wedge J^{I_j}\left(\frac{\partial}{\partial x^j}\,\lrcorner\,\tilde{\omega}\right)\right],
\end{gather*}
where
\begin{gather*}
J^I(\tilde{\alpha})=\frac{\partial}{\partial u_I}\,\lrcorner\,\tilde{\alpha}- \left(\begin{matrix}|I|+1 \\0\end{matrix}\right) D_j\left(\frac{\partial}{\partial x^j}\,\lrcorner\,\tilde{\alpha}\right)+\cdots.
\end{gather*}
We are concerned with the case where $n=r=3$, so we have the equation
\begin{gather*}
h_H^{3,s}(\tilde{\omega})=\frac{1}{s}\sum_{|I|=0}^{k-1}D_I\left[\theta\wedge J^{I_j}\left(\frac{\partial}{\partial x^j}\,\lrcorner\,\tilde{\omega}\right)\right].
\end{gather*}
If $\tilde{\omega}$ is a $(3,s)$ form of the type ${\rm d}x^1\wedge {\rm d}x^2\wedge {\rm d}x^3\wedge M$ with $M\in\Omega^{0,s}(\mathcal{R}^{\infty})$, then
\begin{gather}
h_H^{3,s}(\tilde{\omega})=\frac{1}{s}\nu_j\wedge\theta\wedge\left[\left(\frac{\partial}{\partial u_j}\,\lrcorner\,M\right)-D_i\left(\frac{\partial}{\partial u_{ij}}\,\lrcorner\,M\right)\right]\nonumber\\
\hphantom{h_H^{3,s}(\tilde{\omega})=}{} +\frac{1}{s}\nu_j\wedge\theta_i\wedge\left(\frac{\partial}{\partial u_{ij}}\,\lrcorner\,M\right)+\cdots,\label{h3s}
\end{gather}
where as before, $\nu_j=\frac{\partial}{\partial x^j}\,\lrcorner\,({\rm d}x^1\wedge {\rm d}x^2\wedge {\rm d}x^3)$, and the remaining terms depend on interior products of the form $\frac{\partial}{\partial u_I}\,\lrcorner\,M$ for $|I|\geq3$. Using~(\ref{h3s}) along with the expression~(\ref{intbypartsiii}) for~${\rm d}_H(\tilde{\omega})$, we have
\begin{gather*}
h_H^{3,s}({\rm d}_H\tilde{\omega}) = \frac{1}{s}\nu_j\wedge\theta\wedge\left[\frac{\partial F_{12}}{\partial u_j}\tilde{\rho}_{12}+\frac{\partial F_{23}}{\partial u_j}\tilde{\rho}_{23}+\frac{\partial F_{13}}{\partial u_j}\tilde{\rho}_{13}\right.\\
\left. \hphantom{h_H^{3,s}({\rm d}_H\tilde{\omega}) =}{} -D_i\left(\frac{\partial F_{12}}{\partial u_{ij}}\tilde{\rho}_{12}+\frac{\partial F_{23}}{\partial u_{ij}}\tilde{\rho}_{23}+\frac{\partial F_{13}}{\partial u_{ij}}\tilde{\rho}_{23}+\frac{\partial F_{13}}{\partial u_{ij}}\tilde{\rho}_{13}\right)\right]\\
\hphantom{h_H^{3,s}({\rm d}_H\tilde{\omega}) =}{} +\frac{1}{s}\nu_j\wedge\theta_i\wedge\left(\frac{\partial F_{12}}{\partial u_{ij}}\tilde{\rho}_{12}+\frac{\partial F_{23}}{\partial u_{ij}}\tilde{\rho}_{23}+\frac{\partial F_{13}}{\partial u_{ij}}\tilde{\rho}_{13}\right)+\cdots
\end{gather*}
with the remaining terms depending linearly on the $F_{ij}$, ${\rm d}_VF_{ij}$, and their total derivatives. As shown by Anderson in~\cite{an89}, the homotopy operator $h_H^{3,s}$ satisfies the identity
\begin{gather}\label{homotopyid}
\tilde{\omega}=h_H^{3,s}({\rm d}_H\tilde{\omega})+{\rm d}_Hh_H^{2,s}(\tilde{\omega}).
\end{gather}
Therefore the pullback of (\ref{homotopyid}) to $\mathcal{R}^{\infty}$ gives us the expression~(\ref{omegaincoordinatecoframe}) for $\omega$ in the coordinate coframe on~$\mathcal{R}^{\infty}$.

Now to write (\ref{psiincoordinatecoframe}) and (\ref{omegaincoordinatecoframe}) in terms of the Laplace adapted coframe, we let $\mathcal{L}_{ij}^0$ be the operator $\mathcal{L}_{ij}$ defined previously but with $\mu=1$. Then $\mathcal{L}_{ij}^0=\widehat{\mathcal{L}}_{ij}$ and $\mathcal{L}^{0*}_{ij}(\rho_{ij})=\widehat{\mathcal{L}}^*_{ij}(\rho_{ij})$ so we have $\sum\mathcal{L}^{0*}_{ij}(\rho_{ij})=0$. Given that $A_{ij}^i=F_{u_i}$, $A_{ij}^j=F_{u_j}$ and $\sigma_i={\rm d}x^i$, we can look at~(\ref{psiincoordinatecoframe}) and see directly that the expression for $\widehat{\Psi}_{ij}(\rho_{ij})$ in the coordinate coframe corresponds to the expression for $\Psi_{ij}(\rho_{ij})$ given by equations (\ref{psidef1})--(\ref{psidef3}) with $\mu=1$.

We can now make use of the preceding result to prove the following important result concerning the $(2,s)$ cohomology of the variational bi-complex for $s\geq3$.

\begin{Theorem}\label{sgeq3trivial}Let $\mathcal{R}$ be a second order hyperbolic system of type~\eqref{system} and suppose that $\operatorname{ind}(\mathcal{X}_{ij})=\infty$ for $1\leq i< j\leq3$. Then, for $s\geq3$, all type $(2,s)$ conservation laws are trivial. That is,
\begin{gather*}
H^{2,s}(\mathcal{R}^{\infty},{\rm d}_H)=0\qquad \textrm{for}\quad s\geq3.
\end{gather*}
\end{Theorem}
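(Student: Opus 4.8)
The plan is to combine the structure theorem (Theorem~\ref{structuretheorem}) with the nonexistence of relative invariant contact forms recorded in Proposition~\ref{vanishingindices}. Fix $s\geq3$ and a ${\rm d}_H$-closed form $\omega\in\Omega^{2,s}(\mathcal{R}^{\infty})$. By Theorem~\ref{structuretheorem} there exist $\rho_{ij}\in\Omega^{0,s-1}(\mathcal{R}^{\infty})$ and $\gamma\in\Omega^{1,s}(\mathcal{R}^{\infty})$ with $\omega=\sum_{i<j}\Psi_{ij}(\rho_{ij})+{\rm d}_H\gamma$ and $\sum_{i<j}\mathcal{L}_{ij}^{*}(\rho_{ij})=0$. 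Since a ${\rm d}_H$-exact form does not change the cohomology class, proving $H^{2,s}(\mathcal{R}^{\infty},{\rm d}_H)=0$ reduces to showing that the class of $\sum_{i<j}\Psi_{ij}(\rho_{ij})$ is zero. I would aim for the stronger conclusion that, under $\operatorname{ind}(\mathcal{X}_{ij})=\infty$ for all three pairs, the $\rho_{ij}$ may be taken to vanish; then $\sum_{i<j}\Psi_{ij}(\rho_{ij})=0$ by linearity of the $\Psi_{ij}$, and $\omega={\rm d}_H\gamma$ is trivial.

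The heart of the argument is to manufacture, from any nonzero $\rho_{ij}$, a nonzero relative invariant contact form, so that Proposition~\ref{vanishingindices} may be brought to bear. First I would rewrite the coupled adjoint equation $\sum_{i<j}\mathcal{L}_{ij}^{*}(\rho_{ij})=0$ in the Laplace adapted coframe, substituting the explicit adjoint coefficients of Proposition~\ref{adjproposition}. Its three summands carry distinct second-order leading parts, namely $X_1X_2$, $X_1X_3$ and $X_2X_3$ acting on $\rho_{12}$, $\rho_{13}$ and $\rho_{23}$ respectively. To separate them I would imitate the peeling procedure from the proof of Proposition~\ref{vanishingindices}: take interior products of the equation with the vertical vector fields $U$ and $V_k^l$ dual to the coframe in \eqref{dualvectorfields}, and apply the identity \eqref{verhorint} of Proposition~\ref{verhorprop} together with the Lie bracket congruences collected in Appendix~\ref{appendixB}. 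Handling the terms of highest adapted order in this way should isolate, for each ordered pair, a form built from $\rho_{ij}$ that is annihilated up to scale by the two characteristic vector fields $X_i$ and $X_j$, that is, a relative $X_i$ and $X_j$ invariant contact form.

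With such a relative invariant in hand the conclusion follows from Proposition~\ref{vanishingindices}, applied in vertical degree $s-1\geq2$: since $\operatorname{ind}(\mathcal{X}_{ij})=\infty$ for every pair, none of the three families of relative invariant forms contains a nonzero element, so the forms extracted above vanish and, tracing the peeling backwards, the $\rho_{ij}$ are forced to be zero. I expect the genuine obstacle to be exactly this decoupling step: the adjoint condition ties the three $\rho_{ij}$ together in a single equation, and one must separate the $X_1X_2$, $X_1X_3$ and $X_2X_3$ leading parts while certifying that the surviving top-order data is truly relative invariant, not merely invariant modulo lower-order terms that could reintroduce coupling. This is where I expect the hypothesis $s\geq3$ to be indispensable, rather than merely $s\geq2$: the extra vertical degree provides enough independent contact factors among the $\Theta,\hat{\xi}_i^{k}$ to run the interior-product reductions for all three pairs at once without collision, which is precisely the room that is absent for $s=1,2$ and that accounts for the conclusion being asserted only for $s\geq3$.
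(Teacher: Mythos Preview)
Your plan matches the paper's: invoke Theorem~\ref{structuretheorem}, argue that the $\rho_{ij}$ must vanish by extracting from each a nonzero relative $X_i$-- and $X_j$--invariant contact form, then contradict Proposition~\ref{vanishingindices}. Two points of execution differ from what you propose.

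\emph{The concrete mechanism.} The paper does not peel the coupled adjoint sum. It rewrites a \emph{single} adjoint equation $\mathcal{L}^*_{ij}(\rho_{ij})=0$ as the first-order system
\[
X_i(\rho_{ij})=A_{ij}^j\rho_{ij}+\psi_i^{ij},\qquad X_j(\psi_i^{ij})=H_{ij}^0\rho_{ij}+A_{ij}^i\psi_i^{ij},
\]
and, with $k$ the adapted order of $\rho_{ij}$ and $l\notin\{i,j\}$, takes interior products with $V_l^{k+1}$ and then $V_l^{k}$, using the bracket congruences of Appendix~\ref{appendixB} to show that $V_l^{k}\,\lrcorner\,\rho_{ij}$ is a nonzero relative $X_i$ and $X_j$ invariant $(0,s-2)$ form. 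The first-order reformulation is the computational device you do not name; the peeling runs on this two-line system, not on the second-order operator directly.

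\emph{Why $s\geq3$.} The paper's reason is sharp and not the one you give: since $\rho_{ij}\in\Omega^{0,s-1}$ with $s-1\geq2$, and the only adapted-order-zero contact one-form is a multiple of $\Theta$, any nonzero $\rho_{ij}$ is forced to have adapted order $k\geq1$. That is precisely what guarantees some $V_l^{k}\,\lrcorner\,\rho_{ij}\neq0$, which is the nonzero form fed into Proposition~\ref{vanishingindices}. There is no appeal to having ``enough independent contact factors to run reductions for all three pairs at once.''

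On your decoupling concern: the paper in fact asserts at the outset that it suffices to rule out nonzero solutions of the \emph{individual} equations $\mathcal{L}^*_{ij}(\rho_{ij})=0$ and then argues with the first-order system above, which is equivalent to that individual equation; it does not explicitly decouple the sum $\sum_{i<j}\mathcal{L}^*_{ij}(\rho_{ij})=0$ supplied by Theorem~\ref{structuretheorem}. So your instinct that this passage deserves care is reasonable, but the paper does not treat it as an obstacle.
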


\begin{proof}According to Theorem~\ref{structuretheorem}, we only need to show that there do not exist nonzero type $(0,s-1)$ solutions $\rho_{ij}$ to the adjoint equations $\mathcal{L}^*_{ij}(\rho_{ij})=0$ as this would preclude the existence of any nontrivial conservation law. Begin by rewriting the adjoint equation
\begin{gather*}
X_iX_j(\rho_{ij})+\big(A^i_{ij}\big)^*X_i(\rho_{ij})+\big(A_{ij}^j\big)^*X_j(\rho_{ij})+C^*_{ij}\rho_{ij}=0
\end{gather*}
as a system of first order equations
\begin{gather}\label{firstordersystem1}
X_i(\rho_{ij})=A_{ij}^j\rho_{ij}+\psi^{ij}_i, \\
\label{firstordersystem2}
X_j\big(\psi_{ij}^i\big)=H_{ij}^0\rho_{ij}+A_{ij}^i\psi^{ij}_i.
\end{gather}
We will proceed by showing that if there is a nonzero solution to the system (\ref{firstordersystem1})--(\ref{firstordersystem2}), a~contradiction to Proposition~\ref{vanishingindices} results. To that end, let $\rho_{ij}$ be a nonzero solution to (\ref{firstordersystem1})--(\ref{firstordersystem2}) of adapted order $k$. Because we have taken $s\geq3$, $\rho_{ij}$ is a contact form of degree greater than or equal to 2 and the adapted order of $\rho_{ij}$ is $k\geq1$. Thus $V_l^k \,\lrcorner\,\rho_{ij}\neq0$ for some $l=1,2,3$, where we recall that $V_l^k$ is the vertical vector, defined by~(\ref{dualvectorfields}), dual to the Laplace adapted coframe. For the sake of clarity, and without loss of generality, we will take~$\rho_{ij}$ to be $\rho_{12}$ and~$V_l^k$ to be~$V_3^k$.

We will begin by demonstrating that $V_3^k\,\lrcorner\,\rho_{12}$ is an $X_1$ invariant $(0,s-1)$ contact form. Apply formula~(\ref{verhorint}) to see that
\begin{gather}\label{397}
V_3^{k+1}\,\lrcorner\,X_1(\rho_{12})=\big[V_3^{k+1},X_1\big]\,\lrcorner\,\rho_{12}+X_1\big(V_3^{k+1}\,\lrcorner\,\rho_{12}\big) =\big[V_3^{k+1},X_1\big]\,\lrcorner\,\rho_{12},
\end{gather}
where the last equality holds since $\rho_{12}$ has adapted order $k$ and hence $V_3^{k+1}\,\lrcorner\,\rho_{12}=0$. Now, using the Lie bracket congruences in Proposition~\ref{liebracketcongruences}, we see that
\begin{gather*}
\big[V_3^{k+1},X_1\big]\,\lrcorner\,\rho_{12}=-\big(A_{13}^3\big)^k\big(V_3^{k+1}\,\lrcorner\,\rho_{12}\big)+H_{13}^{k+1}\big(V_3^{k+2}\,\lrcorner\,\rho_{12}\big)=0,
\end{gather*}
again because of the adapted order of $\rho_{12}$. So we may set the right hand side of equation~(\ref{397}) equal to zero, and thereby conclude that
\begin{gather*}
V_3^{k+1}\,\lrcorner\,\psi^{12}_1=0.
\end{gather*}
Again applying formula (\ref{verhorint}) and referring to the Lie bracket congruences of Proposition \ref{liebracketcongruences}, we obtain
\begin{gather*}
V_3^{k+1}\,\lrcorner\,X_2\big(\psi^{12}_1\big) =\big[V_3^{k+1},X_2\big]\,\lrcorner\,\psi^{12}_1+X_2\big(V_3^{k+1}\,\lrcorner\,\psi^{12}_1\big)\\
\hphantom{V_3^{k+1}\,\lrcorner\,X_2\big(\psi^{12}_1\big)}{} =H_{23}^{k+1}\big(V_3^k\,\lrcorner\,\psi^{12}_1\big)-\big(A_{23}^3\big)^{k}\big(V_3^{k+1}\,\lrcorner\,\psi^{12}_1\big) =H_{23}^{k+1}\big(V_3^k\,\lrcorner\,\psi^{12}_1\big).
\end{gather*}
Since interior product of the right hand side of equation~(\ref{firstordersystem2}) with $V_3^{k+1}$ is zero, this implies that
\begin{gather*}
V_3^k\,\lrcorner\,\psi^{12}_1=0.
\end{gather*}
Finally, take the interior product of (\ref{firstordersystem1}) with $V_3^k$ to obtain
\begin{gather*}
X_1\big(V_3^k\,\lrcorner\,\rho_{12}\big) =V_3^k\,\lrcorner\,A_{12}^2\rho_{12}-\big[V_3^k,X_1\big]\,\lrcorner\,\rho_{12} =A_{12}^2V_3^k\,\lrcorner\,\rho_{12}-\big(A_{13}^3\big)^{k-1}V_3^k\,\lrcorner\,\rho_{12}
\end{gather*}
and conclude that $V_3^k\,\lrcorner\,\rho_{12}$ is a relative $X_1$ invariant contact form. Since for systems of the form (\ref{system}) we may choose the characteristic vector fields to commute, the preceding argument can be replicated with the roles of $X_1$ and $X_2$ reversed to show that $V_3^k\,\lrcorner\,\rho_{12}$ is a relative $X_2$ invariant contact form as well. However, this contradicts Proposition~\ref{vanishingindices}, which states that if all Laplace indices are infinite, no nonzero relative $X_1$ and $X_2$ invariant contact forms exist. Hence there cannot in fact exist any nonzero $(0,s-1)$ solutions $\rho_{12}$ to the adjoint equation $\mathcal{L}_{12}^*=0$, and likewise regarding the equations $\mathcal{L}_{13}^*=0$ and $\mathcal{L}_{23}^*=0$, and the proof of Theorem~\ref{sgeq3trivial} is complete.
\end{proof}

We will conclude this section by applying Theorem~\ref{structuretheorem} to an example found in~\cite{kt96}.
\begin{Example}Consider the following involutive system of the form (\ref{system}),
\begin{gather*}
u_{ij}+u_i+u_j+u+1=0\qquad \textrm{for}\quad 1\leq i< j\leq3.
\end{gather*}
The universal linearization for this system is given by
\begin{gather}\label{example1unilin}
\mathcal{L}_{ij}(\theta)=\theta_{ij}+\theta_i+\theta_j+\theta=0
\end{gather}
and the system of adjoint operators is then
\begin{gather*}
\mathcal{L}_{ij}^*=D_iD_j-D_i-D_j+1.
\end{gather*}
Taking the triple $(\rho_{12},\rho_{23},\rho_{13})=\big({\rm e}^{x^1+x^2},{\rm e}^{x^2+x^3},{\rm e}^{x^1+x^3}\big)$, which solves the adjoint equation $\sum\limits_{1\leq i< j\leq3}\mathcal{L}^*_{ij}(\rho_{ij})=0$, we can construct the following conservation law according to the structure theorem presented in Theorem~\ref{structuretheorem}:
\begin{gather*}
\omega=\sigma_1\wedge\sigma_3\wedge\big({\rm e}^{x^1+x^2}\hat{\xi}_1^1-{\rm e}^{x^2+x^3}\hat{\xi}_3^1\big) +\sigma_2\wedge\sigma_3\wedge\big({\rm e}^{x^1+x^2}\hat{\xi}_2^1-{\rm e}^{x^2+x^3}\hat{\xi}_3^1\big) \\
\hphantom{\omega=}{} +\sigma_1\wedge\sigma_2\wedge\big({\rm e}^{x^1+x^3}\hat{\xi}_1^1-{\rm e}^{x^2+x^3}\hat{\xi}_2^1\big) ={\rm I}+{\rm II}+{\rm III},
\end{gather*}
where in this example $\hat{\xi}_i^1=X_i(\theta)+\theta=\theta_i+\theta$. Then we compute ${\rm d}_H\omega={\rm d}_H({\rm I})+{\rm d}_H({\rm II})+{\rm d}_H({\rm III})$ to show that $\omega$ is ${\rm d}_H$ closed as expected. Indeed,
\begin{gather*}
{\rm d}_H({\rm I}) =\sigma_1\wedge\sigma_2\wedge\sigma_3\wedge X_2\big({\rm e}^{x^1+x^2}\hat{\xi}_1^1-{\rm e}^{x^2+x^3}\hat{\xi}_3^1\big) \\
\hphantom{{\rm d}_H({\rm I})}{} =\sigma_1\wedge\sigma_2\wedge\sigma_3\wedge \big({\rm e}^{x^1+x^2}(\theta_{12}+\theta_1+\theta_2+\theta)-{\rm e}^{x^2+x^3}(\theta_{23}+\theta_2+\theta_3+\theta)\big) =0
\end{gather*}
by (\ref{example1unilin}). So $\omega$ is in fact a $(2,1)$ conservation law as we wished to show.
\end{Example}

\begin{Example}
We refer the reader to \cite{kt01} for the following additional example of an involutive linear system of the form (\ref{system}). This system is integrable by repeated applications of the Laplace transformation
\begin{gather*}u_{ij}=\frac{x^i}{x^j(x^j-x^i)}u_i+\frac{x^j}{x^i(x^i-x^j)}u_j\qquad \textrm{for}\quad 1\leq i< j \leq3.\end{gather*}
\end{Example}

\section{Systems of Darboux integrable equations} \label{darbouxsection}

We will begin by stating the definition of Darboux integrability for a single PDE in two independent variables as described, for example, in \cite{ka02}. Given a second order scalar hyperbolic equation, $\mathcal{R}$, in two independent variables~$x$ and~$y$,
\begin{gather}\label{generalsecondorder}
F(x,y,u,u_x,u_y,u_{xx},u_{xy},u_{yy})=0.
\end{gather}
 Let $X$ and $Y$ be the characteristic total vector fields and $\mathcal{I}$ the Pfaffian system associated to $\mathcal{R}$.
\begin{Definition}\label{darbdef2}The equation $\mathcal{R}$ is said to be Darboux integrable if there exist two functionally independent $X$-invariant functions, $I$ and $\tilde{I}$, on $\mathcal{R}^{\infty}$ and two functionally independent $Y$-invariant functions, $J$ and $\tilde{J}$, on $\mathcal{R}^{\infty}$.
\end{Definition}

The method of Darboux then makes use of the invariant functions described in Definition~\ref{darbdef2}, along with arbitrary functions $f_1$ and $f_2$, in order to construct a completely integrable Pfaffian system consisting of the original PDE, $\mathcal{R}$, along with the two additional equations
\begin{gather*}%\label{twomore}
\tilde{I}=f_1(I)\qquad \textrm{and}\qquad \tilde{J}=f_2(J).
\end{gather*}
It can be shown that the integral manifolds of this system correspond to the general solution of~$\mathcal{R}$.
\begin{Theorem} Let $f_1$ and $f_2$ be a pair of monotone functions on $C^{\infty}(\mathbb{R},\mathbb{R})$ and let $\mathcal{L}_{f_1,f_2}$ denote the submanifold of $\mathcal{R}$ defined by $\tilde{I}=f_1(I)$ and $\tilde{J}=f_2(J)$. Then~$\mathcal{I}$ is completely integrable when restricted to $\mathcal{L}_{f_1,f_2}\colon {\rm d}\mathcal{I}|_{\mathcal{L}_{f_1,f_2}}\equiv0\ \mod \mathcal{I}|_{\mathcal{L}_{f_1,f_2}}$.
\end{Theorem}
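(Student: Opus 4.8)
The plan is to reduce the claim to the Frobenius criterion on a set of generators of $\mathcal{I}$ and then to show that the Darboux data $\tilde{I}=f_1(I)$, $\tilde{J}=f_2(J)$ annihilate the obstruction to integrability term by term. Choose the standard contact generators $\theta^0,\theta^1,\theta^2$ of the rank-three Pfaffian system $\mathcal{I}$ on the seven-dimensional manifold $\mathcal{R}$, so that it suffices to prove ${\rm d}\theta^a|_{\mathcal{L}_{f_1,f_2}}\equiv0 \mod \mathcal{I}|_{\mathcal{L}_{f_1,f_2}}$ for $a=0,1,2$. The first structure equation gives ${\rm d}\theta^0\equiv0 \mod\mathcal{I}$ identically, so the entire content resides in ${\rm d}\theta^1$ and ${\rm d}\theta^2$. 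The first step is to record the hyperbolic normal form of these two structure equations: since the equation has two distinct real characteristics, there are characteristic base one-forms $\omega_1,\omega_2$ (with $\omega_1(X)\neq0$, $\omega_1(Y)=0$ and $\omega_2(Y)\neq0$, $\omega_2(X)=0$) and transverse one-forms $\varpi_1,\varpi_2$ arising from the pure second-order jet directions, such that ${\rm d}\theta^1\equiv c_1\,\omega_1\wedge\varpi_1$ and ${\rm d}\theta^2\equiv c_2\,\omega_2\wedge\varpi_2 \mod\mathcal{I}$ with $c_1,c_2$ nonvanishing. These two rank-one terms are exactly what prevents $\mathcal{I}$ from being Frobenius on $\mathcal{R}$ itself.

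The heart of the argument is to collapse each obstruction using the corresponding functional relation. Because $X(I)=X(\tilde{I})=0$, the differentials ${\rm d}I$ and ${\rm d}\tilde{I}$ annihilate $X$ and therefore lie in the span of $\omega_2$ and $\varpi_2$ modulo $\mathcal{I}$; the functional independence ${\rm d}I\wedge{\rm d}\tilde{I}\neq0$ guarantees that $\{{\rm d}I,{\rm d}\tilde{I}\}$ is a basis of this two-plane modulo $\mathcal{I}$, so in particular $\varpi_2$ is recovered as a combination of ${\rm d}I$ and ${\rm d}\tilde{I}$. On the submanifold $\mathcal{L}_{f_1,f_2}$ the identity $\tilde{I}=f_1(I)$ forces ${\rm d}\tilde{I}=f_1'(I)\,{\rm d}I$, so the two-plane degenerates to a single line, and the relation ${\rm d}\tilde{I}-f_1'(I)\,{\rm d}I\equiv0$, rewritten in the basis $\{\omega_2,\varpi_2\}$ modulo $\mathcal{I}$, becomes a nontrivial linear dependence between $\omega_2$ and $\varpi_2$. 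This makes $\varpi_2$ proportional to $\omega_2$ modulo $\mathcal{I}|_{\mathcal{L}_{f_1,f_2}}$, whence the obstruction ${\rm d}\theta^2\equiv c_2\,\omega_2\wedge\varpi_2$ vanishes modulo $\mathcal{I}|_{\mathcal{L}_{f_1,f_2}}$. Interchanging the roles of $(X,I,\tilde{I},f_1)$ with $(Y,J,\tilde{J},f_2)$ runs the identical argument on the second characteristic and kills ${\rm d}\theta^1$.

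With ${\rm d}\theta^0,{\rm d}\theta^1,{\rm d}\theta^2$ all congruent to zero modulo $\mathcal{I}|_{\mathcal{L}_{f_1,f_2}}$, the Frobenius theorem yields complete integrability of $\mathcal{I}|_{\mathcal{L}_{f_1,f_2}}$, and the codimension-two cut together with the rank-three system leaves two-dimensional integral manifolds, which are the graphs of solutions of $\mathcal{R}$. The monotonicity of $f_1,f_2$ is used to ensure that $\{\tilde{I}=f_1(I),\ \tilde{J}=f_2(J)\}$ cuts out a smooth submanifold transversally and that the independence hypotheses on the invariants persist on $\mathcal{L}_{f_1,f_2}$. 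The main obstacle is the collapse step: one must verify that a single scalar relation among the two independent invariants genuinely involves the transverse direction $\varpi_2$, equivalently that the coefficient of $\varpi_2$ in ${\rm d}\tilde{I}-f_1'(I)\,{\rm d}I$ does not vanish, for only then does the degeneration of the two-plane translate into proportionality of $\varpi_2$ and $\omega_2$ rather than a vacuous statement. This is precisely where the rank-one structure of the hyperbolic obstruction established at the outset is essential: it is what allows a single functional constraint per characteristic to absorb the entire torsion, and it would fail for a non-hyperbolic or higher-rank torsion, where more invariants would be required.
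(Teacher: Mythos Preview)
The paper does not supply its own proof of this theorem; it is stated as classical background with an implicit reference to \cite{ka02}. Your argument is the standard one and is correct in outline: put the rank-three contact system into hyperbolic normal form so that the only obstructions to Frobenius are the two decomposable $2$-forms $\omega_i\wedge\varpi_i$, identify the Darboux invariants as first integrals of the two Monge characteristic Pfaffian systems, and observe that each functional relation $\tilde I=f_1(I)$ (resp.\ $\tilde J=f_2(J)$) forces a nontrivial linear dependence between $\omega_2,\varpi_2$ (resp.\ $\omega_1,\varpi_1$) modulo $\mathcal{I}|_{\mathcal L_{f_1,f_2}}$, annihilating the corresponding wedge.

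Two small points. First, your phrasing ``${\rm d}I$ annihilates $X$'' is imprecise: $X$ is a total vector field on $\mathcal{R}^{\infty}$, not a vector field on the $7$-manifold $\mathcal{R}$, so the clean statement is that ${\rm d}I$ lies in the rank-$5$ Monge characteristic Pfaffian system $\{\theta^0,\theta^1,\theta^2,\omega_2,\varpi_2\}$ on $\mathcal{R}$; this is the well-known equivalence between characteristic invariants and first integrals of the Monge systems, and your subsequent use of it is fine. Second, the caveat in your final paragraph is stronger than needed. Writing ${\rm d}I\equiv a\,\omega_2+b\,\varpi_2$ and ${\rm d}\tilde I\equiv c\,\omega_2+d\,\varpi_2$ modulo $\mathcal{I}$ with $ad-bc\neq0$, the relation on $\mathcal{L}_{f_1,f_2}$ reads $(c-f_1'a)\,\omega_2+(d-f_1'b)\,\varpi_2\equiv0$; the two coefficients cannot both vanish (else $ad-bc=0$), so \emph{some} nontrivial dependence holds, and either way $\omega_2\wedge\varpi_2\equiv0\bmod\mathcal{I}|_{\mathcal L_{f_1,f_2}}$. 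You do not need to single out the $\varpi_2$-coefficient.
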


The relationship between the property of Darboux integrability for equations~(\ref{generalsecondorder}) and the vanishing of generalized Laplace invariants was investigated in~\cite{ak97} and~\cite{aj97}. We will outline one of the main findings of these works here.

Let $X=m_xD_x+m_yD_y$ and $Y=n_xD_x+n_yD_y$, with $\delta=m_xn_y-m_yn_x\neq0$, be distinct characteristic vector fields associated to equation (\ref{generalsecondorder}) such that we have the factorization
\begin{gather*}
(m_x\lambda-m_y\mu)(n_x\lambda-n_y\mu)=\kappa\big(F_{u_{xx}}\lambda^2-F_{u_{xy}}\lambda\mu+F_{u_{yy}}\mu^2\big).
\end{gather*} In investigating equations of the form~(\ref{generalsecondorder}), no assumption is made in~\cite{ak97} or~\cite{aj97} regarding the existence of commuting characteristic vector fields, so we will need to consider the commutator of $X$ and $Y$, which we write as $[X,Y]=PX+QY$. Let $\mathcal{L}(\theta)$ denote the universal linearization of~(\ref{generalsecondorder}) obtained from the identity ${\rm d}_VF=0$:
\begin{gather}\label{univlinofF}
\mathcal{L}(\theta)=XY(\theta)+AX(\theta)+BY(\theta)+C\theta=0,
\end{gather}
 where the coefficients in (\ref{univlinofF}) are given by
 \begin{gather*}
 A=\frac{1}{\delta}[(\kappa F_{u_x}-X(n_x))n_y-(\kappa F_{u_y}-X(n_y))n_x],\\
 B=\frac{1}{\delta}[-(\kappa F_{u_x}-X(n_x))m_y+(\kappa F_{u_y}-X(n_y))m_x],\\
 C=\kappa F_u .
 \end{gather*}
Note that the operator in (\ref{univlinofF}) is essentially the form-valued version of what is considered in \cite{kt96} where the classical Laplace method is discussed. Likewise, in this setting two Laplace invariants are defined. The first is $H_0=X(A)+AB-C$ and depends on the coefficients in~(\ref{univlinofF}). The other Laplace invariant arises when equation~(\ref{univlinofF}) is expressed equivalently with the (non-commuting) characteristics $X$ and $Y$ written in the opposite order, leading to the equation{\samepage
\begin{gather}\label{univlinYX}
YX(\theta)+DX(\theta)+EY(\theta)+G\theta=0,
\end{gather}
where $D=A+P$, $E=B+Q$, and $G=C$.}

 Also similar to the classical case, if for example $H_0$ is nonzero, (\ref{univlinofF}) may be transformed into another equation of the same form. On the equation manifold $\mathcal{R}^{\infty}$, $\mathcal{L}(\theta)=0$ holds identically and defining $\eta_1=Y(\theta)+A\theta$, this identity can be expressed as
\begin{gather}\label{firstytransform}
X(\eta_1)+B\eta_1-H_0\theta=0.
\end{gather}
When $H_0\neq0$, we see that $\eta_1$ satisfies an equation of the same form as (\ref{univlinofF}) by rewriting (\ref{firstytransform}) as
\begin{gather}\label{eta1lin}
XY(\eta_1)+A_1X(\eta_1)+B_1Y(\eta_1)+C_1\eta_1=0,
\end{gather}
where
\begin{gather*}
A_1 =A-\frac{Y(H_0)}{H_0}-P,\qquad B_1 =B-Q,\qquad C_1 =C-X(A)-\frac{Y(H_0)}{H_0}B+Y(B).
\end{gather*}
Equation (\ref{eta1lin}) is called the $\mathcal{Y}$-Laplace transform of (\ref{univlinofF}). The Laplace invariant $H_1=X(A_1)+A_1B_1-C_1$ of (\ref{eta1lin}) can then be computed and if $H_1$ is also nonzero, the equation can be transformed again. Clearly this process can continue with $\eta_i=Y(\eta_{i-1})+A_{i-1}\eta_{i-1}$ and $H_i=X(A_i)+A_iB_i-C_i$ denoting the Laplace invariant of the $i^{\rm th}$ transformed equation, so long as the sequence of Laplace invariants does not vanish.

 Defining the form $\hat{\xi}_1=X(\theta)+E\theta$, and assuming $K_0=Y(E)+ED-G\neq0$, (\ref{univlinYX}) can be transformed analogously into what is called the $\mathcal{X}$-Laplace transform of (\ref{univlinYX}),
\begin{gather*}
YX(\hat{\xi})+D_1X(\hat{\xi})+E_1Y(\hat{\xi})+G_1\hat{\xi}=0,
\end{gather*}
where
\begin{gather*}
D_1 =D+P,\qquad E_1=E-\frac{X(K_0)}{K_0}+Q,\qquad G_1=G-Y(E)-D\frac{X(K_0)}{K_0}+X(D).
\end{gather*}
In this way a second sequence of Laplace invariants, $K_i=Y(E_i)+E_iD_i-G_i$ is generated, again noting that $K_{i+1}$ will be defined so long as $K_i\neq0$.

Let $p=\operatorname{ind}(\mathcal{Y})$ be the Laplace index of the $\mathcal{Y}$-Laplace transform and $q=\operatorname{ind}(\mathcal{X})$ be the Laplace index of the $\mathcal{X}$-Laplace transform. That is, $p$ is the number of times that the $\mathcal{Y}$ Laplace transform must be applied before the generalized Laplace invariant of the transformed equation equals zero and likewise for $q$. If one of the sequences of Laplace invariants never vanishes, we write $p=\infty$ or $q=\infty$.

\begin{Example}Consider the Liouville equation
\begin{gather}\label{liouvilleeq}
u_{xy}={\rm e}^u,
\end{gather}
 which was shown to be Darboux integrable in Section~\ref{section1}. Here the characteristic vector fields $X=D_x$ and $Y=D_y$ commute, and the linearization of (\ref{liouvilleeq}) is
 \begin{gather} \label{liouvillelin}
 XY(\theta)-{\rm e}^u\theta=0.
 \end{gather}
 Computing the Laplace invariants for this equation we see that $H_0={\rm e}^u$, so we may then write the $\mathcal{Y}$-Laplace transform of~(\ref{liouvillelin}) as $XY(\eta_1)-u_yX(\eta_1)-{\rm e}^u\eta_1=0$. From this equation we can compute $H_1=0$, so for the Liouville equation $p=1$. Likewise, we can compute $K_0={\rm e}^u$ and $K_1=0$ to conclude that $q=1$. Both Laplace indices are finite as expected from Theorem~\ref{sgeq3trivial}.
\end{Example}

\subsection{Characteristic systems and invariant functions}
In order to extend the ideas developed in the preceeding section to the case of three equations in three independent variables of the form
\begin{gather}\label{bigfij}
F_{ij}\big(x^1,x^2,x^3,u,u_i,u_j,u_{ij}\big)=0,\qquad 1\leq i< j\leq3,
\end{gather}
we begin by defining characteristic Pfaffian systems which correspond to the characteristics defined by Cartan's structural classification as is detailed in Appendix~\ref{structuralclassification}.
\begin{Definition}Define the characteristic Pfaffian system of order $k$, for a given characteristic vector field, $X_i,$ as
\begin{gather*}
C_k(X_i)=\Omega^1\big(\sigma_j,\sigma_l,\theta,\hat{\xi}_1^1,\hat{\xi}_2^1,\hat{\xi}_3^1,\ldots,\hat{\xi}_1^k,\hat{\xi}_2^k,\hat{\xi}_3^k\big)\qquad \textrm{for}\quad j,l\neq i,
\end{gather*}
where $\big\{\theta,\hat{\xi}_1^1,\hat{\xi}_2^1,\hat{\xi}_3^1,\ldots,\hat{\xi}_1^k,\hat{\xi}_2^k,\hat{\xi}_3^k\big\}$ is either the Laplace-adapted or characteristic coframe on~$\mathcal{R}^{\infty}$, the infinite prolongation of the equation manifold defined by~(\ref{bigfij}). We can likewise define a~characteristic Pfaffian system of order $k$ with respect to a pair of characteristic vector fields, $X_i$~and~$X_j$, as
\begin{gather*}
C_k(X_i,X_j)=\Omega^1\big(\sigma_l,\theta,\hat{\xi}_1^1,\hat{\xi}_2^1,\hat{\xi}_3^1,\ldots,\hat{\xi}_1^k,\hat{\xi}_2^k,\hat{\xi}_3^k\big)\qquad \textrm{for}\quad l\neq i,j.
\end{gather*}
\end{Definition}
Since ${\rm d}I=X_1(I)\sigma_1+X_2(I)\sigma_2+X_3(I)\sigma_3+{\rm d}_VI$, it is immediately apparent that $I$ is an $X_i$ invariant function of order $k$ if and only if ${\rm d}I\in C_k(X_i)$, and that $I$ is invariant with respect to both $X_i$ and $X_j$ if and only if ${\rm d}I\in C_k(X_i,X_j)$.

Associated to any Pfaffian system $\mathcal{I}$, there is a flag of Pfaffian subsystems called the derived flag,
\begin{gather*}\cdots\subset\mathcal{I}^{(i)}\subset\mathcal{I}^{(i-1)}\subset\cdots\subset\mathcal{I}^{(2)}\subset\mathcal{I}^{(1)}\subset\mathcal{I}.\end{gather*}
The $i^{\rm th}$ derived system is defined inductively by a short exact sequence.

The derived flag stabilizes at the maximal completely integrable subsystem of $\mathcal{I}$, which we denote by $\mathcal{I}^{(\infty)}$. Thus, the rank of $(C_k(X_i))^{(\infty)}$, written as $C_k^{(\infty)}(X_i)$ from here on, as a~modu\-le over the ring of $C^{\infty}$ functions equals the number of functionally independent $X_i$ invariant functions of order $\leq k$.

\begin{Lemma}\label{charpfaflemma}Let $\mathcal{R}$ be a second order system of the form~\eqref{bigfij} with characteristic vector fields~$X_i$, and $\big\{\sigma_1,\sigma_2,\sigma_3,\theta,\hat{\xi}_1^1\hat{\xi}_2^1,\hat{\xi}_3^1,\hat{\xi}_1^2,\hat{\xi}_2^2,\hat{\xi}_3^2,\ldots\big\}$ either the Laplace-adapted or characteristic coframe on~$\mathcal{R}^{\infty}$. Then for any $k\geq 1$,
\begin{gather}\label{maxintsubxi}
C_k^{(\infty)}(X_i)\subset\Omega^1\big(\sigma_j,\sigma_l,\theta,\hat{\xi}_1^1,\hat{\xi}_2^1,\hat{\xi}_3^1,\hat{\xi}_j^2,\hat{\xi}_l^2,\ldots,\hat{\xi}_j^k,\hat{\xi}_l^k\big)\qquad \text{for} \quad j,l\neq i
\end{gather}
and
\begin{gather}\label{maxintsubxixj}
C_k^{(\infty)}(X_i,X_j)\subset\Omega^1\big(\sigma_l,\theta,\hat{\xi}_1^1,\hat{\xi}_2^1,\hat{\xi}_3^1,\hat{\xi}_l^2,\ldots,\hat{\xi}_l^k\big)\qquad \text{for}\quad l\neq i,j.
\end{gather}
\end{Lemma}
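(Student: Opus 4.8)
The plan is to reduce the statement to a fact about $X_i$-invariant functions, using the characterization of the maximal completely integrable subsystem recorded just above the lemma. Since $C_k^{(\infty)}(X_i)$ is by definition completely integrable, the Frobenius theorem gives, locally, functionally independent first integrals $g_1,\dots,g_r$ with $C_k^{(\infty)}(X_i)=\Omega^1(dg_1,\dots,dg_r)$, where $r=\operatorname{rank}C_k^{(\infty)}(X_i)$. Because each $dg_a$ lies in $C_k^{(\infty)}(X_i)\subseteq C_k(X_i)$, the $\sigma_i$-component of $dg_a$ vanishes (so $X_i(g_a)=0$) and its contact part has adapted order $\le k$; that is, each $g_a$ is an $X_i$-invariant function of order $\le k$. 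As $\Omega^1$ is a module, it therefore suffices to show that for every $X_i$-invariant function $I$ of order $\le k$ one has $dI$ in the right-hand side of \eqref{maxintsubxi}.

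First I would show that an $X_i$-invariant function is independent of the entire $X_i$-tower of jet coordinates $u_i,u_{ii},\dots,u_{i^p},\dots$. Writing $X_i=D_i$ in the coordinates \eqref{natcoorrinfty} and using \eqref{totalderivativedef}, the coordinate $u_{i^{p+1}}$ enters $D_i(I)$ only through the single term $u_{i^{p+1}}\,\partial_{u_{i^p}}I$: all remaining coefficients in $D_i$ involve the $X_i$-tower only up to first order, because each $f_{ij}$, and hence every total derivative $D_j^{m}f_{ij}$ appearing in \eqref{totalderivativedef}, depends on the $i$-derivatives of $u$ only through $u_i$. Reading off the coefficient of $u_{i^{k+1}}$ in the identity $D_i(I)=0$ forces $\partial_{u_{i^k}}I=0$, and descending inductively in $p$ from $p=k$ down to $p=1$ gives $\partial_{u_{i^p}}I=0$ for all $p\ge1$. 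Thus $I$ depends only on $x$, $u$, and the two remaining towers $u_{j^p},u_{l^p}$.

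It then remains to translate this into the chosen coframe. The decisive structural observation is that each coframe element $\hat\xi_m^n$ (respectively $\xi_m^n$) is obtained from $\Theta$ by repeatedly applying $X_m=D_m$, so it is a function-linear combination of $\theta$ and the pure contact forms $\theta_m,\dots,\theta_{m^n}$ of the single $m$-tower; inverting this triangular relation, each $\theta_{m^p}$ lies in $\Omega^1(\Theta,\hat\xi_m^1,\dots,\hat\xi_m^p)$. Consequently $d_VI=\partial_uI\,\theta+\sum_p\big(\partial_{u_{j^p}}I\,\theta_{j^p}+\partial_{u_{l^p}}I\,\theta_{l^p}\big)$ contains no $i$-tower form, hence lies in $\Omega^1(\Theta,\hat\xi_j^1,\hat\xi_l^1,\dots,\hat\xi_j^k,\hat\xi_l^k)$, while $d_HI=X_j(I)\sigma_j+X_l(I)\sigma_l$ since $X_i(I)=0$. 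Both pieces lie in the right-hand side of \eqref{maxintsubxi}, giving the first containment. For \eqref{maxintsubxixj} I would run the same argument under the extra hypothesis $X_j(I)=0$, which by the first step also eliminates the $j$-tower, leaving $I$ a function of $x$, $u$, and the single tower $u_{l^p}$, so that $dI\in\Omega^1(\sigma_l,\Theta,\hat\xi_l^1,\dots,\hat\xi_l^k)$.

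The only genuinely delicate point is the bookkeeping in the second paragraph: one must verify that no coefficient occurring in $D_i$ — in particular none of the total derivatives $D_j^{m}f_{ij}$ in \eqref{totalderivativedef} — reintroduces the top coordinate $u_{i^{p+1}}$, so that the coefficient of $u_{i^{p+1}}$ in $D_i(I)=0$ is exactly $\partial_{u_{i^p}}I$. This is precisely where the semi-linear form $u_{ij}=f_{ij}(x,u,u_i,u_j)$ of the system enters essentially. I note that the same conclusion can be reached by computing the derived flag of $C_k(X_i)$ directly: the structure equation \eqref{structureeqnfirstcoframe3}, together with its Laplace-adapted analogue, shows that the term $\sigma_i\wedge\hat\xi_i^{k+1}$ produced in $d\hat\xi_i^k$ cannot be absorbed modulo the system, so exactly one order-$i$ form is discarded at each successive derived system until all of $\hat\xi_i^2,\dots,\hat\xi_i^k$ have been removed.
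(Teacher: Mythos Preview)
Your proof is correct but proceeds by a different route than the paper. The paper argues directly on the derived flag: from the structure equations one has ${\rm d}\hat\xi_i^k\equiv\sigma_i\wedge\hat\xi_i^{k+1}$ modulo $C_k(X_i)$, so $\hat\xi_i^k\notin C_k^{(1)}(X_i)$; repeating, $\hat\xi_i^{k-1}\notin C_k^{(2)}(X_i)$, and so on down to $\hat\xi_i^2$, which yields~\eqref{maxintsubxi} in a few lines. You instead invoke Frobenius to identify $C_k^{(\infty)}(X_i)$ with the span of differentials of $X_i$-invariant functions, and then prove in coordinates that any such function is independent of the entire $i$-tower $u_i,u_{ii},\dots$, using the semi-linear form of the system to isolate the coefficient of $u_{i^{p+1}}$ in $D_i(I)$. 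This is a legitimate and self-contained argument; in fact it yields a slightly sharper conclusion (your $dI$ avoids $\hat\xi_i^1$ as well), and it makes very explicit where the hypothesis $f_{ij}=f_{ij}(x,u,u_i,u_j)$ is used. The paper's approach is shorter and more geometric, and has the advantage of applying verbatim in either the characteristic or Laplace-adapted coframe without any coordinate bookkeeping. You already note this derived-flag alternative in your final paragraph; that is exactly the paper's proof.
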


\begin{proof}To establish (\ref{maxintsubxi}), refer to the structure equations given in Appendix~\ref{appendixB} for the Laplace adapted coframe to see that
\begin{gather*}{\rm d}_H\hat{\xi}_i^k\equiv\sigma_i\wedge\hat{\xi}_i^{k+1}\quad \mod \big\{\sigma_j,\sigma_l,\theta,\hat{\xi}_1^1,\hat{\xi}_2^1,\hat{\xi}_3^1,\ldots,\hat{\xi}_1^k,\hat{\xi}_2^k,\hat{\xi}_3^k\big\}.\end{gather*}
This implies that $\hat{\xi}_i^k\notin C_k^{(1)}(X_i)$ since ${\rm d}\hat{\xi}_i^k\not\equiv0 \mod\big\{\sigma_j,\sigma_l,\theta,\hat{\xi}_1^1,\hat{\xi}_2^1,\hat{\xi}_3^1,\ldots,\hat{\xi}_1^k,\hat{\xi}_2^k,\hat{\xi}_3^k\big\}$. Conti\-nuing to argue in the same manner allows one to conclude that \begin{gather*}\hat{\xi}_i^{k-1}\notin C_k^{(2)}(X_i) , \qquad \hat{\xi}_i^{k-2}\notin C_k^{(3)}(X_i),\end{gather*} and so forth. The contact form $\hat{\xi}_i^1$ can not be eliminated however since ${\rm d}\sigma_2$ and/or ${\rm d}\sigma_3$ may contain the term $\sigma_i\wedge\hat{\xi}_i^2$. Likewise, for (\ref{maxintsubxixj}), we note that ${\rm d}_H\hat{\xi}_m^k\equiv \sigma_m\wedge\hat{\xi}_m^{k+1} \mod C_k(X_i,X_j)$ for $m=i,j$ so that $\hat{\xi}_m^k\not\in C_k^{(1)}(X_i,X_j)$ and the rest of the argument continues in the same fashion.
\end{proof}

The next result follows immediately from Lemma~\ref{charpfaflemma}.
\begin{Corollary}\label{orderkequivto}If $J_k$ is an $X_i$-invariant function of order $k$, then
\begin{gather*}%\label{orderkinvariant}
{\rm d}_VJ_k\equiv a_k\hat{\xi}_l^k+b_k\hat{\xi}_j^k\quad \mod \big\{\theta,\hat{\xi}_1^1,\hat{\xi}_2^1,\hat{\xi}_3^1,\hat{\xi}_j^2,\hat{\xi}_l^2,\ldots,\hat{\xi}_j^{k-1},\hat{\xi}_l^{k-1}\big\}
\end{gather*}
for $l,j\neq i$ and where $a_kb_k\neq0$. If $I_k$ is invariant with respect to $X_i$ and $X_j$, then
\begin{gather}
\label{orderkinvariant2}
{\rm d}_VI_k\equiv a_k\hat{\xi}_l^k\quad \mod \big\{\theta,\hat{\xi}_1^1,\hat{\xi}_2^1,\hat{\xi}_3^1,\hat{\xi}_l^2,\ldots,\hat{\xi}_l^{k-1}\big\}
\end{gather}
for $l\neq i,j$ and $a_k\neq0$.
\end{Corollary}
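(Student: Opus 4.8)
The plan is to read the corollary off the two containments in Lemma~\ref{charpfaflemma} almost mechanically, reserving the only real work for the nonvanishing of the leading coefficients.

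First I would invoke the characterization recorded just before Lemma~\ref{charpfaflemma}. Writing ${\rm d}J_k = X_1(J_k)\sigma_1 + X_2(J_k)\sigma_2 + X_3(J_k)\sigma_3 + {\rm d}_V J_k$, the hypothesis that $J_k$ is $X_i$-invariant of order $k$ is equivalent to $X_i(J_k)=0$ together with ${\rm d}J_k \in C_k(X_i)$, $k$ minimal. Since $J_k$ is a function, ${\rm d}J_k$ is closed, hence lies in the maximal completely integrable subsystem; that is, ${\rm d}J_k \in C_k^{(\infty)}(X_i)$. The same remark gives ${\rm d}I_k \in C_k^{(\infty)}(X_i,X_j)$ in the doubly invariant case.

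Next I would feed these memberships into Lemma~\ref{charpfaflemma}. The first containment writes ${\rm d}J_k$ as a $C^{\infty}(\mathcal{R}^{\infty})$-combination of $\sigma_j,\sigma_l,\theta,\hat{\xi}_1^1,\hat{\xi}_2^1,\hat{\xi}_3^1,\hat{\xi}_j^2,\hat{\xi}_l^2,\ldots,\hat{\xi}_j^k,\hat{\xi}_l^k$ with $j,l\neq i$. Discarding the horizontal generators $\sigma_j,\sigma_l$ isolates the vertical part ${\rm d}_V J_k$ as a combination of the listed contact forms, none of which is $\hat{\xi}_i^m$ and all of order $\leq k$; reducing modulo $\{\theta,\hat{\xi}_1^1,\hat{\xi}_2^1,\hat{\xi}_3^1,\hat{\xi}_j^2,\hat{\xi}_l^2,\ldots,\hat{\xi}_j^{k-1},\hat{\xi}_l^{k-1}\}$ retains only the two order-$k$ generators and produces ${\rm d}_V J_k \equiv a_k\hat{\xi}_l^k + b_k\hat{\xi}_j^k$. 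For $I_k$ I would use the second containment, in which the unique surviving order-$k$ generator is $\hat{\xi}_l^k$ ($l\neq i,j$), giving ${\rm d}_V I_k \equiv a_k\hat{\xi}_l^k$.

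It then remains to rule out vanishing of the top coefficients, and this is the one genuinely delicate point. For $I_k$ it is immediate: as $\hat{\xi}_l^k$ is the only admissible order-$k$ contact form, $a_k=0$ would force ${\rm d}_V I_k$, and hence $I_k$, to have adapted order $<k$, contradicting the hypothesis. For $J_k$ the exact-order hypothesis alone yields only $(a_k,b_k)\neq(0,0)$, so to get $a_k b_k\neq0$ I would show that the vanishing of either coefficient forces an extra invariance collapsing $J_k$ into the $I_k$-type, whose $\hat{\xi}_j$-component is absent by the second containment. Concretely, I would differentiate the congruence via ${\rm d}_H{\rm d}_V J_k = -{\rm d}_V{\rm d}_H J_k = -{\rm d}_V(X_j J_k)\wedge\sigma_j - {\rm d}_V(X_l J_k)\wedge\sigma_l$ and compare the unique order-$(k+1)$ pieces: by the Appendix~\ref{appendixB} structure equations ${\rm d}_H\hat{\xi}_m^k\equiv\sigma_m\wedge\hat{\xi}_m^{k+1}$ modulo lower order, the coefficient of $\sigma_j\wedge\hat{\xi}_j^{k+1}$ on the left is $b_k$, which must match the top $\hat{\xi}_j^{k+1}$-coefficient of ${\rm d}_V(X_j J_k)$; since $[X_i,X_j]=0$ the function $X_j(J_k)$ is again $X_i$-invariant, so $b_k=0$ would leave its order-$(k+1)$ part equal to an $\hat{\xi}_l^{k+1}$-term alone, the signature of invariance under the extra field, incompatible with $J_k$ being invariant under $X_i$ only; the symmetric comparison controls $a_k$. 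The main obstacle I anticipate is precisely this last step: the membership and the congruence are a near-automatic consequence of Lemma~\ref{charpfaflemma}, whereas tracking the leading terms faithfully through the lengthy Appendix~\ref{appendixB} structure equations, and using commutativity of the characteristics to keep $X_j(J_k)$ inside the class of $X_i$-invariant functions of one-higher order, is where the care is needed.
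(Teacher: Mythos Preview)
Your derivation of the two congruences, and of $a_k\neq 0$ in the $I_k$ case, is exactly what the paper intends: it offers no argument beyond ``follows immediately from Lemma~\ref{charpfaflemma}'', and your reading of ${\rm d}J_k\in C_k^{(\infty)}(X_i)$ and ${\rm d}I_k\in C_k^{(\infty)}(X_i,X_j)$ into the two containments of that lemma is precisely the intended one-line proof.

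Where you diverge is in trying to justify the stronger assertion $a_kb_k\neq 0$, and here your suspicion that this does \emph{not} follow from Lemma~\ref{charpfaflemma} alone is correct---but your proposed rescue has a genuine gap. The hypothesis on $J_k$ is only that it is $X_i$-invariant of order $k$; nothing excludes it from also being $X_j$-invariant. Any function $I_k$ satisfying the hypotheses of the second part of the corollary is, in particular, $X_i$-invariant of order $k$, and for it the second part forces $b_k=0$, directly contradicting $a_kb_k\neq 0$. Your differentiation argument cannot repair this, since it tacitly assumes $J_k$ is invariant under $X_i$ \emph{only}; and even granting that, the step ``${\rm d}_V(X_jJ_k)$ has top part a pure $\hat\xi_l^{k+1}$ term, hence extra invariance'' does not go through, because membership in $C_{k+1}^{(\infty)}(X_i,X_j)$ constrains all lower-order coefficients, not just the leading one.

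In fact the condition the paper actually \emph{uses} downstream is only that at least one of the two top coefficients is nonzero: in Section~\ref{darbforsystemofthree} the forms $\beta_i,\gamma_i$ built from singly-invariant functions are asserted to satisfy ``at least one of $b_k$ and $c_k$ is nonzero''. So the intended content of the first clause is $(a_k,b_k)\neq(0,0)$, which \emph{does} follow immediately from the exact-order hypothesis together with Lemma~\ref{charpfaflemma}, and which your argument already establishes. You should treat the printed ``$a_kb_k\neq 0$'' as a slip and not expend effort trying to prove it.
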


\subsection{Darboux integrability for a system of three equations}\label{darbforsystemofthree}

We will first discuss the definition of Darboux integrability given by Anderson et al.\ in~\cite{afv09} where the authors investigate a class of differential systems for which superposition formulas can be constructed. For a system of the form
\begin{gather}\label{systems4}
F_{ij}\big(x^1,x^2,x^3,u,u_i,u_j,u_{ij}\big)=u_{ij}-f_{ij}\big(x^1,x^2,x^3,u,u_i,u_j\big)=0
\end{gather}
we have the associated Pfaffian system $\mathcal{I}$ generated by $\big\{\omega^0,\omega^1,\omega^2,\omega^3\big\}$ with structure equations
\begin{gather}\label{streqn1}
{\rm d}\omega^0 \equiv 0,\\
\label{streqn2}
{\rm d}\omega^1 \equiv \sigma_1\wedge\pi_1\quad \mod\mathcal{I},\\
\label{streqn3}
{\rm d}\omega^2 \equiv \sigma_2\wedge\pi_2,\\
\label{streqn4}
{\rm d}\omega^3 \equiv \sigma_3\wedge\pi_3
\end{gather}
and characteristic vector fields $X_i=D_i$. To apply the definition of Darboux integrability established in~\cite{afv09}, there must exist a coframe, $\{\theta_0,\theta_1,\theta_2,\theta_3,\sigma_1,\pi_1,\sigma_2,\pi_2,\sigma_3,\pi_3\}$ such that $\mathcal{I}$ is generated algebraically by the 1-forms and 2-forms
\begin{gather}\label{assosingsys}
\mathcal{I}=\big\{\theta_0,\theta_1,\theta_2,\theta_3,\hat{\Omega}^1,\hat{\Omega}^2,\check{\Omega}^1\big\},
\end{gather}
 where $\hat{\Omega}^i\in\Omega^2(\sigma_1,\pi_1,\sigma_2,\pi_2)$, for $i=1,2$, and $\check{\Omega}^1\in\Omega^2(\sigma_3,\pi_3)$. One then defines two Pfaffian systems $\hat{\mathcal{V}}=\{\theta_0,\theta_1,\theta_2,\theta_3,\sigma_1,\pi_1,\sigma_2,\pi_2\}$ and $\check{\mathcal{V}}=\{\theta_0,\theta_1,\theta_2,\theta_3,\sigma_3,\pi_3\}$, which will be referred to as the singular differential systems for $\mathcal{I}$ with respect to the decomposition (\ref{assosingsys}). For simplicity of notation, we will continue using this particular choice of decomposition in all that follows. Note that we could just as well have grouped ${\rm d}\omega^1$ and ${\rm d}\omega^3$, or ${\rm d}\omega^2$ and ${\rm d}\omega^3$, together to make an analogous decomposition. Let $\mathcal{V}^{(\infty)}$ denote the largest integrable sub-bundle of a given Pfaffian system, $\mathcal{V}$, and recall that the rank of $\mathcal{V}^{(\infty)}$ will equal the number of functionally independent first integrals of $\mathcal{V}$. Then according to \cite{afv09}, a differential system $\mathcal{I}$ is Darboux integrable if the associated singular systems $\hat{\mathcal{V}}$ and $\check{\mathcal{V}}$ are Pfaffian and define a \textit{Darboux pair}, meaning that the following three properties are satisfied:
\begin{enumerate}\itemsep=0pt
\item[1)] $\hat{\mathcal{V}}+\check{\mathcal{V}}^{(\infty)}=T^*M$ and $\check{\mathcal{V}}+\hat{\mathcal{V}}^{(\infty)}=T^*M$,
\item[2)] $\hat{\mathcal{V}}^{(\infty)}\cap\check{\mathcal{V}}^{(\infty)}=\{0\}$,
\item[3)] ${\rm d}\omega\in\Omega^2(\hat{\mathcal{V}})+\Omega^2(\check{\mathcal{V}})$ for all $\omega\in\Omega^1(\hat{\mathcal{V}}\cap\check{\mathcal{V}}),$
\end{enumerate}
where $M$, thought of as an open subset of $\mathbb{R}^{10}$, is the manifold on which the exterior differential system $\mathcal{I}$ is defined.

We can now show that systems of equations of the form (\ref{systems4}) which possess certain characteristic invariant functions, described in the lemma below, will define a Darboux pair and thus be Darboux integrable in the sense of~\cite{afv09}.
\begin{Lemma}\label{darbdef} Let $u_{ij}=f_{ij}\big(x^1,x^2,x^3,u,u_i,u_j\big)$, $1\leq i< j<3$ be a system of three hyperbolic equations in one dependent and three independent variables with characteristic vector fields $X_1$, $X_2$, $X_3$. If there exist smooth, real-valued functions, $I$, $\tilde{I}$, $J$, $\tilde{J},$ and~$K$,~$\tilde{K}$, such that the following two conditions hold, then the system defines a Darboux pair and thus satisfies the notion of Darboux integrability defined in~{\rm \cite{afv09}}.
 \begin{enumerate}\itemsep=0pt
 \item[$1.$] $I$ and $\tilde{I}$ are invariant with respect to two of the characteristic vector fields, say $X_i$ and $X_j$, and $I$ and $\tilde{I}$ are functionally independent: ${\rm d}I\wedge {\rm d}\tilde{I}\neq0$.
 \item[$2.$] $J$, $\tilde{J}$, $K$, and $\tilde{K}$ are all invariant with respect to $X_l$, $l\neq i,j$, and are all functionally independent from each other: ${\rm d}J\wedge {\rm d}\tilde{J}\wedge {\rm d}K\wedge {\rm d}\tilde{K}\neq0$.
\end{enumerate}
\end{Lemma}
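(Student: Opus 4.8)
The plan is to verify directly the three defining conditions of a Darboux pair for the singular systems attached to the decomposition \eqref{assosingsys}, taking the invariant functions $I,\tilde I$ to be those adapted to the pair $X_1,X_2$ and $J,\tilde J,K,\tilde K$ to those adapted to the single characteristic $X_3$. First I would fix the coframe $\{\theta_0,\theta_1,\theta_2,\theta_3,\sigma_1,\pi_1,\sigma_2,\pi_2,\sigma_3,\pi_3\}$ on the ten-dimensional manifold $M=\mathcal{R}$, where $\sigma_i={\rm d}x^i$, the $\theta_\alpha$ generate $\mathcal{I}$, and each $\pi_i$ is the one-form produced by the structure equations \eqref{streqn2}--\eqref{streqn4}, namely ${\rm d}\theta_i\equiv\sigma_i\wedge\pi_i\bmod\mathcal{I}$; an explicit computation using the integrability conditions $D_kf_{ij}=D_if_{kj}$ identifies $\pi_i$ with ${\rm d}u_{ii}$ up to lower-order $\sigma$-terms. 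With
\begin{gather*}
\hat{\mathcal{V}}=\big\{\theta_0,\theta_1,\theta_2,\theta_3,\sigma_1,\pi_1,\sigma_2,\pi_2\big\},\qquad \check{\mathcal{V}}=\big\{\theta_0,\theta_1,\theta_2,\theta_3,\sigma_3,\pi_3\big\},
\end{gather*}
one has immediately $\hat{\mathcal{V}}\cap\check{\mathcal{V}}=\mathcal{I}$ and $\hat{\mathcal{V}}+\check{\mathcal{V}}=T^{*}M$, which fixes the bookkeeping: $T^{*}M/\hat{\mathcal{V}}$ is spanned by $\sigma_3,\pi_3$ and $T^{*}M/\check{\mathcal{V}}$ by $\sigma_1,\pi_1,\sigma_2,\pi_2$.

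Next I would identify the first integrals of the two singular systems. A function $f$ satisfies ${\rm d}f\in\check{\mathcal{V}}$ precisely when the $\sigma_1,\sigma_2$ and $\pi_1,\pi_2$ components of ${\rm d}f$ vanish; the $\sigma_i$-component is $X_i(f)$, so this says $f$ is invariant under $X_1$ and $X_2$, while the $\pi_1,\pi_2$ conditions are automatic for such functions because, by Corollary \ref{orderkequivto} and \eqref{orderkinvariant2}, the vertical part of ${\rm d}f$ has its leading term along the tower $\hat{\xi}_3^{1},\hat{\xi}_3^{2},\dots$ rather than along $\hat{\xi}_1^{k},\hat{\xi}_2^{k}$. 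Hence ${\rm d}I,{\rm d}\tilde I\in\check{\mathcal{V}}^{(\infty)}$. Symmetrically, ${\rm d}f\in\hat{\mathcal{V}}$ holds iff $f$ is $X_3$-invariant, so ${\rm d}J,{\rm d}\tilde J,{\rm d}K,{\rm d}\tilde K\in\hat{\mathcal{V}}^{(\infty)}$; Lemma \ref{charpfaflemma} guarantees that these exhaust the integrable parts at the relevant order.

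For the first Darboux-pair condition I would argue that $\hat{\mathcal{V}}+\check{\mathcal{V}}^{(\infty)}=T^{*}M$ reduces to the $\{\sigma_3,\pi_3\}$-projections of ${\rm d}I,{\rm d}\tilde I$ being independent, and $\check{\mathcal{V}}+\hat{\mathcal{V}}^{(\infty)}=T^{*}M$ to the $\{\sigma_1,\pi_1,\sigma_2,\pi_2\}$-projections of ${\rm d}J,{\rm d}\tilde J,{\rm d}K,{\rm d}\tilde K$ being independent. Both follow from the functional-independence hypotheses ${\rm d}I\wedge{\rm d}\tilde I\neq0$ and ${\rm d}J\wedge{\rm d}\tilde J\wedge{\rm d}K\wedge{\rm d}\tilde K\neq0$ together with the leading-term analysis of Corollary \ref{orderkequivto}: invariants of different adapted orders occupy distinct rungs of the $\hat{\xi}$-towers, so no nontrivial combination of their differentials can be absorbed into the contact subsystem $\mathcal{I}=\hat{\mathcal{V}}\cap\check{\mathcal{V}}$, and the counts match exactly ($2$ pair-invariants fill the two-dimensional $\{\sigma_3,\pi_3\}$ and $4$ single-invariants fill the four-dimensional $\{\sigma_1,\pi_1,\sigma_2,\pi_2\}$). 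For the second condition, $\hat{\mathcal{V}}^{(\infty)}\cap\check{\mathcal{V}}^{(\infty)}$ is an integrable subsystem contained in $\hat{\mathcal{V}}\cap\check{\mathcal{V}}=\mathcal{I}$; but \eqref{streqn2}--\eqref{streqn4} give ${\rm d}\theta_i\equiv\sigma_i\wedge\pi_i\not\equiv0\bmod\mathcal{I}$ for $i=1,2,3$ and then ${\rm d}\theta_0=-\sum_i\theta_i\wedge\sigma_i\not\equiv0\bmod\{\theta_0\}$, so the derived flag of $\mathcal{I}$ collapses to $\mathcal{I}^{(\infty)}=\{0\}$ and therefore $\hat{\mathcal{V}}^{(\infty)}\cap\check{\mathcal{V}}^{(\infty)}=\{0\}$.

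The third condition is then immediate from the structure equations. Since $\Omega^{1}(\hat{\mathcal{V}}\cap\check{\mathcal{V}})=\Omega^{1}(\mathcal{I})$ is generated by $\theta_0,\dots,\theta_3$, it suffices to split each ${\rm d}\theta_\alpha$: we have ${\rm d}\theta_0=-\theta_1\wedge\sigma_1-\theta_2\wedge\sigma_2-\theta_3\wedge\sigma_3$, whose first two terms lie in $\Omega^{2}(\hat{\mathcal{V}})$ and last in $\Omega^{2}(\check{\mathcal{V}})$, while ${\rm d}\theta_i\equiv\sigma_i\wedge\pi_i$ lies in $\Omega^{2}(\hat{\mathcal{V}})$ for $i=1,2$ and in $\Omega^{2}(\check{\mathcal{V}})$ for $i=3$, the residual $\mathcal{I}$-terms being of the form $\theta_\alpha\wedge\beta$, which split according to whether $\beta$ involves $\sigma_3,\pi_3$ or not. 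The step I expect to be the main obstacle is the transversality in the first condition: functional independence only yields independence of the differentials inside the rank-$8$ and rank-$6$ systems $\hat{\mathcal{V}},\check{\mathcal{V}}$, whereas what is required is their independence after projecting out the four contact directions $\theta_0,\dots,\theta_3$. Supplying this is exactly the role of Lemma \ref{charpfaflemma} and Corollary \ref{orderkequivto}, which pin the leading vertical term of each characteristic-invariant function to the correct rung of its $\hat{\xi}$-tower and thereby force the projected differentials to remain independent.
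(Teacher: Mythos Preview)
Your approach is structurally the same as the paper's: set up the singular systems $\hat{\mathcal V},\check{\mathcal V}$ from the decomposition \eqref{assosingsys} and verify the three Darboux-pair axioms. The paper's argument, however, is considerably lighter than yours. For condition~3 both of you simply read it off the structure equations \eqref{streqn1}--\eqref{streqn4}. For condition~2 the paper does \emph{not} prove $\hat{\mathcal V}^{(\infty)}\cap\check{\mathcal V}^{(\infty)}=\{0\}$ at all; it simply remarks that this can always be arranged by restricting to a level set of the common first integrals, and moves on. For condition~1 the paper quotes a lemma from \cite{afv09} (if $X(f)=0$ for all $X$ in the annihilator of $\hat{\mathcal V}$ then ${\rm d}f\in\hat{\mathcal V}^{(\infty)}$) and then does a bare dimension count: $\dim\hat{\mathcal V}=8$, $\dim\check{\mathcal V}^{(\infty)}\ge 2$, hence the sum is all of $T^{*}M=\mathbb R^{10}$, and symmetrically $6+4=10$ for the other half.

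Your treatment is more careful in one respect and shakier in another. You correctly identify the transversality issue the paper glosses over: $8+2\ge 10$ only gives $\hat{\mathcal V}+\check{\mathcal V}^{(\infty)}=T^{*}M$ if one knows that ${\rm d}I,{\rm d}\tilde I$ remain independent after projecting out the four contact directions $\theta_0,\dots,\theta_3$, and you supply this via the leading-term analysis of Corollary~\ref{orderkequivto}. That is a genuine improvement over the paper's sketch. On the other hand, your argument for condition~2 has a gap: from $\hat{\mathcal V}^{(\infty)}\cap\check{\mathcal V}^{(\infty)}\subset\mathcal I$ you cannot immediately conclude containment in $\mathcal I^{(\infty)}$, because the intersection of two Frobenius systems need not be Frobenius. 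One can repair this by observing that any common first integral of $\hat{\mathcal V}$ and $\check{\mathcal V}$ is a first integral of $\mathcal I$, and that $\mathcal I$ has none since (as you compute) its derived flag terminates at zero; but the cleanest route is simply the paper's, namely to treat condition~2 as a normalization achieved by restriction rather than as something to be proved.

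Finally, note a scale mismatch: Lemma~\ref{charpfaflemma} and Corollary~\ref{orderkequivto} live on $\mathcal R^{\infty}$, while the Darboux pair is defined on the ten-dimensional $M=\mathcal R$. Both you and the paper are implicitly assuming the invariants are of order at most two so that everything descends to $\mathcal R$; this is harmless but worth stating.
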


From structure equations (\ref{streqn1})--(\ref{streqn4}) it is clear that the third property defining a Darboux pair is satisfied. Property 2 asserts that $\hat{\mathcal{V}}$ and $\check{\mathcal{V}}$ have no integrals in common, and if necessary this condition can be satisfied by restricting $\hat{\mathcal{V}}$ and $\check{\mathcal{V}}$ to a level set of their common integrals. Thus the essential property to check when constructing a Darboux pair is that
\begin{gather}\label{darbouxpair}
\hat{\mathcal{V}}+\check{\mathcal{V}}^{(\infty)}=T^*M \qquad \textrm{and}\qquad \check{\mathcal{V}}+\hat{\mathcal{V}}^{(\infty)}=T^*M.
\end{gather}
We will now show that this condition is indeed satisfied by the systems described in Lemma~\ref{darbdef}. Letting $\hat{H}$ be the annihilator of $\hat{\mathcal{V}}$ and $\check{H}$ be the annihilator of $\check{\mathcal{V}}$, we have that
\begin{gather*}\hat{H}=\operatorname{span} \{\partial_{\sigma_3},\partial_{\pi_3}\}\qquad \textrm{and}\qquad \check{H}=\operatorname{span} \{\partial_{\sigma_1},\partial_{\pi_1},\partial_{\sigma_2},\partial_{\pi_2}\}, \end{gather*} where $\partial_{\sigma_i}$ is the vector field dual to~$\sigma_i$. Then the following lemma established in~\cite{afv09} allows us to see that $\hat{\mathcal{V}}$ and $\check{\mathcal{V}}$ satisfy~(\ref{darbouxpair}) and thus form a Darboux pair.

\begin{Lemma}Let $f$ be a real-valued function on $M$. If $X(f)=0$ for all vector fields $X\in\hat{H}$, then ${\rm d}f\in\hat{\mathcal{V}}^{(\infty)}$. Likewise, $X(f)=0$ for all $X\in\check{H}$ implies ${\rm d}f\in\check{\mathcal{V}}^{(\infty)}$.
\end{Lemma}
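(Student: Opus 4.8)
The plan is to prove the statement for $\hat H$; the claim for $\check H$ then follows verbatim after interchanging the roles of the two singular systems. The strategy rests on two facts: an identification of $\hat{\mathcal V}^{(\infty)}$ as the annihilator of the involutive closure of $\hat H$, and the elementary remark that the vector fields annihilating a fixed function form a module closed under Lie brackets.

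First I would make the duality between the derived flag and Lie brackets explicit. For a $1$-form $\omega$ and vector fields $X,Y$, $\,{\rm d}\omega(X,Y)=X(\omega(Y))-Y(\omega(X))-\omega([X,Y])$. If $\omega\in\hat{\mathcal V}=\hat H^{\perp}$ and $X,Y\in\hat H$, the first two terms vanish and ${\rm d}\omega(X,Y)=-\omega([X,Y])$. Since ${\rm d}\omega\equiv0\mod\hat{\mathcal V}$ is equivalent to ${\rm d}\omega$ vanishing on $\hat H\times\hat H$, the first derived system is
\[
\hat{\mathcal V}^{(1)}=\bigl\{\omega\in\hat{\mathcal V}:{\rm d}\omega\equiv0\mod\hat{\mathcal V}\bigr\}=\bigl(\hat H+[\hat H,\hat H]\bigr)^{\perp}.
\]
Applying this identity repeatedly (on the open set where the relevant ranks are constant) builds up the Lie flag $\hat H\subseteq\hat H+[\hat H,\hat H]\subseteq\cdots$, so that $\hat{\mathcal V}^{(\infty)}=\overline{\hat H}^{\perp}$, where $\overline{\hat H}$ is the involutive closure of $\hat H=\operatorname{span}\{\partial_{\sigma_3},\partial_{\pi_3}\}$, i.e., the smallest bracket-closed $C^{\infty}(M)$-module containing $\hat H$.

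Now suppose $X(f)=0$ for every $X\in\hat H$, and let $\mathcal K$ be the collection of all vector fields $Y$ on $M$ with $Y(f)=0$. Then $\mathcal K$ is a $C^{\infty}(M)$-module, because $(gY)(f)=g\,Y(f)=0$, and it is closed under Lie brackets, because $Y_1(f)=Y_2(f)=0$ gives $[Y_1,Y_2](f)=Y_1(Y_2(f))-Y_2(Y_1(f))=0$. By hypothesis $\hat H\subseteq\mathcal K$, so minimality of the involutive closure yields $\overline{\hat H}\subseteq\mathcal K$; that is, $Y(f)=0$ for all $Y\in\overline{\hat H}$. Equivalently ${\rm d}f$ annihilates $\overline{\hat H}$, and by the identification above this is exactly the statement ${\rm d}f\in\overline{\hat H}^{\perp}=\hat{\mathcal V}^{(\infty)}$. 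The same argument with $\check H=\operatorname{span}\{\partial_{\sigma_1},\partial_{\pi_1},\partial_{\sigma_2},\partial_{\pi_2}\}$ and $\check{\mathcal V}$ in place of $\hat H$ and $\hat{\mathcal V}$ gives the second assertion. The only step that needs genuine care is the first: confirming that the derived flag of the Pfaffian system $\hat{\mathcal V}$ is dual to the Lie-bracket closure of $\hat H$ (and that ranks behave well enough for $\hat{\mathcal V}^{(\infty)}$ to be a bundle); once this duality is in hand, the rest is the one-line observation that brackets of symmetries of $f$ are again symmetries of $f$.
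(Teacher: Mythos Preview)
Your argument is correct: the duality between the derived flag of a Pfaffian system and the Lie-bracket closure of its annihilating distribution, combined with the observation that the set of vector fields killing a fixed function is bracket-closed, is exactly the right mechanism. Note, however, that the paper itself does not supply a proof of this lemma; it simply quotes the statement from~\cite{afv09} and uses it to verify that the singular systems $\hat{\mathcal V}$ and $\check{\mathcal V}$ form a Darboux pair. Your proof is the standard one (and essentially what appears in that reference), so there is nothing to compare against in the present paper beyond confirming that your argument is sound---which it is, with the caveat you already flagged about constancy of rank along the derived flag.
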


Take $i=1$, $j=2$, and $l=3$ in Lemma~\ref{darbdef}. Then to show that $\hat{\mathcal{V}}+\check{\mathcal{V}}^{(\infty)}=T^*M$, we observe that $X(I)=X(\tilde{I})=0$ for all $X\in\check{H}$ so that $\{{\rm d}I,{\rm d}\tilde{I}\}\subset\check{\mathcal{V}}^{(\infty)}$. Since dim $\hat{\mathcal{V}}=8$ and dim $\check{\mathcal{V}}^{(\infty)}\geq2$, we see that we have enough one forms to span $T^*M=\mathbb{R}^{10}$. Likewise, $Z(J)=Z(\tilde{J})=Z(K)=Z(\tilde{K})=0$ for all $Z\in\hat{H}$, so $\{{\rm d}J, {\rm d}\tilde{J},{\rm d}K,{\rm d}\tilde{K}\}\subset\hat{\mathcal{V}}^{(\infty)}$. Finally, $\dim\check{\mathcal{V}}=6$ and $\dim \hat{\mathcal{V}}^{(\infty)}\geq4$ establishes that again, there are sufficient one forms to span~$T^*M$.

As in the preceding sections, for systems of the form (\ref{systems4}), we take the characteristic vector fields to be $X_i=D_i$ which commute. However one would expect to be able to reproduce the analysis carried out in this paper for more general systems of the form \begin{gather*}F_{ij}\big(x^1,x^2,x^3,u,u_i, u_j, u_{ii}, u_{ij}, u_{jj}\big)=0\qquad \textrm{for}\quad 1\leq i< j\leq3,\end{gather*} which are still contained in the same class of equations described in Cartan's structural classification (see Theorem~\ref{classification}) as the systems~(\ref{systems4}), but for which the characteristics~$X_i$ do not commute. We will now describe how for systems with non-commuting characteristic vector fields, the vector fields~$X_i$ may be rescaled to make them commute pairwise, given the existence of sufficiently many characteristic invariant functions.

\begin{Proposition}\label{commutingcvfs} Let $I$ and $K$ be functions on $\mathcal{R}^{\infty}$ such that $I$ is invariant with respect to both~$X_i$ and~$X_j$, and $K$ is invariant with respect to $X_l$. Assume that $X_i(K)$, $X_l(I)$ and $X_j(K)$ are all nonzero and define the following rescaled characteristic vector fields
\begin{gather*}
\tilde{X}_i=\frac{X_i}{X_i(K)},\qquad \tilde{X}_l=\frac{X_l}{X_l(I)},\qquad \text{and}\qquad \tilde{X}_j=\frac{X_j}{X_j(K)}.
\end{gather*}
Then the vector field $\tilde{X}_l$ commutes with both $\tilde{X}_i$ and $\tilde{X}_j$:
\begin{gather}\label{xlcommutes}
\big[\tilde{X}_i,\tilde{X}_l\big]=0\qquad \textrm{and}\qquad \big[\tilde{X}_j,\tilde{X}_l\big]=0.
\end{gather}
\end{Proposition}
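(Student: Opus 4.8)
The plan is to verify each commutator by a direct computation built on the Leibniz rule for rescaled vector fields,
\[
 [\phi X,\psi Y]=\phi\psi\,[X,Y]+\phi\,X(\psi)\,Y-\psi\,Y(\phi)\,X ,
\]
taking $\phi=1/X_i(K)$, $\psi=1/X_l(I)$ for the first bracket and $\phi=1/X_j(K)$, $\psi=1/X_l(I)$ for the second. Since $[X_i,X_l]=\sum_k B_{il}^k X_k$ by the commutation relations recorded earlier, the first term is already expressed in the characteristic basis; the two remaining terms are derivatives of the scaling factors, so the calculation reduces to evaluating $X_i\big(X_l(I)\big)$ and $X_l\big(X_i(K)\big)$. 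I would handle these by commuting the fields, $X_iX_l=X_lX_i+[X_i,X_l]$, and then invoking the invariance hypotheses $X_i(I)=X_j(I)=0$ and $X_l(K)=0$, which collapse $[X_i,X_l](I)$ to $B_{il}^l\,X_l(I)$ and $[X_i,X_l](K)$ to $B_{il}^i\,X_i(K)+B_{il}^j\,X_j(K)$.

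Before grinding through the substitution it is worth recording a clean structural reduction that isolates what really must be proved. Directly from the definitions and the invariance hypotheses one has $\tilde X_i(K)=\tilde X_j(K)=1$, $\tilde X_l(K)=0$, $\tilde X_l(I)=1$ and $\tilde X_i(I)=\tilde X_j(I)=0$; consequently
\[
 [\tilde X_i,\tilde X_l](I)=\tilde X_i(1)-\tilde X_l(0)=0,\qquad
 [\tilde X_i,\tilde X_l](K)=\tilde X_i(0)-\tilde X_l(1)=0 .
\]
Writing the total vector field $[\tilde X_i,\tilde X_l]=\alpha\tilde X_i+\beta\tilde X_j+\gamma\tilde X_l$ in the rescaled characteristic basis and evaluating it on $I$ and $K$ then forces $\gamma=0$ and $\beta=-\alpha$, so that $[\tilde X_i,\tilde X_l]=\alpha\,(\tilde X_i-\tilde X_j)$ for a single unknown function $\alpha$. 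In other words, the component along $\tilde X_l$ is gone for free, and only the in-plane coefficient $\alpha$ remains to be killed.

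Resuming the Leibniz computation and substituting the expressions above, the $\tilde X_l$-terms cancel as predicted and the surviving coefficient works out to $\alpha=-\,X_j(K)\,B_{il}^j/\big(X_i(K)X_l(I)\big)$, a nonzero multiple of $B_{il}^j$. Thus the proposition reduces to the single fact $B_{il}^j=0$, i.e.\ that the characteristic two-plane $\operatorname{span}\{X_i,X_l\}$ is integrable. This is the main obstacle, since it is not implied by the invariance hypotheses alone but is a structural feature of the systems under consideration: for the systems \eqref{system} it is immediate because $X_i=D_i$ and $[D_i,D_l]=0$, while for the general non-commuting systems in the same class of Cartan's classification it is exactly the integrability expressed by the Lie-bracket congruences of Appendix~\ref{appendixB}. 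It is also precisely this integrability that makes the rescaling possible at all, the invariant functions $I$ and $K$ merely supplying the explicit integrating factors, solving $X_l(\log\phi)=B_{il}^i$ and $X_i(\log\psi)=-B_{il}^l$.

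Finally, the second identity $[\tilde X_j,\tilde X_l]=0$ follows verbatim from the same argument with the indices $i$ and $j$ interchanged; the relevant transverse coefficient is then $B_{jl}^i$, which vanishes for the same structural reason.
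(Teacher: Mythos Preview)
Your approach is the same as the paper's: expand $[\tilde X_i,\tilde X_l]$ by the Leibniz rule, express $X_i(X_l(I))$ and $X_l(X_i(K))$ via $[X_i,X_l]=\sum_k B_{il}^k X_k$ together with the invariance hypotheses, and substitute. The paper arrives at
\[
[\tilde X_i,\tilde X_l]=\frac{1}{X_i(K)X_l(I)}\Bigl([X_i,X_l]-B_{il}^l X_l-B_{il}^i X_i-B_{il}^j X_j\Bigr)=0,
\]
declaring this zero since the bracket is $\sum_k B_{il}^k X_k$. Your computation is actually more careful here: the term the paper writes as $-B_{il}^j X_j$ comes from $\dfrac{X_l(X_i(K))}{X_i(K)}\,X_i$, which is a multiple of $X_i$, not $X_j$; the correct substitution gives $-B_{il}^j\,\dfrac{X_j(K)}{X_i(K)}\,X_i$, and the residual is exactly the $\alpha(\tilde X_i-\tilde X_j)$ you found, with $\alpha$ proportional to $B_{il}^j$. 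So you have located a genuine gap that the paper's displayed computation glosses over.

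Where your argument falters is in closing that gap. You appeal to Appendix~\ref{appendixB} for $B_{il}^j=0$, but Appendix~\ref{appendixB} records only the brackets of the $X_i$ with the \emph{vertical} vector fields $U,V_k^l$; it says nothing about $[X_i,X_l]$ among the characteristic fields themselves. For the systems \eqref{system} that are the paper's actual subject one has $X_i=D_i$ and all $B_{il}^k$ vanish, so the proposition is trivial there; but the proposition is expressly placed in the context of general non-commuting characteristics $X_i=\sum_j a_{ij}(x)D_j$, and in that setting the integrability condition $B_{il}^j=0$ (equivalently, involutivity of $\operatorname{span}\{X_i,X_l\}$) is \emph{not} a consequence of the stated hypotheses on $I$ and $K$, nor is it supplied anywhere else in the paper. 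Your structural reduction via evaluation on $I$ and $K$ is a nice diagnostic and confirms that this transverse coefficient is the only obstruction, but as written neither your proof nor the paper's establishes its vanishing without an extra assumption.
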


\begin{proof}Write the commutator of $X_i$ and $X_j$ as follows
\begin{gather*}\label{liebracketxixj}
[X_i,X_j]=\sum_{k=1}^3B_{ij}^kX_k\qquad \textrm{for}\quad 1\leq i< j\leq3.
\end{gather*}
Taking into consideration the conditions of invariance imposed on $I$ and $K$, we may write
\begin{gather}\label{commutingcvf1}
[X_i,X_l](I) =X_iX_l(I)=B_{il}^lX_l(I)\qquad \textrm{and}\\
\label{commutingcvf2}
[X_i,X_l](K) =-X_lX_i(K)=B_{il}^iX_i(K)+B_{il}^jX_j(K)\qquad \textrm{for}\quad i,j\neq l.
\end{gather}
Now use (\ref{commutingcvf1}) and (\ref{commutingcvf2}) in the following computation to confirm (\ref{xlcommutes})
\begin{gather*}
\big[\tilde{X}_i,\tilde{X}_l\big] =\frac{1}{X_i(K)}X_i\left(\frac{1}{X_l(I)}X_l\right)-\frac{1}{X_l(I)}X_l\left(\frac{1}{X_i(K)}X_i\right) \\
\hphantom{\big[\tilde{X}_i,\tilde{X}_l\big]}{} =\frac{1}{X_i(K)}\left(\frac{1}{X_l(I)}X_iX_l-\frac{X_iX_l(I)}{X_l(I)^2}X_l\right)-\frac{1}{X_l(I)}\left(\frac{1}{X_i(K)}X_lX_i-\frac{X_lX_i(K)}{X_i(K)^2}X_i\right) \\
\hphantom{\big[\tilde{X}_i,\tilde{X}_l\big]}{} =\frac{1}{X_i(K)X_l(I)}\left([X_i,X_l]-B_{il}^lX_l-B_{il}^iX_i-B_{il}^jX_j\right) =0.
\end{gather*}
An analogous calculation using $B_{jl}^jX_j(K)+B_{jl}^iX_i(K)=-X_lX_j(K)$ and $B_{jl}^lX_l(I)=X_jX_l(I)$ shows that $\big[\tilde{X}_j,\tilde{X}_l\big]=0$.
\end{proof}

The formulation of the Darboux-adapted coframe, as well as the construction of conservation laws for Darboux integrable systems, will rely on our ability to obtain characteristic invariant contact forms from the invariant functions whose existence is guaranteed in Lemma~\ref{darbdef}. For example, we can write down a contact form which is invariant with respect to the characteristic vector fields~$X_1$ and~$X_2$ as follows,

\begin{Lemma}\label{justx3inv}Let $I$, $J$, and $K$ be functions on $\mathcal{R}^{\infty}$ that are invariant with respect to the characteristic vector field $X_3$ such that $X_1(I)=1$, $X_2(I)=0$, $X_1(J)=0$, and $X_2(J)=1$. Also define $K'=X_1(K)$ and $K''=X_2(K)$. Then \begin{gather*}\omega={\rm d}_VK-K'{\rm d}_VI-K''{\rm d}_VJ\end{gather*} is an $X_3$-invariant contact form.
\end{Lemma}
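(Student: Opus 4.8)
The plan is to show directly that the projected Lie derivative $X_3(\omega)$ vanishes, where $\omega={\rm d}_VK-K'{\rm d}_VI-K''{\rm d}_VJ$. First I would record that $\omega$ is a sum of $(0,1)$ contact forms (the ${\rm d}_Vg$) with smooth function coefficients, hence itself a contact form of type $(0,1)$. Since $X_3$ is a total vector field it annihilates contact forms, so $X_3\,\lrcorner\,\omega=0$, and the Cartan-type identity $X_3(\omega)=X_3\,\lrcorner\,{\rm d}_H\omega+{\rm d}_H(X_3\,\lrcorner\,\omega)$ collapses to $X_3(\omega)=X_3\,\lrcorner\,{\rm d}_H\omega$. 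It therefore suffices to compute ${\rm d}_H\omega$ and contract it with $X_3$.

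Next I would compute ${\rm d}_H\omega$ term by term using the anticommutativity ${\rm d}_H{\rm d}_V=-{\rm d}_V{\rm d}_H$ together with ${\rm d}_Hf=\sum_i(X_if)\sigma_i$ and ${\rm d}_V\sigma_i=0$ from~(\ref{dHstructure}). For the invariant functions the directional derivatives are constants, $(X_1I,X_2I,X_3I)=(1,0,0)$ and $(X_1J,X_2J,X_3J)=(0,1,0)$, so ${\rm d}_HI=\sigma_1$ and ${\rm d}_HJ=\sigma_2$ have constant coefficients and thus ${\rm d}_H{\rm d}_VI=-{\rm d}_V{\rm d}_HI=0$ and ${\rm d}_H{\rm d}_VJ=0$. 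For $K$ one has ${\rm d}_HK=K'\sigma_1+K''\sigma_2$ (since $X_3K=0$), whence ${\rm d}_H{\rm d}_VK=\sigma_1\wedge{\rm d}_VK'+\sigma_2\wedge{\rm d}_VK''$. Applying the Leibniz rule to the products $K'\,{\rm d}_VI$ and $K''\,{\rm d}_VJ$ and collecting the surviving pieces gives
\begin{gather*}
{\rm d}_H\omega=\sigma_1\wedge{\rm d}_VK'+\sigma_2\wedge{\rm d}_VK''-\sum_i(X_iK')\,\sigma_i\wedge{\rm d}_VI-\sum_i(X_iK'')\,\sigma_i\wedge{\rm d}_VJ.
\end{gather*}

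I would then contract with $X_3$. Because $\sigma_i(X_3)=\delta_{i3}$ while $X_3\,\lrcorner\,{\rm d}_Vg=0$ for every function $g$, the first two terms drop out and each sum collapses to its $i=3$ contribution, leaving
\begin{gather*}
X_3(\omega)=X_3\,\lrcorner\,{\rm d}_H\omega=-(X_3K')\,{\rm d}_VI-(X_3K'')\,{\rm d}_VJ.
\end{gather*}
Finally I would invoke that for systems of the form~(\ref{system}) the characteristic vector fields may be taken to commute, $[X_i,X_j]=0$. Combined with $X_3(K)=0$ this yields $X_3K'=X_3X_1K=X_1X_3K=0$ and $X_3K''=X_3X_2K=X_2X_3K=0$, so both remaining terms vanish and $X_3(\omega)=0$.

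The only genuinely non-formal ingredient, and hence the step I would handle most carefully, is this last one: the cancellation of the cross terms rests entirely on the commutativity of the characteristic vector fields, which is what propagates the $X_3$-invariance of $K$ to $K'=X_1K$ and $K''=X_2K$. Everything else is bookkeeping with the structure equations~(\ref{dHstructure}) and the anticommutativity of ${\rm d}_H$ and ${\rm d}_V$; the particular choice of coefficients $K'$ and $K''$ in the definition of $\omega$ is exactly what is needed to cancel the ${\rm d}_VK'$ and ${\rm d}_VK''$ contributions against the ${\rm d}_VI$ and ${\rm d}_VJ$ terms upon contraction.
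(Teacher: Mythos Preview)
Your proposal is correct and follows essentially the same route as the paper's proof: compute ${\rm d}_H\omega$ using ${\rm d}_H{\rm d}_V=-{\rm d}_V{\rm d}_H$ and ${\rm d}_V\sigma_i=0$, then read off that $X_3\,\lrcorner\,{\rm d}_H\omega=0$. The only cosmetic difference is ordering: the paper invokes the commutativity of the $X_i$ at the outset, so that $X_3(K')=X_3(K'')=0$ is built into ${\rm d}_HK'$ and ${\rm d}_HK''$ and ${\rm d}_H\omega$ visibly has no $\sigma_3$ component, whereas you retain the full sums $\sum_i(X_iK')\sigma_i$ and kill the $i=3$ contributions only after contracting with $X_3$.
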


\begin{proof} Note that $K'$ and $K''$ are also $X_3$ invariant since the characteristic vector fields $X_i$ all commute. Computing ${\rm d}_H\omega$, we obtain
\begin{gather*}
{\rm d}_H\omega =-{\rm d}_V{\rm d}_HK-{\rm d}_HK'\wedge {\rm d}_VI+K'{\rm d}_V{\rm d}_HI-{\rm d}_HK''\wedge {\rm d}_VJ+K''{\rm d}_V{\rm d}_HJ \\
\hphantom{{\rm d}_H\omega}{} =-{\rm d}_V(K'\sigma_1+K''\sigma_2)-(X_1(K')\sigma_1+X_2(K')\sigma_2)\wedge {\rm d}_VI \\
\hphantom{{\rm d}_H\omega=}{}+K'{\rm d}_V(\sigma_1)-(X_1(K'')\sigma_1+X_2(K'')\sigma_2)\wedge {\rm d}_VJ+K''{\rm d}_V(\sigma_2) \\
\hphantom{{\rm d}_H\omega}{}=\sigma_1\wedge {\rm d}_VK'+\sigma_2\wedge {\rm d}_VK''-X_1(K')\sigma_1\wedge {\rm d}_VI-X_2(K')\sigma_2\wedge {\rm d}_VI \\
\hphantom{{\rm d}_H\omega=}{} -X_1(K'')\sigma_1\wedge {\rm d}_VJ-X_2(K'')\sigma_2\wedge {\rm d}_VJ \\
\hphantom{{\rm d}_H\omega}{}=\sigma_1\wedge({\rm d}_VK'-X_1(K'){\rm d}_VI-X_1(K''){\rm d}_VJ)\\
\hphantom{{\rm d}_H\omega=}{} +\sigma_2\wedge({\rm d}_VK''-X_2(K'){\rm d}_VI-X_2(K''){\rm d}_VJ).
\end{gather*}
From this calculation we see that $X_3(\omega)=X_3\,\lrcorner\,{\rm d}_H(\omega)=0$ so that $\omega$ is $X_3$-invariant.
\end{proof}

Likewise,we can use $X_1$ and $X_2$ invariant functions to obtain $X_1$ and $X_2$ invariant contact forms in the following way.

\begin{Lemma}\label{x1andx2inv}Let $I$ and $J$ be functions on $\mathcal{R}^{\infty}$ which are invariant with respect to both cha\-racteristic vector fields $X_1$ and $X_2$, such that $X_3(I)=I'$ and $X_3(J)=1$. Then \begin{gather*}\omega={\rm d}_VI-I'{\rm d}_VJ\end{gather*} is an $X_1$ and $X_2$ invariant contact form.
\end{Lemma}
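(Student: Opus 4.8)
The plan is to verify directly that the projected Lie derivatives $X_1(\omega)$ and $X_2(\omega)$ both vanish, since by definition $\omega$ is an $X_1$ and $X_2$ invariant contact form precisely when $X_1(\omega)=X_2(\omega)=0$.

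The key technical ingredient I would establish first is the commutation identity
\begin{gather*}
X_i({\rm d}_V g) = {\rm d}_V(X_i(g))
\end{gather*}
for any smooth function $g$ on $\mathcal{R}^{\infty}$ and any characteristic vector field $X_i$. To prove it, apply the Cartan-type formula $X_i(\alpha) = X_i \,\lrcorner\, {\rm d}_H\alpha + {\rm d}_H(X_i \,\lrcorner\, \alpha)$ to $\alpha = {\rm d}_V g$. Since ${\rm d}_V g$ is a contact form and $X_i$ is a total vector field, $X_i \,\lrcorner\, {\rm d}_V g = 0$, so the second term drops out. For the first term, use the anticommutativity ${\rm d}_H{\rm d}_V = -{\rm d}_V{\rm d}_H$ together with ${\rm d}_H g = \sum_j (D_j g)\sigma_j$ and ${\rm d}_V\sigma_j = 0$ to write ${\rm d}_H{\rm d}_V g = -\sum_j {\rm d}_V(D_j g)\wedge\sigma_j$; contracting with $X_i$, and using $X_i \,\lrcorner\, \sigma_j = \delta_{ij}$ together with $X_i \,\lrcorner\, {\rm d}_V(D_j g)=0$, then yields exactly ${\rm d}_V(D_i g) = {\rm d}_V(X_i(g))$.

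With this identity in hand, and using that the projected Lie derivative is a derivation with respect to multiplication by functions, the remainder is a short computation. First I would observe that $I'$ is itself $X_1$ and $X_2$ invariant: since $I' = X_3(I)$ and the characteristic fields commute ($[X_i,X_j]=0$ for systems of the form~(\ref{system})), one has $X_1(I') = X_1X_3(I) = X_3X_1(I) = 0$, and likewise $X_2(I')=0$. Then
\begin{gather*}
X_1(\omega) = X_1({\rm d}_V I) - X_1(I'){\rm d}_V J - I'X_1({\rm d}_V J) = {\rm d}_V(X_1 I) - 0 - I'{\rm d}_V(X_1 J) = 0,
\end{gather*}
because $X_1(I) = X_1(J) = 0$ by hypothesis. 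The identical computation with $X_2$ in place of $X_1$ gives $X_2(\omega)=0$, so $\omega$ is an $X_1$ and $X_2$ invariant contact form.

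The final computation is routine; the only real content is the commutation identity $X_i\circ{\rm d}_V = {\rm d}_V\circ X_i$ on functions, so I expect the main (though modest) obstacle to be handling the interior-product bookkeeping cleanly, in particular keeping track of the sign coming from the anticommutativity of ${\rm d}_H$ and ${\rm d}_V$ and from the fact that a total vector field annihilates contact forms. Once that identity is secured, the commutativity of the characteristic vector fields does all the remaining work.
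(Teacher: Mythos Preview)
Your proof is correct. The paper takes a slightly different but closely related route: rather than isolating the commutation identity $X_i({\rm d}_V g)={\rm d}_V(X_i g)$, it computes ${\rm d}_H\omega$ in one shot (mirroring the calculation in the preceding Lemma~\ref{justx3inv}) and observes that the result is $\sigma_3\wedge({\rm d}_V I' - X_3(I'){\rm d}_V J)$, so that $X_1\,\lrcorner\,{\rm d}_H\omega=X_2\,\lrcorner\,{\rm d}_H\omega=0$. The two arguments have the same content---your identity is exactly what one obtains by contracting the expansion of ${\rm d}_H({\rm d}_V g)$ with $X_i$---but your packaging has the advantage of extracting a reusable lemma, while the paper's version has the advantage of exhibiting ${\rm d}_H\omega$ explicitly, which is what feeds into the structure equations~(\ref{alphaistreqn}) for the Darboux-adapted coframe later in the section.
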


\begin{proof}In the same spirit as the last proof, we compute ${\rm d}_H\omega$ and by grouping terms arrive at \begin{gather*}{\rm d}_H\omega=\sigma_3\wedge({\rm d}_VI'-X_3(I'){\rm d}_VJ).\end{gather*} So $X_1\,\lrcorner\,{\rm d}_H(\omega)=X_2\,\lrcorner\,{\rm d}_H(\omega)=0$ and $\omega$ is both $X_1$ and $X_2$ invariant.
\end{proof}

Given the preceding lemma, the following corollary follows immediately from Propo\-si\-tion~\ref{vanishingindices}.

\begin{Corollary}\label{ifdarbthenindex}Let $\mathcal{R}$ be a system of equations of the form \eqref{systems4}. If $\mathcal{R}$ satisfies the conditions of Lemma~{\rm \ref{darbdef}}, then at least one of the Laplace indices $p_{ij}=\operatorname{ind}(\mathcal{X}_{ij})$ must be finite.
\end{Corollary}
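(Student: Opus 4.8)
The plan is to manufacture, from the characteristic invariant functions guaranteed by Lemma~\ref{darbdef}, a single nonzero $X_i$ and $X_j$ invariant contact form, and then to feed that form into Proposition~\ref{vanishingindices} in order to exclude the case in which all of the Laplace indices are infinite. After relabelling I would take the two functionally independent functions from condition~1 of Lemma~\ref{darbdef} to be $I$ and $\tilde{I}$, both invariant with respect to $X_1$ and $X_2$, so that $X_1(I)=X_2(I)=X_1(\tilde{I})=X_2(\tilde{I})=0$ while ${\rm d}I\wedge {\rm d}\tilde{I}\neq0$. Setting $I'=X_3(I)$ and $\tilde{I}'=X_3(\tilde{I})$, I note first that because $X_3$ commutes with $X_1$ and $X_2$ the functions $I'$ and $\tilde{I}'$ are themselves $X_1$ and $X_2$ invariant. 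The crucial preliminary point is that $I'$ and $\tilde{I}'$ cannot both vanish: if they did, then $I$ and $\tilde{I}$ would be annihilated by all three characteristic vector fields, hence ${\rm d}_H$-closed, hence constant (the connectedness of $\mathcal{R}^{\infty}$ forces the only ${\rm d}_H$-closed functions to be constants), contradicting ${\rm d}I\wedge {\rm d}\tilde{I}\neq0$.

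Normalizing by whichever of $I',\tilde{I}'$ is nonzero then puts us exactly in the hypotheses of Lemma~\ref{x1andx2inv}, which yields the contact form $\omega={\rm d}_VI-I'{\rm d}_VJ$; equivalently one may work with the homogeneous combination $\omega=\tilde{I}'\,{\rm d}_VI-I'\,{\rm d}_V\tilde{I}$, which avoids the normalization altogether and is manifestly of type $(0,1)$. I would then verify that $\omega$ is nonzero. Since $X_1$ and $X_2$ annihilate $I$ and $\tilde{I}$, the horizontal contributions cancel and $\omega$ reduces (up to the chosen scaling) to $\tilde{I}'\,{\rm d}I-I'\,{\rm d}\tilde{I}$; were this zero, we would obtain ${\rm d}I\wedge {\rm d}\tilde{I}=0$, again contradicting functional independence. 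Thus $\omega$ is a genuine nonzero type $(0,1)$ contact form invariant with respect to both $X_1$ and $X_2$, and in particular it is a \emph{relative} $X_1$ and $X_2$ invariant form, obtained by taking the scaling function $\lambda$ in the definition to be identically zero.

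Finally I would invoke part~1 of Proposition~\ref{vanishingindices}, which states that if $\operatorname{ind}(\mathcal{X}_{13})=\operatorname{ind}(\mathcal{X}_{23})=\infty$ then there are no nonzero relative $X_1$ and $X_2$ invariant $(0,s)$ forms. Our $\omega$ is precisely such a form, so this hypothesis is untenable, forcing at least one of $\operatorname{ind}(\mathcal{X}_{13})$, $\operatorname{ind}(\mathcal{X}_{23})$ to be finite; permuting the roles of $X_1$, $X_2$, $X_3$ covers the general labelling $X_i,X_j$ in Lemma~\ref{darbdef}, so at least one index $p_{ij}$ is finite. I expect the only real obstacle to lie in the normalization step, namely confirming that the invariant functions of Lemma~\ref{darbdef} can be arranged to satisfy the hypotheses of Lemma~\ref{x1andx2inv} and that the resulting contact form does not collapse to zero. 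Once the rank argument above secures a nonvanishing $X_3$-derivative among the invariants, both of these concerns are settled and the contradiction with Proposition~\ref{vanishingindices} is immediate.
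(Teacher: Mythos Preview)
Your proposal is correct and follows essentially the same approach as the paper, which simply states that the corollary is immediate from Lemma~\ref{x1andx2inv} and Proposition~\ref{vanishingindices}. You supply the details the paper omits: ensuring one of the $X_3$-derivatives is nonzero so the hypotheses of Lemma~\ref{x1andx2inv} can be met (or bypassed via the homogeneous form $\tilde{I}'\,{\rm d}_VI - I'\,{\rm d}_V\tilde{I}$), and checking that the resulting contact form does not vanish, before invoking Proposition~\ref{vanishingindices}.
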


The construction of the Darboux adapted coframe will proceed according to the orders of the various characteristic invariant functions in Lemma~\ref{darbdef}. Here we do not have a complete analysis of the possible orders of invariants, as was given by Goursat~\cite{go96} in the case of a single hyperbolic second order equation in the plane. Thus we will carry out the construction under certain assumptions on the orders of the invariants to serve as a demonstration of how a coframe may be constructed in a particular case.

Given two first order $X_1$ and $X_2$ invariant functions, $I_1$ and $\tilde{I}_1$, we may obtain a sequence of functions invariant with respect to $X_1$ and $X_2$ by repeatedly applying $X_3$:
\begin{gather*}\tilde{I}_1,I_1,I_2=X_3(I_1),I_3=X_3(I_2),\ldots,\qquad \textrm{where}\quad X_3\big(\tilde{I}_1\big)=1.
\end{gather*}
Then by Lemma~\ref{x1andx2inv}, the following are a sequence of $X_1$ and $X_2$ invariant contact forms,
\begin{gather*}\alpha_1={\rm d}_VI_1-I_2{\rm d}_V\tilde{I}_1,\quad \alpha_2={\rm d}_VI_2-I_3{\rm d}_V\tilde{I}_1,\quad \ldots,\quad \alpha_i={\rm d}_VI_i-I_{i+1}{\rm d}_V\tilde{I}_1 ,\quad \dots.\end{gather*}
For $i\geq2$, equation~(\ref{orderkinvariant2}) implies that \begin{gather*}\alpha_i\equiv a_i\hat{\xi}_3^i\mod\big\{\theta,\hat{\xi}_1^1,\ldots,\hat{\xi}_1^{i-1},\xi_2^1,\ldots,\hat{\xi}_2^{i-1}\big\},\end{gather*} where $a_i\neq0$. For $i=1$, we have $\alpha_1\equiv a_1\hat{\xi}_3^1\mod\{\theta\}$ with $a_1\neq0$, as we will now show. Since ${\rm d}_H\theta\equiv \sigma_1\wedge\hat{\xi}_1^1+\sigma_2\wedge\hat{\xi}_2^1+\sigma_3\wedge\hat{\xi}_3^1\mod\{\theta\}$, if $a_1=0$ then
\begin{gather}\label{dhalpha1}
{\rm d}_H\alpha_1\equiv0 \quad \mod\big\{\theta,\hat{\xi}_1^1,\hat{\xi}_2^1,\hat{\xi}_3^1\big\}.
\end{gather}
 But since ${\rm d}_H\alpha_1=\sigma_3\wedge\alpha_2\equiv a_2\sigma_3\wedge\hat{\xi}_3^2\mod\big\{\theta,\hat{\xi}_1^1,\hat{\xi}_2^1,\hat{\xi}_3^1\big\}$, equation~(\ref{dhalpha1}) contradicts the fact that~$\alpha_2$ is of order~2, so indeed $a_1\neq0$.

The set of contact forms $\{\alpha_1,\alpha_2,\ldots\}$ can thus replace the branch of the Laplace coframe given by $\big\{\hat{\xi}_3^1,\hat{\xi}_3^2,\ldots\big\}$ previously. The structure equations for the $\alpha_i$ are given by
\begin{gather}\label{alphaistreqn}
{\rm d}\alpha_i={\rm d}\tilde{I}_1\wedge\alpha_{i+1},
\end{gather}
where (\ref{alphaistreqn}) is calculated in the following way
\begin{gather*}
{\rm d}\alpha_i ={\rm d}({\rm d}_VI_i)-{\rm d}(I_{i+1})\wedge {\rm d}_V\tilde{I}_1-I_{i+1}{\rm d}({\rm d}_V\tilde{I}_1)
 ={\rm d}_H{\rm d}_VI_i-{\rm d}(I_{i+1})\wedge {\rm d}_V\tilde{I}_1-I_{i+1}{\rm d}_H{\rm d}_V\tilde{I}_1 \\
\hphantom{{\rm d}\alpha_i}{} =-{\rm d}_V(I_{i+1}\sigma_3)-(I_{i+2}\sigma_3+{\rm d}_VI_{i+1})\wedge {\rm d}_V\tilde{I}_1+I_{i+1}{\rm d}_V(\sigma_3) \\
\hphantom{{\rm d}\alpha_i}{} =-{\rm d}_VI_{i+1}\wedge\sigma_3-(I_{i+2}\sigma_3+{\rm d}_VI_{i+1})\wedge {\rm d}_V\tilde{I}_1 \\
\hphantom{{\rm d}\alpha_i}{} =(\sigma_3+{\rm d}_V\tilde{I}_1)\wedge({\rm d}_VI_{i+1}-I_{i+2}{\rm d}_V\tilde{I}_1) ={\rm d}\tilde{I}_1\wedge\alpha_{i+1}.
\end{gather*}
To complete the coframe, we need to construct contact forms which are equivalent to $\big\{\hat{\xi}_1^1,\hat{\xi}_1^2,\ldots\big\}$ and $\big\{\hat{\xi}_2^1,\hat{\xi}_2^2,\ldots\big\}$. We accomplish this by first generating $X_3$ invariant functions, repeatedly applying the total vector fields $X_1$ and~$X_2$ to the $X_3$ invariant functions $J$ and $K$ from Lemma~\ref{darbdef} to obtain the following two sequences of~$X_3$ invariant functions:
\begin{gather*}
\tilde{J},J=J_1,J_2=X_1(J_1),J_3=X_1(J_2),\ldots,\\
\tilde{K},K=K_1,K_2=X_2(K_1),K_3=X_2(K_2),\ldots.
\end{gather*}

 Then using Lemma~\ref{justx3inv}, we can write two corresponding sequences of $X_3$ invariant contact forms:
\begin{gather*}
\beta_1 ={\rm d}_VJ_1-X_1(J_1){\rm d}_V\tilde{K}-X_2(J_1){\rm d}_V\tilde{J},\\
 \beta_2 ={\rm d}_VJ_2-X_1(J_2){\rm d}_V\tilde{K}-X_2(J_2){\rm d}_V\tilde{J},\\
 \cdots \cdots\cdots\cdots\cdots\cdots\cdots\cdots\cdots\cdots\cdots\cdots\cdots\\
 \beta_i ={\rm d}_VJ_i-X_1(J_i){\rm d}_V\tilde{K}-X_2(J_i){\rm d}_V\tilde{J}
\end{gather*}
and
\begin{gather*}
\gamma_1={\rm d}_VK_1-X_1(K_1){\rm d}_V\tilde{K}-X_2(K_1){\rm d}_V\tilde{J},\\
\gamma_2={\rm d}_VK_2-X_1(K_2){\rm d}_V\tilde{K}-X_2(K_2){\rm d}_V\tilde{J},\\
 \cdots \cdots\cdots\cdots\cdots\cdots\cdots\cdots\cdots\cdots\cdots\cdots\cdots\\
\gamma_i={\rm d}_VK_i-X_1(K_i){\rm d}_V\tilde{K}-X_2(K_i){\rm d}_V\tilde{J}.
\end{gather*}
Then a similar argument to that given above for the sequence of $\alpha_i$ shows that
\begin{gather*}
\beta_i\equiv b_k\hat{\xi}_1^i+c_k\hat{\xi}_2^i\quad \mod\big\{\theta,\hat{\xi}_3^1,\hat{\xi}_3^2,\ldots,\hat{\xi}_3^{i-1}\big\}
\end{gather*}
and likewise,
\begin{gather*}
\gamma_i\equiv e_k\hat{\xi}_1^i+f_k\hat{\xi}_2^i\quad \mod\big\{\theta,\hat{\xi}_3^1,\hat{\xi}_3^2,\ldots,\hat{\xi}_3^{i-1}\big\},
\end{gather*}
where at least one of $b_k$ and $c_k$ is nonzero, and at least one of $e_k$ and $f_k$ is nonzero. While Lemma~\ref{darbdef} requires that ${\rm d}J\wedge {\rm d}\tilde{J}\wedge {\rm d}K\wedge {\rm d}\tilde{K}\neq0$, this condition is not enough to guarantee that ${\rm d}_VJ_i$ and ${\rm d}_VK_i$ are functionally independent. As a result, there is some ambiguity about the construction of the remaining two branches of the coframe, as it will depend on whether the coefficients on $\hat{\xi}_1^i$ and $\hat{\xi}_2^i$ in~$\beta_i$ and~$\gamma_i$ satisfy $b_kf_k-c_ke_k\neq0$. In other words on whether~$\beta_i$ and~$\gamma_i$ are independent ${\rm mod}\big\{\theta,\hat{\xi}_3^1,\hat{\xi}_3^2,\ldots,\hat{\xi}_3^{i-1}\big\}$. If any pair $\beta_i$ and $\gamma_i$ does fail to be independent, then we will complete the coframe by taking the necessary contact forms from the Laplace adapted coframe.

This coframe would likely be useful in the pursuit of an eventual classification of Darboux-integrable systems, similar to that presented in \cite{aj97} for the case of a scalar second order hyperbolic PDE in the plane.

\subsection[Generating infinitely many $(1,s)$ and $(2,s)$ conservation laws]{Generating infinitely many $\boldsymbol{(1,s)}$ and $\boldsymbol{(2,s)}$ conservation laws}\label{generating1sconslaws}

We may assume that for a Darboux integrable system, $[X_i,X_l]=P_{il}^iX_i+P_{il}^lX_l=0$ and $[X_j,X_l]=P_{jl}^jX_j+P_{jl}^lX_l=0$. Recalling the structure equation
\begin{gather*}
{\rm d}_H\sigma_l=P_{il}^l\sigma_i\wedge\sigma_l+P_{jl}^l\sigma_j\wedge\sigma_l
\end{gather*}
we see that if $X_l$ commutes with $X_i$ and $X_j$, then ${\rm d}_H\sigma_l=0$. With this in mind, we may now demonstrate a way to construct infinitely many $(1,s)$ conservation laws for Darboux integrable systems.
\begin{Theorem}
Let $\mathcal{R}$ be a Darboux integrable system of equations. Then there exist infinitely many non-trivial type $(1,s)$ conservation laws for all $s\geq0$.
\end{Theorem}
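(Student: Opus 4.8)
The plan is to manufacture $\mathrm{d}$-closed forms on $\mathcal{R}^{\infty}$ by wedging together the differentials of an unbounded family of characteristic invariant functions, and then to isolate their $(1,s)$ components. First I would assume, rescaling the $X_i$ as in Proposition~\ref{commutingcvfs} if necessary, that the characteristics commute, so that ${\rm d}_H\sigma_i=0$ and formula~\eqref{dhomega} applies cleanly. By Lemma~\ref{darbdef} there are two functionally independent functions $I$, $\tilde I$ invariant under the pair $X_1$, $X_2$; normalizing so that $X_3(\tilde I)=1$ and setting $I_1=I$, $I_{k+1}=X_3(I_k)$ produces a tower $\tilde I, I_1, I_2,\dots$ of $X_1$- and $X_2$-invariant functions, invariance being preserved because $[X_1,X_3]=[X_2,X_3]=0$. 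By Corollary~\ref{orderkequivto} their adapted orders increase, ${\rm d}_V I_k\equiv a_k\hat{\xi}_3^k \bmod\{\theta,\hat{\xi}_3^1,\dots,\hat{\xi}_3^{k-1}\}$ with $a_k\neq 0$, and this is what will ultimately supply both the non-triviality and the infinitude.

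For fixed $s\geq 1$ I would set $\alpha={\rm d}I_1\wedge\cdots\wedge {\rm d}I_{s+1}$, a ${\rm d}$-closed $(s+1)$-form. Since each $I_k$ is $X_1,X_2$-invariant we have ${\rm d}I_k=X_3(I_k)\sigma_3+{\rm d}_V I_k$, so the horizontal part of every factor lies along the single covector $\sigma_3$; because $\sigma_3\wedge\sigma_3=0$ the horizontal degree of $\alpha$ cannot exceed $1$, and $\alpha=\alpha^{0,s+1}+\alpha^{1,s}$. Expanding $0={\rm d}\alpha={\rm d}_H\alpha+{\rm d}_V\alpha$ and reading off the bidegree $(2,s)$ piece gives ${\rm d}_H\alpha^{1,s}=0$ immediately, since there is no $(2,s-1)$ term to interfere. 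Thus $\alpha^{1,s}$ is a $(1,s)$ conservation law, and writing $\alpha^{1,s}=\sigma_3\wedge\beta$ with $\beta\in\Omega^{0,s}$ its coefficient is $\beta=\sum_{k}(-1)^{k-1}X_3(I_k)\bigwedge_{j\neq k}{\rm d}_V I_j$.

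The substance of the proof is non-triviality, and this is where I expect the main obstacle to lie. Suppose $\alpha^{1,s}={\rm d}_H\eta$ for some $\eta\in\Omega^{0,s}$. Comparing the $\sigma_1$- and $\sigma_2$-components in~\eqref{dhomega} forces $X_1(\eta)=X_2(\eta)=0$ and $X_3(\eta)=\beta$, so $\eta$ is an $X_1$ and $X_2$ invariant form. Proposition~\ref{vanishingindices}(1), whose hypothesis that $p_3=\min\{p_{13},p_{23}\}$ be finite is guaranteed here by Corollary~\ref{ifdarbthenindex}, then confines $\eta$, and hence $X_3(\eta)$, to the module generated by the $\hat{\xi}_3^{n}$ with $n\geq p_3+1\geq 1$, so that $X_3(\eta)$ carries no component along $\hat{\xi}_3^1$. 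On the other hand, choosing $I_1,\dots,I_{s+1}$ to have adapted orders $1,\dots,s+1$, the leading-term estimate of Corollary~\ref{orderkequivto} shows that $\beta$ has a nonzero component along $\hat{\xi}_3^1\wedge\cdots\wedge\hat{\xi}_3^s$, arising from the unique summand that omits the top-order factor ${\rm d}_V I_{s+1}$. This contradiction proves $\alpha^{1,s}$ non-trivial. Sliding the window $(I_1,\dots,I_{s+1})$ to $(I_m,\dots,I_{m+s})$ and letting $m\to\infty$ produces forms whose adapted orders grow without bound; since, by the same interior-product argument applied to a difference, two such classes can only coincide if their difference is ${\rm d}_H$ of an invariant form living in arbitrarily high Laplace branches, the unmatched low-order terms force these to represent infinitely many distinct classes in $H^{1,s}$.

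Two subtleties remain to be addressed. The boundary case $s=0$ cannot be handled by the single factor $\alpha={\rm d}I_1$, since then $\alpha^{1,0}={\rm d}_H I_1$ is automatically trivial; it requires instead a modified construction adapted from the planar law $\tilde J\,{\rm d}J$ of the first Example of Section~\ref{section1}, and I would treat it separately, the infinitude again coming from the tower. Secondly, one must confirm that the tower $\{I_k\}$ contains enough functionally independent members to realize every adapted order $1,\dots,s+1$; this is where I expect the most care to be needed, and it should follow from the non-degeneracy inherent in Darboux integrability together with the strict order increase recorded by Corollary~\ref{orderkequivto}. The analogous construction for type $(2,s)$ laws proceeds identically, wedging differentials of the $X_3$-invariant functions $J,\tilde J,K,\tilde K$ of Lemma~\ref{darbdef} and extracting the $(2,s)$ component.
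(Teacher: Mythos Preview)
Your construction of ${\rm d}_H$-closed $(1,s)$ forms by extracting the $(1,s)$ piece of ${\rm d}I_1\wedge\cdots\wedge{\rm d}I_{s+1}$ is correct and is exactly what the paper does. The non-triviality argument, however, has a real gap.

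You attempt a \emph{low-order} contradiction: confine $\eta$ to $\Omega^s(\hat\xi_3^{p_3+1},\dots)$ via Proposition~\ref{vanishingindices}, and then exhibit a $\hat\xi_3^1\wedge\cdots\wedge\hat\xi_3^s$ component in $\beta$ that $X_3(\eta)$ cannot match. This cannot work. Observe that, because the characteristics commute, ${\rm d}_V I_j$ is \emph{itself} an $X_1,X_2$ invariant $(0,1)$ form (compute $X_1({\rm d}_V I_j)=X_1\,\lrcorner\,{\rm d}_H{\rm d}_V I_j=-X_1\,\lrcorner\,({\rm d}_V(X_3(I_j))\wedge\sigma_3)=0$). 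Proposition~\ref{vanishingindices} then forces ${\rm d}_V I_j\in\Omega^1(\hat\xi_3^{p_3+1},\dots)$, so every non-constant $X_1,X_2$ invariant function has adapted order at least $p_3+1$. Hence your hypothesis ``choose $I_1,\dots,I_{s+1}$ of adapted orders $1,\dots,s+1$'' is impossible unless $p_3=0$; and when $p_3=0$ the module $\Omega^s(\hat\xi_3^{p_3+1},\dots)=\Omega^s(\hat\xi_3^1,\dots)$ already contains $\hat\xi_3^1$, so no contradiction arises. In short, $\beta$ and $X_3(\eta)$ live in the \emph{same} module and cannot be separated from the bottom.

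The paper instead runs a \emph{top-order} contradiction on a specially shaped representative. Using the invariant contact forms $\alpha_i$ of Lemma~\ref{x1andx2inv} (which satisfy ${\rm d}\alpha_i={\rm d}\tilde I_1\wedge\alpha_{i+1}$), it takes $\omega=\sigma_l\wedge\alpha_{k+1}\wedge\alpha_k\wedge\eta$ with $\eta\in\Omega_{\mathcal I}^{s-2}(\alpha_{p_l+1},\dots,\alpha_{k-1})$ and $k\geq p_l+s$. If $\omega={\rm d}_H\gamma$, then $\gamma$ is $X_i,X_j$ invariant of adapted order $k$; writing $\gamma=\alpha_k\wedge\beta+\delta$ with $\beta,\delta$ of order $\leq k-1$ and comparing $X_l(\gamma)$ with $\alpha_{k+1}\wedge\alpha_k\wedge\eta$ forces $\beta=\alpha_k\wedge\eta$, which has order $k$ --- contradiction. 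The point is that \emph{two} consecutive top-order factors $\alpha_{k+1},\alpha_k$ are built into $\omega$, which is precisely what your $\pi^{1,s}({\rm d}I_1\wedge\cdots\wedge{\rm d}I_{s+1})$ does not have. Your sketch can be repaired by replacing the generic wedge with this structured form and arguing at the top rather than the bottom.
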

\begin{proof}Fix distinct $i$, $j$, and $l$ as in Lemma~\ref{darbdef}. For $s=0$, we may construct a conservation law by taking $\omega=I\sigma_l$ where $I$ is an $X_i$ and $X_j$ invariant function. Suppose that $\omega$ is exact, that is $\omega=I\sigma_l={\rm d}_H\tilde{I}$ for some function $\tilde{I}$. Then \begin{gather*}{\rm d}_H\tilde{I}=X_1(\tilde{I})\sigma_1+X_2(\tilde{I})\sigma_2+X_3(\tilde{I})\sigma_3=I\sigma_l,\end{gather*} so $\tilde{I}$ is $X_i$ and $X_j$ invariant and $X_l(\tilde{I})=I$. Thus $\omega$ will be a nontrivial conservation law so long as $I\neq X_l(\tilde{I})$ for $\tilde{I}$ another $X_i$ and $X_j$ invariant function. Additional $(1,0)$ conservation laws may be constructed by generating additional $X_i$ and $X_j$ invariant functions by applying~$X_l$ to~$I$ repeatedly. Note that $X_l(I)$ is still $X_i$ and $X_j$ invariant since $X_l$ commutes with both~$X_i$ and~$X_j$.

For $s\geq1$, we first note that if $\omega$ is a $d$ closed $(s+1)$ form with vanishing $(2,s-1)$ component, then the $(1,s)$ component of $\omega$ will be ${\rm d}_H$ closed. To see this, write ${\rm d}\omega$ as a sum of forms of different bidegrees, according to the direct sum decomposition of the space of $(s+2)$ forms,
\begin{gather}\label{decompdw}
\Omega^{s+2}(\mathcal{R}^{\infty})=\bigoplus_{0\leq i\leq3}\Omega^{(i,s+2-i)}(\mathcal{R}^{\infty}).
\end{gather}
 Let ${\rm d}\omega=\nu_0+\nu_1+\nu_2+\nu_3$ where $\nu_i\in\Omega^{(i,s+2-i)}$. Since each $\nu_i$ belongs to a different component of (\ref{decompdw}), ${\rm d}\omega=0$ implies that $\nu_i=0$ for all $i$. In particular, $\nu_2=0$. The form $\omega$ can itself be decomposed according to bidegree as well. Write $\omega=\mu_0+\mu_1+\mu_2+\mu_3$ where $\mu_i\in\Omega^{(i,s+1-i)}$. Now decompose ${\rm d}\omega$ into its horizontal and vertical components, ${\rm d}\omega=({\rm d}_H+{\rm d}_V)(\omega)$, to see that ${\rm d}_H\mu_1+{\rm d}_V\mu_2=\nu_2=0$. If the $(2,s-1)$ component, $\mu_2$, of $\omega$ vanishes, then this last equation becomes ${\rm d}_H\mu_1=0$ and we can conclude that the $(1,s)$ component of $\omega$ is ${\rm d}_H$ closed.

Now, to construct a ${\rm d}_H$ closed $(1,s)$ form, we begin by defining $\alpha={\rm d}I_1\wedge {\rm d}I_2\wedge\cdots\wedge {\rm d}I_{s+1}$ where each of the functions $I_n$ is an $X_i$ and $X_j$ invariant function. These functions can be obtained as before by taking $I=I_1$ and repeatedly applying the total vector field $X_l$ to $I$ to generate the remaining functions. The $(s+1)$ form $\alpha$ will have vanishing $(2,s-1)$ component: since the functions $I_n$ are all $X_i$ and $X_j$ invariant, the only horizontal form that will appear in any of the ${\rm d}I_n$ is $\sigma_l$. Thus, by the preceding discussion,
\begin{gather*}
\pi^{1,s}({\rm d}I_1\wedge {\rm d}I_2\wedge\cdots\wedge {\rm d}I_{s+1})
\end{gather*}
will be ${\rm d}_H$ closed as required.

Furthermore, we can construct a ${\rm d}_H$ closed $(1,s)$ form that is not exact. In other words, we can write down a nontrivial $(1,s)$ conservation law. Let $p_l=\min\{\operatorname{ind}(\mathcal{X}_{il}),\operatorname{ind}(\mathcal{X}_{jl})\}$, and take $k\geq p_l+s$. Then for a nonzero form $\eta\in\Omega_{\mathcal{I}}^{s-2}(\alpha_{p_l+1},\alpha_{p_l+2},\ldots,\alpha_{k-1})$ where $\mathcal{I}$ is the ring of functions which are both $X_i$ and $X_j$ invariant, the $(1,s)$ form
\begin{gather}\label{omegaexp}
\omega=\sigma_l\wedge\alpha_{k+1}\wedge\alpha_k\wedge\eta
\end{gather}
 is ${\rm d}_H$ closed, but not ${\rm d}_H$ exact. It is straightforward to see that $\omega$ is indeed ${\rm d}_H$ closed since~$\sigma_l$ is itself ${\rm d}_H$ closed and $\alpha_{k+1}\wedge\alpha_k\wedge\eta$ is both $X_i$ and $X_j$ invariant. Now suppose that $\omega$ were ${\rm d}_H$ exact, that is, that there exists a $(0,s)$ form, $\gamma$ such that ${\rm d}_H\gamma=\omega$. Since ${\rm d}_H\gamma=\sigma_1\wedge X_1(\gamma)+\sigma_2\wedge X_2(\gamma)+\sigma_3\wedge X_3(\gamma)$, comparing this with the above expression,~(\ref{omegaexp}), for~$\omega$ we conclude that $\gamma$ must also be $X_i$ and $X_j$ invariant. Moreover, $X_l(\gamma)=\alpha_{k+1}\wedge\alpha_k\wedge\eta$ which implies that $\gamma\in\Omega_{\mathcal{I}}^s(\alpha_{p_l+1},\alpha_{p_l+2},\ldots)$. The adapted order of $\omega$ being $k+1$ implies that the adapted order of~$\gamma$ would be~$k$. Then $\gamma$ can be written as $\gamma=\alpha_k\wedge\beta+\delta$ where $\beta$ and $\delta$ each have adapted order $\leq k-1$, and are both~$X_i$ and $X_j$ invariant forms. But then $X_l(\gamma)=X_l(\alpha_k\wedge\beta+\delta)=\alpha_{k+1}\wedge\alpha_k\wedge\eta$ which indicates that $\beta=\alpha_k\wedge\eta$, showing that $\beta$ has adapted order~$k$. This contradicts the original premise that $\beta$ be of adapted order $\leq k-1$, from which we conclude that $\omega$ is not exact.
\end{proof}

It is possible to construct infinitely many nontrivial $(2,s)$ conservation laws in a similar way. By an argument analogous to that given above in the case of $(1,s)$ conservation laws, if~$\omega$ is a~${\rm d}$~closed~$(s+2)$ form with vanishing $(3,s-1)$ component, then the $(2,s)$ component of~$\omega$ is~${\rm d}_H$ closed. Consider the form ${\rm d}J_1\wedge {\rm d}J_2\wedge\cdots\wedge {\rm d} J_{s+2}$ where the functions $J_n=X_i(J_{n-1})$ are all $X_l$ invariant. This $(s+2)$ form has vanishing $(3,s-1)$ component since each term ${\rm d} J_n$ can only contain the horizontal forms $\sigma_i$ and $\sigma_j$. So
\begin{gather*}
\nu=\pi^{2,s}({\rm d}J_1\wedge {\rm d}J_2\wedge\cdots\wedge {\rm d}J_{s+2})
\end{gather*}
is a ${\rm d}_H$ closed $(2,s)$ form. Construct a second ${\rm d}_H$ closed $(2,s)$ form by taking the $(2,s)$ component of the $(s+2)$ form ${\rm d}K_1\wedge {\rm d}K_2\wedge\cdots\wedge {\rm d}K_{s+2}$ where $K_n=X_j(K_{n-1})$ and each $K_n$ is $X_l$ invariant. Let
\begin{gather*}
\mu=\pi^{2,s}({\rm d}K_1\wedge {\rm d}K_2\wedge\cdots\wedge {\rm d}K_{s+2}).
\end{gather*}

Take $\mathcal{I}$ to be the ring of $X_l$ invariant functions. Then the following $(2,s)$ form will be a~nontrivial conservation law
\begin{gather}\label{2somega}
\omega=\sigma_i\wedge\sigma_j\wedge[\nu_{k+1}\wedge\nu_{k}\wedge\eta_1+\mu_{k+1}\wedge\mu_k\wedge\eta_2],
\end{gather}
where $\eta_1\in\Omega_{\mathcal{I}}^{s-2}(\nu_{p_i+1},\nu_{p_i+2},{\ldots},\nu_{k-1})$ and $\eta_2\in\Omega_{\mathcal{I}}^{s-2}(\mu_{p_j+1},\mu_{p_j+2},{\ldots},\mu_{k-1})$, and $k\geq\max\{p_j+s, p_i+s\}$. Clearly ${\rm d}_H\omega=0$ since $\sigma_i\wedge\sigma_j$ is ${\rm d}_H$ closed and $\nu_{k+1}\wedge\nu_{k}\wedge\eta_1$ and $\mu_{k+1}\wedge\mu_{k}\wedge\eta_2$ are $X_l$ invariant. To see that $\omega$ is not exact, we will assume otherwise and produce a contradiction. Suppose that there exists a $(1,s)$ form, $\gamma$, such that ${\rm d}_H\gamma=\omega$. First note that by writing ${\rm d}_H\gamma=\sigma_1\wedge X_1(\gamma)+\sigma_2\wedge X_2(\gamma)+\sigma_3 \wedge X_3(\gamma)$ and looking at equation~(\ref{2somega}), it may be concluded that $\gamma$ is $X_l$ invariant, $l\neq i,j$. Furthermore, any $(1,s)$ form can be written as \begin{gather*}\gamma=\sigma_1\wedge M_1+\sigma_2\wedge M_2+\sigma_3\wedge M_3,\end{gather*} where each $M_i$ is a $(0,s)$ form. Comparing equation (\ref{dh1s}) for ${\rm d}_H\gamma$ with the expression (\ref{2somega}) for $\omega$, we see that ${\rm d}_H\gamma=\omega$ implies that
\begin{gather*}
{\rm d}_H\gamma=\sigma_i\wedge\sigma_j\wedge[X_j(M_i)-X_i(M_j)],
\end{gather*}
where
\begin{gather}\label{expformimj}
X_j(M_i)-X_i(M_j)=\nu_{k+1}\wedge\nu_{k}\wedge\eta_1+\mu_{k+1}\wedge\mu_k\wedge\eta_2
\end{gather}
and so $M_i, M_j\in\Omega_{\mathcal{I}}^s(\nu_{p_l+1},\nu_{p_l+2},\ldots)$. Since the adapted order of $\omega$ is $(k+1)$, the adapted order of $\gamma$ must be $k$ and hence $M_i$ and $M_j$ may be written as
\begin{gather*}%\label{mimiforms}
M_i=\mu_{k}\wedge\alpha_{1}+\delta_{1}, \qquad M_j=\nu_{k}\wedge\alpha_{2}+\delta_{2},
\end{gather*}
where $\alpha_{1}$, $\alpha_{2}$, $\delta_{1}$, and $\delta_{2}$ all have adapted order $\leq k-1$. Plugging these expressions for~$M_i$ and~$M_j$ into (\ref{expformimj}) yields
\begin{gather} \label{ordercontra}
X_j(\mu_{k}\wedge\alpha_{1}+\delta_{1})-X_i(\nu_{k}\wedge\alpha_{2}+\delta_{2}) =\nu_{k+1}\wedge\nu_{k}\wedge\eta_1+\mu_{k+1}\wedge\mu_k\wedge\eta_2.
\end{gather}
From equation (\ref{ordercontra}), we deduce that $\alpha_1=\mu_k\wedge\eta_2$ and $\alpha_2=\nu_k\wedge\eta_1$. Since this contradicts the fact that $\alpha_1$ and $\alpha_2$ were taken to have order $\leq k-1$, we can conclude that $\omega$ is indeed not exact and thus represents a nontrivial conservation law.

\section{Conclusion and further research}\label{section5}

In this paper we have proven a number of results concerning the existence and structure of conservation laws for involutive systems of partial differential equations of the form
\begin{gather}\label{conclusionsystem}
u_{ij}-f_{ij}\big(x^1,x^2,x^3,u,u_i,u_j\big)=0\qquad \textrm{for}\quad 1\leq i< j\leq3.
\end{gather}
After defining the constrained variational bicomplex, which provides the framework in which we investigate form-valued conservation laws, and developing other technical tools such as the generalized Laplace transform, and the structure equations and Lie bracket congruences for the Laplace adapted coframe, the first main result is proved in Section~\ref{structuretheoremsec}. This result, Theorem~\ref{structuretheorem}, describes the structure of $(2,s)$ conservation laws for $s\geq1$ in terms of solutions to the adjoint equations of the associated linearized system. As a consequence of this structure theorem, we obtain another key result which states that if all of the Laplace indices of the linearized system are infinite, then all $(2,s)$ conservation laws for $s\geq3$ are trivial. That is, in this case the horizontal cohomology of the constrained variational bi-complex associated to the system is trivial:
\begin{gather*}H^{2,s}(\mathcal{R}^{\infty},{\rm d}_H)=0\qquad \textrm{for}\quad s\geq3.\end{gather*}

Conditions are given in Lemma~\ref{darbdef} which will guarantee that a system of the form (\ref{systemsbeingstudied}) will satisfy the definition of Darboux integrability introduced in~\cite{afv09}. It is then show that these same conditions will imply that at least one of the Laplace indices for the linearization of such a~system must be finite. A procedure for using characteristic invariant contact forms to construct non-trivial $(1,s)$ and $(2,s)$ conservation laws for $s\geq0$ is then described in Section~\ref{generating1sconslaws}.

There are several natural directions in which one may try to extend the results presented here. First, we should emphasize that in this paper we have considered the very specific class of semi-linear systems of the form (\ref{conclusionsystem}). In particular, the characteristic vector fields, $X_i$, for such systems commute. It should be fairly easy to investigate which of the results presented here could be replicated for more general classes of nonlinear systems belonging to the dif\-fe\-rent cases of Cartan's structural classification listed in Theorem~\ref{classification}, specifically systems with non-commuting characteristics. Other directions in which to extend this work would be to consider overdetermined systems in one dependent and $n$ independent variables, instead of only 3 independent variables, or to allow $u$ to be a vector-valued function (see, for example, \cite{sz03}). Furthermore, it remains to find further examples and applications which fall under the form of systems (\ref{conclusionsystem}) which we consider here. Constructing involutive overdetermined systems is a~project in its own right. Another interesting avenue of research would consist in classifying Darboux-integrable systems of the form~(\ref{conclusionsystem}) or a~generalization of these. A potentially fruitful approach to this problem would utilize the definitions and classification scheme presented in~\cite{afv09}, which is discussed in Section~\ref{darbouxsection}.

Beyond these points, there is the overarching question of how the horizontal cohomology of the constrained variational bi-complex $H^{r,s}$ can be interpreted for $s\geq 1$. In other words, what does the horizontal cohomology mean in terms of the analytic properties of the solution space of the system $\mathcal{R}$? Here we refer the reader to \cite{an92} for some examples of the ways in which the cohomology of the variational bi-complex has been used to study, for example, the inverse problem of the calculus of variations for autonomous systems of ODE, Riemannian structures, and Gelfand--Fuks cohomology. In~\cite{ts82}, Tsujishita also describes several applications of the higher-degree cohomology of the variational bi-complex, including its role in determining deformation classes of the solution space of~$\mathcal{R}$. An interpretation of $H^{n-1,2}(\mathcal{R}^{\infty})$ cohomology, where~$n$ is the number of independent variables in the associated PDEs system, is also given in~\cite{zu87}. It is shown there that the existence of nontrivial cohomology classes $[\omega]\in H^{n-1,2}$ indicates the possible existence of variational principles for~$\mathcal{R}$. Specifically, if~$\mathcal{R}$ is the Euler--Lagrange equations for some Lagrangian~$\lambda$, then it can be shown that ${\rm d}_V\lambda={\rm d}_H\eta$ on $\mathcal{R}^{\infty}$ for $\eta$ a form of bi-degree $(n-1,1)$. Therefore $\omega={\rm d}_V(\eta)$, which Zuckerman calls the universal conserved current for $\lambda$, is a ${\rm d}_H$ closed $(n-1,2)$ form. Assuming that $\lambda$ is nontrivial, Vinogradov's two-line theorem \cite{vi842} implies that~$\omega$ is not~${\rm d}_H$ exact, so that if $H^{n-1,2}(\mathcal{R}^{\infty})=0$, then $\mathcal{R}$ does not admit a variational principle.

\appendix
\section{Structural classification}\label{structuralclassification}

In \cite{ca11}, Cartan studies involutive Pfaffian systems defined on a suitable jet space, which arise when considering involutive systems of PDEs from the perspective of exterior differential systems. In the course of his analysis, the Pfaffian systems' structure equations are computed and simplified. Systems with the same reduced structure equations are then regarded as being structurally equivalent. Some of the main results of this work are summarized in~\cite{st00}, and it is this exposition which we will follow as we proceed to place systems of the form~(\ref{systemsbeingstudied}) in the context of Cartan's structural classification.

Let $\Delta^a(x^i,u,u_i,u_{ij})=0$ with $i,j=1,2,3$ be an involutive second order system of PDEs considered as a subset of the bundle of 2-jets, $J^2\big(\mathbb{R}^3,\mathbb{R}\big)$. As defined in Section~\ref{jetbundles}, consider the contact system on $J^2\big(\mathbb{R}^3,\mathbb{R}\big)$ generated by the 1-forms
\begin{gather*}
\theta^0={\rm d}u-\sum_{j=1}^3u_j{\rm d}x^j,\qquad \theta^i={\rm d}u_i-\sum_{j=1}^3u_{ij}{\rm d}x^j, \qquad i=1,2,3.
\end{gather*}
Let $\omega^i={\rm d}x^i$, $\pi_j^i={\rm d}u_{ij}=\pi_i^j$ so that a local coframe for $J^2\big(\mathbb{R}^3,\mathbb{R}\big)$ is given by $\big\{\omega^i,\theta^0,\theta^i,\pi_j^i\big\}$ for $i,j=1,2,3$. Then we have the following structure equations
\begin{gather}\label{dstructure}
{\rm d}\theta^0=\sum_{j=1}^3\omega^j\wedge\theta^j,\qquad {\rm d}\theta^i=\sum_{j=1}^3\omega^j\wedge\pi_j^i.
\end{gather}
The set of forms $\big\{\theta^0,\theta^i\big\}$ comprises a basis for the contact system on $J^2\big(\mathbb{R}^3,\mathbb{R}\big)$. Keeping in mind that our goal is to obtain a simplified set of structure equations, another basis may be found by transforming the forms $\big\{\theta^0,\theta^i\big\}$ in the following way
\begin{gather*}
\left(\begin{matrix}\bar{\theta}^0 \\\bar{\theta}^1 \\\bar{\theta}^2 \\\bar{\theta}^3\end{matrix}\right)=\left(\begin{matrix}1 & 0 & 0 & 0 \\0 & a_1^1 & a_2^1 & a_3^1 \\0 & a_1^2 & a_2^2 & a_3^2 \\0 & a_1^3 & a_2^3 & a_3^3\end{matrix}\right)\left(\begin{matrix}\theta^0 \\\theta^1 \\\theta^2 \\\theta^3\end{matrix}\right)=\left(\begin{matrix}1 & 0 \\0 & A\end{matrix}\right)\left(\begin{matrix}\theta^0 \\\theta^1 \\\theta^2 \\\theta^3\end{matrix}\right),
\end{gather*}
where the entries $a_j^i$ are functions on the jet space $J^2\big(\mathbb{R}^3,\mathbb{R}\big)$ and $\det A\neq0$. Observe that this transformation preserves the contact form $\theta^0$, as well as the Pfaffian system $\big\{\theta^1,\theta^2,\theta^3\big\}$. Denote by $\mathcal{A}$ the group consisting of such matrices\footnote{In~\cite{ca11}, Cartan allows for contact transformations on the full set of contact forms, including shifts in the $\theta^0$ direction. Since the terms $\theta^0$ are not relevant to the symbol analysis as carried out in~\cite{st00}, we will likewise fix these terms in the group of transformations we consider.},~$A$. Then we will show presently that there is a~prolongation $\mathcal{A}'$ of $\mathcal{A}$ that acts on the coframe $\big\{\omega^i,\theta^0,\theta^i,\pi_j^i\big\}$, $i=1,2,3,$ such that the form of the structure equations for the corresponding contact system is preserved.

Let $A=(a_j^i)$ and $A^{-1}=(\alpha_j^i)$ so that $\bar{\theta^i}=A\theta^i$ and $\theta^i=A^{-1}\bar{\theta^i}$. A straightforward calculation using the definitions provided shows that \begin{gather*}{\rm d}\bar{\theta}^0=\sum_{j=1}^3\bar{\omega}^j\wedge\bar{\theta}^j,\end{gather*} where $\bar{\omega}^j:=\sum\limits_{i=1}^3\alpha_j^i\omega^i$. Likewise, utilizing the transformations $\bar{\omega}=\big(A^{-1}\big)^{\rm T}\omega=\big(A^{\rm T}\big)^{-1}\omega$ and $\omega=A^{\rm T}\bar{\omega}$, we can compute \begin{gather*}{\rm d} \bar{\theta}^i\equiv\sum_{j=1}^3\bar{\omega}^j\wedge\bar{\pi}^i_j\quad \mod \big\{\bar{\theta}^i\big\},\end{gather*} where $\bar{\pi}_j^i:=\sum\limits_{k,l=1}^3a_k^ia_l^j\pi_l^k.$ Hence, if~$\mathcal{A}'$ acts on~$\omega^i$ and~$\pi_j^i$ as follows
\begin{gather*}
\omega^i\longmapsto\sum_{j=1}^3\alpha_i^j\omega^j\qquad \textrm{and}\qquad \pi_j^i\longmapsto\sum_{k,l=1}^3a_k^ia_l^j\pi_l^k,
\end{gather*}
then the structure equations ${\rm d}\theta^i=\sum\limits_{j=1}^3\omega^j\wedge\pi_j^i$ become the congruences
\begin{gather*}{\rm d}\bar{\theta}^i\equiv\sum_{j=1}^3\bar{\omega}^j\wedge\bar{\pi}_j^i\quad \mod \big\{\theta^i\big\}\end{gather*} and the following lemma may be stated.

\begin{Lemma}$\mathcal{A}'$ prolongs $\mathcal{A}$, maps linear combinations of the $\omega^i$ to linear combinations of $\omega^i$, and transforms the structure equations~\eqref{dstructure} to
\begin{gather*}
{\rm d}\bar{\theta}^0=\sum_{j=1}^3\bar{\omega}^j\wedge\bar{\theta}^j, \qquad {\rm d}\bar{\theta}^i\equiv\sum_{j=1}^3\bar{\omega}^j\wedge\bar{\pi}^i_j\quad \mod \bar{\theta}^i, \qquad i=1,2,3.
\end{gather*}
Moreover, the $\pi_j^i$ are transformed as the coefficients of a quadratic form.
\end{Lemma}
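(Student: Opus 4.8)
The plan is to verify the three assertions directly, organizing the matrix computations already displayed just before the statement. First I would make the prolonged action fully explicit: for $A=(a_j^i)\in\mathcal{A}$ with inverse $A^{-1}=(\alpha_j^i)$, the prolongation $\mathcal{A}'$ acts by $\theta^0\mapsto\theta^0$, $\theta^i\mapsto\sum_j a_j^i\theta^j$, $\omega^i\mapsto\sum_j\alpha_i^j\omega^j$, and $\pi_j^i\mapsto\sum_{k,l}a_k^ia_l^j\pi_l^k$. The claim that $\mathcal{A}'$ \emph{prolongs} $\mathcal{A}$ amounts to checking that $A\mapsto\mathcal{A}'(A)$ is a group homomorphism whose restriction to $\{\theta^0,\theta^i\}$ is the original action of $A$; phrasing all four rules in matrix form, namely $\bar\omega=(A^T)^{-1}\omega$ and $\bar\Pi=A\,\Pi\,A^T$ for $\Pi=(\pi_j^i)$, this is immediate from $(AB)^{-1}=B^{-1}A^{-1}$ and $(AB)^T=B^TA^T$. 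The assertion that linear combinations of the $\omega^i$ map to linear combinations of the $\omega^i$ is then read off the defining formula $\omega^i\mapsto\sum_j\alpha_i^j\omega^j$, since the $\alpha_i^j$ are functions on $J^2(\mathbb{R}^3,\mathbb{R})$ and no $\theta$- or $\pi$-terms are introduced.

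Next I would establish the two transformed structure equations. For the zeroth, since $\bar\theta^0=\theta^0$ I compute ${\rm d}\bar\theta^0={\rm d}\theta^0=\sum_j\omega^j\wedge\theta^j$ and substitute $\theta^j=\sum_k\alpha_k^j\bar\theta^k$ together with $\omega^j=\sum_i a_j^i\bar\omega^i$; collecting terms produces the coefficient $\sum_j a_j^i\alpha_k^j$, which equals $\delta_{ik}$ by $AA^{-1}=I$, and hence ${\rm d}\bar\theta^0=\sum_i\bar\omega^i\wedge\bar\theta^i$. For the $i$-th equation I expand ${\rm d}\bar\theta^i=\sum_m{\rm d}a_m^i\wedge\theta^m+\sum_m a_m^i\,{\rm d}\theta^m$, discard the first sum since it lies in the span $\{\theta^m\}=\{\bar\theta^m\}$, and insert ${\rm d}\theta^m=\sum_n\omega^n\wedge\pi_n^m$ to get ${\rm d}\bar\theta^i\equiv\sum_{m,n}a_m^i\,\omega^n\wedge\pi_n^m$ modulo $\bar\theta^i$. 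Expanding the target $\sum_p\bar\omega^p\wedge\bar\pi_p^i$ with $\bar\pi_p^i=\sum_{k,l}a_k^ia_l^p\pi_l^k$ and using $\sum_p a_l^p\bar\omega^p=\omega^l$ reduces it to $\sum_{k,l}a_k^i\,\omega^l\wedge\pi_l^k$, which agrees with the previous expression after relabeling $m\mapsto k$, $n\mapsto l$; this gives the desired congruence.

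Finally, the quadratic-form assertion follows by observing that $\Pi=(\pi_j^i)$ is symmetric (as $\pi_j^i=\pi_i^j$) and transforms by $\bar\pi_j^i=\sum_{k,l}a_k^i\pi_l^k a_l^j=(A\Pi A^T)_{ij}$, which is exactly the transformation law for the coefficient matrix of a quadratic form under the change of basis $A$. I do not expect a genuine obstacle, since the statement essentially repackages the calculations already given; the only real care required is index bookkeeping, namely keeping straight that superscripts label rows and subscripts columns and tracking which of $A$, $A^{-1}$, $A^T$, $(A^T)^{-1}$ governs each of the four families $\theta^0$, $\theta^i$, $\omega^i$, $\pi_j^i$. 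The mildly subtle point is the well-definedness of $\mathcal{A}'$ as a prolongation, that is, its compatibility with composition, which I would dispatch via the matrix formulation indicated above.
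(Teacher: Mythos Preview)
Your proposal is correct and follows exactly the approach taken in the paper: the lemma in the paper is stated without a separate proof, since it is simply a repackaging of the direct computations carried out in the paragraphs immediately preceding it (the derivations of $\bar{\omega}^j$, ${\rm d}\bar{\theta}^0$, ${\rm d}\bar{\theta}^i$, and $\bar{\pi}_j^i$). You are in fact slightly more thorough than the paper in explicitly verifying the group-homomorphism property of the prolongation and spelling out the $A\Pi A^T$ transformation law for the quadratic form, both of which the paper leaves implicit.
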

The group $\mathcal{A}'$ will later be utilized to simplify the possible structure equations which must be considered in the structural classification.

We will now consider more specifically systems of three second order equations, $\Delta^a(x^i,u,u_i$, $u_{ij})=0$, $1\leq a\leq3$, which satisfy the maximal rank condition that the $3\times13$ Jacobian matrix \begin{gather*}J_{\Delta}(x,u,u_i,u_{ij})=\left(\frac{\partial\Delta^a}{\partial x^i},\frac{\partial\Delta^a}{\partial u_I}\right)\end{gather*} of $\Delta^a$ with respect to $x^i$, $u$, $u_i$, and~$u_{ij}$ is of rank~3 whenever $\Delta^a=0$. To further ensure that the PDEs system is non-degenerate, we require the $(3\times 6)$ submatrix \begin{gather*}\left(\frac{\partial \Delta^a}{\partial u_{ij}}\right)\end{gather*} to have rank~3. Let $\mathcal{R}$ be the codimension 3 submanifold of $J^2\big(\mathbb{R}^3,\mathbb{R}\big)$ thus defined by $\Delta^a=0$, and $\mathcal{C}^2(\mathcal{R})$ the corresponding contact system restricted to $\mathcal{R}$. When the one forms $\big\{\omega^i,\theta^0,\theta^i,\pi_j^i\big\}$ are restricted to~$\mathcal{R}$, we obtain 3 linear relations between them. By replacing $\pi_j^i$ with $\pi_j^i$ plus an appropriate linear combination of the $\theta^i$, including $\theta^0$, these linear relations may be simplified to the following
\begin{gather}\label{relations}
\sum_{i,j=1}^3A_i^{jk}\pi_j^i+\sum_{l=1}^3A_l^k\omega^l=0\qquad \textrm{for}\quad k=1,2,3.
\end{gather}
We will henceforth assume that the Pfaffian system $\mathcal{C}^2(\mathcal{R})$ is an involution on which \begin{gather*}\omega^1\wedge\omega^2\wedge\omega^3\neq0,\end{gather*} in other words, there exists at least one K\"{a}hler-regular integral element of the form \begin{gather*}\theta^i=0, \qquad \pi_j^i-\sum_{l=1}^3\alpha_{jl}^i\omega^l=0,\end{gather*} where the $\alpha_{jl}^i$ are symmetric with respect to all indices. This allows the relations (\ref{relations}) to be further reduced to
\begin{gather}\label{reducedrelations}
\sum_{i,j=1}^3A_i^{jk}\pi_j^i=0\qquad \textrm{for}\quad k=1,2,3.
\end{gather}
Finally, when considering the general 3-dimensional involution defined by \begin{gather*}\theta^i=0,\qquad \pi_j^i-\sum_{l=1}^3a_{jl}^i\omega^l=0,\end{gather*} the relations (\ref{reducedrelations}) produce 9 equations
\begin{gather}\label{9equations}
\sum_{i,j=1}^3A_i^{jk}a_{jl}^i=0\qquad \textrm{for}\quad l,k=1,2,3.
\end{gather}
We would like to know how many of the coefficients $a_{jl}^i$ are left arbitrary by these 9 equations since Cartan's involutivity test tells us that $\mathcal{C}^2(\mathcal{R})$ is involutive with $\omega^1\wedge\omega^2\wedge\omega^3\neq0$ if and only if the number of arbitrary coefficients $a_{jl}^i$ above equals $3\cdot3-2\sigma_1-\sigma_2$ where $\sigma_1$ and $\sigma_2$ are the first two reduced Cartan characters of $\mathcal{C}^2(\mathcal{R})$. Given that we are concerned with the case of systems of 3 equations in 3 independent variables, we can see that the Cartan characters $\sigma_1$ and $\sigma_2$ satisfy $\sigma_1\leq3$ and $\sigma_1+\sigma_2\leq3$. Thus the number of arbitrary coefficients $a_{jl}^i$ will be $9-2\sigma_1-\sigma_2\geq9-6=3$. Since the $a_{jl}^i$ are symmetric with respect to all indices, there are 10 terms, which we now see are related by at most $10-3=7$ independent linear equations. Hence, at most 7 of the 9 equations (\ref{9equations}) are linearly independent. Making the identifications $\pi_j^i\longleftrightarrow\hat{\xi}^i\hat{\xi}^j$ and $a_{jl}^i\longleftrightarrow\hat{\xi}^i\hat{\xi}^j\hat{\xi}^l$, we can state the following result:
\begin{Lemma}\label{linearrelations}Defining the quadratic forms
\begin{gather*}M^k=\sum_{i,j=1}^3A_i^{jk}\hat{\xi}^i\hat{\xi}^j,\qquad k=1,2,3,
\end{gather*} the $9$ cubic forms $\hat{\xi}^iM^k$ are related by at least $9-7=2$ linear equations.
\end{Lemma}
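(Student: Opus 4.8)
The plan is to recognize the nine cubic forms $\hat{\xi}^lM^k$ (for $l,k=1,2,3$; these are the forms $\hat{\xi}^iM^k$ of the statement, with the free index relabelled) as linear functionals on the space of candidate integral-element coefficients, and then to read off the number of relations among them directly from the rank count already carried out above.

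First I would fix the two relevant ten-dimensional spaces. The cubic forms in $\hat{\xi}^1,\hat{\xi}^2,\hat{\xi}^3$ form a space $W$ of dimension $\binom{5}{2}=10$, and the fully symmetric arrays $\big(a_{jl}^i\big)$ form a space $V$ of the same dimension. Under the identification $a_{jl}^i\longleftrightarrow\hat{\xi}^i\hat{\xi}^j\hat{\xi}^l$ supplied above, $V$ and $W$ are dual via the nondegenerate pairing that contracts the coefficients of a cubic form against the entries of a symmetric array.

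The crux is to observe that, under this pairing, the cubic form $\hat{\xi}^lM^k=\sum_{i,j}A_i^{jk}\hat{\xi}^i\hat{\xi}^j\hat{\xi}^l$ represents exactly the linear functional
\begin{gather*}
\big(a_{jl}^i\big)\longmapsto\sum_{i,j=1}^3A_i^{jk}a_{jl}^i,
\end{gather*}
which is the left-hand side of the $(l,k)$ equation of~\eqref{9equations}; here the full symmetry of $a_{jl}^i$ is what makes the two expressions agree on the nose rather than merely up to symmetrization. Consequently a linear relation $\sum_{l,k}c_{lk}\,\hat{\xi}^lM^k=0$ in $W$ is the same datum as a linear dependence among the nine equations~\eqref{9equations}, and the dimension of the span of the $\hat{\xi}^lM^k$ equals the number of linearly independent equations in~\eqref{9equations}.

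It then remains only to invoke the count established just above. Involutivity of $\mathcal{C}^2(\mathcal{R})$ leaves $9-2\sigma_1-\sigma_2\geq3$ of the coefficients $a_{jl}^i$ free, so the nine constraints~\eqref{9equations} cut out a subspace of $V$ of codimension at most $7$; equivalently at most $7$ of them are linearly independent. Transporting this through the dictionary above, the span of the nine cubic forms has dimension at most $7$, whence they satisfy at least $9-7=2$ independent linear relations. The only real work is the bookkeeping of the previous paragraph --- checking that the pairing is nondegenerate so that no relation is created or destroyed in passing between equations and forms; once that dictionary is pinned down, the conclusion is immediate from the codimension bound.
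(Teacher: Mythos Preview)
Your proposal is correct and follows essentially the same approach as the paper: the argument in the text immediately preceding the lemma establishes that at most $7$ of the $9$ equations~\eqref{9equations} are linearly independent and then invokes the identifications $\pi_j^i\leftrightarrow\hat{\xi}^i\hat{\xi}^j$, $a_{jl}^i\leftrightarrow\hat{\xi}^i\hat{\xi}^j\hat{\xi}^l$ to translate this into a statement about the cubic forms $\hat{\xi}^lM^k$. You have spelled out the duality more carefully than the paper does, making explicit why a linear dependence among the equations is the same datum as a linear relation among the forms, but the underlying idea is identical.
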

The lemma states that the 9 cubic forms $\hat{\xi}^iM^k$ are related by at least two linear relations, thus there are 6 linear forms, $l^i$ and $m^i$, $i=1,2,3$, in the variables $\hat{\xi}^i$, such that these two linear relations can be expressed by the two equations,
\begin{gather*}
l^1M^1+l^2M^2+l^3M^3=0,\qquad m^1M^1+m^2M^2+m^3M^3=0.
\end{gather*}
Given that these relations among the $M^i$ exist, one may take a linear combination, \begin{gather*}\sum_{i=1}^3\big(z_1l^i+z_2m^i\big)M^i=0,\end{gather*} and find $z_1$ and $z_2$ such that one of the coefficients $z_1l^i+z_2m^i$ vanishes. In other words, we can choose $z_1$ and $z_2$ such that the three linear forms $z_1l^i+z_2m^i$ are linearly dependent. Equivalently, consider $z_1$ and $z_2$ as homogeneous coordinates on the Riemann sphere, $\mathbb{P}_1(\mathbb{C})$, and define $z:=z_2/z_1$. Then we have three linear forms $l^i+zm^i$, each of which is a linear combination of the three variables $\hat{\xi}^i$. Taking the coefficient of each of the three variables $\hat{\xi}^i$ in each of the three expressions $l^i+zm^i$ yields a $3\times3$ matrix. Setting the determinant of this matrix equal to zero will determine $z$ in such a way that the three linear forms $l^i+zm^i$ are linearly dependent, and produce a cubic equation in $z$ with three roots in $\mathbb{P}_1(\mathbb{C})$.

To make this more explicit, and simplified, the group $\mathcal{A}'$ may be used to induce linear transformations of the variables $\hat{\xi}^i$ which then induce corresponding linear transformations of~$z_1$ and~$z_2$. This allows us to assume, for example, that one root of the equation mentioned above is $z=0$. Then $l^1$, $l^2,$ and $l^3$ are linearly dependent and can be taken to be \begin{gather*}l^1=\hat{\xi}^1,\qquad l^2=\hat{\xi}^2,\qquad l^3=0.\end{gather*} Furthermore, if we specify the $m^i$ by \begin{gather*}m^1=a^1\hat{\xi}^1+b^1\hat{\xi}^2+c^1\hat{\xi}^3, \qquad m^2=a^2\hat{\xi}^1+b^2\hat{\xi}^2+c^2\hat{\xi}^3,\qquad m^3=a^3\hat{\xi}^1+b^3\hat{\xi}^2+c^3\hat{\xi}^3, \end{gather*} then the equation which $z$ must satisfy is the cubic polynomial given by,
\begin{gather}\label{cubicpoly}
\det\left(\begin{matrix}
1+a^1z & b^1z & c^1z \\
a^2z & 1+b^2z & c^2z \\
a^3z & b^3z & c^3z
\end{matrix}
\right)= z\det \left(\begin{matrix}
1+a^1z & b^1z & c^1z \\
a^2z & 1+b^2z & c^2z \\
a^3 & b^3 & c^3
\end{matrix}
\right)=0.
\end{gather}
The structural classification then breaks into 5 possible cases, according to whether the equation~(\ref{cubicpoly}) has $(i)$ 3 simple roots, $(ii)$~one double and one simple root, $(iii)$~one triple root, or $(iv)$~vanishes identically. Case~$(iv)$ splits further into two sub-cases according to which of the following two forms the matrix in equation~(\ref{cubicpoly}) can be reduced,
\begin{gather*}
\left(\begin{matrix}
1 & 0 & 0 \\
0 & 1 & z \\
1 & 0 & 0
\end{matrix}
\right)
\qquad \textrm{or}\qquad
\left(\begin{matrix}
1 & z & 0 \\
0 & 1 & 0 \\
1 & 0 & 0
\end{matrix}
\right).
\end{gather*}
 Hence there are 5 possible contact invariant structures in total, as described in the following theorem.
\begin{Theorem}\label{classification}There are $5$ distinct classes of involutive systems of $3$ PDEs in $3$ independent and $1$ dependent variable. These $5$ classes are described according to which of the $\pi_j^i$ vanish:
\begin{alignat*}{3}
&(i) && \pi_2^1=\pi_3^1=\pi_3^2=0,& \\
&(ii)&& \pi_1^1=\pi_3^1=\pi_3^2=0,&\\
&(iii) \ && \pi_1^1=\pi_2^1=\pi_3^1-\pi_2^2=0, & \\
&(iv)&& \pi_1^1=\pi_2^1=\pi_2^2=0, & \\
&(v)&& \pi_1^1=\pi_2^1=\pi_3^1=0.&
\end{alignat*}
\end{Theorem}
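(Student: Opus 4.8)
The plan is to prove the classification by a case analysis of the root structure of the cubic equation (\ref{cubicpoly}) in the variable $z$, since this cubic, together with the two linear relations furnished by Lemma~\ref{linearrelations}, encodes the full contact-invariant content of the reduced relations (\ref{reducedrelations}) among the $\pi_j^i$. Over the Riemann sphere $\mathbb{P}_1(\mathbb{C})$ a cubic either has exactly three roots counted with multiplicity --- so that its root pattern is one of \emph{(a)} three simple roots, \emph{(b)} one double and one simple root, or \emph{(c)} one triple root --- or else it vanishes identically. First I would record that the multiplicity pattern is invariant under the prolonged group $\mathcal{A}'$: since $\mathcal{A}'$ induces linear substitutions of the variables $\hat{\xi}^i$ and hence fractional-linear transformations of $z=z_2/z_1$, it permutes the roots of (\ref{cubicpoly}) without changing their multiplicities. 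This is what guarantees that the resulting classes are genuinely contact-invariant and therefore well defined.

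Next, for each root $z_0$ the three linear forms $l^i+z_0 m^i$ are linearly dependent, and this dependence, inserted into the identity $\sum_i(l^i+z_0 m^i)M^i=0$, produces a relation among the quadratic forms $M^k$ that singles out a characteristic direction. I would then use the freedom in $\mathcal{A}'$ to move these characteristic directions into canonical position --- placing one root at $z=0$ and normalizing $l^1=\hat{\xi}^1$, $l^2=\hat{\xi}^2$, $l^3=0$ as in the text --- and read off from the normalized net of quadrics which of the $\pi_j^i$ are forced to vanish, via the identification $\pi_j^i\leftrightarrow\hat{\xi}^i\hat{\xi}^j$. Carrying this out case by case yields the vanishing patterns (i), (ii), (iii) for three simple, double-plus-simple, and triple roots respectively; the remaining degenerate case in which (\ref{cubicpoly}) vanishes identically splits according to which of the two displayed normal forms the matrix reduces to, producing precisely the two patterns (iv) and (v).

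The main obstacle I expect is the treatment of the coincident-root and identically-vanishing cases, where the delicate linear algebra lies. In the coincident cases one must track not merely the vanishing of a characteristic form but the higher-order contact between the net of quadrics and the characteristic variety; this Jordan-block behavior is what produces the relation $\pi_3^1-\pi_2^2=0$ in the triple-root pattern (iii) rather than a third independent vanishing. The single hardest point is verifying that the identically-vanishing case admits \emph{exactly} two inequivalent subcases under $\mathcal{A}'$ --- no more and no fewer --- which amounts to showing that the two matrix normal forms in the discussion preceding the theorem exhaust the possibilities and are mutually inequivalent. I would close by confirming that the five patterns are pairwise contact-inequivalent: since the invariant root data distinguishes them and is preserved by $\mathcal{A}'$, no element of $\mathcal{A}'$ can carry any one of (i)--(v) into another, which establishes that there are exactly five distinct classes.
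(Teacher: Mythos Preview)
Your proposal is correct and follows essentially the same approach as the paper's own treatment: the paper does not give a self-contained proof of Theorem~\ref{classification} but rather records Cartan's classical result from~\cite{ca11}, preceded by exactly the sketch you describe --- the cubic~(\ref{cubicpoly}) in $z$, the case split on its root pattern (three simple, double-plus-simple, triple, identically zero), and the further bifurcation of the identically-vanishing case into the two displayed matrix normal forms. Your elaboration on invariance of the multiplicity pattern under $\mathcal{A}'$ and on pairwise inequivalence of the five classes fills in details the paper leaves implicit, but the route is the same.
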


Finally, we will demonstrate to which of the five classes given in Theorem~\ref{classification} systems of the type (\ref{systemsbeingstudied}) belong. Recall that the system of three equations (\ref{systemsbeingstudied}) is given by
\begin{gather*}F_{ij}\big(x^1,x^2,x^3,u,u_i,u_j,u_{ij}\big)=u_{ij}-f_{ij}\big(x^1,x^2,x^3,u,u_i,u_j\big)=0\qquad \textrm{for}\quad 1\leq i < j\leq 3.\end{gather*}
Then by Lemma~\ref{linearrelations} there exist at least 2 linear relations between the 3 quadratic forms \begin{gather*}M^1=\hat{\xi}^1\hat{\xi}^2,\qquad M^2=\hat{\xi}^2\hat{\xi}^3,\qquad M^3=\hat{\xi}^1\hat{\xi}^3.\end{gather*} Write these equations as
\begin{gather*}
l^1\hat{\xi}^1\hat{\xi}^2+l^2\hat{\xi}^2\hat{\xi}^3+l^3\hat{\xi}^1\hat{\xi}^3=0,\\
m^1\hat{\xi}^1\hat{\xi}^2+m^2\hat{\xi}^2\hat{\xi}^3+m^3\hat{\xi}^1\hat{\xi}^3 =0.
\end{gather*}
Then, for example, we can take $\big(l^1,l^2,l^3\big)=\big(\hat{\xi}^3,-\hat{\xi}^1,0\big)$ and $\big(m^1,m^2,m^3\big)=\big(\hat{\xi}^3,\hat{\xi}^1,-2\hat{\xi}^2\big)$ and set the following determinant equal to zero
\begin{gather*}
\det\left(\begin{matrix}0 & 0 & 1+z \\-1+z & 0 & 0 \\0 & -2z & 0\end{matrix}\right)=0.
\end{gather*}
In other words, the equation $(1+z)(1-z)(2z)=0$ must hold. Here we have 3 simple roots, so the structure of this system fits into case $(i)$ of Theorem~\ref{classification}. The corresponding structure equations are then
\begin{gather*}
{\rm d}\theta^0 \equiv0,\\ \nonumber
{\rm d}\theta^1 \equiv\omega^1\wedge\pi_1^1\quad \mod \big(\theta^0,\theta^1,\theta^2,\theta^3\big),\\
{\rm d}\theta^2 \equiv\omega^2\wedge\pi_2^2,\\
{\rm d}\theta^3 \equiv\omega^3\wedge\pi_3^3.
\end{gather*}

Cartan's test then establishes that systems of the form (\ref{systemsbeingstudied}) which satisfy the integrability conditions $D_ku_{ij}=D_iu_{kj},\;\textrm{for}\;1\leq i\neq j\neq k \leq3$, are involutive and their solutions are parametrized by three arbitrary functions of one variable.

\section{Structure equations and Lie bracket congruences}\label{appendixB}
\begin{Proposition}\label{dhstructureequations}The ${\rm d}_H$ structure equations for the Laplace adapted coframe \eqref{laplacecoframedef} for a system of the form~\eqref{systemsbeingstudied} of three hyperbolic equations with commuting characteristics are given by the following equations. Recall that $p_1=\min \{p_{21},p_{31}\}$, $p_2=\min \{p_{12},p_{32}\}$, and $p_3=\min \{p_{13},p_{23}\}$. Furthermore, $p_{k1}$, $p_{k2}$, and $p_{k3}$ will be used to denote which Laplace index achieves the minimum in the definitions of $p_1$, $p_2$, and $p_3$ respectively; whereas $p_{l1}$, $p_{l2}$, and~$p_{l3}$ will denote the Laplace index which does \textit{not} achieve the minimum in the definitions of~$p_1$,~$p_2$, and $p_3$. So, for example, if $p_1=p_{31}$ then $p_{k1}=p_{31}$ and $p_{l1}=p_{21}$,
\begin{gather}
{\rm d}_H\sigma_i =0,\\
\label{dhtheta}
{\rm d}_H\Theta =\sigma_1\wedge\big(\hat{\xi}_1^1-A_{i1}^i\Theta\big)+\sigma_2\wedge\big(\hat{\xi}_2^1-A_{i2}^i\Theta\big)+\sigma_3\wedge\big(\hat{\xi}_3^1-A_{i3}^i\Theta\big),\\
\label{dhxi11}
{\rm d}_H\hat{\xi}_1^1 =\sigma_1\wedge\big(\hat{\xi}_1^2-\big(A_{i1}^i\big)^1\hat{\xi}_1^1\big)+\sigma_2\wedge\big(H_{21}^0\Theta-A_{21}^1\hat{\xi}_1^1\big)+\sigma_3\wedge\big(H_{31}^0\Theta-A_{31}^1\hat{\xi}_1^1\big),\\
\nonumber
\cdots\cdots\cdots\cdots\cdots\cdots\cdots\cdots\cdots\cdots\cdots\cdots\cdots\cdots\cdots\cdots\cdots\cdots\cdots\cdots\cdots\cdots\cdots\\
{\rm d}_H\hat{\xi}_1^n =\sigma_1\wedge\big(\hat{\xi}_1^{n+1}-\mathcal{X}_1^n\big(A_{i1}^i\big)\hat{\xi}_1^n\big)+\sigma_2\wedge\big(H_{21}^{n-1}\hat{\xi}_1^{n-1}-\mathcal{X}_1^{n-1}\big(A_{21}^1\big)\hat{\xi}_1^n\big)\nonumber\\
\hphantom{{\rm d}_H\hat{\xi}_1^n =}{} +\sigma_3\wedge\big(H_{31}^{n-1}\hat{\xi}_1^{n-1}-\mathcal{X}_1^{n-1}\big(A_{31}^1\big)\hat{\xi}_1^n\big)\qquad \textrm{for all}\quad n,\quad 2\leq n\leq p_{k1},\label{dhxi1nfornbetween2andminpi1}\\
{\rm d}_H\hat{\xi}_1^n =\sigma_1\wedge\big(\hat{\xi}_1^{n+1}-\big(A_{i1}^i\big)^n\hat{\xi}_1^n\big)+\sigma_k\wedge\big({-}\big(A_{k1}^1\big)^{n-1}\hat{\xi}_1^n\big)\nonumber\\
\hphantom{{\rm d}_H\hat{\xi}_1^n =}{} +\sigma_l\wedge\big(H_{l1}^{n-1}\hat{\xi}_1^{n-1}-\big(A_{l1}^1\big)^{n-1}\hat{\xi}_1^n\big)\qquad \textrm{for}\quad n=p_{k1}+1,\\
{\rm d}_H\hat{\xi}_1^n \equiv\sigma_1\wedge\big(\hat{\xi}_1^{n+1}-\big(A_{i1}^i\big)^n\hat{\xi}_1^n\big)+\sigma_k\wedge\big({-}\big(A_{k1}^1\big)^{p_{k1}}\hat{\xi}_1^n\big)\nonumber\\
\hphantom{{\rm d}_H\hat{\xi}_1^n \equiv}{}
 +\sigma_l\wedge\big(H_{l1}^{n-1}\hat{\xi}_1^{n-1}-\big(A_{l1}^1\big)^{n-1}\hat{\xi}_1^n\big)\quad \mod\big\{\hat{\xi}_1^{p_{k1}+1},\ldots,\hat{\xi}_1^{n-1}\big\}\nonumber\\
\hphantom{{\rm d}_H\hat{\xi}_1^n \equiv}{} \textrm{for}\quad p_{k1}+2\leq n\leq p_{l1},\\
{\rm d}_H\hat{\xi}_1^n \equiv\sigma_1\wedge\hat{\xi}_1^{n+1}+\sigma_k\wedge\big({-}\big(A_{k1}^1\big)^{p_{k1}}\hat{\xi}_1^n\big)+\sigma_l\wedge\big({-}\big(A_{l1}^1\big)^{n-1}\hat{\xi}_1^n\big)\nonumber\\
\hphantom{{\rm d}_H\hat{\xi}_1^n \equiv}{} \textrm{for} \quad n=p_{l1}+1=p_1+1\quad \mod\big\{\hat{\xi}_1^{p_{k1}+1},\ldots,\hat{\xi}_1^{n-1}\big\},\label{dhxi1nfornequaltop1plus1}\\
{\rm d}_H\hat{\xi}_1^n \equiv\sigma_1\wedge\hat{\xi}_1^{n+1}+\sigma_k\wedge\big({-}\big(A_{k1}^1\big)^{p_{k1}}\hat{\xi}_1^n)+\sigma_l\wedge\big({-}\big(A_{l1}^1\big)^{p_{l1}}\hat{\xi}_1^n\big)\nonumber\\
\hphantom{{\rm d}_H\hat{\xi}_1^n \equiv}{} \textrm{for} \quad n\geq p_1+2\quad \mod\big\{\hat{\xi}_1^{p_{k1}+2},\ldots,\hat{\xi}_1^{n-1}\big\},\label{dhxi1nforngreaterthanp1plus2}
\end{gather}
where $p_{ij}$ is the smallest integer such that $H_{ij}^{p_{ij}}=0$. Similar formulas hold for the forms~$\hat{\xi}_2^i$ and~$\hat{\xi}_3^i$:
\begin{gather*}
{\rm d}_H\hat{\xi}_2^1 =\sigma_1\wedge\big(H_{12}^0\Theta-A_{12}^2\hat{\xi}_2^1\big)+\sigma_2\wedge\big(\hat{\xi}_2^2-\big(A_{i2}^i\big)^1\hat{\xi}_2^1\big)+\sigma_3\wedge\big(H_{32}^0\Theta-A_{32}^2\hat{\xi}_2^1\big), \\
\cdots\cdots\cdots\cdots\cdots\cdots\cdots\cdots\cdots\cdots\cdots\cdots\cdots\cdots\cdots\cdots\cdots\cdots\cdots\cdots\cdots\cdots\cdots \\
{\rm d}_H\hat{\xi}_2^n =\sigma_1\wedge\big(H_{12}^{n-1}\hat{\xi}_2^{n-1}-\big(A_{12}^2\big)^{n-1}\hat{\xi}_2^n\big)+\sigma_2\wedge\big(\hat{\xi}_2^{n+1}-\big(A_{i2}^i\big)^n\hat{\xi}_2^n\big) \\
 \hphantom{{\rm d}_H\hat{\xi}_2^n =}{} +\sigma_3\wedge\big(H_{32}^{n-1}\hat{\xi}_2^{n-1}-\big(A_{32}^2\big)^{n-1}\hat{\xi}_2^n\big)\qquad \textrm{for} \quad 2\leq n\leq p_{k2},\\
{\rm d}_H\hat{\xi}_2^n =\sigma_k\wedge\big({-}\big(A_{k2}^2\big)^{n-1}\hat{\xi}_2^n\big)+\sigma_2\wedge\big(\hat{\xi}_2^{n+1}-\big(A_{i2}^i\big)^n\hat{\xi}_2^n\big)+\sigma_l\wedge\big(H_{l2}^{n-1}\hat{\xi}_2^{n-1}-\big(A_{l2}^2\big)^{n-1}\hat{\xi}_2^n\big) \\
 \hphantom{{\rm d}_H\hat{\xi}_2^n =}{} \textrm{for} \quad n=p_{k2}+1, \\
{\rm d}_H\hat{\xi}_2^n \equiv\sigma_k\wedge\big({-}\big(A_{k2}^2\big)^{p_{k2}}\hat{\xi}_2^n\big)+\sigma_2\wedge\big(\hat{\xi}_2^{n+1}-\big(A_{i2}^i\big)^n\hat{\xi}_2^n\big)
+\sigma_l\wedge\big(H_{l2}^{n-1}\hat{\xi}_2^{n-1}-\big(A_{l2}^2\big)^{n-1}\hat{\xi}_2^n \big)\\
\hphantom{{\rm d}_H\hat{\xi}_2^n \equiv}{} \mod\big\{\hat{\xi}_2^{p_{k2}+1},\ldots,\hat{\xi}_2^{n-1}\big\}\qquad \textrm{for}\quad p_{k2}+2\leq n\leq p_{l2}, \\
{\rm d}_H\hat{\xi}_2^n \equiv\sigma_k\wedge\big({-}\big(A_{k2}^2\big)^{p_{k2}}\hat{\xi}_2^n\big)+\sigma_2\wedge\hat{\xi}_2^{n+1}+\sigma_l\wedge\big({-}\big(A_{l2}^2\big)^{n-1}\hat{\xi}_2^n\big) \\
\hphantom{{\rm d}_H\hat{\xi}_2^n \equiv}{} \mod\big\{\hat{\xi}_2^{p_{k2}+1},\ldots,\hat{\xi}_2^{n-1}\big\}\qquad \textrm{for} \quad n\geq p_{l2}+1=p_2+1
\end{gather*}
and
\begin{gather*}
{\rm d}_H\hat{\xi}_3^1 =\sigma_1\wedge\big(H_{13}^0\Theta-A_{13}^3\hat{\xi}_3^1\big)+\sigma_2\wedge\big(H_{23}^0\Theta-A_{23}^3\hat{\xi}_3^1\big)+\sigma_3\wedge\big(\hat{\xi}_3^2-\big(A_{i3}^i\big)^1\hat{\xi}_3^1\big), \\
\cdots\cdots\cdots\cdots\cdots\cdots\cdots\cdots\cdots\cdots\cdots\cdots\cdots\cdots\cdots\cdots\cdots\cdots\cdots\cdots\cdots\cdots\cdots \\
{\rm d}_H\hat{\xi}_3^n =\sigma_1\wedge\big(H_{13}^{n-1}\hat{\xi}_3^{n-1}-\big(A_{13}^3\big)^{n-1}\hat{\xi}_3^n\big)+\sigma_2\wedge\big(H_{23}^{n-1}\hat{\xi}_3^{n-1}-\big(A_{23}^3\big)^{n-1}\hat{\xi}_3^n\big) \\
\hphantom{{\rm d}_H\hat{\xi}_3^n =}{} +\sigma_3\wedge\big(\hat{\xi}_3^{n+1}-\big(A_{i3}^i\big)^n\hat{\xi}_3^n\big)\qquad \textrm{for} \quad 2\leq n\leq p_{k3}, \\
{\rm d}_H\hat{\xi}_3^n =\sigma_k\wedge\big({-}\big(A_{k3}^3\big)^{n-1}\hat{\xi}_3^n\big)+\sigma_l\wedge\big(H_{l3}^{n-1}-\big(A_{l3}^3\big)^{n-1}\hat{\xi}_3^n\big)+\sigma_3\wedge\big(\hat{\xi}_3^{n+1}-\big(A_{i3}^i\big)^n\hat{\xi}_3^n\big) \\
 \hphantom{{\rm d}_H\hat{\xi}_3^n =}{} \textrm{for}\quad n=p_{k3}+1, \\
{\rm d}_H\hat{\xi}_3^n \equiv\sigma_k\wedge\big({-}\big(A_{k3}^3\big)^{p_{k3}}\hat{\xi}_3^n\big)+\sigma_l\wedge\big(H_{l3}^{n-1}\hat{\xi}_3^{n-1}-\big(A_{l3}^3\big)^{n-1}\hat{\xi}_3^n\big)+\sigma_3\wedge \big(\hat{\xi}_3^{n+1}-\big(A_{i3}^i\big)^n\hat{\xi}_3^n\big) \\
\hphantom{{\rm d}_H\hat{\xi}_3^n \equiv}{} \mod\big\{\hat{\xi}_3^{p_{k3}+1},\ldots,\hat{\xi}_3^{n-1}\big\}\qquad \textrm{for} \quad p_{k3}+2\leq n\leq p_{l3}, \\
{\rm d}_H\hat{\xi}_3^n \equiv\sigma_k\wedge\big({-}\big(A_{k3}^3\big)^{p_{k3}}\hat{\xi}_3^n\big)+\sigma_l\wedge\big({-}\big(A_{l3}^3\big)^{n-1}\hat{\xi}_3^n\big)+\sigma_3\wedge\hat{\xi}_3^{n+1} \\
\hphantom{{\rm d}_H\hat{\xi}_3^n \equiv}{} \mod\big\{\hat{\xi}_3^{p_{k3}+1},\ldots,\hat{\xi}_3^{n-1}\big\}\qquad \textrm{for} \quad n\geq p_{l3}+1=p_3+1.
\end{gather*}
\end{Proposition}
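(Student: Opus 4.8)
The plan is to compute $d_H$ of each one-form in the Laplace adapted coframe \eqref{laplacecoframedef} directly from the master identity \eqref{dhomega}, namely $d_H\omega = \sigma_1 \wedge X_1(\omega) + \sigma_2 \wedge X_2(\omega) + \sigma_3 \wedge X_3(\omega)$, so that the entire computation reduces to assembling the projected Lie derivatives $X_k(\omega)$ that are already recorded in the construction of the coframe. For the $\sigma_i$ the vanishing $d_H\sigma_i = 0$ is immediate, since $\sigma_i = dx^i$ and the characteristics commute, exactly as in Proposition~\ref{charcoframe}. For $\Theta$ I would solve the defining relation $\hat\xi_k^1 = X_k(\Theta) + A_{ik}^i\Theta$ for $X_k(\Theta) = \hat\xi_k^1 - A_{ik}^i\Theta$ and substitute into \eqref{dhomega}, which yields \eqref{dhtheta} at once.

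The substantive part is the family $\hat\xi_1^n$, the cases $\hat\xi_2^n$ and $\hat\xi_3^n$ being identical after permuting indices. Here I would read off the three needed derivatives separately. The longitudinal derivative $X_1(\hat\xi_1^n)$ comes from rearranging the recursive definition \eqref{xinj}: in the invertible range it gives $X_1(\hat\xi_1^n) = \hat\xi_1^{n+1} - \mathcal{X}_1^n(A_{i1}^i)\hat\xi_1^n$, and past the index $p_{i1}+1$ it reduces to $X_1(\hat\xi_1^n) = \hat\xi_1^{n+1}$. The two transverse derivatives $X_2(\hat\xi_1^n)$ and $X_3(\hat\xi_1^n)$ are precisely the relations \eqref{xixij1}--\eqref{xixijpijplusn} with $j=1$ and $i=2,3$. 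Substituting all three into \eqref{dhomega} and collecting the $\sigma_1$, $\sigma_2$, and $\sigma_3$ components then produces each of the stated structure equations.

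The only genuine work is the case analysis, which is dictated entirely by the vanishing pattern of the Laplace invariants: $H_{i1}^m$ is nonzero for $m < p_{i1}$ and vanishes at $m = p_{i1}$. Writing $p_1 = \min\{p_{21},p_{31}\}$ with $p_{k1}$ the index achieving the minimum and $p_{l1}$ the other, I would split into the ranges $2 \le n \le p_{k1}$, $n = p_{k1}+1$, $p_{k1}+2 \le n \le p_{l1}$, $n = p_1+1$, and $n \ge p_1+2$. In each range one notes which of the terms $H_{k1}^{n-1}\hat\xi_1^{n-1}$ and $H_{l1}^{n-1}\hat\xi_1^{n-1}$ has dropped out, and replaces the running coefficient $\mathcal{X}_1^{n-1}(A_{\cdot 1}^1)$ by its stabilized value $(A_{\cdot 1}^1)^{p_{\cdot 1}}$ once the corresponding index is passed. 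The congruences $\bmod\{\hat\xi_1^{p_{k1}+1},\ldots,\hat\xi_1^{n-1}\}$ enter exactly because beyond its Laplace index the transverse action $X_i(\hat\xi_1^n)$ is controlled only modulo those lower forms, as recorded in \eqref{xixijpijplusn}.

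I expect the main obstacle to be purely organizational bookkeeping rather than any analytic difficulty. One must verify that the coefficient relabeling $\mathcal{X}_1^{n-1}(A_{\cdot 1}^1) \mapsto (A_{\cdot 1}^1)^{p_{\cdot 1}}$ is consistent across the boundary cases, that the transition at $n = p_{k1}+1$ (where the smaller index has been passed but the larger has not) correctly retains one honest $H$-term together with one congruence, and that the integrability relations \eqref{intconditions} guarantee $\hat A_{ij}^j = A_{ij}^j$ so that the transverse coefficients match those appearing in the structure equations. Once the case ranges are fixed, each individual equation is a one-line substitution.
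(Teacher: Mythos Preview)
Your proposal is correct and follows essentially the same route as the paper's proof: both reduce everything to the master identity \eqref{dhomega} and then substitute the already-recorded expressions \eqref{xinj}--\eqref{xixijpijplusn} for $X_k(\hat\xi_j^n)$, with the case split governed by which Laplace indices have been passed. The only point the paper makes slightly more explicit is that the final congruence \eqref{dhxi1nforngreaterthanp1plus2} is established by a short induction on $n$ using commutativity $X_kX_1=X_1X_k$, but this is exactly what underlies your appeal to \eqref{xixijpijplusn}.
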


\begin{proof}The structure equations ${\rm d}_H\sigma_i=0$ follow from the assumption that the characteristic vector fields $X_i$ all commute, along with Definition~\ref{totalverticaldef} which implies that $({\rm d}_V\sigma_i)(X_i,X_j)=0$. Thus
\begin{gather*}({\rm d}_H\sigma_i)(X_i,X_j)=({\rm d}\sigma_i)(X_i,X_j)=-\sigma_i([X_i,X_j])=0\end{gather*}
for any pair of $X_i$ and $X_j$. The other structure equations are computed by using the formula
\begin{gather}\label{dhomegaformula}
{\rm d}_H\omega=\sigma_1\wedge X_1(\omega)+\sigma_2\wedge X_2(\omega)+\sigma_3\wedge X_3(\omega).
\end{gather}
The formula for ${\rm d}_H\Theta$ follows immediately from the definitions of $\hat{\xi}_1^1$, $\hat{\xi}_1^2$, and $\hat{\xi}_1^3$. The remaining formulas are obtained by using equations (\ref{xinj})--(\ref{xixijpijplusn}) in formula~(\ref{dhomegaformula}). For example, for ${\rm d}_H\hat{\xi}_1^1$ we use equation~(\ref{xinj}) with $n=2$ and $j=1$ to see that \begin{gather*}\sigma_1\wedge X_1\big(\hat{\xi}_1^1\big)=\sigma_1\wedge\big(\hat{\xi}_1^2-\big(A_{i1}^i\big)^1\hat{\xi}_1^1\big).\end{gather*} Equation~(\ref{xixij1}) with $i=2$ and $j=1$ shows that \begin{gather*}\sigma_2\wedge X_2\big(\hat{\xi}_1^1\big)=\sigma_2\wedge\big(H_{21}^0\Theta-\big(A_{21}^1\big)\hat{\xi}_1^1\big)\end{gather*} and likewise, (\ref{xixij1}) with $i=3$ and $j=1$ gives $\sigma_3\wedge X_3\big(\hat{\xi}_1^1\big)=\sigma_3\wedge\big(H_{31}^0\Theta-\big(A_{31}^1\big)\hat{\xi}_1^1\big)$. Similar substitutions yields the~${\rm d}_H$ structure formulas (\ref{dhxi1nfornbetween2andminpi1})--(\ref{dhxi1nfornequaltop1plus1}) above. Finally, formula~(\ref{dhxi1nforngreaterthanp1plus2}) is proved by induction on~$n$, beginning with
\begin{gather*}
X_k\big(\hat{\xi}_1^{n+1}\big) =X_k\big(X_1\big(\hat{\xi}_1^n\big)\big) =X_1\big(X_k\big(\hat{\xi}_1^n\big)\big)\nonumber\\
\hphantom{X_k\big(\hat{\xi}_1^{n+1}\big)}{} \equiv X_1\big({-}\big(A_{k1}^1\big)^{p_{k1}}\hat{\xi}_1^n\big)\quad \mod\big\{\hat{\xi}_1^{p_{k1}+2},\ldots,\hat{\xi}_1^{n-1}\big\},\qquad \textrm{for}\quad k=2,3,
\end{gather*}
where the above equalities are obtained by making use of equation~(\ref{xixijpijplusn}). The sets of~${\rm d}_H$ structure formulas for $\hat{\xi}_2^n$ and $\hat{\xi}_3^n$ are derived via an analogous procedure, again using the appropriate versions of equations (\ref{xinj})--(\ref{xixijpijplusn}) in formula~(\ref{dhomegaformula}).
\end{proof}

The Lie bracket congruences for the characteristic vector fields~$X_i$ with the vertical vector fields~$U$ and $V_k^l$ defined by \begin{gather*}\Theta(U)=1,\qquad \Theta\big(V_k^l\big)=0,\qquad \hat{\xi}_j^n(U)=0,\qquad \hat{\xi}_j^n\big(V_k^l\big)=\delta_k^j\delta_l^n\end{gather*} are given in the following proposition.

\begin{Proposition}\label{liebracketcongruences}The following Lie bracket congruences hold modulo $\{X_1,X_2,X_3\}$:
\begin{gather}\label{x1withu}
[X_1,U] \equiv A_{i1}^iU-H_{12}^0V_2^1-H_{13}^0V_3^1\quad \mod\{X_1,X_2,X_3\},\\
\label{x1withv11}
\big[X_1,V_1^1\big] \equiv -U+\big(A_{i1}^i\big)^1V_1^1\quad \mod\{X_1,X_2,X_3\},\\
\label{x1withv1n}
\big[X_1,V_1^n\big] \equiv -V_1^{n-1}+\big(A_{i1}^i\big)^nV_1^n\quad \mod\{X_1,X_2,X_3\}\qquad \textrm{for}\quad 2\leq n\leq p_{l1},\\
\label{x1withv1n2}
\big[X_1,V_1^n\big] \equiv -V_1^{n-1}\quad \mod\{X_1,X_2,X_3\}\qquad \textrm{for}\quad n\geq p_{l1}+1,\nonumber\\
\big[X_1,V_2^1\big] \equiv A_{12}^2V_2^1-H^1_{12}V_2^2\quad \mod\{X_1,X_2,X_3\},\nonumber\\
\big[X_1,V_2^n\big] \equiv \big(A_{12}^2\big)^{n-1}V_2^n-H_{12}^nV_2^{n+1}\quad \mod\{X_1,X_2,X_3\}\qquad \textrm{for}\quad 2\leq n<p_{12},\nonumber\\
\big[X_1,V_2^n\big] \equiv \big(A_{12}^2\big)^{p_{12}}V_2^n\quad \mod\big\{X_1,X_2,X_3,V_2^{n+1},\ldots\big\}\qquad \textrm{for}\quad n\geq p_{12},\\
\label{x1withv31}
\big[X_1,V_3^1\big] \equiv A_{13}^3V_3^1-H^1_{13}V_3^2\quad \mod\{X_1,X_2,X_3\},\\
\label{x1withv3n}
\big[X_1,V_3^n\big] \equiv \big(A_{13}^3\big)^{n-1}V_3^n-H_{13}^nV_3^{n+1}\quad \mod\{X_1,X_2,X_3\}\qquad \textrm{for}\quad 2\leq n<p_{13},\\
\big[X_1,V_3^n\big] \equiv \big(A_{13}^3\big)^{p_{13}}V_3^n\quad \mod\big\{X_1,X_2,X_3,V_3^{n+1},\ldots\big\}\qquad \textrm{for}\quad n\geq p_{13},\nonumber\\
[X_2,U] \equiv A_{i2}^iU-H_{21}^0V_1^1-H_{23}^0V_3^1\quad \mod\{X_1,X_2,X_3\},\nonumber\\
\big[X_2,V_1^1\big] \equiv A_{21}^1V_1^1-H_{21}^1V_1^2\quad \mod\{X_1,X_2,X_3\},\nonumber\\
\big[X_2,V_1^n\big] \equiv \big(A_{21}^1\big)^{n-1}V_1^n-H_{21}^nV_1^{n+1}\quad \mod\{X_1,X_2,X_3\}\qquad \textrm{for}\quad 2\leq n<p_{21},\nonumber\\
\big[X_2,V_1^n\big] \equiv \big(A_{21}^1\big)^{p_{21}}V_1^n\quad \mod\big\{X_1,X_2,X_3,V_1^{n+1},\ldots\big\}\qquad \textrm{for}\quad n\geq p_{21},\nonumber\\
\big[X_2,V_2^1\big] \equiv -U+\big(A_{i2}^i\big)^1V_2^1\quad \mod\{X_1,X_2,X_3\},\nonumber\\
\big[X_2,V_2^n\big] \equiv -V_2^{n-1}+\big(A_{i2}^i\big)^nV_2^n\quad \mod\{X_1,X_2,X_3\}\qquad \textrm{for}\quad 2\leq n\leq p_{l2},\nonumber\\
\big[X_2,V_2^n\big] \equiv -V_2^{n-1}\quad \mod\{X_1,X_2,X_3\}\qquad \textrm{for}\quad n\geq p_{l2}+1,\nonumber\\
\big[X_2,V_3^1\big] \equiv A_{23}^3V_3^1-H_{23}^1V_3^2\quad \mod\{X_1,X_2,X_3\},\nonumber\\
\big[X_2,V_3^n\big] \equiv \big(A_{23}^3\big)^{n-1}V_3^n-H_{23}^nV_3^{n+1}\quad \mod\{X_1,X_2,X_3\}\qquad \textrm{for}\quad 2\leq n<p_{23},\nonumber\\
\big[X_2,V_3^n\big] \equiv \big(A_{23}^3\big)^{p_{23}}V_3^n\quad \mod\big\{X_1,X_2,X_3,V_3^{n+1},\ldots\big\}\qquad \textrm{for}\quad n\geq p_{23},\nonumber\\
[X_3,U] \equiv A_{i3}^iU-H_{31}^0V_1^1-H_{32}^0V_2^1\quad \mod\{X_1,X_2,X_3\},\nonumber\\
\big[X_3,V_1^1\big] \equiv A_{31}^1V_1^1-H_{31}^1V_1^2\quad \mod\{X_1,X_2,X_3\},\nonumber\\
\big[X_3,V_1^n\big] \equiv \big(A_{31}^1\big)^{n-1}V_1^n-H_{31}^nV_1^{n+1}\quad\mod\{X_1,X_2,X_3\}\qquad \textrm{for}\quad 2\leq n<p_{31},\nonumber\\
\big[X_3,V_1^n\big] \equiv \big(A_{31}^1\big)^{p_{31}}V_1^n\quad\mod\big\{X_1,X_2,X_3,V_1^{n+1},\ldots\big\}\qquad \textrm{for}\quad n\geq p_{31},\nonumber\\
\big[X_3,V_2^1\big] \equiv A_{32}^2V_2^1-H^1_{32}V_2^2\quad\mod\{X_1,X_2,X_3\},\nonumber\\
\big[X_3,V_2^n\big] \equiv \big(A_{32}^2\big)^{n-1}V_2^n-H_{32}^nV_2^{n+1}\quad\mod\{X_1,X_2,X_3\}\qquad \textrm{for}\quad 2\leq n<p_{32},\nonumber\\
\big[X_3,V_2^n\big] \equiv \big(A_{32}^2\big)^{p_{32}}V_2^n\quad\mod\big\{X_1,X_2,X_3,V_2^{n+1},\ldots\big\}\qquad \textrm{for}\quad n\geq p_{32},\nonumber\\
\big[X_3,V_3^1\big] \equiv -U+\big(A_{i3}^i\big)^1V_3^1\quad\mod\{X_1,X_2,X_3\},\nonumber\\
\big[X_3,V_3^n\big] \equiv -V_3^{n-1}+\big(A_{i3}^i\big)^nV_3^n\quad\mod\{X_1,X_2,X_3\}\qquad \textrm{for}\quad 2\leq n\leq p_{l3},\nonumber\\
\big[X_3,V_3^n\big] \equiv -V_3^{n-1}\quad\mod\{X_1,X_2,X_3\}\qquad \textrm{for}\quad n\geq p_{l3}+1.\nonumber
\end{gather}
\end{Proposition}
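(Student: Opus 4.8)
The plan is to compute every bracket by pairing it against the Laplace adapted coframe, exploiting the fact that, by construction (\ref{dualvectorfields}), the collection $\{X_1,X_2,X_3,U,V_k^l\}$ is the frame dual to the coframe $\{\sigma_1,\sigma_2,\sigma_3,\Theta,\hat\xi_k^l\}$. Consequently any vector field $W$ on $\mathcal R^\infty$ admits the expansion $W\equiv\Theta(W)\,U+\sum_{j,n}\hat\xi_j^n(W)\,V_j^n \mod\{X_1,X_2,X_3\}$, so that determining a bracket modulo $\{X_1,X_2,X_3\}$ amounts to evaluating the scalars $\Theta([X_i,Z])$ and $\hat\xi_j^n([X_i,Z])$ for each vertical field $Z\in\{U,V_k^l\}$.

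The key step is the commutator identity (\ref{verhorint}) of Proposition~\ref{verhorprop}. Applied to a total field $X_i$, a vertical field $Z$, and a contact one-form $\omega$, it reads $Z\,\lrcorner\,X_i(\omega)=[Z,X_i]\,\lrcorner\,\omega+X_i(Z\,\lrcorner\,\omega)$, which I would rearrange to $\omega([Z,X_i])=Z\,\lrcorner\,X_i(\omega)-X_i(\omega(Z))$. Because $\omega$ and $Z$ are dual coframe/frame members, $\omega(Z)$ is one of the constants $0$ or $1$, so its total derivative $X_i(\omega(Z))$ vanishes. Writing $[X_i,Z]=-[Z,X_i]$ then yields the compact rule $\omega([X_i,Z])=-X_i(\omega)(Z)$: the $\omega$-component of $[X_i,Z]$ is minus the $Z$-component of the contact one-form $X_i(\omega)$.

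With this rule in hand I would read $X_i(\omega)$ off directly from the $d_H$ structure equations of Proposition~\ref{dhstructureequations}: since $d_H\omega=\sigma_1\wedge X_1(\omega)+\sigma_2\wedge X_2(\omega)+\sigma_3\wedge X_3(\omega)$ by (\ref{dhomega}), the form $X_i(\omega)$ is exactly the contact one-form wedged with $\sigma_i$ in $d_H\omega$. Substituting $\omega=\Theta$ and $\omega=\hat\xi_j^n$ and pairing with each $Z$ produces every coefficient. For instance, $X_1(\Theta)=\hat\xi_1^1-A_{i1}^i\Theta$ from (\ref{dhtheta}) gives $\Theta([X_1,V_1^1])=-X_1(\Theta)(V_1^1)=-1$, while $X_1(\hat\xi_1^1)=\hat\xi_1^2-(A_{i1}^i)^1\hat\xi_1^1$ from (\ref{dhxi11}) gives $\hat\xi_1^1([X_1,V_1^1])=(A_{i1}^i)^1$ with all other pairings vanishing, which is precisely (\ref{x1withv11}); similarly $X_1(\hat\xi_2^1)=H_{12}^0\Theta-A_{12}^2\hat\xi_2^1$ and $X_1(\hat\xi_3^1)=H_{13}^0\Theta-A_{13}^3\hat\xi_3^1$ reproduce (\ref{x1withu}). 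The full list then follows by running $\omega$ through the three branches $\hat\xi_1^\bullet,\hat\xi_2^\bullet,\hat\xi_3^\bullet$ and permuting the roles of $X_1,X_2,X_3$.

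The main obstacle is purely organizational rather than conceptual. The structure equations in Proposition~\ref{dhstructureequations} change shape according to where $n$ lies relative to the Laplace indices $p_{k1}\le p_{l1}$ and their analogues, so the bracket computations must be split over the ranges $2\le n\le p_{k1}$, $n=p_{k1}+1$, $p_{k1}+2\le n\le p_{l1}$, and $n\ge p_1+1$. Beyond the first vanishing invariant the relevant $d_H$ equations are themselves valid only modulo the higher forms $\{\hat\xi_j^{n+1},\ldots\}$, and I would track this carefully so that the resulting bracket relations are correspondingly stated modulo $\{X_1,X_2,X_3,V_j^{n+1},\ldots\}$; this is exactly what accounts for the weaker congruences appearing for $n\ge p_{ij}$. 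Once the case bookkeeping is fixed, each individual coefficient is an immediate pairing and no further calculation is needed.
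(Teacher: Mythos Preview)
Your proposal is correct and follows essentially the same approach as the paper: both derive the pairing formula $\omega([X_i,Z])=-X_i(\omega)(Z)$ (the paper writes it as $({\rm d}_H\omega)(X,V)=-\omega([X,V])$, which is the same thing once one notes $({\rm d}_H\omega)(X_i,Z)=X_i(\omega)(Z)$), and then read off all coefficients from the ${\rm d}_H$ structure equations of Proposition~\ref{dhstructureequations}. The only cosmetic difference is that you invoke identity~(\ref{verhorint}) while the paper rederives it from the Cartan formula for ${\rm d}\omega$; your handling of the case-splitting and the weakened congruences modulo $\{V_j^{n+1},\ldots\}$ for $n\ge p_{ij}$ is exactly what the paper's argument requires.
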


\begin{proof} If $X$ is a total vector field and $V$ is a $\pi_M^{\infty}$ vertical vector field on $\mathcal{R}^{\infty}$ and $\omega$ is a contact 1-form, then the following equation holds
\begin{gather*}
({\rm d}_H\omega)(X,V)=({\rm d}\omega)(X,V) =X(\omega(V))-V(\omega(X))-w([X,V]) =X(\omega(V))-\omega([X,V]),
\end{gather*}
since $\omega(X)=0$ by definition of $X$ being a total vector field, and ${\rm d}_V\omega(X,V)=0$ again by definition because ${\rm d}_V\omega$ is a~type $(0,2)$ contact form. Moreover, if $\omega(V)$ is a constant, then
\begin{gather*}
({\rm d}_H\omega)(X,V)=-\omega([X,V]).
\end{gather*}
This last equation, along with the ${\rm d}_H$ structure equations from Theorem~\ref{dhstructureequations}, allow us to obtain the above Lie bracket congruences. As an example, let's compute $[X_1,U]$. Taking the structure equation (\ref{dhtheta}), we see that $\Theta([X_1,U])= -({\rm d}_H\Theta)(X_1,U)=A_{i1}^i$. This tells us that the bracket $[X_1,U]$ has a $U$ component with coefficient~$A_{i1}^i$. Likewise, taking equation~(\ref{dhxi11}) shows that $\hat{\xi}_1^1([X_1,U])=-\big({\rm d}_H\hat{\xi}_1^1\big)(X_1,U)=0$, which implies that~$[X_1,U]$ has no $V_1^1$ component. Continue on in this way using each of the remaining ${\rm d}_H\hat{\xi}_i^j$ structure equations in turn in order to identify all the nonzero components of $[X_1,U]$. Repeat the same procedure with $[X_2,U]$, $[X_3,U]$, and $\big[X_i,V_k^l\big]$ to obtain all of the above Lie bracket congruences.
\end{proof}

\subsection*{Acknowledgements}

I would like to express my heartfelt gratitude to my Ph.D.~advisor, Professor Niky Kamran for his constant encouragement and guidance throughout the preparation of this paper. I would also like to thank Professor Mark Fels for the lectures he presented during the spring of 2012 at McGill University, which contributed greatly to my understanding of Cartan's structural classification of involutive overdetermined systems of PDEs. And for bringing to my attention the example presented in Section~\ref{universallinearization-section}, I~thank Professor Peter Vassiliou. Finally, I~thank the referees of this paper for their many thoughtful and helpful comments.

\pdfbookmark[1]{References}{ref}
\LastPageEnding

\end{document}